\documentclass{amsart}
\usepackage[utf8]{inputenc}
\usepackage{amsmath}
\usepackage{amssymb}
\usepackage{amsthm}
\usepackage[style=alphabetic,maxbibnames=8,safeinputenc,backend=biber]{biblatex}

\usepackage[hyperfootnotes=false]{hyperref}
\usepackage{enumerate}
\usepackage{graphicx}
\usepackage{tikz-cd}
\usepackage[labelfont=bf]{caption}
\usepackage{accents}

\usepackage{todonotes}
\usepackage{fancyhdr}
 
\pagestyle{fancy}
\fancyhf{}
\fancyhead[CO]{\sc\small Relatively dominated representations}
\fancyhead[CE]{\sc\small F. Zhu}
\fancyhead[LE,RO]{\small\thepage}

\newcommand{\complex}{\mathbb{C}}
\newcommand{\real}{\mathbb{R}}
\newcommand{\ints}{\mathbb{Z}}

\newcommand{\nats}{\mathbb{N}}

\newcommand{\proj}{\mathbf{P}}
\newcommand{\Gr}{\mathrm{Gr}}
\newcommand{\HH}{\mathbb{H}}

\DeclareMathOperator{\PGL}{PGL}
\DeclareMathOperator{\GL}{GL}
\DeclareMathOperator{\SL}{SL}
\DeclareMathOperator{\PSL}{PSL}

\DeclareMathOperator{\SO}{SO}
\DeclareMathOperator{\PSO}{PSO}

\newcommand{\eps}{\epsilon}
\newcommand{\del}{\partial}

\DeclareMathOperator{\id}{id}

\DeclareMathOperator{\diam}{diam}
\DeclareMathOperator{\Cay}{Cay}
\DeclareMathOperator{\Stab}{Stab}
\newcommand{\actson}{\curvearrowright}

\DeclareMathOperator{\Aut}{Aut}

\newcommand{\Nth}{^{\mathrm{th}}}

\newcommand{\spacer}{}

\newcommand{\ubar}[1]{\underaccent{\bar}{#1}}

\theoremstyle{plain}
\newtheorem{thm}{Theorem}[section]
\newtheorem{lem}[thm]{Lemma}
\newtheorem{prop}[thm]{Proposition}
\newtheorem{cor}[thm]{Corollary}
\newtheorem{defn}[thm]{Definition}

\theoremstyle{definition}
\newtheorem{eg}[thm]{Example}
\newtheorem{rmk}[thm]{Remark}

\title{Relatively dominated representations}
\author{Feng Zhu}
\date{}

\addbibresource{main.bib}

\begin{document}

\begin{abstract}
We introduce relatively dominated representations as a relativization of Anosov representations, or in other words a higher-rank analogue of geometric finiteness. We prove that groups admitting relatively dominated representations must be relatively hyperbolic, that these representations induce limit maps with good properties, provide examples, and draw connections to work of Kapovich--Leeb which also introduces higher-rank analogues of geometric finiteness.
\end{abstract}

\maketitle

\section{Introduction}

Given a rank-one semisimple Lie group $G$ such as $\SL(2,\real)$ or $\SL(2,\complex) \cong \SO(1,3)$, the notion of convex cocompactness, first introduced in the setting of Kleinian groups acting on $\HH^3$, gives us a stable class of subgroups with good geometric and dynamical properties. 

When $G$ is instead a higher-rank semisimple Lie group, such as $\SL(d,\real)$ with $d \geq 3$, Anosov subgroups  are, at present, the best analogue of convex cocompact ones. These were originally defined in \cite{Labourie}, as a tool to study the dynamics and geometry of individual Hitchin representations, and further developed 
in \cite{GW}. There have subsequently been many other equivalent characterizations: see for instance \cite{KLP}, \cite{GGKW}, and \cite{BPS}.

In rank one, the class of convex cocompact subgroups form part of the strictly larger class of geometrically finite subgroups, which may be understood as convex cocompactness with the possible addition of certain degenerate ``cuspidal'' ends with controlled geometry. Geometrically finite groups continue to have many of the good properties of convex cocompact groups, modulo mild degeneracy at the cusps which may need controlled by additional hypotheses.

In prior work \cite{KL}, Kapovich and Leeb proposed relativized versions of the Anosov condition, which may be considered to be higher-rank analogues of  geometric finiteness.
In this paper we propose another, inspired by the characterization in \cite{BPS} and making use of the theory of relatively hyperbolic groups.

Below, all of our groups $\Gamma$ will be finitely-generated, and, to avoid unnecessary additional technicalities, torsion-free.

The condition on representations which we wish to define is given in terms of singular values and subspaces, and in terms of a modified word-length: 
given a matrix $A \in \GL(d,\real)$, let $\sigma_i(A)$ (for $1 \leq i \leq d$) denote the $i\Nth$ singular value of $A$.

Fix $\Gamma$ a finitely-generated torsion-free group
and a finite collection $\mathcal{P}$ of finitely-generated subgroups satisfying certain conditions (RH) (described in Definition \ref{defn:peri_preconds}) which are automatic if $\Gamma$ is hyperbolic relative to $\mathcal{P}$. We will designate the subgroups in $\mathcal{P}$ and their conjugates ``peripheral''. 

Given $\Gamma$ and $\mathcal{P}$ as above, we will say that the images of peripheral subgroups under a representation $\rho: \Gamma \to \GL(d,\real)$ are well-behaved if they satisfy certain conditions which essentially ensure their images are parabolic, plus mild technical conditions governing the behaviors of limits of Cartan projections. All of these conditions are described precisely in Definition \ref{defn:peri_conds}.

Let $X$ be a cusped space for $(\Gamma,\mathcal{P})$ as constructed in \cite{GrovesManning} (see \S2 for definitions.) Write $d_c$ to denote the metric on $X$, and $|\cdot|_c := d_c(\id, \cdot)$. These are defined in \cite{GrovesManning} in the case where $\Gamma$ is hyperbolic relative to $\mathcal{P}$, but the same construction can be done and continues to make sense in the more general case of $\Gamma$ a torsion-free finitely-generated group and $\mathcal{P}$ a malnormal finite collection of finitely-generated subgroups.

Given $\Gamma$ a finitely-generated torsion-free subgroup and a collection  $\mathcal{P}$ of finitely-generated subgroups satisfying (RH), we will say a representation $\rho: \Gamma \to \GL(d,\real)$ is {\bf 1-dominated relative to $\mathcal{P}$} (Definition \ref{defn:reldomrep}), if there exists constants $C, \mu > 0$ such that 
(D\textsuperscript{-}) for all $\gamma \in \Gamma$, $\frac{\sigma_1}{\sigma_{2}}(\rho(\gamma)) \geq C e^{\mu|\gamma|_c}$,
and the images of peripheral subgroups under $\rho$ are well-behaved.

Examples of relatively-dominated representations include geometrically-finite hyperbolic holonomies and geometrically-finite convex projective holonomies in the sense of \cite{CM12}; we also remark that in the case $\mathcal{P} = \varnothing$, we recover the \cite{BPS} definition of dominated reprsentations.

1-relatively dominated representations are discrete and faithful, and send  non-peripheral elements to proximal images. Their orbit maps are quasi-isometric embeddings of the relative Cayley graph, i.e. the Cayley graph with the metric induced from the cusped space $X \supset \Cay(\Gamma)$. 

In the setting of Anosov representations, \cite{KLP_Morse} proved that if $\Gamma$ is finitely-generated and $\rho: \Gamma \to \GL(d,\real)$ is such that there exist constants $C, \mu > 0$ so that $\frac{\sigma_1}{\sigma_2}(\rho(\gamma)) \geq C e^{\mu|\gamma|}$ for all $\gamma \in \Gamma$, then $\rho$ is ($P_1$)-Anosov, and in particular $\Gamma$ must be word-hyperbolic. An alternative proof of this appears in \cite{BPS} and was the original inspiration for this work. 
Here we can prove a relative analogue to this hyperbolicity theorem: 
\spacer \begin{thm} [Theorem \ref{thm:reldom_relhyp}]
If $\rho: \Gamma \to \GL(d,\real)$ is 
1-dominated relative to $\mathcal{P}$, and $\Gamma$ contains non-peripheral elements, then $\Gamma$ must be hyperbolic relative to $\mathcal{P}$.
\end{thm}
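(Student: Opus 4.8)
The plan is to adapt the argument of \cite{BPS} for the non-relative hyperbolicity theorem, running it inside the Groves--Manning cusped space $X$ rather than in the Cayley graph. By the results of \cite{GrovesManning} recalled in Section~2, and because $(\Gamma,\mathcal{P})$ satisfies (RH), the group $\Gamma$ is hyperbolic relative to $\mathcal{P}$ if and only if $X$ is Gromov-hyperbolic; so it suffices to prove the latter. The hypothesis that $\Gamma$ contains non-peripheral elements rules out the degenerate case $\Gamma \in \mathcal{P}$, in which there is nothing to prove and (D\textsuperscript{-}) carries no information. Since $\Gamma$ acts on $X$ by isometries and the level-$0$ vertices are identified with $\Gamma$, condition (D\textsuperscript{-}) is equivalent to the estimate $\frac{\sigma_1}{\sigma_2}(\rho(x)^{-1}\rho(y)) \geq C e^{\mu d_c(x,y)}$ for all $x,y \in \Gamma$; this ``domination along all pairs'' replaces the Cayley-distance version used in \cite{BPS}.

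Next I would attach to each geodesic $\eta$ in $X$ joining two points of $\Gamma$ a \emph{shadow} in the symmetric space $\GL(d,\real)/\OO(d)$ to which the \cite{BPS} machinery applies. Up to bounded error, $\eta$ decomposes into maximal subsegments inside $\Cay(\Gamma,S)$ and maximal excursions into combinatorial horoballs $\mathcal{H}(gP)$; each excursion is recorded by its entry and exit vertices $g p_{\mathrm{in}}, g p_{\mathrm{out}} \in \Gamma$, and by the Groves--Manning estimates its length is comparable to $\log(1 + d_P(p_{\mathrm{in}}, p_{\mathrm{out}}))$. Reading off the group elements at the Cayley vertices and at the excursion endpoints yields a sequence $z_0, \dots, z_k \in \Gamma$ along $\eta$, and I would bound $\rho$ along it piece by piece: on Cayley subsegments using (D\textsuperscript{-}); on excursions using the well-behaved-peripheral hypotheses, where parabolicity of $\rho(P)$ forces $\frac{\sigma_1}{\sigma_2}$ of a peripheral syllable to grow at most polynomially in $d_P$, hence at most linearly in the excursion length (consistent with the domination constants), while the conditions on limits of Cartan projections provide transversality of the attracting and repelling data of consecutive pieces. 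Feeding this into the concatenation estimates for singular values from \cite{BPS} shows that the partial products of $\rho$ along $(z_i)$ form a uniformly dominated sequence whose gap $\frac{\sigma_1}{\sigma_2}$ grows exponentially in the $d_c$-length of the corresponding subsegment of $\eta$; in particular the top singular directions $U_1(\rho(z_i)) \in \proj(\real^d)$ converge exponentially, and the points $\rho(z_i) \cdot o$ trace a uniform Morse quasigeodesic in $\GL(d,\real)/\OO(d)$, built only from level-$0$ data so that the non-definition of the orbit map on horoball interiors never intervenes.

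From here I would conclude as in \cite{BPS}: the stability and contraction properties of uniformly dominated sequences (two such shadows with nearby endpoints fellow-travel) show that geodesics in $X$ are uniformly Morse, and a standard argument then gives that a geodesic space all of whose geodesics are uniformly Morse is Gromov-hyperbolic. I expect the main obstacle to be the analysis at the junctions between horoball excursions and Cayley subsegments: one must show that the attracting singular direction coming out of one piece is \emph{uniformly} transverse to the repelling hyperplane of the next, so that singular-value gaps genuinely multiply rather than cancel along the possibly long alternating concatenation. This is precisely the role of the technical conditions on limits of Cartan projections in the peripheral hypotheses, and making them uniform along $\eta$ is the step that genuinely departs from \cite{BPS}. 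A secondary point, also needed for the shadow to be a quasigeodesic, is ruling out that a geodesic in $X$ backtracks cheaply within or between horoballs; here the malnormality built into (RH) keeps distinct peripheral cosets from running parallel and supplies the required coarse separation of the $\mathcal{H}(gP)$.
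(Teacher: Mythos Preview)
Your approach differs substantially from the paper's, and in fact also from the argument in \cite{BPS} that you cite. Neither proves hyperbolicity by establishing a Morse property for geodesics in a model space. Instead, both construct a candidate boundary --- a limit set $\Lambda_{rel} \subset \proj(\real^d)$ built from accumulation points of the maps $\gamma \mapsto U_1(\rho(\gamma))$ --- and verify that $\Gamma$ acts on it as a (geometrically finite, in the relative case) convergence group; hyperbolicity then follows from Bowditch's criterion in \cite{BPS}, or from the Yaman/Gerasimov relative version (Theorems~\ref{thm:Yaman} and~\ref{thm:Gerasimov}) here. The domination hypotheses enter not to build Morse quasigeodesics in the symmetric space, but to establish north--south dynamics on $\Lambda_{rel}$ (Lemma~\ref{lem:ns_dyn_all}), which then yields proper discontinuity on triples and cocompactness on pairs. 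The transversality input at junctions that you correctly flag as the crux is supplied by Proposition~\ref{prop:limits_exist_trans}, proved via a modified Oseledets-type theorem (Appendix~\ref{app:QTZ}) rather than a concatenation estimate.

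Your Morse-lemma strategy also has a genuine gap. To conclude that $X$ is Gromov-hyperbolic from a Morse property, you must compare a geodesic in $X$ with an \emph{arbitrary} $(K,C)$-quasigeodesic sharing its endpoints, or control arbitrary geodesic triangles. Your shadow construction applies only to geodesics, which admit the tidy horoball-excursion decomposition of \S\ref{sub:hat_unhat}; a general quasigeodesic in $X$ need not, so the singular-value bookkeeping cannot be run along it. Moreover, the orbit map is defined only on level-$0$ vertices: even granting that shadows of two geodesics fellow-travel in the symmetric space, pulling this back to fellow-traveling in $X$ requires handling the horoball interiors separately, and for triangles with vertices deep in horoballs there are no level-$0$ waypoints to anchor the argument. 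One might patch this using that combinatorial horoballs are individually hyperbolic, but making the constants uniform across the gluing is nontrivial and not addressed. The paper's route through Yaman's criterion sidesteps all of this --- it never analyses geodesics in $X$ directly --- and as a bonus the equivariant homeomorphism $\del(\Gamma,\mathcal{P}) \to \Lambda_{rel}$ supplied by the criterion immediately gives the limit maps of \S\ref{sec:limitmaps}.
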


Moreover, given a 1-relatively dominated representation, we have limit maps from the Bowditch boundary $\del(\Gamma,\mathcal{P})$ with many of the good properties of Anosov limit maps:
\spacer \begin{thm}[Theorem \ref{thm:limitmaps}]
Given $\rho: \Gamma \to \GL(d,\real)$ 1-dominated relative to $\mathcal{P}$, we have well-defined, $\Gamma$-equivariant, continuous maps $\xi: \del(\Gamma, \mathcal{P}) \to \proj(\real^d)$ and $\xi^*: \del(\Gamma, \mathcal{P}) \to \proj(\real^d)^*)$ which are dynamics-preserving, compatible and transverse.
\end{thm}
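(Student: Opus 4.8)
The plan is to imitate the usual construction of Anosov limit maps (as in \cite{GGKW} or \cite{BPS}), adapted to the cusp structure and powered by the relative hyperbolicity just obtained. By Theorem~\ref{thm:reldom_relhyp}, $\Gamma$ is hyperbolic relative to $\mathcal{P}$, so the cusped space $X$ is Gromov hyperbolic and, by the Groves--Manning identification recalled in \S2, its Gromov boundary is the Bowditch boundary $\del(\Gamma,\mathcal{P})$, which decomposes as the disjoint union of conical limit points and bounded parabolic points, the latter forming the $\Gamma$-orbit of the points fixed by the subgroups in $\mathcal{P}$. I would construct $\xi$ and $\xi^\ast$ on the two kinds of points separately.

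For a conical point $z$, fix a geodesic ray in $X$ from $\id$ to $z$; conicality keeps it within bounded Hausdorff distance of $\Cay(\Gamma)$, and since the orbit map is a quasi-isometric embedding one can extract a sequence $(\gamma_n)$ in $\Gamma$ with $\gamma_n \to z$, $|\gamma_n|_c \to \infty$, and $\gamma_n^{-1}\gamma_{n+1}$ confined to a fixed finite set (the $\gamma_n$ coarsely track the ray). By (D\textsuperscript{-}) the gap $\tfrac{\sigma_1}{\sigma_2}(\rho(\gamma_n))$ grows exponentially in $|\gamma_n|_c$; the standard estimate that a large first singular gap makes the top left singular line $U_1(\rho(\gamma))$ stable under uniformly bounded right multiplication (see \cite[\S2]{BPS}) then shows $\bigl(U_1(\rho(\gamma_n))\bigr)_n$ is Cauchy, and I set $\xi(z):=\lim_n U_1(\rho(\gamma_n)) \in \proj(\real^d)$; the same estimate applied to two fellow-travelling rays with the same endpoint makes the limit independent of choices. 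Since (D\textsuperscript{-}) is imposed at \emph{every} $\gamma$ and $|\gamma|_c = |\gamma^{-1}|_c$, applying it to $\gamma^{-1}$ gives $\tfrac{\sigma_{d-1}}{\sigma_d}(\rho(\gamma)) \geq Ce^{\mu|\gamma|_c}$ as well, so the smallest left singular direction $u_d(\rho(\gamma_n)) = U_1(\rho^\ast(\gamma_n))$ is stable too; I define $\xi^\ast(z) := \lim_n u_d(\rho(\gamma_n))^\perp \in \proj((\real^d)^\ast)$, which is precisely the line-limit map of the contragredient $\rho^\ast$. For a bounded parabolic point $p = g\cdot p_i$, with $p_i$ fixed by $P_i \in \mathcal{P}$, the well-behavedness hypothesis on $\rho(P_i)$ (Definition~\ref{defn:peri_conds}) supplies a distinguished $\rho(P_i)$-invariant line $\ell_i$ and hyperplane $H_i$ with $\ell_i \subset H_i$; I set $\xi(p) := \rho(g)\ell_i$ and $\xi^\ast(p) := \rho(g)H_i$, which is independent of the representative $g$ by malnormality of $\mathcal{P}$.

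Equivariance is then immediate from the formulas, using that $U_1(\rho(\delta)\rho(\gamma_n)) \to \rho(\delta)\xi(z)$ (left multiplication by a fixed matrix while the gap blows up). Compatibility $\xi(z) \subset \xi^\ast(z)$ holds because ``line inside hyperplane'' is a closed condition satisfied along the approximating data ($U_1(\rho(\gamma_n)) \subset u_d(\rho(\gamma_n))^\perp$ trivially, $\ell_i \subset H_i$ by hypothesis). For dynamics-preservation: an infinite-order non-peripheral $\gamma$ has conical fixed points $\gamma^\pm$, and taking the sequences $\gamma^{\pm n}$ identifies $\xi(\gamma^\pm)$ and $\xi^\ast(\gamma^\pm)$ with the attracting/repelling lines and hyperplanes of the proximal matrix $\rho(\gamma)$ (proximality already being known), giving the required convergence dynamics on $\proj(\real^d)$; at parabolic fixed points this is read off from the convergence dynamics of $\rho(P_i)$ encoded in Definition~\ref{defn:peri_conds}. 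Transversality $\xi(z) \oplus \xi^\ast(w) = \real^d$ for $z \neq w$ I would first prove for conical $z,w$: approximating along geodesic rays, $z \neq w$ forces the Gromov product $(\gamma_n \mid \delta_n)$ to stay bounded, and a second standard estimate (large singular gaps plus bounded Gromov product yield a uniform lower bound on the angle between $U_1(\rho(\gamma_n))$ and $u_d(\rho(\delta_n))^\perp$; cf.\ \cite[\S2]{BPS}) passes to the limit; transversality involving parabolic points follows in the same way, now using the conditions in Definition~\ref{defn:peri_conds} controlling the limits of Cartan projections inside the horoballs.

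The real content, and the main obstacle, is \emph{continuity}. By compactness it suffices to show that $z_k \to z$ and $\xi(z_k) \to \eta$ force $\eta = \xi(z)$. Choosing geodesic rays to $z_k$ and to $z$, hyperbolicity of $X$ makes them fellow-travel along longer and longer initial segments before diverging, and the perturbation estimate used to build $\xi$ keeps $\xi(z_k)$ close to $U_1(\rho(\gamma_n))$ for $n$ in the common segment, forcing $\eta = \xi(z)$ when $z$ is conical. The delicate case is $z$ a parabolic point approached by the $z_k$ (conical points accumulating on a cusp): the rays to the $z_k$ then run alongside, or climb into, the horoball over the relevant peripheral coset, and one must show that the top singular line along such a ray is pushed close to $\rho(g)\ell_i$. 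This is exactly the job of the ``mild technical conditions governing the behaviors of limits of Cartan projections'' in Definition~\ref{defn:peri_conds} --- roughly, that the Cartan projections of $\rho|_{P_i}$ escape to infinity in a controlled, uniformly transverse fashion --- and feeding these into the fellow-travelling argument (for both $\xi$ and, via $\rho^\ast$, for $\xi^\ast$) is where the proof's difficulty is concentrated.
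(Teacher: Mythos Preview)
Your approach is correct in spirit and is in fact explicitly acknowledged in the paper as an alternative proof (see the Remark immediately following the proof of Theorem~\ref{thm:limitmaps}). The paper, however, takes a different and somewhat slicker route: rather than building $\xi$ and $\xi^*$ by hand on conical and parabolic points and then fighting for continuity, it invokes Yaman's criterion (Theorem~\ref{thm:Yaman}). The point is that \S\ref{sec:reldom_relhyp} already established that $\rho(\Gamma)$ acts on the limit set $\Lambda_{rel} \subset \proj(\real^d)$ as a geometrically-finite convergence group with $\mathcal{P}^\Gamma$ as the maximal parabolics (Proposition~\ref{prop:geomfin_conv}); Yaman then hands you an equivariant \emph{homeomorphism} $\xi_\rho: \del(\Gamma,\mathcal{P}) \to \Lambda_{rel}$ for free, so continuity --- which you correctly identify as the hard step in your approach --- requires no separate argument. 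The paper then only needs to identify this abstract homeomorphism with the explicit formula $\xi_\rho(x) = \lim_n \Xi_\rho(\gamma_n)$ in order to read off compatibility and transversality (via Corollary~\ref{cor:lim_transverse}); this identification is done by checking it on proximal and parabolic fixed points and then propagating by continuity and the (EC) estimate.

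One caution about your sketch: you call the transversality step a ``second standard estimate'' from \cite{BPS}, but in the relative setting this is precisely where the work of \S\ref{sec:limtrans} lives --- the increments along a projected geodesic are \emph{not} uniformly bounded (peripheral excursions can be arbitrarily long), so the BPS argument does not apply directly and one needs the modified Quas--Thieullen--Zarrabi theorem (Theorem~\ref{thm:QTZ}) via Proposition~\ref{prop:limits_exist_trans} and Corollary~\ref{cor:lim_transverse}. You should cite those results rather than the non-relative analogue. With that correction, your direct construction would go through, at the cost of having to carry out the continuity argument at parabolic points that the paper's Yaman-based proof sidesteps.
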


A key technical input into the proofs of these theorems is a powerful generalization of the Oseledets theorem recently formulated in \cite{QTZ}; we will use a slightly modified version of this result, whose proof is discussed in Appendix \ref{app:QTZ}.

Our approach is different from that of \cite{KL}---the latter really focuses on the geometry of the symmetric space whereas we look more at the intrinsic geometry associated to the relatively hyperbolic group---but we show that the resulting notions are closely related:
\spacer 
\begin{thm}[Theorems \ref{thm:reldom_rRCA} and \ref{thm:rRCA_reldom}]
(a) If $\rho: \Gamma \to \SL(d,\real)$ is relatively dominated, then $\rho(\Gamma)$ is relatively RCA (in the sense of \cite{KL}) with uniformly regular peripherals.

(b) If $\rho: \Gamma \to \SL(d,\real)$ is such that $\rho(\Gamma)$ is relatively RCA with uniformly regular and undistorted peripherals satisfying an additional technical condition, then $\rho$ is relatively dominated.
\end{thm}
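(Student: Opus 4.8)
The plan is to prove the two implications separately, in each direction translating between the intrinsic data of the relatively hyperbolic structure---the cusped space $X$, the Bowditch boundary $\del(\Gamma,\mathcal{P})$, and the relative length $|\cdot|_c$---and the extrinsic data in the symmetric space and flag variety underlying the Kapovich--Leeb framework of \cite{KL}. For (a), assume $\rho$ is relatively dominated. By Theorem \ref{thm:reldom_relhyp} the pair $(\Gamma,\mathcal{P})$ is relatively hyperbolic, so by Theorem \ref{thm:limitmaps} we have continuous, $\Gamma$-equivariant, transverse, compatible, dynamics-preserving limit maps $\xi,\xi^*$. One then verifies in turn the conditions making up the RCA notion---uniform regularity, conicality, antipodality---relativized over $\mathcal{P}$. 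Antipodality is essentially immediate: the limit set of $\rho(\Gamma)$ in $\proj(\real^d)$ is the image of $\xi$ (by dynamics-preservation together with minimality of $\Gamma \actson \del(\Gamma,\mathcal{P})$), and transversality of $(\xi,\xi^*)$ says exactly that distinct limit flags are in general position. Uniform regularity of the peripherals is read directly off the well-behavedness hypotheses of Definition \ref{defn:peri_conds}, which were chosen precisely so that peripheral images are parabolic and their Cartan projections drift into a cone away from the relevant walls.

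The substantive point in (a) is conicality of non-peripheral limit points. Given a conical limit point $z \in \del(\Gamma,\mathcal{P})$ there is a sequence $\gamma_n \in \Gamma$ converging to $z$ and staying within bounded $d_c$-distance of a geodesic ray in $X$ toward $z$; since the orbit map is a quasi-isometric embedding of $(\Gamma,|\cdot|_c)$, since non-peripheral elements have proximal images, and since $\tfrac{\sigma_1}{\sigma_2}(\rho(\gamma_n)) \geq Ce^{\mu|\gamma_n|_c}$, the elements $\rho(\gamma_n)^{-1}$ contract a neighbourhood of $\xi(z)$ uniformly onto a single flag, which is the flag-variety contraction characterizing conicality. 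I expect the main obstacle in (a) to be converting ``bounded $d_c$-distance from a geodesic'' into this uniform contraction estimate, because the geodesic makes excursions into the Groves--Manning horoballs and one must control the Cartan projections of $\rho$ along those excursions; this is exactly where the well-behavedness of peripheral images and the generalized Oseledets theorem of \cite{QTZ} are used.

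For (b), assume $\rho(\Gamma)$ is relatively RCA with uniformly regular, undistorted peripherals satisfying the additional technical condition; the goal is to produce $C,\mu > 0$ with $\tfrac{\sigma_1}{\sigma_2}(\rho(\gamma)) \geq Ce^{\mu|\gamma|_c}$ for all $\gamma$ and to check the peripheral-image conditions of Definition \ref{defn:peri_conds}. The RCA hypotheses already give an equivariant embedding of the Bowditch boundary into the flag variety with transverse, dynamics-preserving image, so the limit-map and parabolicity parts of the definition follow once the technical condition is used to upgrade to the precise conditions on limits of Cartan projections. The heart of (b) is the exponential estimate: the regular-plus-conical part of RCA gives that along a geodesic in the thick part the Cartan projection of $\rho$ drifts linearly into the open Weyl chamber, so $\log\tfrac{\sigma_1}{\sigma_2}$ grows linearly in the length; undistortedness of the peripherals gives that each peripheral subgroup is quasi-isometrically embedded via $\rho$ with respect to its word metric, equivalently with respect to the horoball metric, so the same linear drift holds along horoball excursions after reparametrizing. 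Patching these contributions along an arbitrary relative geodesic from $\id$ to $\gamma$---which alternates thick-part segments with horoball excursions, $|\gamma|_c$ being the sum of the two kinds of contribution---yields $\log\tfrac{\sigma_1}{\sigma_2}(\rho(\gamma)) \gtrsim |\gamma|_c$ up to additive error, i.e.\ (D\textsuperscript{-}). The patching is licensed by the submultiplicativity $\tfrac{\sigma_1}{\sigma_2}(AB) \leq \tfrac{\sigma_1}{\sigma_2}(A)\tfrac{\sigma_1}{\sigma_2}(B)$ together with the finer estimates for products of transverse regular elements, and I expect the main obstacle to be ruling out cancellation at the thick-part/horoball interfaces: one needs the flags produced by the conical dynamics and those produced by the peripheral parabolics to be uniformly transverse along the geodesic, controlled uniformly over all $\gamma$, which is precisely what the technical condition, together with a local-to-global argument for relatively hyperbolic groups relativizing \cite{BPS}, should supply.
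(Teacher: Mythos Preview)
Your proposal overcomplicates both directions relative to the paper's arguments, and for (b) it rests on a misreading of the hypothesis.

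For (a), the paper does not verify conicality directly. It checks regularity (immediate from (D$^-$) together with the relative quasi-isometric embedding of Proposition \ref{prop:qiembed_rel}), antipodality (from transversality of the limit maps, as you say), and then \emph{asymptotic embeddedness}: the combined limit map $(\xi_\rho,\xi^*_\rho): \del(\Gamma,\mathcal{P}) \to \Lambda_\Gamma \subset G/P_{1,d-1}$ furnished by Theorem \ref{thm:limitmaps} is already the required equivariant homeomorphism. The paper then invokes Kapovich--Leeb's equivalence (Theorem \ref{thm:KL78}) between relatively RCA and relatively asymptotically embedded. This entirely sidesteps the conicality verification you identify as the ``substantive point''; no further appeal to \cite{QTZ} or to control of horoball excursions is needed beyond what already went into proving Theorem \ref{thm:limitmaps}.

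For (b), the hypothesis in Theorem \ref{thm:rRCA_reldom} is that $\rho(\Gamma)$ is \emph{relatively uniform RCA and undistorted}: the \emph{entire} image $\rho(\Gamma)$ is uniformly regular, and the \emph{entire} group $\Gamma$ is relatively undistorted by $\rho$ (see the definitions in \S\ref{sec:KL}). The phrase ``uniformly regular peripherals'' in the introduction refers to the equivalent reformulation via \cite{KL} Theorem 8.25, but undistortedness is still a global hypothesis. With the correct reading, (D$^-$) is a one-line deduction: uniform regularity gives $\log\frac{\sigma_1}{\sigma_2}(\rho(\gamma)) \geq \mu\|a(\rho(\gamma))\| - c$, and relative undistortedness gives $\|a(\rho(\gamma))\|$ quasi-equivalent to $|\gamma|_c$, hence $\log\frac{\sigma_1}{\sigma_2}(\rho(\gamma)) \gtrsim |\gamma|_c$. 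There is no patching along geodesics, no interface cancellation to rule out, no local-to-global argument. The remaining peripheral conditions are then handled briefly: upper domination from \cite{KL} Corollary 5.13, unique limits from the asymptotic embedding, uniform transversality via Proposition \ref{prop:trans_unitrans}, and quadratic gaps is precisely the ``additional technical condition'', which is simply assumed. Your patching strategy would be relevant only under the strictly weaker hypothesis that merely the peripherals are uniformly regular and undistorted---a statement the paper does not claim, and which in the non-relative case already requires a higher-rank Morse lemma.
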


The rest of this paper is organized as follows: we start by reviewing relevant background facts on relatively hyperbolic groups in \S\ref{sec:relhyp} and on singular value decompositions in \S3. We then give the definition of relatively dominated representations, as well as noting some immediate properties, in \S\ref{sec:reldom}. \S\ref{sec:limtrans} proves a key transversality property, \S\ref{sec:reldom_relhyp} the relative hyperbolicity theorem, and \S\ref{sec:limitmaps} the existence of the limit maps. \S\ref{sec:eg} briefly discusses examples. \S\ref{sec:KL} describes links between the notion of relatively dominated representations introduced here and notions in \cite{KL}; finally, \S10 discusses extending the definition in \S\ref{sec:reldom} to more general semisimple Lie groups and parabolic subgroups.

Appendix \ref{app:linalg} collects various linear algebra lemmas which are used throughout, especially in the later sections; Appendix \ref{app:QTZ} contains a proof of the generalization of the Oseledets theorem alluded to above.

\subsection*{Acknowledgements}
The author wishes to thank Dick Canary for suggesting this question, and for many helpful discussions. We also thank Jean-Philippe Burelle, Jeff Danciger, Matt Durham, Ilya Gekhtman, Fanny Kassel, Wouter van Limbeek, Sara Maloni, Jason Manning, Max Riestenberg, Andr\'es Sambarino and Kostas Tsouvalas for their insights, advice, and encouragement, and Jairo Bochi for pointing out (via Andr\'es Sambarino) the work of \cite{QTZ}.

The author was partially supported by U.S. National Science Foundation (NSF) grant DMS 1564362 ``FRG: Geometric Structures on Higher Teichm\"uller Spaces'', and acknowledges support from NSF grants DMS 1107452, 1107263, 1107367 ``RNMS: Geometric Structures and Representation Varieties'' (the GEAR Network).
This project has also received funding from the European Research Council (ERC) under the European Union’s Horizon 2020 research and innovation programme (ERC starting grant DiGGeS, grant agreement No 715982).

\section{Relatively hyperbolic groups} \label{sec:relhyp}
Relative hyperbolicity is a group-theoretic notion---originally suggested by Gromov, and further developed by Bowditch, Farb, Groves--Manning, and others---of non-positive curvature inspired by the geometry of cusped hyperbolic manifolds and free products.  

The geometry of a relatively hyperbolic group is akin to the geometry of a cusped hyperbolic manifold in that it is negatively-curved outside of certain regions, which, like the cusps in a cusped hyperbolic manifold, can be more or less separated from each other. 

There are various ways to make this intuition precise, resulting in various equivalent characterizations of relatively hyperbolic groups. We will use a definition of Bowditch, in the tradition of Gromov: 

Consider a finite-volume cusped hyperbolic manifold with an open neighborhood of each cusp removed: call the resulting truncated manifold $M$. The universal cover $\tilde{M}$ of such a $M$ is hyperbolic space with a countable set of horoballs removed. The universal cover $\tilde{M}$ is not Gromov-hyperbolic; distances along horospheres that bound removed horoballs are distorted. If we glue the removed horoballs back in to the universal cover, however, the resulting space will again be hyperbolic space.

We can do a similar thing from a group-theoretic perspective: the Cayley graph of the fundamental group $\pi_1 M$ is not word-hyperbolic, because the cusp subgroups fail to quasi-isometrically embed into hyperbolic space. However, we can glue in metric graphs quasi-isometric to horoballs (``combinatorial horoballs'') along the subgraphs of the Cayley graph corresponding to these cusp subgroups, and the resulting space (a ``cusped space'' or ``augmented space'') will again be quasi-isometric to hyperbolic space. We then say that $\pi_1 M$ is hyperbolic {\it relative to} its cusp subgroups.

More precisely (and more generally), let $\Gamma$ be a finitely generated group and $S = S^{-1}$ a finite generating set. We consider the following construction:

\spacer \begin{defn}[\cite{GrovesManning}, Definition 3.1] \label{defn:combhoroball}
Given a subgraph $\Lambda$ of the Cayley graph $\Cay(\Gamma,S)$, the {\bf combinatorial horoball} based on $\Lambda$, denoted $\mathcal{H} = \mathcal{H}(\Lambda)$, is the 1-complex\footnote{Groves-Manning combinatorial horoballs are actually defined as 2-complexes; the definition here is really of a 1-skeleton of a Groves-Manning horoball. For metric purposes only the 1-skeleton matters.} formed as follows:
\begin{itemize}
\item the vertex set $\mathcal{H}^{(0)}$ is given by $\Lambda^{(0)} \times \ints_{\geq 0}$
\item the edge set $\mathcal{H}^{(1)}$ consists of the following two types of edges: \begin{enumerate}[(1)]
\item If $k \geq 0$ and $0 < d_\Lambda(v, w) \leq 2^k$, then there is a (``horizontal'') edge connecting
$(v, k)$ to $(w, k)$ 
\item If $k \geq 0$ and $v \in \Lambda^{(0)}$, there is a (``vertical'') edge joining $(v, k)$ to $(v, k + 1)$.
\end{enumerate} \end{itemize}
$\mathcal{H}$ is metrized by assigning length 1 to all edges.
\end{defn}

\spacer \begin{eg} \label{eg:horoball_qi}
The combinatorial horoball over $\ints^d$ is quasi-isometric to a horoball in $\HH^{d+1}$, via the map sending $(\vec{v}, n)$ in $\ints^d \times \nats$ to $(\vec{v}, e^n)$ in the upper half-space.
\end{eg}

Next let $\mathcal{P}$ be a finite collection of finitely-generated subgroups of $\Gamma$, and suppose $S$ is a {\bf compatible generating set}, i.e. for each $P \in \mathcal{P}$, $S \cap P$ generates $P$. 

\spacer \begin{defn}[\cite{GrovesManning}, Definition 3.12] \label{defn:cuspedspace}
Given $\Gamma, \mathcal{P}, S$ as above, the {\bf cusped space} $X(\Gamma, \mathcal{P}, S)$ is the simplicial metric space 
\[ \Cay(\Gamma,S) \cup \bigcup \mathcal{H}(\gamma P) \]
where the union is taken over all left cosets of elements of $\mathcal{P}$, i.e. over $P \in \mathcal{P}$ and (for each $P$) $\gamma P$ in a collection of representatives for left cosets of $P$. 

Here the induced subgraph of $\mathcal{H}(tP)$ on the $tP \times \{0\}$ vertices is identified with (the induced subgraph of) $tP \subset \Cay(\Gamma,S)$ in the natural way.
\end{defn}

\spacer \begin{defn} \label{defn:relhyp}
$\Gamma$ is hyperbolic relative to $\mathcal{P}$ if and only if the cusped space $X(\Gamma,\mathcal{P},S)$ is $\delta$-hyperbolic (for any compatible generating set $S$.) 

We will also call $(\Gamma, \mathcal{P})$ a {\bf relatively hyperbolic structure}.
\end{defn}

We remark that cusped spaces are quasi-isometry invariant for relatively hyperbolic groups (\cite{Groff}, Theorem 6.3):
in particular, the notion above is well-defined independent of the choice of generating set $S$.
There is a natural action of $\Gamma$ on the cusped space $X = X(\Gamma,\mathcal{P},S)$;
with respect to this action, the quasi-isometry between two cusped spaces $X(\Gamma,\mathcal{P},S_i)$ ($i=1,2$) is $\Gamma$-equivariant. 


In particular, this gives us a notion of a boundary associated to the data of a relatively hyperbolic group $\Gamma$ and its peripheral subgroup $\mathcal{P}$:
\spacer \begin{defn} \label{defn:bowditch_bdy}
For  $\Gamma$ hyperbolic relative to $\mathcal{P}$, the {\bf Bowditch boundary} $\del (\Gamma, \mathcal{P})$ is defined as the Gromov boundary $\del_\infty X$ of any cusped space $X = X(\Gamma,\mathcal{P},S)$.
\end{defn}
By the remarks above, this is well-defined up to homeomorphism, independent of the choice of compatible generating set $S$ (\cite{Bowditch}, \S9.)

The following terminology will be useful further below:
\spacer \begin{defn}
 $\Cay(\Gamma, S)$ considered as a subspace of $X(\Gamma,\mathcal{P},S)$---i.e. with the metric inherited from $X(\Gamma,\mathcal{P},S)$---will be called the {\bf relative Cayley graph}.
\end{defn}

Below, with a fixed choice of $\Gamma$, $\mathcal{P}$ and $S$ as above, for $\gamma, \gamma' \in \Gamma$, $d(\gamma, \gamma')$ will denotes the distance between $\gamma$ and $\gamma'$ in the Cayley graph with the word metric, and $|\gamma| := d(\id, \gamma)$ denotes word length in this metric. Similarly, $d_c(\gamma, \gamma')$ denotes distance in the corresponding cusped space and $|\gamma|_c := d_c(\id,\gamma)$ denotes cusped word-length.

\subsection{A Bowditch--Yaman criterion for relative hyperbolicity}

The Bowditch criterion \cite{Bowditch_crit} states, roughly speaking, that we can show a group $\Gamma$ is hyperbolic by exhibiting an action of $\Gamma$ on a metric space satisfying certain properties which are characteristic of the action of a hyperbolic group on its Gromov boundary. Moreover, if the hypotheses are satisfied, the space (and action) we produce is naturally identified with the Gromov boundary of the group (and the action of the group thereon.)

Using the Bowditch boundary and generalizing Bowditch's arguments, Asli Yaman proved an analogue of Bowditch's criterion for relatively hyperbolic groups:

\spacer \begin{defn}
If $M$ is a compact metric space, $\Gamma \actson M$ {\bf as a convergence group} if the induced action on the space $M^{(3)}$ of distinct triples is properly discontinuous. 

$\Gamma \actson M$ as a {\bf geometrically-finite} convergence group if every point in $M$ is either a conical limit point or a bounded parabolic point.

($x \in M$ is a {\bf conical limit point} if there exists a sequence $(g_i) \subset \Gamma$ and $a, b \in M$ ($a \neq b$) such that $g_ix \to a$ and $g_i y \to b$ for any $y \in M \setminus \{x\}$. 

$H \leq \Gamma$ is {\bf parabolic} if it is infinite, fixes some point of $M$, and contains no infinite-order element with fixed locus of size 2. Such $H$ have unique fixed points in $M$, called {\bf parabolic points}.
A parabolic point $x \in M$ is {\bf bounded} if $(M \setminus \{x\}) / \Stab_\Gamma(x)$ is compact.)
\end{defn}

\spacer \begin{thm}[\cite{Yaman}, Theorem 0.1] \label{thm:Yaman}
Suppose that $M$ is a non-empty, perfect, compact metric space, and $\Gamma \actson M$ as a geometrically-finite convergence group.

Suppose also that the stabiliser of each bounded parabolic point is finitely generated.

Then $\Gamma$ is hyperbolic relative to the collection $\mathcal{P}$ of its maximal parabolic subgroups, and $M$ is equivariantly homeomorphic to $\del (\Gamma,\mathcal{P})$.
\end{thm}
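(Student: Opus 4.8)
The plan is to adapt Bowditch's proof of the absolute characterization \cite{Bowditch_crit} to the geometrically finite setting: out of the convergence dynamics of $\Gamma \actson M$ one manufactures a graph $X$ that is $\delta$-hyperbolic, on which $\Gamma$ acts properly discontinuously and cocompactly \emph{relative to} $\mathcal{P}$, and whose Gromov boundary is $\Gamma$-equivariantly homeomorphic to $M$. Once such an $X$ is produced, a relative Milnor--\v Svarc argument identifies it, up to $\Gamma$-equivariant quasi-isometry, with a Groves--Manning cusped space $X(\Gamma,\mathcal{P},S)$ (both are ``coned-off Cayley graph with combinatorial horoballs glued along peripheral cosets''); quasi-isometry invariance of hyperbolicity (\cite{Groff}) and of the boundary then yield, via Definitions \ref{defn:relhyp} and \ref{defn:bowditch_bdy}, that $\Gamma$ is hyperbolic relative to $\mathcal{P}$ and that $\del(\Gamma,\mathcal{P}) \cong M$.

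The construction proceeds in two stages. First I extract the peripheral structure: using proper discontinuity of the action on $M^{(3)}$ together with geometric finiteness, one shows there are only finitely many $\Gamma$-orbits of bounded parabolic points; fixing representatives $p_1,\dots,p_k$, set $\mathcal{P} = \{\,\Stab_\Gamma(p_i)\,\}$. These are finitely generated by hypothesis, and since an infinite-order element fixing two points of $M$ is loxodromic (hence lies in no parabolic subgroup), distinct maximal parabolics meet in a finite subgroup, so $\mathcal{P}$ is an almost malnormal collection. For each $i$, boundedness supplies a compact $K_i \subset M \setminus \{p_i\}$ transverse to $\Stab_\Gamma(p_i)$; this is precisely the input that will force the horoballs to be coarsely dense. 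Second, choosing $\Gamma$-equivariantly a pairwise-disjoint, properly nested family of ``horoball neighbourhoods'' of the parabolic points and relativizing Bowditch's pair-graph construction to the complementary region, one obtains a connected, locally finite, $\delta_0$-hyperbolic graph $G$ carrying a proper action of $\Gamma$ that is cocompact modulo the subgraphs $\gamma\,\Stab_\Gamma(p_i)$; $G$ is then $\Gamma$-equivariantly quasi-isometric to the relative Cayley graph $\Cay(\Gamma,S)$, and each peripheral coset spans a uniformly quasiconvex ``horospherical'' subgraph.

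The final construction step is to glue a Groves--Manning combinatorial horoball $\mathcal{H}(\gamma P_i)$ onto each horospherical subgraph, producing $X$; and \emph{this hyperbolicity step is the main obstacle}. A geodesic that in $G$ would short-cut along a horospherical subgraph must, in $X$, detour into the horoball, so one must bound geodesic triangles in $X$ uniformly. Here the boundedness of the parabolic points is indispensable: it guarantees that coning $X$ off over the combinatorial horoballs recovers $G$ up to uniform quasi-isometry, whereupon a nearest-point-projection argument in the spirit of Groves--Manning (or, alternatively, a verification of fineness of $G$ together with the standard equivalences among the definitions of relative hyperbolicity) transfers a uniform $\delta$ from $G$ and the individually hyperbolic horoballs to $X$. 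One also checks along the way that $X$ does not depend, up to equivariant quasi-isometry, on the auxiliary choices of horoball neighbourhoods and generating set.

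It remains to identify $\del_\infty X$ with $M$. A geodesic ray in $X$ either remains coarsely within the thick part, or it eventually enters and stays in a single horoball $\mathcal{H}(\gamma P_i)$; sending its endpoint to the associated conical limit point in the first case and to $\gamma p_i$ in the second defines a $\Gamma$-equivariant map $\del_\infty X \to M$. Surjectivity is immediate since every point of $M$ is conical or a bounded parabolic point; injectivity uses the north--south behaviour of suitable sequences in $\Gamma$ to separate distinct conical points, together with uniqueness of parabolic fixed points; and continuity in both directions is routine once the coarse geometry of $X$ (in particular the depth function along the horoballs) is understood. Combining this identification with the quasi-isometry $X \simeq X(\Gamma,\mathcal{P},S)$ of the first paragraph completes the proof.
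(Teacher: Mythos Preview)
This theorem is not proved in the paper; it is quoted from Yaman's paper \cite{Yaman} as Theorem~0.1 there and used as a black box (together with Gerasimov's refinement, Theorem~\ref{thm:Gerasimov}) in the proofs of Theorem~\ref{thm:reldom_relhyp} and Theorem~\ref{thm:limitmaps}. There is therefore no ``paper's own proof'' to compare your proposal against.

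That said, your sketch is a reasonable high-level outline of the kind of argument that establishes the result, though it is not quite Yaman's actual proof. Yaman follows Bowditch's original technology of \emph{annulus systems} and the associated quasi-metric/crossratio machinery on $M$ to produce a hyperbolic space with the required $\Gamma$-action, rather than directly building a Groves--Manning cusped space and invoking a relative Milnor--\v{S}varc argument as you propose. Your approach is closer in spirit to the later equivalences between the various definitions of relative hyperbolicity (Bowditch, Farb, Groves--Manning, Osin), and would work in principle, but several of the steps you flag as ``routine'' or defer to ``standard equivalences'' (in particular the hyperbolicity of the glued space and the fineness verification) are exactly where the substantive work lies. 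If you want a self-contained proof, you should either follow Yaman's annulus-system argument directly or cite one of the later unified treatments of these equivalences.
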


Gerasimov in \cite{Gerasimov2009} has shown that geometric finiteness, as well as the finite generation of the parabolic stabilisers, can be characterized using the induced group action on the space of distinct pairs. Putting these together, we obtain
\spacer \begin{thm} \label{thm:Gerasimov}
Suppose $\Gamma$ is finitely-generated and $M$ is a non-empty, perfect, compact metrizable space, and $\Gamma \actson M$ is such that the induced action on $M^{(3)}$ is properly discontinuous and the induced action on $M^{(2)}$ is cocompact.

Then $\Gamma$ is hyperbolic relative to the maximal parabolic subgroups of the action $\Gamma \actson M$.
\end{thm}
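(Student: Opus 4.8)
The plan is to obtain this as a direct consequence of Yaman's criterion (Theorem \ref{thm:Yaman}), once Gerasimov's results from \cite{Gerasimov2009} are used to translate the hypotheses on the pair- and triple-spaces into the form Yaman requires. The first observation is that proper discontinuity of the induced action on $M^{(3)}$ is precisely the defining property of a convergence action, so $\Gamma \actson M$ is a convergence group in the sense of the definition above.

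The key input is then Gerasimov's characterization: for a convergence action of a finitely generated group $\Gamma$ on a compact metrizable space $M$, cocompactness of the induced action on the space $M^{(2)}$ of distinct pairs is equivalent to the action being geometrically finite --- i.e. every point of $M$ is either a conical limit point or a bounded parabolic point --- and moreover it forces each maximal parabolic subgroup to be finitely generated, with only finitely many conjugacy classes of such subgroups. I would invoke this essentially verbatim; the only care needed is to fix an auxiliary metric on the compact metrizable space $M$ (any choice will do, and the relevant properties do not depend on it) so that the notions of cocompactness and proper discontinuity on $M^{(2)}$ and $M^{(3)}$ match those in the cited statements.

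With this dictionary in place, all the hypotheses of Theorem \ref{thm:Yaman} hold: $M$ is non-empty, perfect, and compact metrizable by assumption; $\Gamma$ acts on $M$ as a geometrically finite convergence group by the previous two paragraphs; and the stabilizer of each bounded parabolic point is finitely generated. Applying Yaman's theorem then yields that $\Gamma$ is hyperbolic relative to the collection $\mathcal{P}$ of its maximal parabolic subgroups, and (although not needed for the statement) that $M$ is $\Gamma$-equivariantly homeomorphic to the Bowditch boundary $\del(\Gamma,\mathcal{P})$.

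There is no substantive obstacle here beyond careful bookkeeping: the statement is really a repackaging of the Yaman--Gerasimov machinery. The one point that warrants attention is checking that Gerasimov's hypotheses coincide exactly with ours --- in particular that $2$-cocompactness of a convergence action of a finitely generated group yields finite generation of the parabolic point stabilizers, so that Yaman's finite-generation assumption becomes automatic rather than an additional hypothesis, and that the maximal parabolic subgroups fall into finitely many conjugacy classes so that $\mathcal{P}$ genuinely determines a relatively hyperbolic structure in the sense of Definition \ref{defn:relhyp}.
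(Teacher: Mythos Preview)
Your proposal is correct and matches the paper's approach exactly: the paper does not give a detailed proof of this theorem, but simply introduces it with the remark that Gerasimov's characterization of geometric finiteness (and the finite generation of parabolic stabilisers) via the action on distinct pairs, combined with Yaman's criterion, yields the statement. Your write-up is a faithful expansion of precisely that sentence.
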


\subsection{Geodesics in the cusped space} \label{sub:hat_unhat}

Let $\Gamma$ be a finitely-generated group, $\mathcal{P}$ be a malnormal finite collection of finitely-generated subgroups, and let $S = S^{-1}$ be a compatible finite generating set as above. Let $X = X(\Gamma, \mathcal{P}, S)$ be the cusped space, and $\Cay(\Gamma) = \Cay(\Gamma,S)$ the Cayley graph.

We emphasize that none of the results in this or the next subsection requires $\Gamma$ to be relatively hyperbolic, although the motivation for the constructions involved comes from relative hyperbolicity. This will be useful below, in the proof of the relative hyperbolicity theorem (Theorem \ref{thm:reldom_relhyp}.)

We start by pointing out a family of preferred geodesics
in the combinatorial horoballs:
\spacer \begin{lem}[\cite{GrovesManning}, Lemma 3.10] \label{lem:gm310}
Let $\mathcal{H}(\Gamma)$ be a combinatorial horoball. Suppose that $x,y \in \mathcal{H}(\Gamma)$ are distinct vertices. Then there is a geodesic $\gamma(x,y) = \gamma(y,x)$ between $x$ and $y$ which consists of at most two vertical segments and a single horizontal segment of length at most 3. 
\end{lem}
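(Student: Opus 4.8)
The plan is to exhibit one path of the asserted shape and certify it is a geodesic by matching its length against a lower bound for $d_{\mathcal{H}}$. First I would set up notation and dispose of the trivial case: write $x = (v,m)$ and $y = (w,n)$ with $v, w \in \Lambda^{(0)}$ and $m, n \in \ints_{\geq 0}$, and put $N := d_\Lambda(v,w)$; if $v = w$ the vertical segment from $x$ to $y$ is plainly geodesic and we are done (a degenerate instance, with horizontal segment of length $0$), so assume $N \geq 1$. For each integer $h \geq \max(m,n)$ I would form the path $p_h$ that rises vertically from $(v,m)$ to $(v,h)$, then runs from $(v,h)$ to $(w,h)$ along $\lceil N/2^h\rceil$ horizontal edges obtained by chopping a fixed $\Lambda$-geodesic $v = u_0, u_1, \dots, u_N = w$ into blocks of length at most $2^h$, then drops vertically from $(w,h)$ to $(w,n)$. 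By construction $p_h$ consists of two vertical segments and one horizontal segment, and has length $\ell(h) := (h - m) + (h - n) + \lceil N/2^h\rceil$.

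The next step is to choose the level. Let $h^{*}$ realise the minimum of $\ell$ over integers $h \geq \max(m,n)$ (it exists since $\ell(h) \to \infty$). The point that produces the constant $3$ is that $h^{*}$ can be taken with $\lceil N/2^{h^{*}}\rceil \leq 3$, which I would get from a short estimate on $F(h) := 2h + \lceil N/2^h\rceil$ (this differs from $\ell$ only by the additive constant $m + n$). Using that $\lceil 2a\rceil \geq 2\lceil a\rceil - 1$ for $a > 0$, one finds that whenever $\lceil N/2^h\rceil \geq 4$ one has $\lceil N/2^{h-1}\rceil \geq 2\lceil N/2^h\rceil - 1 \geq \lceil N/2^h\rceil + 3$, hence $F(h) \leq F(h-1) - 1$. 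Thus $F$ strictly decreases along any run of consecutive levels on which the ceiling term is at least $4$; since that term is non-increasing in $h$ and equals $1$ once $2^h \geq N$, the minimum of $F$, equivalently of $\ell$, over $h \geq \max(m,n)$ is attained at a level where the ceiling is at most $3$. With such an $h^{*}$, the path $p_{h^{*}}$ has exactly the form asserted in the lemma.

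Finally I would prove $d_{\mathcal{H}}(x,y) \geq \ell(h^{*})$, which forces $p_{h^{*}}$ to be geodesic. Let $q$ be any edge-path from $x$ to $y$ and let $M$ be the highest level it visits; then $M \geq \max(m,n)$, since $q$ passes through levels $m$ and $n$. Cutting $q$ at the first vertex of level $M$ produces two sub-paths joining level $m$ to level $M$ and level $M$ to level $n$ respectively, so $q$ has at least $(M - m) + (M - n)$ vertical edges. For the horizontal edges, replace each one (say at level $k \leq M$, between $\Lambda$-coordinates $a$ and $b$) by a $\Lambda$-geodesic from $a$ to $b$, of length $d_\Lambda(a,b) \leq 2^k \leq 2^M$; since vertical edges leave the $\Lambda$-coordinate unchanged, concatenating these geodesics in order yields a walk in $\Lambda$ from $v$ to $w$, of length at least $N$, so if $q$ has $H$ horizontal edges then $H \cdot 2^M \geq N$, i.e.\ $H \geq \lceil N/2^M\rceil$. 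Hence $q$ has length at least $(M - m) + (M - n) + \lceil N/2^M\rceil = \ell(M) \geq \ell(h^{*})$, and since $p_{h^{*}}$ attains this bound it is a geodesic of the required form; running the same construction with $x$ and $y$ exchanged (and the $\Lambda$-geodesic reversed) changes nothing, as $\ell$ is symmetric in $m,n$, so $\gamma(x,y) = \gamma(y,x)$. I expect the main obstacle to be pinning down the constant: the lower bound is robust, but extracting exactly ``length at most $3$'' needs the ceiling-function bookkeeping above, together with a little care that the optimal level respects the constraint $h \geq \max(m,n)$.
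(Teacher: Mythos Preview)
The paper does not prove this lemma: it is quoted directly from \cite{GrovesManning} and no argument is given. So there is nothing in the paper to compare against; your write-up is a self-contained proof in the spirit of the original Groves--Manning argument (build the up--across--down candidate $p_h$, optimise over the level $h$, and bound an arbitrary path from below by counting vertical edges via the maximal level and horizontal edges via a projected $\Lambda$-walk). That strategy is correct and is exactly how the result is usually proved.

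There is one small gap in your extraction of the constant $3$. Your inequality $F(h)\le F(h-1)-1$ is derived under the hypothesis $\lceil N/2^{h}\rceil\ge 4$, i.e.\ it controls the step from $h-1$ up to $h$ only when the ceiling at the \emph{higher} level is still at least $4$. Let $b$ be the largest level with $\lceil N/2^{b}\rceil\ge 4$. Your inequality gives $F(\max(m,n))>F(\max(m,n)+1)>\cdots>F(b)$, but it says nothing about the step from $b$ to $b+1$, so as written you have not ruled out that the minimum sits at $b$. The fix is a one-line forward comparison: if $c:=\lceil N/2^{h}\rceil\ge 4$ then $\lceil N/2^{h+1}\rceil\le\lceil c/2\rceil\le c-2$, hence
\[
F(h+1)-F(h)=2+\lceil N/2^{h+1}\rceil-c\le 0,
\]
so $F(h+1)\le F(h)$ whenever the ceiling at $h$ is at least $4$, and in particular $F(b+1)\le F(b)$. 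This guarantees a minimiser $h^{*}$ with $\lceil N/2^{h^{*}}\rceil\le 3$, and the rest of your argument goes through unchanged.
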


We will call any such geodesic a {\bf preferred geodesic}.

We have the following estimate going between uncusped and cusped lengths:
\spacer \begin{prop} \label{prop:wordlengths_bilip_log}
Suppose $\gamma$ is a word contained in a single peripheral subgroup. 

Then
$\frac{2}{\log 2} \log |\gamma| \leq |\gamma|_c \leq \frac{2}{\log 2} \log |\gamma| + 1$, or equivalently
$\frac{1}{\sqrt{2}} \sqrt{2}^{|\gamma|_c} \leq |\gamma| \leq \sqrt{2}^{|\gamma|_c}$

\begin{proof}
Let $\gamma$ be an peripheral element of $\Gamma$ which can be written as a word of word-length $L$.

There is always a path in the cusped space $X$ from $\id$ to $\gamma$ which consists of going up $\left \lfloor \log_2 L \right\rfloor$, going across 1, and then going down $\left\lfloor \log_2 L \right\rfloor$, and so the cusped word-length is certainly bounded from above by $2 \log_2 L + 1 = \frac{2}{\log 2} \log L + 1$.

Conversely, any path in $X$ of cusped length at most $2 \log_2 L-1$ with a single horizontal segment of (cusped) length $\ell$ can correspond to a word of word-length at most $\ell \cdot 2^{\log_2 L - \frac{\ell+1}2} = 2^{-\frac{\ell+1}2}\ell L < L$ whenever $\ell\geq 1$

Note that any path in $X$ which has two distinct endpoints in $\Cay(\Gamma) \subset X$ must contain at least one horizontal edge. 
By Lemma \ref{lem:gm310}, there is always a geodesic in the cusped space from $\id$ to $\gamma$ consisting of at most two vertical segments and a single horizontal segment.

Hence the cusped word-length is bounded from below by $2 \log_2 L = \frac{2}{\log 2} \log L$, as desired.
\end{proof}
\end{prop}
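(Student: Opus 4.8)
The plan is to reduce both inequalities to the combinatorial geometry of a single Groves--Manning horoball. Since $\gamma$ lies in a peripheral subgroup $P \in \mathcal{P}$, the vertices $\id$ and $\gamma$ both sit at level $0$ of the combinatorial horoball $\mathcal{H} = \mathcal{H}(P)$ inside the cusped space $X$; write $L := |\gamma|$, which (as $S$ is compatible) is also the distance $d_\Lambda(\id,\gamma)$ entering Definition \ref{defn:combhoroball}. Everything then follows from comparing $L$ with $d_c(\id,\gamma)$ using the explicit edge set of $\mathcal{H}$ and the preferred-geodesic description of Lemma \ref{lem:gm310}.

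For the upper bound I would just write down an efficient path. If $L$ is small one takes the level-$0$ path of length $L$ directly; otherwise one ascends vertically from $(\id,0)$ to $(\id,k)$ with $k \approx \log_2 L$, crosses a horizontal segment of bounded length at level $k$ --- legal because each horizontal edge at level $k$ joins vertices at $d_\Lambda$-distance at most $2^k$, so $O(1)$ such edges span the distance $L$ once $k$ is chosen appropriately --- and descends vertically to $(\gamma,0)$. Counting edges gives $d_c(\id,\gamma) \le 2k + O(1) = \tfrac{2}{\log 2}\log L + O(1)$; a careful choice of the ascent height $k$ together with a horizontal segment of length at most two pins the additive term down to the stated $+1$.

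For the lower bound, take a geodesic in $X$ from $\id$ to $\gamma$ and apply Lemma \ref{lem:gm310}: it may be taken to consist of an ascending vertical segment from $(\id,0)$ to $(\id,a)$, a horizontal segment of length $h \le 3$ at level $a$, and a descending vertical segment to $(\gamma,0)$, so that $|\gamma|_c = 2a + h$. Since $\id \neq \gamma$ are both at level $0$ the horizontal part is nonempty, $h \ge 1$; and since each of its $h$ edges spans $d_\Lambda$-distance at most $2^a$, we get $L = d_\Lambda(\id,\gamma) \le h\,2^a$. Writing $a = \tfrac12(|\gamma|_c - h)$ yields $L \le h\,2^{-h/2}\cdot 2^{|\gamma|_c/2}$, and as $h\,2^{-h/2} \le 1$ for the small integer values of $h$ that occur (with $h=3$ handled by noting that between two level-$0$ vertices the horizontal part can be taken of length at most $2$, or by a marginally sharper count), this gives $L \le \sqrt{2}^{\,|\gamma|_c}$, i.e. $|\gamma|_c \ge \tfrac{2}{\log 2}\log L$. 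Exponentiating the two inequalities gives the asserted equivalent form.

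The step I expect to be the main obstacle is the lower bound --- concretely, two points need care. First, one should check that a geodesic in the full cusped space between two elements of the same peripheral subgroup really does have the preferred shape inside $\mathcal{H}(P)$, i.e. that it cannot profitably shortcut through the ambient Cayley graph; this is where the combinatorics of $\mathcal{H}$ (and, implicitly, the behaviour of $P$ inside $\Gamma$) must be used. Second, one must keep track of the interaction between the ascent height $a$ and the horizontal length $h$, and of the rounding hidden in $\log_2 L$, precisely enough to land on the stated constants rather than merely on a bound of the form $\tfrac{2}{\log 2}\log L + O(1)$. The remaining ingredients are a direct unwinding of Definition \ref{defn:combhoroball}.
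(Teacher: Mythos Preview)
Your approach is essentially the paper's: an explicit up--across--down path for the upper bound, and Lemma~\ref{lem:gm310} together with horoball edge-counting for the lower bound. One caveat on the $h=3$ case: your suggestion that between level-$0$ endpoints the horizontal segment can always be taken of length at most~$2$ is not correct (for instance $L=3$ forces $h=3$ on the geodesic, since any path with $h\le 2$ must climb to level~$1$ and has length $\ge 4$), but your fallback ``marginally sharper count'' is exactly what the paper does --- it argues that a preferred path of total length at most $2\log_2 L - 1$ with $\ell\ge 1$ horizontal edges reaches height at most $\log_2 L - \tfrac{\ell+1}{2}$, hence spans word-length at most $\ell\cdot 2^{-(\ell+1)/2}L < L$, and this handles all $\ell\in\{1,2,3\}$ uniformly. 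Your concern about geodesics in $X$ shortcutting through the ambient Cayley graph is not addressed in the paper's proof either; the estimate is only used coarsely (cf.\ Proposition~\ref{prop:unhat_distance}) and this point does not affect anything downstream.
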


Given a path $\gamma: I \to \Cay(\Gamma)$ in the Cayley graph 
such that $\gamma(I \cap \ints) \subset \Gamma$, we can consider $\gamma$ as a {\bf relative path}
$(\gamma, H)$, where $H$ is a subset of $I$ consisting of a disjoint union of finitely many subintervals $H_1, \dots, H_n$  occurring in this order along $I$, such that each $\eta_i := \gamma|_{H_i}$ is a maximal subpath lying in a closed combinatorial horoball $B_i$, 
and $\gamma|_{I \smallsetminus H}$ contains no edges of $\Cay(\Gamma)$ labelled by a peripheral generator.

Similarly, a path $\hat\gamma: \hat{I} \to X$ in the cusped space with endpoints in $\Cay(\Gamma) \subset X$ may be considered as a relative path $(\hat\gamma, \hat{H})$, where $\hat{H} = \coprod_{i=1}^n \hat{H}_i$, $\hat{H}_1, \dots, \hat{H}_n$ occur in this order along $\hat{I}$, each $\hat\eta_i := \hat\gamma|_{\hat{H}_i}$ is a maximal subpath in a closed combinatorial horoball $B_i$, and $\hat\gamma|_{\hat{I} \smallsetminus \hat{H}}$ lies inside the Cayley graph. Below, we will consider only geodesics and quasigeodesic paths $\hat\gamma: \hat{I} \to X$ where all of the $\hat\eta_i$ are preferred geodesics (in the sense of Lemma \ref{lem:gm310}.)

We will refer to the $\eta_i$ and $\hat\eta_i$ as {\bf peripheral excursions}. We remark that the $\eta_i$, or any other subpath of $\gamma$ in the Cayley graph, may be considered as a word and hence a group element in $\Gamma$; this will be used without further comment below.

Given a path $\hat\gamma: \hat{I} \to X$ whose peripheral excursions are all preferred geodesics, we may replace each excursion $\hat\eta_i = \hat\gamma|_{\hat{H}_i}$ into a combinatorial horoball with a geodesic path (or, more precisely, a path with geodesic image) $\eta_i = \pi \circ \hat\eta_i$ in the Cayley (sub)graph of the corresponding peripheral subgroup connecting the same endpoints, by omitting the vertical segments of the preferred geodesic $\hat\eta_i$ and replacing the horizontal segment with the corresponding segment at level 0, i.e. in the Cayley graph.\footnote{As a parametrized path this has constant image on the subintervals of $\hat{H}_i$ corresponding to the vertical segments, and travels along the projected horizontal segment at constant speed.} We call this the ``project'' operation, since it involves ``projecting'' paths inside combinatorial horoballs onto the boundaries of those horoballs. This produces a path $\gamma = \pi\circ\hat\gamma: \hat{I} \to \Cay(\Gamma)$. 


Below, given any path $\alpha$ in the Cayley graph with endpoints $g, h \in \Gamma$, or any path $\hat\alpha$ in the cusped space with endpoints in $g, h \in X$, we write $\ell(\alpha)$ to denote $d(g,h)$ i.e. distance measured according to the word metric in $\Cay(\Gamma)$, and $\ell_c(\hat\alpha)$) to denote $d_c(g,h)$, where $d_c$ denotes distance in the cusped space.

The following observation will be used many times below:
\spacer \begin{prop} \label{prop:unhat_distance}
Given a geodesic $\hat\gamma: \hat{J} \to X$ with endpoints in $\Cay(\Gamma) \subset X$ and whose peripheral excursions are all preferred geodesics, let $\gamma = \pi \circ \hat\gamma: \hat{J} \to \Cay(\Gamma)$ be its projected image. 

Given any subinterval $[a,b] \subset \hat{J}$, consider the subpath $\gamma|_{[a,b]}$ as a relative path $(\gamma|_{[a,b]}, H)$ where $H = (H_1, \dots, H_n)$, and write $\eta_i := \gamma|_{H_i}$; then we have the biLipschitz equivalence
\[ \frac13 \leq \frac{d_c(\gamma(a), \gamma(b))}{\ell(\gamma|_{[a,b]}) - \sum_{i=1}^n \ell(\eta_i) + \sum_{i=1}^n \hat\ell(\eta_i)} \leq \frac{2}{\log 2} + 1 < 4 \]
where $\hat\ell(\eta_i) := \max\{\log(\ell(\eta_i)), 1\}$.
\begin{proof}
If $\gamma|_{[a,b]}$ lies in a single peripheral excursion, then this follows from the fact that the projection operation replaces excursions with geodesic paths in the Cayley graph and from Proposition \ref{prop:wordlengths_bilip_log}.

More generally, since we start with a geodesic in the cusped space, we have
\begin{align}
d_c(\gamma(a), \gamma(b)) \leq \ell_c(\gamma|_{[a,b] \setminus H}) + \sum_{i=1}^n \ell_c(\eta_i). \label{ineq:dc_geod_upper}
\end{align}
Here $\gamma|_{[a,b] \setminus H}$ is a disjoint union of subpaths $\gamma_1, \dots, \gamma_k$ of $\gamma$ with endpoints in $\Gamma$, and $\ell_c(\gamma|_{[a,b] \setminus H}) := \sum_{i=1}^k \ell_c(\gamma_i)$, where $\ell_c(\gamma_i)$ denotes cusped distance between the endpoints of the subpath $\gamma_i$.

If the endpoints of our subpath do not lie in the middle of a (projected) peripheral excursion, we can promote the inequality (\ref{ineq:dc_geod_upper}) to an equality
\begin{align}
d_c(\gamma(a), \gamma(b)) = \ell_c(\gamma|_{[a,b] \setminus H}) + \sum_{i=1}^n \ell_c(\eta_i). \tag{1'} \label{eq:dc_geod}
\end{align}

Now suppose one of our endpoints, say $b$, does lie in the middle of a projected peripheral excursion, say $\eta_n$. (The case where $a$ lies in the middle of an excursion will be similar.) This is the special case which will take the remaining time:

Let $b^-$ be such that $\hat\gamma(b^-)$ is the endpoint of $\eta_n$ between $\gamma(a)$ and $\gamma(b)$. The infinite vertical ray into the combinatorial horoball from $\gamma(b)$ hits the image of $\hat\gamma$ at the point $\hat\gamma(b)$. We remark that, by the properties of the project operation, $\gamma(a) = \hat\gamma(a)$ and $\gamma(b^-) = \hat\gamma(b^-)$.

Note $\hat\gamma|_{[a,b]}$ is a geodesic, so 
by the triangle inequality
\begin{align} 
d_c(\gamma(a), \gamma(b)) + d_c(\hat\gamma(b), \gamma(b)) & \geq d_c( \gamma(a),\hat\gamma(b) ) \notag \\ 
 & = d_c( \gamma(a), \gamma(b^-) ) + d_c( \gamma(b^-), \hat\gamma(b) ) 
\label{ineq:part_exc_triang}
\end{align}

Moreover, $[\gamma(b), \hat\gamma(b)]$ consists of a single vertical segment, (an isometric translate of) which is a subpath of $\hat\gamma|_{[b^-, b]}$, so $d_c(\gamma(b), \hat\gamma(b)) \leq d_c(\gamma(b^-), \hat\gamma(b))$.
Combining these observations with (\ref{ineq:part_exc_triang}), we obtain
\begin{align*}
d_c(\gamma(a), \gamma(b)) + d_c(\hat\gamma(b), \gamma(b)) & \geq  d_c( \gamma(a), \gamma(b^-) ) + d_c(\gamma(b^-), \hat\gamma(b)) \end{align*}
so
\begin{align*}
d_c(\gamma(a), \gamma(b)) & \geq d_c( \gamma(a), \gamma(b^-) ) + d_c(\gamma(b^-), \hat\gamma(b)) -  d_c(\hat\gamma(b), \gamma(b)) \\
 & \geq d_c(\gamma(a), \gamma(b^-) ) .
\end{align*}

\begin{figure}[ht]
    \centering
    \includegraphics[width=0.4\textwidth]{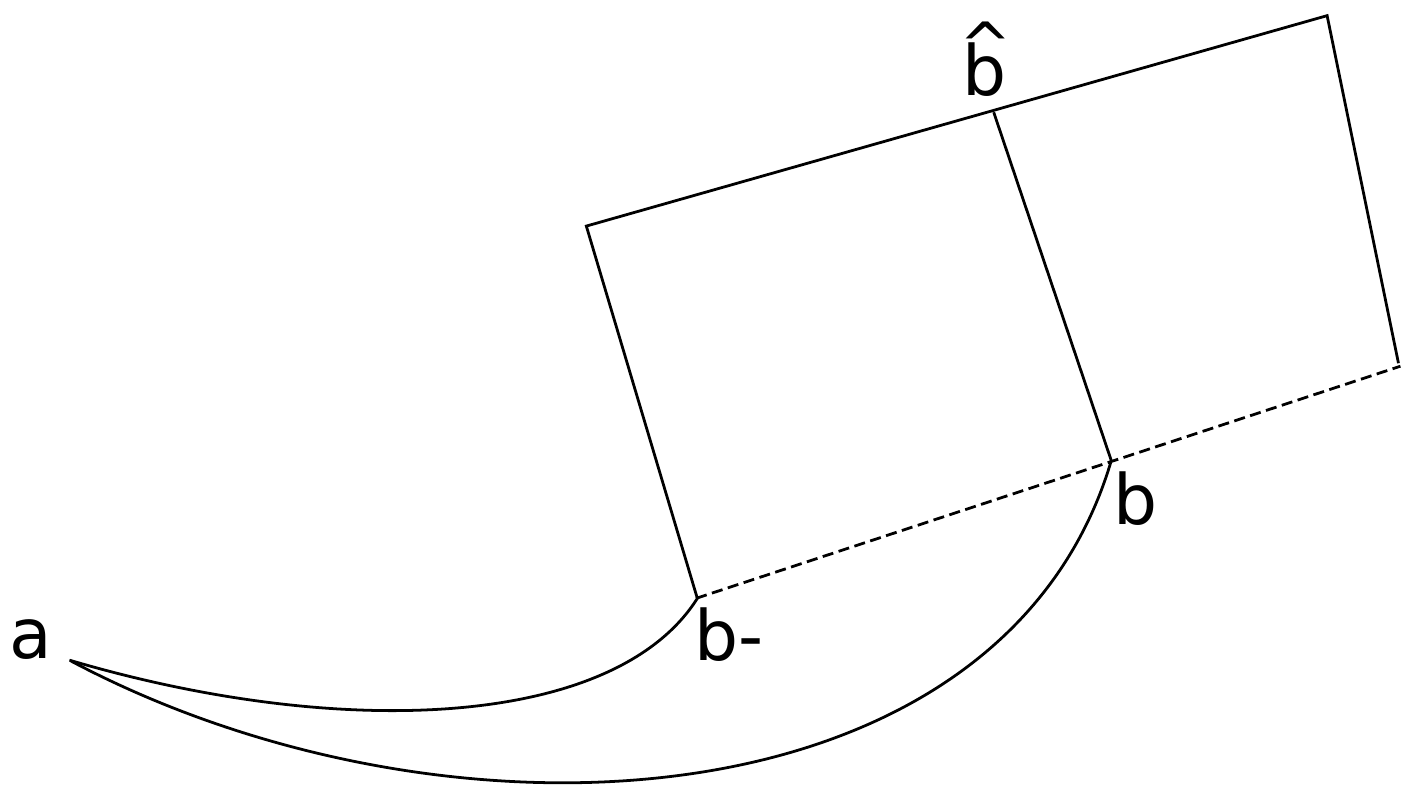}
    \caption{Solid lines here indicated geodesics in $X$, dotted lines indicate projected geodesics}
    \label{fig:prop:unhat_distance}
\end{figure}

On the other hand, again applying the triangle equality 
(and multiplying both sides by $\frac12$) we have
\[ \frac12 \left( d_c(\gamma(b^-),\gamma(b)) - d_c(\gamma(a), \gamma(b^-)) \right) \leq \frac12 d_c(\gamma(a), \gamma(b)) .\]

Adding together these inequalities, we obtain
\[ \frac32 d_c(\gamma(a), \gamma(b)) \geq \frac12 \left( d_c(\gamma(a), \gamma(b^-)) + d_c(\gamma(b^-), \gamma(b)) \right)
.\]

Now apply (\ref{ineq:dc_geod_upper}) to $\gamma|_{[a,b^-]}$, where we have equality, and remark that $d_c(\gamma(b^-), \gamma(b)) = \ell_c(\eta_n)$ by the properties of the project operation, so that we may rewrite this inequality as 
\begin{align*}
d_c(\gamma(a), \gamma(b)) & \geq \frac13 \left( d_c(\gamma(a), \gamma(b^-)) + d_c(\gamma(b^-), \gamma(b)) \right)  \\
 & = \frac13 \left( \ell(\gamma|_{[a,b] \setminus H}) + \sum_{i=1}^{n-1} \ell_c(\eta_i) + \ell_c(\eta_n) \right) 
\end{align*}
and so we have $d_c(\gamma(a), \gamma(b)) \geq \frac13 \left( \ell_c(\gamma|_{[a,b] \setminus H}) + \sum_{i=1}^n \ell_c(\eta_i) \right)$.

By the definition of the cusped metric and of a relative path, 
\[ \ell_c(\gamma|_{[a,b] \setminus H}) = \ell(\gamma|_{[a,b] \setminus H}) = \ell(\gamma|_{[a,b]}) - \sum_{i=1}^n \ell(\eta_i) .\] 
By Proposition \ref{prop:wordlengths_bilip_log}, for each $i$ between 1 and $n$, 
\[ \frac{2}{\log 2} \log \ell(\eta_i) \leq \ell_c(\eta_i) \leq \frac{2}{\log 2} \log \ell(\eta_i) + 1 .\]

Hence, writing $L := \ell(\gamma|_{[a,b]}) - \sum_{i=1}^n \ell(\eta_i) + \sum_{i=1}^n  \hat\ell(\eta_i)$, we have 
\[ \frac13 L \leq d_c(\gamma(a), \gamma(b)) \leq \frac{2}{\log 2} L + n \leq \left(\frac{2}{\log 2} + 1\right) L \] as desired.
\end{proof} \end{prop}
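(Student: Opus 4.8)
The plan is to estimate both sides against the cusped length of the pieces of the geodesic $\hat\gamma|_{[a,b]}$, namely the subpaths of $\hat\gamma|_{[a,b]}$ lying in $\Cay(\Gamma)$ together with the preferred-geodesic excursions $\hat\eta_i$. The quantity $L := \ell(\gamma|_{[a,b]}) - \sum_i \ell(\eta_i) + \sum_i \hat\ell(\eta_i)$ is exactly the projected path-length with each peripheral excursion's contribution replaced by its logarithmically-compressed version $\hat\ell(\eta_i) = \max\{\log\ell(\eta_i),1\}$, which is precisely what the cusped metric does inside a horoball; so the content of the proposition is that the cusped distance between the projected endpoints is comparable to this compressed length. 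First I would dispose of the case where $[a,b]$ lies inside a single peripheral excursion: the project operation replaces such an excursion by a geodesic of the corresponding peripheral Cayley subgraph with the same endpoints, so the claim is just Proposition \ref{prop:wordlengths_bilip_log}, after noting that its additive $+1$ is harmless since a (nondegenerate) excursion has cusped length at least $1$, and that $\tfrac{2}{\log 2}\log\ell(\eta_i)$ and $\hat\ell(\eta_i)$ are comparable.

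For the upper bound in general, I would concatenate the pieces of $\hat\gamma|_{[a,b]}$ and use the triangle inequality, so that $d_c(\gamma(a),\gamma(b))$ is at most the cusped length of the Cayley part plus $\sum_i \ell_c(\eta_i)$ (the projected and unprojected excursions share endpoints, hence share $\ell_c$). The Cayley part contributes $\ell(\gamma|_{[a,b]}) - \sum_i \ell(\eta_i)$ by the definition of the cusped metric on Cayley edges, and each excursion satisfies $\ell_c(\eta_i) \leq \tfrac{2}{\log 2}\log\ell(\eta_i) + 1 \leq \tfrac{2}{\log 2}\hat\ell(\eta_i) + 1$ by Proposition \ref{prop:wordlengths_bilip_log}. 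Summing, and absorbing the additive terms into the $\hat\ell(\eta_i)$'s, yields $d_c(\gamma(a),\gamma(b)) \leq \left(\tfrac{2}{\log 2}+1\right) L < 4L$.

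For the lower bound, the clean case is when neither $a$ nor $b$ lies in the interior of a projected excursion: then the projected endpoints coincide with $\hat\gamma(a),\hat\gamma(b)$, and since $\hat\gamma$ is a geodesic whose excursions are preferred geodesics we get the exact identity $d_c(\gamma(a),\gamma(b)) = \ell_c(\hat\gamma|_{[a,b]\setminus H}) + \sum_i \ell_c(\eta_i)$; combining $\ell_c(\eta_i) \geq \tfrac{2}{\log 2}\log\ell(\eta_i)$ with $\ell_c(\eta_i)\geq 1$ gives $\ell_c(\eta_i) \geq \hat\ell(\eta_i)$, hence $d_c(\gamma(a),\gamma(b)) \geq L$ with constant even better than $\tfrac13$. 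The main obstacle — and the reason for the constant $\tfrac13$ — is the case where an endpoint, say $b$, lies in the interior of a projected excursion $\eta_n$: then $\gamma(b)$ sits at level $0$ while $\hat\gamma(b)$ lies above it in the horoball, so the geodesic identity is unavailable directly. I would let $\gamma(b^-) = \hat\gamma(b^-)$ be the near endpoint of $\eta_n$ and argue in two steps. Using that $\hat\gamma|_{[a,b]}$ is geodesic through $\hat\gamma(b^-)$, and that the vertical segment $[\gamma(b),\hat\gamma(b)]$ is an isometric translate of a subpath of the descending vertical part of the preferred geodesic $\hat\eta_n$ and so is no longer than $[\gamma(b^-),\hat\gamma(b)]$, the triangle inequality gives $d_c(\gamma(a),\gamma(b)) \geq d_c(\gamma(a),\gamma(b^-))$; separately, the triangle inequality $d_c(\gamma(b^-),\gamma(b)) \leq d_c(\gamma(a),\gamma(b^-)) + d_c(\gamma(a),\gamma(b))$ controls $d_c(\gamma(b^-),\gamma(b))$, and adding the right multiples of the two inequalities produces $d_c(\gamma(a),\gamma(b)) \geq \tfrac13\bigl(d_c(\gamma(a),\gamma(b^-)) + d_c(\gamma(b^-),\gamma(b))\bigr)$. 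Applying the clean-case identity to $\gamma|_{[a,b^-]}$ and noting $d_c(\gamma(b^-),\gamma(b)) = \ell_c(\eta_n)$ recovers $\ell_c(\hat\gamma|_{[a,b]\setminus H}) + \sum_i \ell_c(\eta_i)$ inside the parentheses, and the bound $\ell_c(\eta_i)\geq\hat\ell(\eta_i)$ finishes $d_c(\gamma(a),\gamma(b))\geq\tfrac13 L$. The case where $a$ is interior to an excursion is symmetric, and if both are, one applies the trick at each end. I expect the fiddliest part to be the bookkeeping of which endpoints of which excursions are preserved by the project operation, and the verification of the vertical-segment comparison using the preferred-geodesic structure from Lemma \ref{lem:gm310}.
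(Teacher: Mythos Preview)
Your proposal is correct and follows essentially the same route as the paper's proof: the single-excursion case via Proposition~\ref{prop:wordlengths_bilip_log}, the upper bound by triangle inequality on the pieces, the exact identity in the clean case, and for an endpoint interior to an excursion the same pair of inequalities (the vertical-segment comparison giving $d_c(\gamma(a),\gamma(b)) \geq d_c(\gamma(a),\gamma(b^-))$, together with the reverse triangle inequality) combined in the same multiples to produce the factor $\tfrac13$. Your bookkeeping and the final conversion $\ell_c(\eta_i)\geq\hat\ell(\eta_i)$ also match the paper's.
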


In particular, we note the following very coarse equivalence statement:
\spacer \begin{cor} \label{cor:wordlengths_comp}
For any sequence of elements $(\gamma_n) \subset \Gamma$, $|\gamma_n|_c \to \infty$ if and only if $|\gamma_n| \to \infty$.
\end{cor}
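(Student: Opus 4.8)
The plan is to prove the two implications separately; one is immediate and the other carries the content. For $|\gamma_n|_c \to \infty \Rightarrow |\gamma_n|\to\infty$: a geodesic word for $\gamma$ gives a path of length $|\gamma|$ from $\id$ to $\gamma$ inside $\Cay(\Gamma)\subseteq X$, so $|\gamma|_c \le |\gamma|$ for every $\gamma\in\Gamma$; hence $|\gamma_n| \ge |\gamma_n|_c \to \infty$. For the reverse implication I would argue by contraposition: if $|\gamma_n|_c$ does not tend to infinity, then after passing to a subsequence $|\gamma_n|_c \le R$ for a fixed $R$, and it suffices to bound $|\gamma_n|$ in terms of $R$ alone.

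So fix such a $\gamma$ (with $\gamma\neq\id$), and choose a geodesic $\hat\gamma$ in $X$ from $\id$ to $\gamma$ all of whose peripheral excursions are preferred geodesics — such a geodesic exists because the horoball subpaths of any geodesic may be replaced, without changing length, by preferred geodesics via Lemma \ref{lem:gm310}. Let $\gamma = \pi\circ\hat\gamma$ be its projection, a relative path in $\Cay(\Gamma)$ with peripheral excursions $\eta_1,\dots,\eta_n$, and apply Proposition \ref{prop:unhat_distance} with the subinterval taken to be the whole domain. Writing $L := \bigl(\ell(\gamma) - \sum_{i=1}^n \ell(\eta_i)\bigr) + \sum_{i=1}^n \hat\ell(\eta_i)$ for the quantity appearing there, the lower bound gives $\tfrac13 L \le |\gamma|_c \le R$, so $L \le 3R$. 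Both summands defining $L$ are nonnegative — the first is a sum of distances along the Cayley-graph parts of $\gamma$ — so each is at most $3R$; in particular, since every $\hat\ell(\eta_i) = \max\{\log\ell(\eta_i),1\}\ge 1$, there are at most $n\le 3R$ excursions, and each satisfies $\log\ell(\eta_i) \le \hat\ell(\eta_i) \le 3R$, whence $\ell(\eta_i) \le e^{3R}$. Therefore
\[ |\gamma| = \ell(\gamma) = \Bigl(\ell(\gamma) - \sum_{i=1}^n \ell(\eta_i)\Bigr) + \sum_{i=1}^n \ell(\eta_i) \le 3R + 3R\,e^{3R}, \]
a bound depending only on $R$; this contradicts $|\gamma_n|\to\infty$ along the subsequence, completing the proof.

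The one place the earlier results are really needed — and the step I would flag as the crux — is the fact that the cusped metric compresses word-length inside peripheral subgroups only logarithmically, not to a bounded amount: this is the content of Proposition \ref{prop:wordlengths_bilip_log}, repackaged in Proposition \ref{prop:unhat_distance}, and it is exactly what forces both that a cusped geodesic of length $\le R$ makes boundedly many peripheral excursions and that each such excursion has word-length at most $e^{O(R)}$. Everything else is bookkeeping with the relative-path decomposition, the only subtlety being to keep straight which occurrences of $\ell(\cdot)$ denote a distance between endpoints and which denote a sum of sub-lengths along the projected path.
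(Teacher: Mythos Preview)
Your proof is correct and follows exactly the route the paper intends: the corollary is stated immediately after Proposition~\ref{prop:unhat_distance} with no explicit proof, as a direct consequence of that proposition, and your argument spells out precisely how. The forward implication via $|\gamma|_c \le |\gamma|$ and the reverse via the lower bound in Proposition~\ref{prop:unhat_distance} (bounding the number of excursions and the word-length of each) is the natural and intended reading.
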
 

\subsection{Reparametrizing projected geodesics} \label{sub:reparam_proj}

Given a geodesic segment $\hat\gamma$ in the cusped space with endpoints in $\Cay(\Gamma)$, we can take its projection $\gamma = \pi \circ \hat\gamma: \hat{I} \to \Cay(\Gamma)$ and then reparametrize it in such a way that the increments correspond, approximately, to linear increments in cusped distance. Slightly more generally we will find it useful to consider paths in $\Cay(\Gamma)$ that ``behave metrically like quasi-geodesics in the relative Cayley graph'', in the following sense: 
\spacer \begin{defn} \label{defn:projgeod_depth}
Given any path $\gamma: I \to \Cay(\Gamma)$ such that $I$ has integer endpoints and $\gamma(I \cap \ints) \subset \Gamma$,
define the {\bf depth} $\delta(n) = \delta_\gamma(n)$ of a point $\gamma(n)$ (for any $n \in I \cap \ints$) as \begin{enumerate}[(a)]
\item the smallest integer $d$ such that at least one of $\gamma_r(n-d)$, $\gamma_r(n+d)$ is well-defined (i.e. $\{n-d, n+d\} \cap I \neq \varnothing$) and not in the same peripheral coset as $\gamma(n)$, {\bf or}
\item if no such integer exists, $\min\{\sup I - n, n - \inf I\}$.
\end{enumerate}
\end{defn}

\spacer \begin{defn} \label{defn:metric_proj_qgeod}
Given constants $\ubar\upsilon, \bar\upsilon > 0$, an {\bf $(\ubar\upsilon,\bar\upsilon)$-metric quasigeodesic path} is a path $\gamma: I \to \Cay(\Gamma)$ 
with $\gamma(I \cap \ints) \subset \Gamma$
such that
for all integers $m, n \in I$, 
\begin{enumerate}[(i)]
\item $ |\gamma(n)^{-1}\gamma(m)|_c \geq \ubar\upsilon^{-1} |m-n| - \ubar\upsilon$, 
\item $ |\gamma(n)^{-1}\gamma(m)|_c \leq \bar\upsilon(|m-n| + \min\{\delta(m), \delta(n)\} ) + \bar\upsilon$, and
\item if $\gamma(n)^{-1} \gamma(n+1) \in P$ for some $P \in \mathcal{P}$, we have $\gamma(n)^{-1} \gamma(n+1) = p_{n,1} \cdots p_{n,\ell(n)}$ where each $p_{n,i}$ is a peripheral generator of $P$, and $$2^{\delta(n)-1} \leq \ell(n) = |\gamma(n)^{-1}\gamma(n+1)| \leq 2^{\delta(n)+1}.$$ 
\end{enumerate}
\end{defn}

We can now make more precise our assertion about reparametrizing projected geodesic segments: 

\spacer \begin{prop} \label{prop:reparam_proj_geod}
Given a cusped space $X = X(\Gamma,\mathcal{P},S)$, 
for any projected geodesic $\gamma = \pi \circ \hat\gamma: I \to \Cay(\Gamma)$ with at least one end not inside a peripheral coset, we have a reparametrization of its image $\gamma_r: I_r \to \Cay(\Gamma)$ which is a $(6,20)$-metric quasigeodesic path. (In fact, we can improve the inequalities slightly so that for all integers $m, n \in I_r$, 
\begin{enumerate}[(i)]
\item $ |\gamma_r(n)^{-1}\gamma_r(m)|_c \geq \frac16 |m-n|$, and
\item $ |\gamma_r(n)^{-1}\gamma_r(m)|_c \leq 8(|m-n| + \min\{\delta(m), \delta(n)\}) + 20$.)
\end{enumerate}
\begin{proof}
We define the reparametrization as follows: 
\begin{itemize}
\item Outside of the peripheral excursions, parametrize by arc-length in $\Cay(\Gamma)$. 

\item Within a infinite but not bi-infinite peripheral excursion, the first letter is left alone, the next two are multiplied together, then the next four multiplied together, and so on.

\item Within a finite peripheral excursion of cusped length $E$, do this from both ends simultaneously, and do some rounding as necessary. More precisely, to each natural number $n$ we associate an ordered partition of positive integers as follows:
\begin{itemize}
\item If $n = 1 + 2 + \dots + 2^{k-1} + 2^k + 2^{k-1} + \dots + 1$ for some $k \in \ints_{\geq0}$, that is the associated ordered partition (e.g. $22 = 1 + 2 + 4 + 8 + 4 + 2 + 1$, so $(1,2,4,8,4,2,1)$ is the ordered partition associated to 22.) Call these numbers $n_k$. Note $n_k = 3 \cdot 2^k - 2$. 

\item If $n \in (n_k, n_{k+1})$, associate to $n$ the ordered partition $(1, 2, \dots, 2^k + (n-n_k), 2^{k-1}, \dots, 1)$. Note the middle term will be between $2^k+1$ and $2^k+(n_{k+1}-n_k-1) = 2^k + 3 \cdot 2^k - 1 = 2^{k+2}-1$ in this case.

e.g. $n=17 \in (n_2, n_3) = (10, 22)$, and so the ordered partition for 17 is given by $(1, 2, 4+7, 2, 1)=(1,2,11,2,1)$
\end{itemize}
Then take the ordered partition $(a_1, \dots, a_l)$ associated to $E$, and if $\gamma(s) = \gamma_r(s_r)$ is the start of the peripheral excursion, define $\gamma_r(s_r+j) = \gamma(m+\sum_{i=1}^j a_i)$ for $1 \leq j \leq l$.
\end{itemize}

To verify that this satisfies the desired criteria, we remark that the reparametrization does not modify cusped length outside of the peripheral excursions; inside a peripheral excursion of length $E$,
the sum of any $j$ consecutive numbers inside the partition associated to $E$ is at least 
\[ 1 + \dots + 2^{j-1} = 2^{j}-1 \] 
if $j$ is no more than half the length of the partition; if $j$ is greater than this threshold, this sum is still bounded below by
\[ 1 + \dots + 2^{\ell_2-1} = 2^{\ell_2}-1 \geq 2^{j/2}-1 ,\]
where $\ell_2$ is the floor of half the length of the partition, since the sum must contain a sum of $\ell_2$ consecutive numbers inside the partition.

Thus, by Proposition \ref{prop:wordlengths_bilip_log}, the cusped length of the part of the peripheral excursion associated to this part of the reparametrization is no less than $2 \log_2 (2^{j/2} - 1) \geq j-1$. Considering separately what happens for small values of $j$, we may further replace this lower bound with $j/2$.

Proposition \ref{prop:unhat_distance} then gives us
\[ d_c(\gamma_r(n), \gamma_r(m)) \geq \frac13 \left( \ell(\gamma_r|_{[n,m] \setminus I_\eta}) + \ell\left(\gamma_r|_{I_\eta} \right) \right) \geq \frac16|m-n| \]
This suffices to verify (i).

To verify (ii), we recall that, if $w_{m,n} := \gamma_r(m)^{-1} \gamma_r(n)$ 
is a peripheral word of length $\ell(w_{m,n})$, its cusped length is between $2 \log_2 \ell(w_{m,n})$ and $2 \log_2 \ell(w_{m,n})+1$ (see Proposition \ref{prop:wordlengths_bilip_log}.)

By construction $\ell(w_{m,m+1}) \leq 2^{\delta(m)+1}$, so $|w_{m,m+1}|_c \leq 2\delta(m)+3$, and more generally, 
\[ |w_{m,n}|_c \leq 2 \log_2 (2^{\delta(m)+1} + \dots + 2^{\delta(n)+1}) + 1 \]
and, writing $\delta = \min \{ \delta(m), \delta(n) \}$, this latter is bounded above by
\begin{align*}
2 \log_2 \left( 2^{\delta+1} + \dots + 2^{\delta+1+|m-n|} \right) + 1 & \leq 2 \log_2 \left( 2^{\delta+1} \cdot (2^{|m-n|+1}-1) \right) + 1 \\
 & \leq 2 (\delta + |m-n|) + 5  .
\end{align*}  
This, again in conjunction with Proposition \ref{prop:unhat_distance}, which yields
\[ d_c(\gamma_r(n), \gamma_r(m)) \leq 4 \left( \ell(\gamma_r|_{[n,m] \setminus I_\eta}) + \ell(\gamma_r|_{I_\eta}) \right) \leq 8\left( |m-n| + \min\{\delta(m), \delta(n)\} \right) + 20 ,\]
suffices to prove the Proposition.
\end{proof} 
\end{prop}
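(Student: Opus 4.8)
The plan is to take the explicit reparametrization described in the statement and verify conditions (i)--(iii) of Definition~\ref{defn:metric_proj_qgeod} one at a time, in each case using Proposition~\ref{prop:wordlengths_bilip_log} to pass between word-length and cusped length inside a combinatorial horoball and Proposition~\ref{prop:unhat_distance} to pass between cusped length and honest cusped distance $d_c$. The single combinatorial input needed is the elementary estimate that, in the doubling partition $(a_1,\dots,a_l)$ attached to a finite excursion of cusped length $E$, the sum of any $j$ consecutive terms is bounded below by a quantity of order $2^{j/2}$ and, if the run of terms begins at ``depth'' $\delta$ from the nearer end of the partition, bounded above by a quantity of order $2^{\delta+j}$; this is what simultaneously forces each regrouped step to have cusped size comparable to $1$ and keeps the total cusped length of an excursion honest.

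For the lower bound (i), outside the peripheral excursions the reparametrization is by arclength in $\Cay(\Gamma)$, so each step contributes one unit of relative (hence cusped) length; inside an excursion, a run of $j$ consecutive reparametrized steps is a subword of word-length at least $2^{j/2}-1$, where the exponent $j/2$ rather than $j$ accounts for the fact that near the turnaround of a finite excursion the partition increases and then decreases, so that $j$ consecutive terms need only contain $\lfloor j/2\rfloor$ consecutive terms of an increasing run. By Proposition~\ref{prop:wordlengths_bilip_log} such a subword has cusped length at least $2\log_2(2^{j/2}-1)$, which after handling small $j$ separately is at least $j/2$; feeding this lower bound on the denominator appearing in Proposition~\ref{prop:unhat_distance} into its $\tfrac13$ side gives $|\gamma_r(n)^{-1}\gamma_r(m)|_c \geq \tfrac16|m-n|$.

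For (iii) and the upper bound (ii) the point is instead to control the block sizes from above via the depth function of Definition~\ref{defn:projgeod_depth}. Condition (iii) --- that a single reparametrized step $\gamma_r(n)^{-1}\gamma_r(n+1)$ lying in some $P\in\mathcal{P}$ is a product of between $2^{\delta(n)-1}$ and $2^{\delta(n)+1}$ peripheral generators --- I would read off directly from the partitions $n\mapsto(1,2,\dots,2^k+(n-n_k),\dots,1)$ together with the definition of $\delta(n)$. Since consecutive steps satisfy $\delta(n+1)\leq\delta(n)+1$, a run of $|m-n|$ consecutive steps starting at depth $\delta:=\min\{\delta(m),\delta(n)\}$ has total word-length at most $2^{\delta+1}+\cdots+2^{\delta+|m-n|+1}\leq 2^{\delta+|m-n|+2}$, so by Proposition~\ref{prop:wordlengths_bilip_log} each peripheral stretch of $\gamma_r|_{[n,m]}$ has cusped length at most $2(\delta+|m-n|)+O(1)$; applying the $\bigl(\tfrac{2}{\log 2}+1\bigr)$ side of Proposition~\ref{prop:unhat_distance} to the whole subpath then yields $|\gamma_r(n)^{-1}\gamma_r(m)|_c \leq 8(|m-n|+\min\{\delta(m),\delta(n)\})+20$.

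The main obstacle I anticipate is bookkeeping rather than conceptual. One has to nail down the precise dictionary between the depth $\delta(n)$ and the block index in the two-sided, rounded partition used for finite excursions --- distinguishing $n=n_k$ from $n\in(n_k,n_{k+1})$, and treating subintervals $[n,m]$ that straddle the turnaround of such an excursion --- and one has to handle subintervals whose endpoints fall in the middle of a projected peripheral excursion, for which the relevant statement is the ``middle of an excursion'' case of Proposition~\ref{prop:unhat_distance} rather than its equality form. Pushing the constants carefully through these cases, together with separating off small values of $j$ and of $|m-n|$, is exactly what upgrades the bare $(6,20)$ conclusion to the sharpened inequalities stated parenthetically.
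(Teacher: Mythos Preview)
Your proposal is correct and follows essentially the same route as the paper: the same reparametrization, the same combinatorial lower bound $2^{j/2}-1$ on runs of $j$ consecutive partition terms fed through Proposition~\ref{prop:wordlengths_bilip_log} and the $\tfrac13$ side of Proposition~\ref{prop:unhat_distance} for (i), and the same upper bound $\ell(w_{m,m+1})\leq 2^{\delta(m)+1}$ summed geometrically and fed through the $(\tfrac{2}{\log 2}+1)$ side for (ii). Your anticipated difficulties---the two-sided partition bookkeeping and the mid-excursion endpoint case---are exactly the places where the paper's argument also compresses details, so you have identified the right pressure points.
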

\section{Singular value decompositions} \label{sec:SVD}

The condition on representations which we will define is given in terms of singular values and subspaces: given a matrix $g \in \GL(d,\real)$, let $\sigma_i(g)$ (for $1 \leq i \leq d$) denote its $i\Nth$ singular value.

Measuring these requires specifying a norm on $\real^d$, although the conditions below are independent (up to possibly changing the constants) of this choice of norm. Below we will assume we have fixed a norm coming from an inner product on $\real^d$; by viewing the symmetric space $\SL(d,\real) / \SO(d)$ as a space of (homothety classes of) inner products on $\real^d$, this is equivalent to choosing a basepoint $o \in \SL(d,\real) / \SO(d)$ (and then arbitrarily fixing a scaling).

Furthermore, write $U_i(g)$ to denote the span of the $i$ largest axes in the image of the unit sphere in $\real^d$ under $g$, and $S_i(g) := U_i(g^{-1})$ (the letters come from ``Unstable'' and ``Stable''; these names are inspired by ideas from dynamics.)
Note $U_i(g)$ is well-defined if and only if we have a singular-value gap $\sigma_i(g) > \sigma_{i+1}(g)$.

More precisely, given any $g \in \GL(d,\real)$, we may write $g = KAL$, where $K$ and $L$ are orthogonal matrices and $A$ is a diagonal matrix with nonincreasing entries down the diagonal. $A$ is uniquely determined, and we may define $\sigma_i(g) = A_{ii}$. $U_i(g)$ is given by the span of the first $i$ columns of $K$, which is well-defined as long as $\sigma_i(g) > \sigma_{i+1}(g)$. 

We remark that, for $g \in \SL(d,\real)$, this singular-value decomposition is a (particular choice of) Cartan decomposition. We will occasionally write (given $g = KAL$ as above) 
\[ a(g) := (\log A_{11}, \dots, \log A_{dd}) = (\log \sigma_1(g), \dots, \log \sigma_d(g)) ;\] 
we note that the norm $\|a(g)\| = \sqrt{(\log \sigma_1(g))^2 + \dots + (\log \sigma_d(g))^2}$ is equal to the distance $d(o, g \cdot o)$ in the associated symmetric space $\SL(d,\real)/\SO(d)$ (see e.g. formula (7.3) in \cite{BPS}.)

\section{Relatively dominated representations} \label{sec:reldom}
Recall that $\Gamma$ is a finitely-generated group, which we assume to be torsion-free.

Let $\mathcal{P}$ be a finite collection of finitely-generated proper infinite subgroups; call all conjugates of these subgroups {\bf peripheral}. A element of $\Gamma$ is called peripheral if it belongs to any peripheral subgroup, and non-peripheral otherwise. Below we will write $\mathcal{P}^\Gamma$ to denote the set of all conjugates of groups in $\mathcal{P}$, $\bigcup \mathcal{P} := \bigcup_{P\in\mathcal{P}} P$ and $\bigcup \mathcal{P}^\Gamma := \bigcup_{Q\in\mathcal{P}^\Gamma} Q$ to denote the set of peripheral elements.

Let $S$ be a compatible generating set, and let $X = X(\Gamma, \mathcal{P}, S)$ be the corresponding cusped space
(see Definitions \ref{defn:combhoroball} and \ref{defn:cuspedspace} above.) As above, let $d_c$ denote the metric on $X$, and $|\cdot|_c := d_c(\id, \cdot)$ denote the cusped word-length.

For most of the arguments below we will also impose further conditions on $\mathcal{P}$:
\spacer \begin{defn} \label{defn:peri_preconds}
We say that a finite collection $\mathcal{P}$ of finitely-generated proper infinite subgroups satisfies (RH) if 
\begin{itemize}
\item (malnormality) $\mathcal{P}$ is malnormal, i.e. for all $\gamma \in \Gamma$ and $P, P' \in \mathcal{P}$, $\gamma P \gamma^{-1} \cap P' = 1$ unless $\gamma \in P = P'$; 
\item (non-distortion) there exists $\nu > 0$ such that for any infinite-order non-peripheral element $\gamma \in \Gamma$, $|\gamma^n|_c \geq \nu |n|$; 
\item (local-to-global) 
there exist $\ubar\upsilon, \bar\upsilon > 0$ and a constant $L > 0$ so that if $p=p_1....p_n$ is a geodesic word in $P\in\mathcal P$, $n>L$ and $\gamma p_1 \cdots p_L$ is a projected geodesic, then $\gamma p$ is an $(\ubar\upsilon, \bar\upsilon)$-metric projected quasigeodesic. 
\end{itemize}
\end{defn}

We remark that all of these conditions hold automatically if $\Gamma$ is hyperbolic relative to $\mathcal{P}$: malnormality follows for torsion-free $\Gamma$ from \cite{osinRH}, Theorem 1.4; non-distortion follows from \cite{osinRH}, Theorem 1.14; the local-to-global condition is a particular case of the much more general local-to-global properties that hold due to the hyperbolicity of the cusped space $X$ when $\Gamma$ is relatively hyperbolic. 

We introduce first a few technical conditions controlling what happens on the images of peripheral subgroups, and then the main notion we are defining:

\spacer \begin{defn} \label{defn:peri_conds}
Given $\Gamma$ and $\mathcal{P}$ as above, and a representation $\rho: \Gamma \to \GL(d,\real)$, we say that the peripheral subgroups have {\bf well-behaved images under $\rho$} if the following conditions are satisfied: 
\begin{itemize}
\item (upper domination) there exist constants $C_1, \mu_1 > 0$ such that $\sigma_1(\rho(\eta)) \leq C_1 e^{\mu_1|\eta|_c}$ for every peripheral element $\eta \in \bigcup\mathcal{P}$ 
\item (unique limits) for each $P \in \mathcal{P}$, there exists $\xi_\rho(P) \in \proj(\real^d)$ and  $\xi^*_\rho(P) \in \Gr_{d-1}(\real^d)$ such that for every sequence $(\eta_n) \subset P$ with $\eta_n \to \infty$, we have $\lim_{n\to\infty} U_1(\rho(\eta_n)) = \xi_\rho(P)$ and $\lim_{n\to\infty} U_{d-1}(\rho(\eta_n)) = \xi^*_\rho(P)$.
\item (quadratic gaps) for every $\ubar\upsilon, \bar\upsilon > 0$, there exists
$C'\geq 0$ such that if $\eta \in P$ for some $P \in \mathcal{P}$, then, for any $\gamma\in\Gamma$, 
if $\gamma\eta$ ($\eta\gamma$, respectively) is 
an $(\ubar\upsilon,\bar\upsilon)$-metric quasigeodesic path
then $\frac{\sigma_1}{\sigma_2}(\rho(\gamma\eta)) \geq C'|\eta|^2 = C'e^{|\eta|_c}$ 
($\frac{\sigma_1}{\sigma_2}(\rho(\eta\gamma)) \geq C'|\eta|^2$, resp.);

\item (uniform transversality) 
for every $P, P' \in \mathcal{P}$ and $\gamma \in \Gamma$, $\xi(P) \neq \xi(\gamma P'\gamma^{-1})$. 
Moreover,
for every $\ubar\upsilon,\bar\upsilon>0$, 
there exists $\delta_0 > 0$ 
such that for all $P, P' \in \mathcal{P}$ and $g, h \in \Gamma$
such that there exists a bi-infinite $(\ubar\upsilon,\bar\upsilon)$-metric quasigeodesic path $\eta gh \eta'$ where 
$\eta'$ is in $P'$ and $\eta$ is in $P$, 
we have 
$\sin \angle (g^{-1} \xi(P), h\, \xi^*(P')) > \delta_0$.
\end{itemize}
\end{defn}

We remark that the unique limits condition corresponds to the ``tied-up horoballs'' condition in \cite{KL}, and the quadratic gaps condition is analogous to the uniform gap summation property that appears in \cite{GGKW}.

\spacer \begin{defn} \label{defn:reldomrep}
Fix $\Gamma$ and $\mathcal{P}$ as above, with $\mathcal{P}$ satisfying (RH), and fix constants $\ubar{C}, \ubar{\mu} > 0$. 
A representation $\rho: \Gamma \to \GL(d,\real)$ is {\bf $1$-almost dominated relative to $\mathcal{P}$} with lower domination constants $(\ubar{C},\ubar\mu)$, if it satisfies \begin{itemize}
\item[(D\textsuperscript{-})] for all $\gamma \in \Gamma$, $\frac{\sigma_1}{\sigma_{2}}(\rho(\gamma)) \geq \ubar{C} e^{\ubar\mu|\gamma|_c}$.
\end{itemize}

A $1$-almost dominated representation $\rho$ is {\bf $1$-dominated relative to $\mathcal{P}$} with lower domination constants $(\ubar{C},\ubar\mu)$ if in addition 
the images of peripheral subgroups under $\rho$ are well-behaved.
\end{defn}

Below we will sometimes refer to (D\textsuperscript{-}) as the lower domination inequality.
We will sometimes 
suppress $\mathcal{P}$ and refer to 1-relatively dominated representations. 

We further remark that many of the conditions in Definition \ref{defn:peri_conds} can be weakened or omitted if we assume relative hyperbolicity of the source group, together with the existence and transversality of limit maps: see Theorem \ref{thm:rRCA_reldom}, and associated definitions in that section, for a precise statement. We conjecture that it may further be possible that the uniform transversality hypothesis in Definition \ref{defn:peri_conds} can be made to follow from relative hyperbolicity and (D\textsuperscript{-}) as well.

\subsection{Dual representations} \label{sub:dual_rep}

Given $\rho: \Gamma \to \GL(V)$ with $V = \real^d$ as above (and the implicit choice of the standard basis, which fixes an identification $V \cong V^*$), we may define the dual representation $\rho^*: \Gamma \to \GL(V^*) \cong \GL(V)$ by $\rho^*(\gamma) = \rho(\gamma^{-1})^T$.

The following observations will be useful later:

\spacer \begin{prop} \label{prop:dualrep_reldom}
If $\rho: \Gamma \to \GL(V)$ is $1$-dominated relative to $\mathcal{P}$ with lower domination constants $(\ubar{C},\ubar\mu)$, then so is $\rho^*: \Gamma \to \GL(V)$.

Furthermore, for all $\gamma \in \Gamma$, $U_1(\rho^*(\gamma)) = (U_{d-1}(\rho(\gamma)))^\perp$ and $U_{d-1}(\rho^*(\gamma)) = (U_{1}(\rho(\gamma)))^\perp$.
\begin{proof}
We have (D\textsuperscript{-}) since 
$\frac{\sigma_1}{\sigma_2}(\rho^*(\gamma)) = \frac{\sigma_1}{\sigma_2}(\rho(\gamma^{-1})) \geq \ubar{C} e^{-\ubar\mu |\gamma^{-1}|_c} = \ubar{C} e^{-\ubar\mu |\gamma|_c}$.

We can similarly get the quadratic gaps condition, since $\frac{\sigma_1}{\sigma_2}(\rho^*(\gamma\eta)) = \frac{\sigma_1}{\sigma_2}(\rho(\eta^{-1}\gamma^{-1})$ and $\frac{\sigma_1}{\sigma_2}(\rho^*(\eta\gamma)) = \frac{\sigma_1}{\sigma_2}(\rho(\gamma^{-1}\eta^{-1})$

Now if write the singular value decomposition $\rho(\gamma) = KAL$,
then $\rho^*(\gamma) = (K^{-1})^T (A^{-1})^T (L^{-1})^T = KA^{-1}L$. 

Recalling $A$ has diagonal entries in non-increasing order, $A^{-1}$ has diagonal entries in non-decreasing order; hence $U_1(\rho^*(\gamma))$ is the line spanned by the last column of $K$, 
which is $(U_{d-1}(\rho(\gamma)))^\perp$.
Similarly, $U_{d-1}(\rho^*(\gamma))$ is the hyperplane spanned by the all but the first column of $K$;
this is $(U_1(\rho(\gamma)))^\perp$.

Now the unique limits condition for $\rho^*$ follows from the unique limits condition for $\rho$, since 
\[ \lim_{n\to\infty} U_1(\rho^*(\eta_n)) = \lim_{n\to\infty} (U_{d-1}(\rho(\eta_n)))^\perp = \xi^*_\rho(P)^\perp \] 
and similarly
\[ \lim_{n\to\infty} U_{d-1}(\rho^*(\eta_n)) = \lim_{n\to\infty} (U_1(\rho(\eta_n)))^\perp = \xi_\rho(P)^\perp \] 

Similarly, the uniform transversality condition for $\rho^*$ follows from the uniform transversality condition for $\rho$, due to the above identifications.
\end{proof}
\end{prop}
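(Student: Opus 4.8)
The statement reduces to two elementary linear-algebra observations plus some bookkeeping about how the defining conditions transform under $\gamma \mapsto \gamma^{-1}$. The first observation: if $\rho(\gamma) = KAL$ is a singular value decomposition ($K, L$ orthogonal, $A$ diagonal with non-increasing diagonal), then $\rho^*(\gamma) = \rho(\gamma^{-1})^{T} = (L^{-1}A^{-1}K^{-1})^{T} = K A^{-1} L$ using orthogonality of $K, L$; since the diagonal of $A^{-1}$ is now non-decreasing, conjugating by the order-reversing permutation matrix $Q$ exhibits $(KQ)(QA^{-1}Q)(QL)$ as a singular value decomposition of $\rho^*(\gamma)$, whose left orthogonal frame is the columns of $K$ in reverse order. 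The second observation, immediate from this: $\sigma_i(\rho^*(\gamma)) = \sigma_i(\rho(\gamma^{-1}))$ for all $i$, and $U_i(\rho^*(\gamma))$ is the span of the last $i$ columns of $K$. Since $U_1(\rho(\gamma))$ is the span of the first column of $K$ and $U_{d-1}(\rho(\gamma))$ the span of its first $d-1$ columns, this is exactly the ``furthermore'' clause: $U_1(\rho^*(\gamma)) = U_{d-1}(\rho(\gamma))^{\perp}$ and $U_{d-1}(\rho^*(\gamma)) = U_1(\rho(\gamma))^{\perp}$.

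Next I would dispatch (D\textsuperscript{-}): from $\sigma_i(\rho^*(\gamma)) = \sigma_i(\rho(\gamma^{-1}))$ and the symmetry $|\gamma^{-1}|_c = |\gamma|_c$ of cusped word-length, $\frac{\sigma_1}{\sigma_2}(\rho^*(\gamma)) = \frac{\sigma_1}{\sigma_2}(\rho(\gamma^{-1})) \geq \ubar{C} e^{\ubar\mu |\gamma^{-1}|_c} = \ubar{C} e^{\ubar\mu |\gamma|_c}$, so $\rho^*$ is $1$-almost dominated with the same constants. For well-behavedness of peripheral images, upper domination is the same computation applied to peripheral $\eta$ (using that $\eta^{-1}$ is peripheral whenever $\eta$ is). For unique limits one sets $\xi_{\rho^*}(P) := \xi^*_\rho(P)^{\perp} \in \proj(\real^d)$ and $\xi^*_{\rho^*}(P) := \xi_\rho(P)^{\perp} \in \Gr_{d-1}(\real^d)$; then for any $(\eta_n) \subset P$ with $\eta_n \to \infty$, continuity of the orthogonal-complement map together with the ``furthermore'' clause gives $U_1(\rho^*(\eta_n)) = U_{d-1}(\rho(\eta_n))^{\perp} \to \xi^*_\rho(P)^{\perp}$ and $U_{d-1}(\rho^*(\eta_n)) = U_1(\rho(\eta_n))^{\perp} \to \xi_\rho(P)^{\perp}$.

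The quadratic gaps and uniform transversality conditions are phrased in terms of metric quasigeodesic paths, so here I would first record the routine fact that reversing an $(\ubar\upsilon,\bar\upsilon)$-metric quasigeodesic path yields another one with the same constants: conditions (i)--(iii) of Definition~\ref{defn:metric_proj_qgeod} are each symmetric under reversal, since the depth function is symmetric and a peripheral edge traversed backwards is still a peripheral word of the same length. Granting this, if $\gamma\eta$ (resp.\ $\eta\gamma$) is an $(\ubar\upsilon,\bar\upsilon)$-metric quasigeodesic path with $\eta \in P$, then its reverse $\eta^{-1}\gamma^{-1}$ (resp.\ $\gamma^{-1}\eta^{-1}$) is one with $\eta^{-1} \in P$, and $\frac{\sigma_1}{\sigma_2}(\rho^*(\gamma\eta)) = \frac{\sigma_1}{\sigma_2}(\rho(\eta^{-1}\gamma^{-1})) \geq C'|\eta^{-1}|^2 = C'|\eta|^2$ by quadratic gaps for $\rho$, and symmetrically for $\eta\gamma$. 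Finally, uniform transversality for $\rho^*$ follows from that for $\rho$ via the ``furthermore'' clause together with the elementary identity $\sin\angle(u^{\perp}, W^{\perp}) = \sin\angle(u, W)$ for a line $u$ and a hyperplane $W$, after translating the $\Gamma$-action on $\proj(\real^d)$ through $\rho^*$ back into an action through $\rho$ on orthogonal complements.

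I expect nearly everything here to be mechanical substitution; the only places needing care are the two bookkeeping points in the last paragraph --- verifying that path reversal preserves the metric-quasigeodesic constants (so that the quadratic gaps and uniform transversality hypotheses for $\rho$ genuinely apply to the reversed words), and keeping straight the dictionary between the $\rho^*$-action, the $\rho$-action, and orthogonal complements in the uniform transversality clause. I would isolate the needed subspace-angle identities as a lemma in Appendix~\ref{app:linalg} rather than inline them.
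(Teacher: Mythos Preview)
Your proposal is correct and follows essentially the same approach as the paper: both compute the singular value decomposition of $\rho^*(\gamma)$ from that of $\rho(\gamma)$, derive the ``furthermore'' clause and the singular-value identity $\sigma_i(\rho^*(\gamma)) = \sigma_i(\rho(\gamma^{-1}))$, and then check each condition of Definitions~\ref{defn:peri_conds} and~\ref{defn:reldomrep} by reducing to the corresponding condition for $\rho$ applied to inverses. Your write-up is in fact more thorough than the paper's in a couple of places --- you explicitly handle upper domination, isolate the path-reversal bookkeeping for the quadratic gaps and transversality clauses, and name the angle identity $\sin\angle(u^\perp,W^\perp)=\sin\angle(u,W)$ --- whereas the paper leaves these implicit under ``similarly''.
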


\subsection{Discreteness, faithfulness, proximal elements} \label{sub:df_prox}

Discreteness and faithfulness are straightforward consequences of the singular value gap growing coarsely with cusped word-length:
\spacer \begin{prop} \label{prop:discrete_faithful}
If $\rho: \Gamma \to \GL(d,\real)$ is 1-almost relatively dominated, then $\rho$ is discrete and faithful.
\begin{proof}
Given any sequence of distinct elements$(\gamma_n) \subset \Gamma$, we must have $|\gamma_n|_c \to \infty$ since there are finitely many group elements $\gamma$ satisfying $|\gamma|_c \leq N$ for each $N$. 

(D\textsuperscript{-}) then gives $\log \frac{\sigma_1}{\sigma_2}(\rho(\gamma_n)) \geq \log \ubar{C} + \ubar\mu |\gamma_n|_c \to \infty$ for a $(1,\ubar{C},\ubar\mu)$-relatively almost dominated representation. Hence we cannot have $\rho(\gamma_n) \to \id$, which proves that $\rho$ is discrete and has finite kernel. Since by assumption $\Gamma$ is torsion-free, we may further conclude that $\rho$ is faithful.
\end{proof}
\end{prop}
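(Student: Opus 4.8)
The plan is to deduce both conclusions from a single observation: along any sequence of \emph{distinct} elements of $\Gamma$, the first singular-value gap of the image blows up. First I would check that if $(\gamma_n) \subset \Gamma$ is a sequence of pairwise distinct elements, then $|\gamma_n|_c \to \infty$. Since the generating set $S$ is finite, only finitely many elements of $\Gamma$ have word-length $|\gamma| \le N$ for any fixed $N$, so $|\gamma_n| \to \infty$; Corollary \ref{cor:wordlengths_comp} then upgrades this to $|\gamma_n|_c \to \infty$. (Equivalently, one can argue directly that the ball of any finite radius around $\id$ in the cusped space $X$ meets $\Gamma$ in only finitely many points.)

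Next I would feed this into the lower domination inequality (D\textsuperscript{-}): for a representation that is $1$-almost dominated relative to $\mathcal{P}$ with lower domination constants $(\ubar C,\ubar\mu)$, we obtain
\[ \frac{\sigma_1}{\sigma_2}(\rho(\gamma_n)) \;\ge\; \ubar C\, e^{\ubar\mu |\gamma_n|_c} \;\longrightarrow\; \infty . \]
Since $g \mapsto \sigma_1(g)/\sigma_2(g)$ is a continuous and everywhere-finite function on $\GL(d,\real)$, this forces $\rho(\gamma_n)$ to leave every compact subset of $\GL(d,\real)$; in particular $\rho(\gamma_n) \not\to \id$, and indeed $(\rho(\gamma_n))$ has no convergent subsequence.

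From here discreteness and faithfulness are immediate. For discreteness: if $(h_n)$ were a sequence of pairwise distinct elements of $\rho(\Gamma)$ with $h_n \to \id$, then lifting each $h_n$ to some $\gamma_n \in \Gamma$ yields pairwise distinct $\gamma_n$ (their images differ), so $\sigma_1/\sigma_2(h_n) \to \infty$, contradicting $h_n \to \id$; hence $\id$ is isolated in $\rho(\Gamma)$. For finiteness of the kernel: any infinite set of distinct elements of $\ker\rho$ would give a sequence $(\gamma_n)$ with $|\gamma_n|_c \to \infty$ yet $\frac{\sigma_1}{\sigma_2}(\rho(\gamma_n)) = 1$ for all $n$, contradicting (D\textsuperscript{-}); hence $\ker\rho$ is finite. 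Finally, $\ker\rho$ is a finite subgroup of the torsion-free group $\Gamma$ and therefore trivial, so $\rho$ is faithful.

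I do not anticipate a genuine obstacle: the only step requiring care is the passage from ``$(\gamma_n)$ distinct'' to ``$|\gamma_n|_c \to \infty$'', which is precisely what Corollary \ref{cor:wordlengths_comp} supplies, and the rest is a one-line consequence of (D\textsuperscript{-}) together with torsion-freeness. The single point worth stating explicitly, to make the argument airtight, is that $\sigma_1/\sigma_2$ is continuous and finite-valued on all of $\GL(d,\real)$, which is what makes ``$\sigma_1/\sigma_2 \to \infty$'' incompatible with convergence of $\rho(\gamma_n)$.
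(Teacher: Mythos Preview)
Your proof is correct and follows essentially the same route as the paper's: distinct elements have cusped word-length going to infinity, (D\textsuperscript{-}) then forces the singular-value gap to blow up, precluding convergence to $\id$; finite kernel plus torsion-freeness gives faithfulness. The only cosmetic difference is that the paper asserts directly that finitely many $\gamma$ satisfy $|\gamma|_c \le N$ (local finiteness of the cusped space), whereas you route through $|\gamma| \to \infty$ and Corollary~\ref{cor:wordlengths_comp}; you even note the direct argument parenthetically, so there is no substantive divergence.
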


Using in addition the property that our peripheral subgroups $\mathcal{P}$ satisfy (RH)---or, in particular, non-distortion---, we further obtain
\spacer \begin{prop} \label{lem:nonperi_proximal}
Suppose $\rho: \Gamma \to \GL(d,\real)$ is 1-almost relatively dominated. For any non-peripheral $\gamma \in \Gamma$, $\rho(\gamma)$ must be proximal.

\begin{proof}
Recall the relation between the eigenvalues and singular values given by 
\[ \log |\lambda_i (\rho(\gamma))| = \lim_{n \to \infty} \frac 1n \log \sigma_i(\rho(\gamma^n)) \]
(see e.g. \cite{Benoist1997}, \S2.5.) Suppose $\rho: \Gamma \to G$ is $(1,\ubar{C},\ubar\mu)$-almost relatively dominated.

Non-distortion implies there exists $\nu > 0$ such that $|\gamma^n|_c \geq \nu n$ for any non-peripheral $\gamma$, and (D\textsuperscript{-}) then implies $\log \frac{\sigma_1}{\sigma_2}(\gamma^n) \geq \ubar\mu \nu n + \log \ubar{C}$; hence we obtain 
\[ \log \left| \frac{\lambda_1}{\lambda_2} \right| (\rho(\gamma)) = \lim_{n \to \infty} \frac 1n \log \frac{\sigma_1}{\sigma_2}(\gamma^n) \geq \ubar\mu \nu > 0 .\]

Hence $\rho(\gamma)$ is proximal, as desired.
\end{proof} 
\end{prop}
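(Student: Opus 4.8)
The plan is to exploit the classical asymptotic relation between singular values of powers and moduli of eigenvalues: for any $g \in \GL(d,\real)$ and each index $i$, $\log|\lambda_i(g)| = \lim_{n\to\infty}\tfrac1n\log\sigma_i(g^n)$ (see e.g. \cite{Benoist1997}, \S2.5). Since $\rho(\gamma)$ is proximal precisely when it has a unique eigenvalue of maximal modulus and that eigenvalue is simple --- equivalently $|\lambda_1(\rho(\gamma))| > |\lambda_2(\rho(\gamma))|$ --- it suffices to show $\lim_{n\to\infty}\tfrac1n\log\tfrac{\sigma_1}{\sigma_2}(\rho(\gamma^n)) > 0$.

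First I would observe that a non-peripheral $\gamma$ has infinite order: $\Gamma$ is torsion-free and the identity is peripheral, so every torsion element is trivial, hence peripheral. The non-distortion clause of hypothesis (RH) then supplies a constant $\nu > 0$, independent of $\gamma$, with $|\gamma^n|_c \geq \nu n$ for all $n \geq 1$. Plugging this into the lower domination inequality (D\textsuperscript{-}) applied to $\rho(\gamma^n)$ gives $\tfrac{\sigma_1}{\sigma_2}(\rho(\gamma^n)) \geq \ubar{C}\,e^{\ubar\mu|\gamma^n|_c} \geq \ubar{C}\,e^{\ubar\mu\nu n}$. Taking logarithms, dividing by $n$, and passing to the limit, the limit formula yields $\log\bigl|\tfrac{\lambda_1}{\lambda_2}\bigr|(\rho(\gamma)) \geq \ubar\mu\nu > 0$, so $\rho(\gamma)$ is proximal.

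The argument reduces to a short chain of inequalities once its two inputs --- the eigenvalue/singular-value limit formula and the linear lower bound $|\gamma^n|_c \gtrsim n$ --- are secured, so there is no substantive obstacle. The only points meriting a moment's care are that non-distortion is genuinely available (it is bundled into (RH), and holds automatically when $\Gamma$ is relatively hyperbolic), and that a \emph{linear} rather than merely unbounded growth of $|\gamma^n|_c$ is what the argument needs; without it the limit could collapse to $0$.
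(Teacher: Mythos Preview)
Your proof is correct and follows essentially the same approach as the paper's: both invoke the Benoist eigenvalue/singular-value limit formula, feed in the linear lower bound $|\gamma^n|_c \geq \nu n$ from the non-distortion clause of (RH), apply (D\textsuperscript{-}), and conclude $\log|\lambda_1/\lambda_2|(\rho(\gamma)) \geq \ubar\mu\nu > 0$. Your added remark that non-peripheral $\gamma$ has infinite order (needed since non-distortion is stated for infinite-order elements) is a helpful clarification but not a substantive difference.
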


\subsection{Relative quasi-isometric embedding} \label{sub:rel_qi_embed}

We can extend the upper domination hypothesis on the peripherals to a more general upper domination inequality (D\textsuperscript{+}). Using the upper and lower domination inequalities (D$^\pm$), we can then demonstrate that orbit maps are quasi-isometric embeddings of the relative Cayley graph, that is the Cayley graph with the extrinsic metric from the cusped space.

\spacer \begin{prop} \label{prop:1D+}
Suppose $\rho: \Gamma \to \GL(d,\real)$ is $1$-dominated relative to $\mathcal{P}$ with lower domination constants $(\ubar{C},\ubar\mu)$. Then there exists $\bar{C} > 1$ and $\bar\mu \geq \ubar\mu$ such that for all $\gamma \in \Gamma$, 
\[ \sigma_1(\rho(\gamma)) \leq  \bar{C}^{\frac12} e^{\frac12 \bar\mu |\gamma|_c} .\]
\end{prop}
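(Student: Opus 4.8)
The goal is to bound $\sigma_1(\rho(\gamma))$ from above by a constant times $e^{\frac12\bar\mu|\gamma|_c}$, given only the \emph{lower} domination bound (D\textsuperscript{-}) and well-behavedness of the peripherals (in particular upper domination on peripheral elements: $\sigma_1(\rho(\eta))\le C_1 e^{\mu_1|\eta|_c}$). The key idea is to decompose a projected geodesic from $\id$ to $\gamma$ into peripheral excursions and the pieces between them, estimate $\sigma_1$ along such a path submultiplicatively, and use that on the non-peripheral pieces the number of edges is controlled linearly by the cusped length (so one can absorb them into an exponential), while on the peripheral excursions one invokes the upper-domination hypothesis directly.

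First I would fix $\gamma\in\Gamma$ and take a geodesic $\hat\gamma$ in $X$ from $\id$ to $\gamma$ whose peripheral excursions are preferred geodesics, then project it to $\gamma=\pi\circ\hat\gamma$ in $\Cay(\Gamma)$, with relative-path structure $(\gamma, H)$, $H=(H_1,\dots,H_n)$ and peripheral excursions $\eta_i=\gamma|_{H_i}$ as in \S\ref{sub:hat_unhat}. Writing $\gamma = w_0 \eta_1 w_1 \eta_2 \cdots \eta_n w_n$ where the $w_j$ are the subwords lying in $\Cay(\Gamma)\smallsetminus H$ (so each $w_j$ has word-length equal to its number of edges), submultiplicativity of $\sigma_1$ gives
\[
\sigma_1(\rho(\gamma)) \;\le\; \prod_{j=0}^n \sigma_1(\rho(w_j)) \cdot \prod_{i=1}^n \sigma_1(\rho(\eta_i)).
\]
For the non-peripheral pieces, $\sigma_1(\rho(w_j)) \le A^{\ell(w_j)}$ where $A := \max_{s\in S}\sigma_1(\rho(s)) \ge 1$, so $\prod_j \sigma_1(\rho(w_j)) \le A^{\sum_j \ell(w_j)} = A^{\ell(\gamma|_{\cdot}\smallsetminus H)}$. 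For the peripheral excursions, upper domination gives $\sigma_1(\rho(\eta_i)) \le C_1 e^{\mu_1 |\eta_i|_c}$. Now Proposition \ref{prop:unhat_distance} (with $a,b$ the endpoints, so $\gamma(a)=\id$, $\gamma(b)=\gamma$, no partial excursions since the endpoints are genuine group elements) says that $|\gamma|_c = d_c(\id,\gamma)$ is bi-Lipschitz-comparable (with explicit constants $\tfrac13$ and $4$) to $\ell(\gamma|_{\cdot}\smallsetminus H) + \sum_i \hat\ell(\eta_i)$, and Proposition \ref{prop:wordlengths_bilip_log} (together with $\hat\ell(\eta_i)=\max\{\log\ell(\eta_i),1\}$) relates $|\eta_i|_c$ to $\hat\ell(\eta_i)$ linearly. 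Combining: $\log\sigma_1(\rho(\gamma)) \le (\log A)\,\ell(\gamma|_{\cdot}\smallsetminus H) + n\log C_1 + \mu_1\sum_i |\eta_i|_c \le K_1\big(\ell(\gamma|_{\cdot}\smallsetminus H) + \sum_i \hat\ell(\eta_i)\big) + K_2$ for suitable constants, where $n$ is absorbed because $n \le \ell(\gamma|_{\cdot}\smallsetminus H)+1$ (excursions are separated by at least one non-peripheral edge, or bounded in the degenerate case), and this last quantity is at most $3|\gamma|_c$ by Proposition \ref{prop:unhat_distance}. So $\log\sigma_1(\rho(\gamma)) \le 3K_1|\gamma|_c + K_2'$; setting $\bar\mu := 6K_1$ (and ensuring $\bar\mu\ge\ubar\mu$, enlarging if needed) and $\bar C := e^{2K_2'}$ gives $\sigma_1(\rho(\gamma)) \le \bar C^{1/2} e^{\frac12\bar\mu|\gamma|_c}$ as stated, with $\bar C>1$.

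The main obstacle I anticipate is the bookkeeping at the boundary of the excursions — making sure that when the geodesic $\hat\gamma$ descends into and climbs out of a horoball, the pieces $w_j$ really do have word-length bounded by their edge-count (this is automatic since they lie in $\Cay(\Gamma)$ at level $0$) and, more delicately, that the upper-domination hypothesis is stated for $\eta_i$ regarded as an honest element of $\bigcup\mathcal P$ (which it is, since each $\eta_i$ lies in a single peripheral coset $\gamma_i P_i$, and left-translating by $\gamma_i^{-1}$ — whose $\sigma_1$ contribution is already accounted for in the adjacent $w_j$ up to a bounded error, or one simply notes $|\eta_i|_c$ is translation-independent within its coset). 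A secondary subtlety is the relationship between $|\gamma|_c$ as computed via the chosen projected geodesic and via the true cusped distance; but since $\hat\gamma$ was taken to be a genuine geodesic in $X$, Proposition \ref{prop:unhat_distance} applies on the nose with $[a,b]$ the whole interval and both endpoints in $\Gamma$, so no partial-excursion correction is needed. Modulo these routine checks, the argument is a direct submultiplicativity estimate glued to the metric comparisons already established in \S\ref{sec:relhyp}.
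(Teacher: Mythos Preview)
Your proposal is correct and follows essentially the same approach as the paper: decompose a projected geodesic into peripheral excursions and non-peripheral segments, apply submultiplicativity of $\sigma_1$ with the generator bound on the latter and upper domination on the former, then compare to $|\gamma|_c$. The paper's version is slightly cleaner in two places: it invokes the exact equality~(\ref{eq:dc_geod}) from the proof of Proposition~\ref{prop:unhat_distance} (so $|\gamma|_c = \ell(\gamma\setminus\eta) + \sum_i |\eta_i|_c$ directly) rather than the bi-Lipschitz bound, and it bounds the number $n$ of excursions by $|\gamma|_c$ (each excursion has cusped length $\ge 1$), which sidesteps your worry about adjacent excursions sharing a vertex.
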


Since $\frac{\sigma_1}{\sigma_d}(\rho(\gamma)) = \sigma_1(\rho(\gamma)) \cdot \sigma_1(\rho(\gamma^{-1}))$, this immediately yields
\spacer 
\begin{cor}[D\textsuperscript{+}] \label{cor:D+}
For $\rho: \Gamma \to \GL(d,\real)$ a 1-relatively dominated representation, let $\bar{C}$, and $\bar\mu$ be as in Proposition \ref{prop:1D+}. We have 
\[ \frac{\sigma_1}{\sigma_d}(\rho(\gamma)) \leq \bar{C} e^{\bar\mu |\gamma|_c} \]
for all $\gamma \in \Gamma$.
\end{cor}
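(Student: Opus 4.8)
The plan is to read $\gamma$ off a cusped geodesic, factor it accordingly, and estimate $\sigma_1(\rho(\gamma))$ using submultiplicativity of the operator norm, $\sigma_1(AB)\le\sigma_1(A)\sigma_1(B)$. Concretely: choose a geodesic $\hat\gamma\colon\hat J\to X$ from $\id$ to $\gamma$ whose peripheral excursions are preferred geodesics (possible by Lemma~\ref{lem:gm310}), and let $\gamma=\pi\circ\hat\gamma\colon\hat J\to\Cay(\Gamma)$ be its projection, a path in the Cayley graph from $\id$ to $\gamma$; write $(\gamma,H)$ for the associated relative path, with excursions $\eta_1,\dots,\eta_n$ (which we may take to be nondegenerate). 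Reading edge labels and cutting at excursion boundaries expresses $\gamma=w_0\eta_1w_1\cdots\eta_nw_n$ as a product in $\Gamma$, where each $w_j$ is a (possibly empty) non-peripheral subword and each $\eta_i$, read from its initial vertex, is a word in $S\cap P$ for the relevant $P\in\mathcal P$, hence lies in $\bigcup\mathcal P$. Submultiplicativity gives
\[ \log\sigma_1(\rho(\gamma))\;\le\;\sum_{j=0}^{n}\log\sigma_1(\rho(w_j))\;+\;\sum_{i=1}^{n}\log\sigma_1(\rho(\eta_i)). \]

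For the non-peripheral factors I would use finiteness of $S$: with $\kappa:=\max_{s\in S}\max\{0,\log\sigma_1(\rho(s))\}$ one has $\log\sigma_1(\rho(w_j))\le\kappa\,\ell(w_j)$, so $\sum_j\log\sigma_1(\rho(w_j))\le\kappa\,\ell(\gamma|_{\hat J\setminus H})$. For the peripheral factors I would apply the upper domination clause of Definition~\ref{defn:peri_conds} (enlarging $C_1$ so that $C_1\ge1$): $\log\sigma_1(\rho(\eta_i))\le\log C_1+\mu_1|\eta_i|_c$, and by left-invariance of $d_c$ the cusped word length $|\eta_i|_c$ equals the cusped length $\ell_c(\eta_i)$ of the excursion. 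The structural input tying everything to $|\gamma|_c$ is the equality case \eqref{eq:dc_geod} of Proposition~\ref{prop:unhat_distance}, applied to the whole path: since $\id,\gamma$ are level-$0$ vertices and $\hat\gamma$ is a geodesic, $|\gamma|_c=\ell_c(\gamma|_{\hat J\setminus H})+\sum_{i=1}^{n}\ell_c(\eta_i)$, whence $\ell(\gamma|_{\hat J\setminus H})=\ell_c(\gamma|_{\hat J\setminus H})\le|\gamma|_c$ and $\sum_i|\eta_i|_c\le|\gamma|_c$; moreover each $\ell_c(\eta_i)\ge1$, so also $n\le|\gamma|_c$.

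Combining these, $\log\sigma_1(\rho(\gamma))\le(\kappa+\log C_1+\mu_1)\,|\gamma|_c$, and setting $\bar\mu:=\max\{\ubar\mu,\,2(\kappa+\log C_1+\mu_1)\}$ and $\bar C:=2$ yields $\sigma_1(\rho(\gamma))\le\bar C^{1/2}e^{\frac12\bar\mu|\gamma|_c}$ as required. The step that genuinely needs care---and the reason the trivial bound $\sigma_1(\rho(\gamma))\le(\max_{s\in S}\sigma_1(\rho(s)))^{|\gamma|}$ is worthless here---is the gap between word length and cusped word length inside horoballs, where $|\gamma|$ is doubly exponential in $|\gamma|_c$ (Proposition~\ref{prop:wordlengths_bilip_log}); the upper domination hypothesis on peripherals is precisely what absorbs this blow-up, while Proposition~\ref{prop:unhat_distance} is what lets us bookkeep the excursion lengths against $|\gamma|_c$. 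The remaining points---that each $\eta_i$ genuinely lies in some $P\in\mathcal P$, that a factor $w_j$ (including $w_0$ or $w_n$) may be empty, in particular the degenerate case $\gamma\in\bigcup\mathcal P$ with $n=1$, and that $\hat\gamma$ may be chosen with preferred-geodesic excursions---are routine.
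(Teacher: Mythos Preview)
Your argument is essentially the paper's own proof of Proposition~\ref{prop:1D+}: the same relative-path factorization along a cusped geodesic, the same use of submultiplicativity of $\sigma_1$, upper domination on the peripheral excursions, the finitely-generated bound $\sigma_1(\rho(s))\le e^\kappa$ on the non-peripheral pieces, and the equality~\eqref{eq:dc_geod} from Proposition~\ref{prop:unhat_distance} to tie the bookkeeping back to $|\gamma|_c$. The only cosmetic difference is that the paper absorbs the factor $n\log C_1$ into the constant by setting $\bar C^{1/2}=C_1$, whereas you fold it into the exponential rate and take $\bar C=2$.

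However, you have stopped one line short of the stated goal. What you conclude is $\sigma_1(\rho(\gamma))\le\bar C^{1/2}e^{\frac12\bar\mu|\gamma|_c}$, which is exactly Proposition~\ref{prop:1D+}; the Corollary asks instead for a bound on $\dfrac{\sigma_1}{\sigma_d}(\rho(\gamma))$. The missing step---and in fact the paper's entire proof of the Corollary, since Proposition~\ref{prop:1D+} is taken as given---is the identity
\[
\frac{\sigma_1}{\sigma_d}(\rho(\gamma))=\sigma_1(\rho(\gamma))\cdot\sigma_1(\rho(\gamma^{-1})),
\]
which holds because $\sigma_d(A)=\sigma_1(A^{-1})^{-1}$ for $A\in\GL(d,\real)$. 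Applying your bound (equivalently, Proposition~\ref{prop:1D+}) to both $\gamma$ and $\gamma^{-1}$ and using $|\gamma^{-1}|_c=|\gamma|_c$ then gives $\dfrac{\sigma_1}{\sigma_d}(\rho(\gamma))\le\bar C\,e^{\bar\mu|\gamma|_c}$, as required.
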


We will sometimes refer to (D\textsuperscript{+}) as the upper domination inequality. Below, we will speak of relatively dominated representations with domination constants $(\ubar{C},\ubar\mu,\bar{C},\bar\mu)$.

\begin{proof}[Proof of Proposition \ref{prop:1D+}]
We already know the related but weaker inequality $\sigma_1(\rho(\gamma)) \leq e^{\mu_2 |\gamma|}$ from $\Gamma$ being finitely-generated, where we may take $e^{\mu_2} = \max_{s\in S} \|s\|$ where $S$ the finite generating set we used to build our cusped space.

More generally, given a word $\gamma$, we consider it as a relative path $(\gamma, H)$ (see \S\ref{sub:hat_unhat}) where $H = H_1 \coprod \dots \coprod H_n$, and suppose $\eta = (\eta_1, \dots, \eta_n)$ where $\eta_i = \gamma|_{H_i}$ are the maximal peripheral excursions.
Then we have 
\begin{align*}
\|\rho(\gamma)\| & \leq \|\rho(\gamma \setminus \eta)\| \cdot \prod_{i=1}^n \|\rho(\eta_i)\| \\
 & \leq e^{\mu_2 \cdot \ell(\gamma \setminus \eta)} \cdot C_1^n e^{\mu_1 \sum_{i=1}^n|\eta_i|_c} \\
 & \leq C_1^{|\gamma|_c} e^{\max\{\mu_2,\mu_1 \} \cdot |\gamma|_c}
\end{align*}
where $\|\rho(\gamma \setminus \eta)\|$ is to be interpreted as a product of $\|\rho(\gamma_i))\|$, where each $\gamma_i$ is a maximal connected component of $\gamma \setminus \eta$ as a path; 
$\ell(\gamma \setminus \eta)$ is the (sum of) length(s) of these paths (see \S\ref{sub:hat_unhat}.) $C_1$ and $\mu_1$ here are the constants from the upper domination condition in Definition \ref{defn:peri_conds}.

Here the second inequality follows from the first paragraph of the proof for individual non-peripheral pieces, and the upper domination hypothesis in Definition \ref{defn:peri_conds} for peripheral pieces, together with the equality (\ref{eq:dc_geod}) (from the proof of Proposition \ref{prop:unhat_distance}.)

In particular, writing $\bar{C}^{\frac12} = C_1$ and $\frac12 \bar\mu = \max\{\mu_2,\mu_1\}$, we have the Proposition.
\end{proof}

\spacer \begin{prop} \label{prop:qiembed_rel}
Let $\rho: \Gamma \to \SL(d,\real)$ be a representation which is $1$-dominated relative to $\mathcal{P}$ with lower domination constants $(\ubar{C},\ubar\mu)$.

Then the orbit maps $\gamma \mapsto \rho(\gamma) \cdot o$  are equivariant quasi-isometric embeddings of the relative Cayley graph $\Cay(\Gamma, S) \subset X(\Gamma, \mathcal{P}, S)$ into the symmetric space $G/K = \SL(d,\real) / \SO(d)$.

\begin{proof}
By construction, the orbit map is equivariant, i.e. $\rho(\gamma_2\gamma_1) \cdot o = \rho(\gamma_2) \cdot (\rho(\gamma_1) \cdot o)$.

Viewing $G/K$ as a space of inner products on $\real^d$, we recall the distance formula at the end of \S\ref{sec:SVD}:
\[ d_{G/K}(o, g\cdot o) = \sqrt{\sum (\log \sigma_i(g))^2 } \]
for any $g \in \SL(d,\real)$, where the $o$ denotes the basepoint corresponding to our choice of inner product (see the beginning of this section.)

Now Proposition \ref{prop:1D+} implies $\left( \log \sigma_i(\rho(\gamma)) \right)^2 \leq \left( \log \sigma_1(\rho(\gamma)) \right)^2 \leq  \frac14 \left( \log \bar{C} + \bar\mu |\gamma|_c \right)^2$ for $1 \leq i \leq d$, and so \[ \sqrt{\sum_{i=1}^d (\log \sigma_i(\rho(\gamma)))^2} \leq \frac{\sqrt{d}}2 \left( \log \bar{C} + \bar\mu |\gamma|_c \right) \]
for all $\gamma \in \Gamma$.
On the other hand, we have
\begin{align*} 
\sqrt{ \sum_{i=1}^d (\log \sigma_i(\rho(\gamma)))^2} & \geq 
\frac12 \left( |\log \sigma_1(\rho(\gamma))| + |\log \sigma_d(\rho(\gamma))| \right) \\
 & \geq \frac12 \log \frac{\sigma_1}{\sigma_2}(\rho(\gamma)) \geq \frac12 \log\ubar{C} + \frac{\ubar\mu}2 |\gamma|_c .
\end{align*}

Combining the two immediately yields that
the orbit map into $G/K$ is a quasi-isometric embedding {\it with respect to the cusped metric}.
\end{proof} \end{prop}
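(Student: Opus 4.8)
The plan is to use equivariance to reduce the statement to a two-sided affine comparison between $d_{G/K}(o, \rho(\gamma)\cdot o)$ and the cusped word-length $|\gamma|_c$, and then to extract both inequalities from the distance formula $d_{G/K}(o, g\cdot o) = \|a(g)\| = \sqrt{\sum_{i=1}^{d} (\log\sigma_i(g))^2}$ recalled at the end of \S\ref{sec:SVD}, fed by the upper and lower domination inequalities. First I would observe that the orbit map $\gamma\mapsto\rho(\gamma)\cdot o$ is $\Gamma$-equivariant by construction, so $d_{G/K}(\rho(\gamma_1)\cdot o,\rho(\gamma_2)\cdot o)=d_{G/K}(o,\rho(\gamma_1^{-1}\gamma_2)\cdot o)$, while left-invariance of $d_c$ gives $d_c(\gamma_1,\gamma_2)=|\gamma_1^{-1}\gamma_2|_c$; hence it suffices to bound $d_{G/K}(o,\rho(\gamma)\cdot o)$ above and below by affine functions of $|\gamma|_c$, uniformly in $\gamma\in\Gamma$.

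For the upper bound I would invoke Proposition~\ref{prop:1D+} (equivalently Corollary~\ref{cor:D+}), which gives $\sigma_1(\rho(\gamma))\le \bar C^{1/2}e^{\bar\mu|\gamma|_c/2}$ and, applied to $\gamma^{-1}$ together with $|\gamma^{-1}|_c=|\gamma|_c$, also $\sigma_1(\rho(\gamma^{-1}))\le \bar C^{1/2}e^{\bar\mu|\gamma|_c/2}$. Since $\rho(\gamma)\in\SL(d,\real)$ has $\sigma_1\ge 1\ge\sigma_d$ and $\sigma_d(\rho(\gamma))^{-1}=\sigma_1(\rho(\gamma^{-1}))$, each $|\log\sigma_i(\rho(\gamma))|$ is at most $\max\{\log\sigma_1(\rho(\gamma)),\log\sigma_1(\rho(\gamma^{-1}))\}\le\tfrac12(\log\bar C+\bar\mu|\gamma|_c)$, and summing $d$ such terms under the square root yields $d_{G/K}(o,\rho(\gamma)\cdot o)\le\tfrac{\sqrt d}{2}(\log\bar C+\bar\mu|\gamma|_c)$.

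For the lower bound I would keep only the two extreme singular values: the elementary inequality $\sqrt{\sum_i x_i^2}\ge\sqrt{x_1^2+x_d^2}\ge\tfrac12(|x_1|+|x_d|)$ gives $d_{G/K}(o,\rho(\gamma)\cdot o)\ge\tfrac12\bigl(|\log\sigma_1(\rho(\gamma))|+|\log\sigma_d(\rho(\gamma))|\bigr)\ge\tfrac12\log\tfrac{\sigma_1}{\sigma_d}(\rho(\gamma))\ge\tfrac12\log\tfrac{\sigma_1}{\sigma_2}(\rho(\gamma))$, and then (D\textsuperscript{-}) bounds this below by $\tfrac12(\log\ubar C+\ubar\mu|\gamma|_c)$. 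Combining the two estimates gives the required comparison between $d_{G/K}(o,\rho(\gamma)\cdot o)$ and $|\gamma|_c$; since $d_c$ restricted to $\Cay(\Gamma,S)\subset X(\Gamma,\mathcal P,S)$ is by definition the metric of the relative Cayley graph, this is exactly the asserted quasi-isometric embedding, and equivariance was noted at the outset.

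The one step that is not bookkeeping has already been packaged as Proposition~\ref{prop:1D+}: upgrading the a priori bound $\sigma_1(\rho(\gamma))\le e^{\mu_2|\gamma|}$ --- which controls $\sigma_1$ only by the \emph{unrelativized} word length, and so would merely reprove a quasi-isometric embedding of the ordinary Cayley graph --- to a bound in terms of $|\gamma|_c$. I expect the real substance to live entirely there: the \emph{upper domination} hypothesis on peripheral images, submultiplicativity of the operator norm across the decomposition of a word into peripheral excursions and non-peripheral pieces, and the length comparison of Proposition~\ref{prop:unhat_distance} (via the equality~(\ref{eq:dc_geod})) all enter at that point. Granting that input, the proposition above is the short computation just described, and matching the constants is the only thing requiring care.
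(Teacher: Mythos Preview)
Your proof is correct and follows essentially the same approach as the paper: equivariance reduces to comparing $d_{G/K}(o,\rho(\gamma)\cdot o)=\|a(\rho(\gamma))\|$ with $|\gamma|_c$, the upper bound comes from Proposition~\ref{prop:1D+}, and the lower bound from (D\textsuperscript{-}) via the extreme singular values. Your justification of the upper bound is in fact slightly more careful than the paper's, since the paper writes $(\log\sigma_i(\rho(\gamma)))^2\le(\log\sigma_1(\rho(\gamma)))^2$, which is not literally true for $i$ with $\sigma_i<1$; your observation that $|\log\sigma_d(\rho(\gamma))|=\log\sigma_1(\rho(\gamma^{-1}))$ together with Proposition~\ref{prop:1D+} applied to $\gamma^{-1}$ is exactly what is needed to make that step rigorous.
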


\section{Existence and transversality of limits} \label{sec:limtrans}

For the rest of this paper, let $\Gamma$ be a finitely generated group, $\mathcal{P}$ be a finite collection of subgroups of $\Gamma$ satisfying (RH), and $S = S^{-1}$ be a compatible finite generating set. For the next three sections (\S\S5, 6, and 7), fix $\rho: \Gamma \to \GL(d,\real)$ a representation which is $1$-dominated relative to $\mathcal{P}$ with domination constants $(\ubar{C},\ubar\mu,\bar{C},\bar\mu)$.

The goal of this section is to establish the following existence and transversality result, which will be very useful in the following sections:
\spacer \begin{defn} \label{defn:x_gamma}
Let $\alpha: I \to \Cay(\Gamma)$ be a path with $\alpha(I \cap \ints) \subset \Gamma$.

We define the sequence
\begin{align*} 
x_\alpha  & = ( \dots A_{a-1}, \dots, A_{-1}, A_0, \dots, A_{b-1}, \dots) \\
 & := \resizebox{.96\hsize}{!}{$([\cdots], \rho(\alpha(a)^{-1} \alpha(a-1)), \dots, \rho(\alpha(0)^{-1} \alpha(-1)), \rho(\alpha(1)^{-1} \alpha(0)), \dots, \rho(\alpha(b)^{-1} \alpha(b-1)), [\cdots] ) $}
 \end{align*}
and call this the {\bf matrix sequence associated to $\alpha$}. 

We say that $\alpha$ (or $x_\alpha$) is {\bf based at $\id$} if $I \ni 0$ and $\alpha(0) = \id$.
\end{defn}

\spacer \begin{prop}
\label{prop:limits_exist_trans}
Let $\gamma = \pi \circ \hat\gamma$ be a bi-infinite $(\ubar\upsilon, \bar\upsilon)$-metric quasigeodesic path $\gamma$ based at $\id$, 
and let $x = x_{\gamma} = (A_k)_{k\in\ints}$ be the matrix sequence associated to $\gamma$.
Then \begin{enumerate}[(i)]
\item the following limits 
\begin{align*}
E^{u}(x) & := \lim_{n\to\infty} U_1(A_{-1} \cdots A_{-n}) 
\\
E^{s}(x) & := \lim_{n\to\infty} S_{d-1}(A_{n-1} \cdots A_0) 
\end{align*}
exist and form a 
splitting $E^{u}(x) \oplus E^{s}(x)$ of $\real^d$, and

\item there is a uniform bound $s_{\min}$ (depending only on the quasigeodesic and domination constants) on the minimal separation $s(E^{u}(x), E^{s}(x)) := \sin \angle (E^{u}(x),E^{s}(x))$ between these linear subspaces.

\end{enumerate}
\end{prop}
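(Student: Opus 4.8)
The plan is to reduce the statement to the dominated (Anosov) case treated in \cite{BPS}, using the $(\ubar\upsilon,\bar\upsilon)$-metric quasigeodesic structure of $\gamma$ together with the well-behavedness of peripheral images to supply the uniform singular-value gaps that the proof requires. The key point is that along the matrix products $A_{-1}\cdots A_{-n}$ and $A_{n-1}\cdots A_0$ one has a \emph{uniform, linearly growing} gap $\log\frac{\sigma_1}{\sigma_2}$ in the \emph{cusped} metric, and one must convert this into the kind of hypothesis (a dominated-sequence or ``bounded cocycle with gaps'' hypothesis) under which the existence of $E^u, E^s$ and their uniform transversality is already known. Concretely: for $\gamma$ parametrized as an $(\ubar\upsilon,\bar\upsilon)$-metric quasigeodesic, consecutive cusped-length increments $|\gamma(n)^{-1}\gamma(n+1)|_c$ are bounded below by $\ubar\upsilon^{-1} - \ubar\upsilon$ away from peripheral steps, and the total $|\gamma(m)^{-1}\gamma(n)|_c \geq \ubar\upsilon^{-1}|m-n| - \ubar\upsilon$; feeding this into (D\textsuperscript{-}) gives $\log\frac{\sigma_1}{\sigma_2}(\rho(\gamma(m)^{-1}\gamma(n))) \geq \ubar\mu\ubar\upsilon^{-1}|m-n| + (\text{const})$, i.e. the products along $\gamma$ are \emph{uniformly dominated along subwords}. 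For peripheral steps one cannot use (D\textsuperscript{-}) naïvely (a single peripheral step can have arbitrarily large $|\cdot|_c$ and is not itself proximal in a controlled way), and here the quadratic gaps condition of Definition~\ref{defn:peri_conds} is exactly what is needed: if $\gamma(n)^{-1}\gamma(n+1)$ is a peripheral excursion then because $\gamma$ is a metric quasigeodesic, $\gamma(n)^{-1}\gamma(n+1)\gamma(n+1)^{-1}\gamma(n+2)\cdots$ is still a metric quasigeodesic, so $\frac{\sigma_1}{\sigma_2}(\rho(\cdots\gamma(n+1)\cdots))$ picks up the required $e^{|\eta|_c}$ factor, which compensates for the large cusped length of the excursion.

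The main steps, in order, would be: (1) establish the uniform lower bound on $\log\frac{\sigma_1}{\sigma_2}(\rho(\gamma(m)^{-1}\gamma(n)))$ growing linearly in $|m-n|$, combining (D\textsuperscript{-}), the metric-quasigeodesic inequalities (i)--(iii) of Definition~\ref{defn:metric_proj_qgeod}, and the quadratic gaps condition to handle indices $m,n$ straddling or sitting inside peripheral excursions; (2) use (D\textsuperscript{+}) (Corollary~\ref{cor:D+}) together with the upper domination on peripherals to get a matching \emph{upper} bound $\log\sigma_1(\rho(\gamma(m)^{-1}\gamma(n))) \leq (\text{const})|m-n|_c \leq (\text{const})(|m-n| + \min\{\delta(m),\delta(n)\}) + (\text{const})$, so that one has two-sided control; (3) invoke the Cauchy-type argument of \cite{BPS} (or the generalized Oseledets statement from \cite{QTZ} alluded to in the introduction) to conclude that $U_1(A_{-1}\cdots A_{-n})$ is Cauchy in $\proj(\real^d)$ — the singular-value gaps force the images $U_1$ of the partial products to form a nested, exponentially-contracting sequence of cones — hence $E^u(x)$ exists, and symmetrically (applying the same reasoning to the dual representation $\rho^*$, which is again relatively dominated by Proposition~\ref{prop:dualrep_reldom}, and using $S_{d-1}(g) = U_{d-1}(g^{-1})$) that $E^s(x)$ exists; (4) for transversality, observe that $E^u(x)$ is the limit of the ``most expanded'' direction of the backward products and $E^s(x)$ is the limit of the ``most contracted'' hyperplane of the forward products, estimate $\sin\angle(U_1(A_{-1}\cdots A_{-n}), S_{d-1}(A_{n-1}\cdots A_0))$ for finite $n$ using a linear-algebra lemma from Appendix~\ref{app:linalg} relating this angle to $\sigma_2/\sigma_1$ of the \emph{full} product $A_{n-1}\cdots A_0 \cdot A_{-1}^{-1}\cdots$ — no wait, more precisely to the gap of the product $A_{n-1}\cdots A_{-n}$ read across the basepoint — and note this full product is itself $\rho$ of an element lying on a bi-infinite metric quasigeodesic through $\id$, so its $\frac{\sigma_1}{\sigma_2}$ is uniformly large by step (1); pass to the limit to get $s(E^u,E^s) \geq s_{\min} > 0$ depending only on $\ubar C, \ubar\mu, \bar C, \bar\mu, \ubar\upsilon, \bar\upsilon$.

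The subtle point deserving care is the interaction of the two ends of the bi-infinite quasigeodesic at the basepoint: the product $A_{n-1}\cdots A_0 A_{-1}\cdots A_{-n}$ equals $\rho(\gamma(n)^{-1}\gamma(-n)) = \rho(\gamma(n))^{-1}\rho(\gamma(-n))$, and one needs $\gamma(n)^{-1}\gamma(-n)$ to again travel along a metric quasigeodesic of cusped length $\gtrsim |n|$ — this uses that $\gamma$ is bi-infinite and that concatenation of the two rays at $\id$ is still (uniformly) a metric quasigeodesic, which is where a local-to-global / stability input (ultimately the local-to-global condition in Definition~\ref{defn:peri_preconds}, or hyperbolicity of the cusped space once Theorem~\ref{thm:reldom_relhyp} is available) is silently used; alternatively one argues purely via the singular-value estimates without ever composing the two halves, using instead that $U_1$ of the left half and $U_{d-1}$ of the right half are each individually $O(e^{-cn})$-close to their limits and a single gap estimate at finite depth.

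\textbf{Main obstacle.} I expect the hardest part to be step (1) in the presence of peripheral excursions: making the bookkeeping precise so that a word of the form $(\text{non-peripheral})\cdot(\text{long peripheral }\eta)\cdot(\text{non-peripheral})$ has $\frac{\sigma_1}{\sigma_2}$ of its $\rho$-image bounded below by $e^{\ubar\mu \cdot (\text{cusped length})}$ \emph{uniformly}, and in particular ensuring that $U_1$ of the partial product just before entering $\eta$ is ``close enough'' to the attracting data $\xi_\rho(P)$ that the excursion genuinely contracts — this is precisely the role of the unique limits and uniform transversality conditions, and threading those hypotheses through the Cauchy estimate (rather than just the crude singular-value gap) is the delicate bit. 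Everything after one has the clean two-sided estimates is a routine adaptation of \cite{BPS}.
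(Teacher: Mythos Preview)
Your steps (1)--(3) are in the right spirit and roughly match what the paper does: the (SVG-BG) axiom comes exactly from (D\textsuperscript{-}) plus the metric-quasigeodesic lower bound, and the existence of the limits $E^u(x)$, $E^s(x)$ is proved via an exponential-Cauchy estimate (the paper's Lemma~\ref{lem:EC}) whose proof does indeed split into a low-depth case handled by Lemma~\ref{lem:BPSA4A5} and a high-depth case handled by the quadratic gaps condition, much as you anticipate in your ``main obstacle'' paragraph.

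Step (4), however, has a genuine gap. There is no lemma in Appendix~\ref{app:linalg} that produces a lower bound on $\sin\angle\bigl(U_1(B), S_{d-1}(A)\bigr)$ from a large gap $\frac{\sigma_1}{\sigma_2}(AB)$, even combined with large gaps for $A$ and $B$ separately --- and in fact no such statement is true. In dimension $2$, take $A = \mathrm{diag}(\lambda,\lambda^{-1})$ and $B = R_{\pi/2}\,\mathrm{diag}(\mu,\mu^{-1})$ with $\lambda = \mu^2 \gg 1$: then $U_1(B) = e_2 = S_1(A)$ so the angle is zero, yet $\frac{\sigma_1}{\sigma_2}(A) = \mu^4$, $\frac{\sigma_1}{\sigma_2}(B) = \mu^2$, $\frac{\sigma_1}{\sigma_2}(AB) = \mu^2$ are all large. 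Lemma~\ref{lem:BPSA7} goes only in the other direction (a lower bound on the angle \emph{implies} multiplicativity of the gap), and Lemma~\ref{lem:BPSA6} gives an upper bound on $d(AP, U_1(A))$ in terms of $1/\sin\alpha$, which again does not invert. So your proposed route to transversality does not close.

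The paper's actual argument for transversality is quite different and substantially more involved. It invokes Theorem~\ref{thm:QTZ} (the modified Quas--Thieullen--Zarrabi result), which in addition to (SVG-BG) and (EC) requires a ``forward irreducibility'' axiom (FI)\textsubscript{back} controlling how far $\sigma_1$ of a concatenation can fall below the product of the pieces. This axiom is \emph{not} available uniformly for general metric quasigeodesics: it holds with constants depending on the maximal depth $\Delta$ of peripheral excursions in the backward direction. The proof therefore splits into two cases. For sequences of bounded depth in one direction, (FI)\textsubscript{back} holds (with $\Delta$-dependent constants) and Theorem~\ref{thm:QTZ} yields a gap $s_{\min}(\Delta)$. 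For sequences of unbounded depth in both directions, one approximates by sequences with infinite peripheral tails in each direction, and the gap comes \emph{directly} from the uniform transversality hypothesis in Definition~\ref{defn:peri_conds}, together with (EC) to control the approximation error. A final compactness argument shows $\inf_\Delta s_{\min}(\Delta) > 0$. You will need to engage with this dichotomy and with the (FI) axiom; the uniform transversality condition is not just used to ``thread through the Cauchy estimate'' but is the sole source of transversality in the unbounded-depth case.
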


To prove this we will use the following theorem, which is a mild modification of a recent result of Quas--Thieullen--Zarrabi \cite{QTZ}, which in turn is a vast generalization of the characterization of linear cocycles with dominated splittings given in Bochi--Gourmelon \cite{BG}:
\spacer \begin{thm}
\label{thm:QTZ}
Let $(A_k)_{k\in\ints} \subset \GL(d,\real)$ be a sequence of matrices such that 
there exists constants $C \geq 1$ and $\mu, \mu' \geq 0$, with $\frac1\mu \log 3C > 1$, such that the following axioms are satisfied:
\begin{itemize}
\item (SVG-BG) 
for all $k \in \ints$ and all $n \geq 0$, 
\begin{align*}
\frac{\sigma_2}{\sigma_1} (A_{k+n-1} \cdots A_k) & \leq C e^{-n\mu} 
\end{align*}

\item (EC) 
for all $k \in \ints$ and all $n \geq 0$, 
\begin{align*}
d(S_{d-1}(A_{k+n-1} \cdots A_k), S_{d-1}(A_{k+n} \cdots A_k)) & \leq C e^{-n\mu} ,\\
d(U_1(A_{k-1} \cdots A_{k-n}), U_1(A_{k-1} \cdots A_{k-(n+1)})) & \leq C e^{-n\mu} .
\end{align*}

\item (FI)\textsubscript{back}: 
for all $k \leq 0$ and $n, m \geq 0$
\begin{align*}
\frac{\sigma_1(A_{k+n-1}  \cdots A_{k-m})}{\sigma_1(A_{k+n-1} \cdots A_k) \cdot \sigma_1(A_{k-1} \cdots A_{k-m})} & \geq C^{-1} e^{-m \mu'}
\end{align*}
\end{itemize}

Then \begin{enumerate}[(i)]
\item for each $k \in \ints$ in the sequence we have a splitting $E^{u} \oplus E^{s}$ of $\real^d$ given by
\begin{align*}
E^{u}(k) & := \lim_{n\to\infty} U_1(A_{k-1} \cdots A_{k-n}) 
\\
E^{s}(k) & := \lim_{n\to\infty} S_{d-1}(A_{k+n-1} \cdots A_k) 
\end{align*}
which is equivariant in the sense that $A_k E^*(k) = E^*(k+1)$ for all $k \in \ints$ and $* \in \{u,s\}$;
\item moreover, for all $k \leq 0$, we have a uniform lower bound $s_{\min} = s_{\min}(C,\mu,\mu')$ on the gap $s(E^{u}(k), E^{s}(k)) := \sin \angle (E^{u}(k),E^{s}(k))$ given by
\[ s(E^{u}(k), E^{s}(k)) \geq s_{\min} := \frac23 (3e)^{-2r} \exp\left( -\frac{3/2}{1-e^{-\mu}} \right) C^{-(1+2r)} ,\]
where $r := \frac{\mu'}\mu$.

\end{enumerate}
\end{thm}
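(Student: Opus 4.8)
The plan is to run the argument of Quas--Thieullen--Zarrabi \cite{QTZ} (which is itself modelled on Bochi--Gourmelon \cite{BG}), adapted to the slightly weaker, one-sided hypotheses used here; the proof has three parts --- existence of the limits, transversality with the quantitative gap for $k\le 0$, and equivariance. For existence, note first that (SVG-BG) forces $\sigma_1>\sigma_2$ for any product $A_{k+n-1}\cdots A_k$ once $n>\tfrac1\mu\log C$, so the $U_1$'s and $S_{d-1}$'s of the relevant truncated products are eventually well-defined, the standing hypothesis $\tfrac1\mu\log 3C>1$ serving to make these threshold scales explicit and harmless. Then (EC) says exactly that $n\mapsto U_1(A_{k-1}\cdots A_{k-n})$ and $n\mapsto S_{d-1}(A_{k+n-1}\cdots A_k)$ are Cauchy in the sine-angle metrics on $\proj(\real^d)$ and $\Gr_{d-1}(\real^d)$, since $\sum_n Ce^{-n\mu}<\infty$; hence the limits $E^u(k),E^s(k)$ exist, and summing the geometric tails gives the quantitative rates $d(U_1(A_{k-1}\cdots A_{k-m}),E^u(k))\le \tfrac{Ce^{-m\mu}}{1-e^{-\mu}}$ and $d(S_{d-1}(A_{k+n-1}\cdots A_k),E^s(k))\le\tfrac{Ce^{-n\mu}}{1-e^{-\mu}}$, which feed into the next step.

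The transversality estimate rests on a linear-algebra comparison lemma (of the kind collected in Appendix~\ref{app:linalg}): for $g,h\in\GL(d,\real)$ with simple top singular value,
\[
\Bigl|\tfrac{\sigma_1(gh)}{\sigma_1(g)\,\sigma_1(h)}-\sin\angle\bigl(U_1(h),S_{d-1}(g)\bigr)\Bigr|\ \le\ \tfrac{\sigma_2(g)}{\sigma_1(g)}+\tfrac{\sigma_2(h)}{\sigma_1(h)},
\]
proved by normalising $g,h$ by their top singular values and comparing each with the rank-one outer product $U_1(\cdot)V_1(\cdot)^{\!\top}$. Fixing $k\le 0$ and applying this with $h=A_{k-1}\cdots A_{k-m}$ and $g=A_{k+n-1}\cdots A_k$ (so $gh=A_{k+n-1}\cdots A_{k-m}$), one bounds the two error terms by $Ce^{-m\mu}$ and $Ce^{-n\mu}$ via (SVG-BG), and bounds $\tfrac{\sigma_1(gh)}{\sigma_1(g)\sigma_1(h)}$ below by $C^{-1}e^{-m\mu'}$ via (FI)\textsubscript{back}, yielding
\[
\sin\angle\bigl(U_1(A_{k-1}\cdots A_{k-m}),\ S_{d-1}(A_{k+n-1}\cdots A_k)\bigr)\ \ge\ C^{-1}e^{-m\mu'}-Ce^{-n\mu}-Ce^{-m\mu}.
\]
Letting $n\to\infty$ replaces $S_{d-1}(\cdots)$ by $E^s(k)$ and removes the $Ce^{-n\mu}$ term, and the $U_1$-rate above then lets one replace $U_1(A_{k-1}\cdots A_{k-m})$ by $E^u(k)$ at cost $\tfrac{Ce^{-m\mu}}{1-e^{-\mu}}$; a judicious choice of the (finite) truncation length $m$ --- of order $\tfrac1\mu\log C$, tuned so that the $C^{-1}e^{-m\mu'}$ term dominates --- converts this into the asserted lower bound $s_{\min}=\tfrac23(3e)^{-2r}\exp\!\bigl(-\tfrac{3/2}{1-e^{-\mu}}\bigr)C^{-(1+2r)}$ with $r=\mu'/\mu$ (the precise constant emerges from tracking the bookkeeping through the QTZ argument, which is slightly sharper than the crude version just sketched). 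In particular $E^u(k)\cap E^s(k)=0$ for $k\le 0$, and since $\dim E^u(k)=1$ and $\dim E^s(k)=d-1$, we get the direct sum $\real^d=E^u(k)\oplus E^s(k)$ there.

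For equivariance: $A_kU_1(A_{k-1}\cdots A_{k-n})$ and $U_1(A_kA_{k-1}\cdots A_{k-n})$ differ by an amount controlled by $\tfrac{\sigma_2}{\sigma_1}(A_k\cdots A_{k-n})$ --- another elementary fact about how $U_1$ transforms under left multiplication once the singular gap is large --- and that gap tends to $0$ as $n\to\infty$ by (SVG-BG), so passing to the limit gives $A_kE^u(k)=E^u(k+1)$; applying the same to the transpose-inverse cocycle (whose $U_1$ is $S_{d-1}^{\perp}$) gives $A_kE^s(k)=E^s(k+1)$. Finally the direct-sum decomposition, established for $k\le 0$, propagates to all $k\in\ints$ by equivariance: $E^u(k+1)\oplus E^s(k+1)=A_k(E^u(k))\oplus A_k(E^s(k))=A_k\real^d=\real^d$.

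The real work --- beyond transcribing and lightly modifying the QTZ machinery --- is the constant bookkeeping in the transversality step: one must juggle three indices (the truncation length $m$, the forward length $n$, and the base index $k$), take the limit in $n$ before optimising in $m$, and push the constants from (SVG-BG), (EC) and (FI)\textsubscript{back} through carefully enough to land exactly on $s_{\min}$. A secondary point, and the main departure from \cite{QTZ}, is to check that the \emph{one-sided} form of (FI)\textsubscript{back} (imposed only for $k\le 0$) is precisely what the argument consumes, which is why the quantitative gap in (ii) is correspondingly asserted only for $k\le 0$ and then transported forward by equivariance.
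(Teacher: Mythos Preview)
Your existence and equivariance arguments are fine and match the paper's. The transversality step, however, has a genuine gap.

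Your comparison lemma is correct (up to a harmless cross term), and combined with (FI)\textsubscript{back} and (SVG-BG) it does give, after sending $n\to\infty$ and passing from $U_1(A_{k-1}\cdots A_{k-m})$ to $E^u(k)$ via the (EC) rate,
\[
s(E^u(k), E^s(k)) \ \ge\ C^{-1}e^{-m\mu'} \;-\; \frac{C(2-e^{-\mu})}{1-e^{-\mu}}\,e^{-m\mu}
\]
for every sufficiently large $m$. But this is positive for \emph{some} $m$ only when $\mu' < \mu$: if $r = \mu'/\mu \ge 1$ the first term decays at least as fast as the second, and since $C \ge 1$ the constants are against you, so no choice of $m$ makes the right-hand side positive. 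The theorem, though, is stated and used for arbitrary $r \ge 0$ (the explicit $s_{\min}$ is positive for all $r$, and in the application in \S\ref{sec:limtrans} there is no reason for the (FI)\textsubscript{back} rate $\mu'$ to be bounded by the gap rate $\mu$). Saying the ``precise constant emerges from tracking the bookkeeping through the QTZ argument'' does not rescue this: for $r \ge 1$ your sketch gives no positive bound at all, not merely a suboptimal one.

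The paper's proof (following \cite{QTZ}) circumvents this by a genuinely different, iterative mechanism. One fixes a block length $N$ with $\frac{Ce^{-N\mu}}{1-e^{-\mu}} \le \frac13$, and rather than comparing $U_1$ to $E^s$ in one shot, writes $\tilde U(k,nN) := A(k-nN,nN)\,U(k-nN,nN)$ as the graph of a linear map $\Xi_n: \tilde U(k,N) \to E^s(k)$. The block-upper-triangular form of $A(k-nN,nN)$ with respect to the approximate splitting gives a recursion for $\Xi_n$, and one bounds $\|\id \oplus \Xi_n\|$ by a product $\prod_{j\ge 0}\bigl(1 + \tfrac32 C\,e^{N(\mu' - j\mu)}\bigr)$. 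The point is that each new block costs one factor of $e^{N\mu'}$ from (FI)\textsubscript{back}, but after $j$ blocks the accumulated (SVG-BG) decay contributes $e^{-jN\mu}$, so the product converges regardless of how $\mu'$ compares to $\mu$. Your one-shot comparison forfeits exactly this accumulation; the graph-transform iteration is the missing idea, not a bookkeeping refinement.
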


We will defer the proof of this result to Appendix \ref{app:QTZ} and focus on showing how to obtain Proposition \ref{prop:limits_exist_trans} given the Theorem. We remark that we may assume, without loss of generality, that our constants are such that the additional hypothesis $\frac1\mu \log 3C > 1$ specified in Theorem \ref{thm:QTZ} is satisfied; if they are not, we can make $C$ larger or $\mu$ smaller and the other required axioms will continue to hold with these adjusted constants.

Before beginning the argument, we remark that a number of linear algebra results, which will be used throughout this and subsequent proofs, are collected in Appendix \ref{app:linalg}. We note that Lemma \ref{lem:BPSA4A5}, in particular, will be used many times below to control unstable spaces of products of matrices. 

We start by establishing the following
\spacer \begin{lem} \label{lem:EC}
Given $\ubar\upsilon, \bar\upsilon > 0$, there exist constants $C \geq 1$ and $\mu >0$, depending only on the representation and $\ubar\upsilon, \bar\upsilon$, such that for any matrix sequence $x = x_{\gamma}$ associated to a 
bi-infinite $(\ubar\upsilon,\bar\upsilon)$-metric quasigeodesic path $\gamma$ based at $\id$,
\begin{align*}
d(U_1(A_{k-1} \cdots A_{k-n}), U_1(A_{k-1} \cdots A_{k-(n+1)})) & \leq C e^{-n\mu} \\
d(S_{d-1}(A_{k+n-1} \cdots A_k), S_{d-1}(A_{k+n} \cdots A_k)) & \leq C e^{-n\mu} .
\end{align*}
\end{lem}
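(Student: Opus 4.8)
The plan is to prove the two estimates in Lemma~\ref{lem:EC} by reducing them to a statement about \emph{products of matrices associated to metric quasigeodesic paths having uniformly growing top singular value gap}, and then invoking the Bochi--Gourmelon--type exponential convergence that follows once such a gap is available. Concretely, the first step is to establish that along a bi-infinite $(\ubar\upsilon,\bar\upsilon)$-metric quasigeodesic path $\gamma$ based at $\id$, the partial products $g_{k,n} := A_{k+n-1}\cdots A_k = \rho(\gamma(k)^{-1}\gamma(k+n))$ satisfy a uniform singular value gap bound of the form $\frac{\sigma_1}{\sigma_2}(g_{k,n}) \geq C_0 e^{\mu_0 n}$ for constants $C_0, \mu_0 > 0$ depending only on $\rho$ and $\ubar\upsilon,\bar\upsilon$. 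This is where the hypotheses must be combined carefully: on the stretches of $\gamma$ outside peripheral excursions one uses (D\textsuperscript{-}) together with property~(i) of a metric quasigeodesic ($|\gamma(k)^{-1}\gamma(k+n)|_c \geq \ubar\upsilon^{-1}n - \ubar\upsilon$), while each peripheral excursion is handled by the \textbf{quadratic gaps} condition in Definition~\ref{defn:peri_conds} applied to the sub-quasigeodesics $\gamma p$ or $p\gamma$ — the local-to-global hypothesis in~(RH) is exactly what guarantees these sub-paths are themselves metric quasigeodesics with controlled constants, so that the quadratic gaps bound kicks in. One then has to check that these gap lower bounds concatenate, i.e.\ that a long product whose factors each have a definite singular value gap still has a gap growing linearly in the combinatorial length; this is a standard consequence of the singular-value/Cartan-projection estimates collected in Appendix~\ref{app:linalg} (in particular Lemma~\ref{lem:BPSA4A5} controlling unstable directions under products), combined with the uniform transversality condition which keeps the relevant unstable/stable directions from collapsing.

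The second step, once the gap bound $\frac{\sigma_1}{\sigma_2}(g_{k,n})\geq C_0 e^{\mu_0 n}$ is in hand, is purely linear-algebraic: one shows that adding one more matrix to the product moves $U_1$ (resp.\ $S_{d-1}$) by an amount controlled by the reciprocal of the current gap. The point is that for a matrix $g$ with $\sigma_1(g)/\sigma_2(g)$ large, $U_1(g)$ is nearly an attracting direction, and if $h$ has bounded norm and conorm then $U_1(hg)$ is within $O\big(\tfrac{\sigma_2}{\sigma_1}(g)\big)$ of $hU_1(g)$ in the appropriate projective metric; iterating, one gets $d(U_1(g_{k,-n}), U_1(g_{k,-(n+1)})) \leq C' e^{-n\mu_0}$ and similarly for $S_{d-1}$ of the forward products. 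Here one needs the matrices being appended to have uniformly bounded singular values — this is where Proposition~\ref{prop:1D+} / Corollary~\ref{cor:D+} on \textbf{upper domination} enters, bounding $\|\rho(\gamma(k)^{-1}\gamma(k\pm1))\|$ in terms of $|\gamma(k)^{-1}\gamma(k\pm1)|_c$, which along a metric quasigeodesic is uniformly bounded for a single step outside a peripheral excursion, and for a peripheral step one instead has to use that the quasigeodesic increments inside an excursion grow controlledly (property~(iii)) together with the upper domination bound on peripherals; one absorbs these into the constant $C$ by the same concatenation bookkeeping used in Proposition~\ref{prop:1D+}.

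The main obstacle I expect is the bookkeeping across peripheral excursions: a single index step $k\mapsto k+1$ of the metric quasigeodesic may correspond to a very long peripheral word $p_{k,1}\cdots p_{k,\ell(k)}$ with $\ell(k)$ as large as $2^{\delta(k)+1}$, so the naive ``each factor has bounded norm'' argument fails and one must instead feed the whole excursion, as a single group element, into the quadratic gaps and upper domination conditions — and verify that after this the exponential-in-$n$ (cusped-length-normalized index $n$) decay survives the comparison between combinatorial length and cusped length supplied by Proposition~\ref{prop:wordlengths_bilip_log} and Proposition~\ref{prop:unhat_distance}. Once that comparison is set up cleanly (it says precisely that the reparametrized index $n$ is comparable to cusped distance, up to the $\min\{\delta(m),\delta(n)\}$ slack that harmlessly only helps the stable/unstable convergence since it occurs deep inside horoballs), the two displayed inequalities follow with $\mu$ a fixed fraction of $\mu_0$ and $C$ absorbing all the multiplicative constants, and the lemma is proved.
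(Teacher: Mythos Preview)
Your overall architecture matches the paper's: use Lemma~\ref{lem:BPSA4A5} to bound
\[
d\big(U_1(A_{k-1}\cdots A_{k-n}),\,U_1(A_{k-1}\cdots A_{k-n-1})\big)
\;\le\;
\frac{\sigma_1}{\sigma_d}(A_{k-n-1})\cdot\frac{\sigma_2}{\sigma_1}(A_{k-1}\cdots A_{k-n}),
\]
control the second factor by (D\textsuperscript{-}) plus the metric quasigeodesic lower bound, and control the first via (D\textsuperscript{+}). But two points deserve correction.

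First, your Step~1 is overbuilt. The lower domination hypothesis (D\textsuperscript{-}) already applies to \emph{every} $\gamma\in\Gamma$, not just to generators, so $\frac{\sigma_1}{\sigma_2}\big(\rho(\gamma(k)^{-1}\gamma(k-n))\big)\ge\ubar C\,e^{\ubar\mu|\gamma(k)^{-1}\gamma(k-n)|_c}$ is immediate; combined with property~(i) of Definition~\ref{defn:metric_proj_qgeod} this gives the gap $\ge C_0 e^{\mu_0 n}$ in one line. There is no need to concatenate gaps across sub-segments, and in particular no need to invoke uniform transversality here at all --- the paper does not use it in this lemma.

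Second, and more seriously, your handling of the deep-excursion case is where the real work lies, and your proposal does not supply it. You correctly observe that when $A_{k-n-1}$ sits at depth $\delta$ inside a peripheral excursion, $\frac{\sigma_1}{\sigma_d}(A_{k-n-1})$ is of order $e^{\bar\mu\bar\upsilon\delta}$, and since $\delta$ can be comparable to $n$ (with no control on $\bar\mu\bar\upsilon$ versus $\ubar\mu/\ubar\upsilon$), the simple product bound can blow up. Saying ``feed the whole excursion as a single group element into quadratic gaps'' does not resolve this: the quantity you must bound is the movement of $U_1$ from index $n$ to index $n+1$ \emph{inside} the excursion, not at its boundary. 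The paper's argument here is to split the single increment $A_{k-n-1}=\rho(p_{k-n-1,1}\cdots p_{k-n-1,\ell})$ into its individual peripheral generators $F_j$, telescope the distance via
\[
d(U_1(DE),U_1(DEA_{k-n-1}))\;\le\;\sum_{j=1}^{\ell}\frac{\sigma_1}{\sigma_d}(F_j)\cdot\frac{\sigma_2}{\sigma_1}(DEF_1\cdots F_{j-1}),
\]
and then apply the \emph{quadratic gaps} condition to each intermediate product $DEF_1\cdots F_{j-1}$ (which is a metric-quasigeodesic word ending in a peripheral tail of length $\asymp 2^{n-n_0}+j$). This yields a tail sum of the form $\sum_j (2^{(n-n_0)/\ubar\upsilon}+j)^{-2}$, which is bounded by a constant times $2^{-(n-n_0)/\ubar\upsilon}$; since at high depth $n-n_0$ is comparable to $\delta$ and hence to $n$, this gives exponential decay in $n$ independent of how large the single increment is. The final statement follows by running the low-depth bound when $\delta(A_{k-n-1})\le\frac{\mu_0}{2\mu_2}n$ and the high-depth bound otherwise; this two-case split is the substance of the proof and is absent from your proposal.
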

In other words, such sequences $x_{\gamma}$ satisfy (EC), with constants depending only on the representation and the quasigeodesic constants. It then follows, using the triangle inequality, that the limits exist, and in fact convergence to the limits is uniform:
\spacer \begin{cor} \label{cor:EC}
Given $x = x_{\gamma} = (A_k)$ a matrix sequence associated to 
bi-infinite $(\ubar\upsilon,\bar\upsilon)$-metric quasigeodesic path $\gamma$ based at $\id$, the limits 
\[ E^u(x) := \lim_{n\to\infty} U_1(A_{-1} \cdots A_{-n})
\quad\mbox{ and }\quad 
E^s(x) := \lim_{n\to\infty} S_{d-1}(A_{n-1} \cdots A_0)\] 
exist, and
\begin{align*}
d(U_1(A_{-1} \cdots A_{-n}), E^{u}(x)) & \leq 
\frac{C}{1-e^{-\mu}} \cdot e^{-n\mu} \\
d(E^{s}(x), S_{d-1}(A_{n} \cdots A_0)) & \leq 
\frac{C}{1-e^{-\mu}} \cdot e^{-n\mu}
\end{align*}
where $C, \mu$ are the constants from Lemma \ref{lem:EC}.
\end{cor}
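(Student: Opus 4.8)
The plan is to deduce the statement directly from Lemma \ref{lem:EC} by a routine geometric-series estimate, using that the metric spaces $\proj(\real^d)$ and $\Gr_{d-1}(\real^d)$ are complete (indeed compact).

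First I would fix $x = x_\gamma = (A_k)$ as in the statement and apply Lemma \ref{lem:EC} with $k = 0$ on the ``unstable'' side: for every $n \geq 0$ we have $d(U_1(A_{-1}\cdots A_{-n}), U_1(A_{-1}\cdots A_{-(n+1)})) \leq Ce^{-n\mu}$, with $C \geq 1$ and $\mu > 0$ depending only on the representation and on $\ubar\upsilon, \bar\upsilon$. Then for any $m > n \geq 0$ the triangle inequality gives
\[ d(U_1(A_{-1}\cdots A_{-n}), U_1(A_{-1}\cdots A_{-m})) \leq \sum_{j=n}^{m-1} Ce^{-j\mu} \leq \frac{C}{1-e^{-\mu}}\, e^{-n\mu}. \]
In particular $(U_1(A_{-1}\cdots A_{-n}))_{n\geq 0}$ is a Cauchy sequence in $\proj(\real^d)$, which is complete, so the limit $E^u(x)$ exists; letting $m \to \infty$ in the displayed inequality and using continuity of the distance function yields $d(U_1(A_{-1}\cdots A_{-n}), E^u(x)) \leq \frac{C}{1-e^{-\mu}} e^{-n\mu}$.

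The argument for $E^s(x)$ is identical, now applying the second inequality of Lemma \ref{lem:EC} with $k = 0$ to the sequence $(S_{d-1}(A_{n-1}\cdots A_0))_{n\geq 0}$ in the compact (hence complete) space $\Gr_{d-1}(\real^d)$: from $d(S_{d-1}(A_{n-1}\cdots A_0), S_{d-1}(A_n\cdots A_0)) \leq Ce^{-n\mu}$ one sums the tail to see the sequence is Cauchy with limit $E^s(x)$, and letting the far index tend to infinity gives $d(E^s(x), S_{d-1}(A_n\cdots A_0)) \leq \frac{C}{1-e^{-\mu}} e^{-n\mu}$. There is no substantive obstacle here; the only points needing (minimal) care are the completeness of the relevant projective space and Grassmannian, which is immediate from their compactness, and the bookkeeping of the harmless off-by-one in the index of the stable estimate, which does not affect the form of the bound.
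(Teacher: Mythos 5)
Your proposal is correct and is essentially the argument the paper intends: the paper derives Corollary \ref{cor:EC} from Lemma \ref{lem:EC} by exactly this triangle-inequality/geometric-series summation, merely stating it rather than writing out the Cauchy-sequence details. Your handling of completeness of the Grassmannians and the off-by-one index is fine and introduces no new ideas beyond the paper's route.
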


To prove Lemma \ref{lem:EC} it will be useful to 
more closely examine the parts of matrix sequences inside the peripheral subgroups. For this purpose, we recall the notions of peripheral excursion and depth from \S\ref{sec:relhyp}, now used for matrix sequences coming from paths in $\Gamma$:
\spacer \begin{defn} \label{defn:matseq_periexcur}
Given $I$ an interval in $\ints$ and a sequence $x = x_\alpha = (A_k) \in \GL(d,\real)^I$ associated to some path $\gamma: I \to 
\Cay(\Gamma)$, a {\bf peripheral excursion} in $x$ is a subsequence $(A_k) \in \GL(d,\real)^J$ where $\gamma|_J$ is a peripheral excursion in the sense of \S\ref{sub:hat_unhat}.

The {\bf depth} of a matrix $A_k = \rho(\gamma(k)^{-1} \gamma(k-1))$ inside a peripheral excursion is the depth of $\gamma(k)^{-1} \gamma(k-1)$ in the sense of Definition \ref{defn:projgeod_depth}.
\end{defn}

\begin{proof}[Proof of Lemma \ref{lem:EC}]
We presently restrict our attention to $(A_{k-n})_{n>0}$, in order to study more carefully the limit giving $E^{u}(k)$. 

We now derive two inequalities, each of which works to give us the bound we want in a different case.
On the one hand, we have
\begin{align*}
& d(U_1(A_{k-1} \cdots A_{k-n}), U_1(A_{k-1} \cdots A_{k-n-1})) \\
 & \quad\leq \frac{\sigma_1}{\sigma_d}(A_{k-n-1}) \cdot \frac{\sigma_2}{\sigma_1}(A_{k-1} \cdots A_{k-n}) \\
 & \quad \leq \frac{\sigma_1}{\sigma_d}(\rho(\gamma(k-n)^{-1} \gamma(k-n-1))) \cdot \frac{\sigma_2}{\sigma_1}(\rho(\gamma(k)^{-1} \gamma(k-n))) 
\end{align*}
by Lemma \ref{lem:BPSA4A5}. By 
Corollary \ref{cor:D+} and Definition \ref{defn:metric_proj_qgeod},
\[ \frac{\sigma_1}{\sigma_d}(\rho(\gamma(k-n)^{-1} \gamma(k-n-1))) \leq 
e^{\bar\mu \cdot \bar\upsilon (\delta(A_{k-n-1}) + 6)} = \bar{C} e^{6 \bar\mu \bar\upsilon} e^{\bar\mu \bar\upsilon \cdot \delta(A_{k-n-1})} ;\]
by Definition \ref{defn:metric_proj_qgeod} and the lower domination inequality (D\textsuperscript{-}),
\[ \frac{\sigma_2}{\sigma_1}(\rho(\gamma(k)^{-1} \gamma(k-n))) \leq \ubar{C}^{-1} e^{-\ubar\mu \ubar\upsilon} e^{\ubar\mu \ubar\upsilon n} \] 
where $\ubar{C}$ and $\ubar\mu$ are the domination constants.
Hence, writing $C_2 = \bar{C} \ubar{C}^{-1} e^{6 \bar\mu \bar\upsilon + \ubar\mu \ubar\upsilon}$, $\mu_2 = \bar\mu \bar\upsilon$, and $\mu_0 = \ubar\mu \ubar\upsilon$,
\begin{align}
d(U_1(A_{k-1} \cdots A_{k-n}), U_1(A_{k-1} \cdots A_{k-n-1}))&  \leq C_2 e^{ \mu_2 \cdot \delta(A_{k-n-1})} \cdot e^{-\mu_0 n} \label{ineq:lodepth}
\end{align}
This will turn out to give us the inequality we want when the depth $\delta(A_{k-n-1})$ is relatively small compared to $n$.

Alternatively, suppose a matrix lies in a peripheral excursion starting at $k-n_0$. Write $D := A_{k-1} \cdots A_{k-n_0}$ to denote the word prior to the excursion, and, for any integer $n$ with $A_{k-n}$ belonging to the peripheral excursion, $E(n-n_0) := A_{k-n_0-1} \cdots A_{k-n}$, so that we have the decomposition $A_{k-1} \cdots A_{k-n} = DE(n-n_0)$. 

We break $E(n-n_0)^{-1} E(n+1-n_0) = A_{k-n-1}$ up into smaller chunks
\[ A_{k-n-1} = A_{k-n-1,1} \cdots A_{k-n-1,\ell(k-n-1)} = \rho\left( p_{k-n-1,1} \cdots p_{k-n-1,\ell(k-n-1)} \right) \]
corresponding to single unbunched peripheral generators (as in property (iii) of Definition \ref{defn:metric_proj_qgeod}.)

For brevity, we write $F_j := A_{k-n-1,j}$ in the next inequality, and also adopt the convention $F_0 = \id$.
Now we have
\begin{align*}
& d(U_1(DE(n-n_0)), U_1(DE(n+1-n_0))) \\
 & \quad \leq \sum_{j=1}^{\ell(k-n-1)} d(U_1(DEF_0 \cdots F_{j-1}), U_1(DE(n-n_0) F_0 \cdots F_j) )  \\
 & \quad \leq \sum_{j=1}^{\ell(k-n-1)} \frac{\sigma_1}{\sigma_d}(F_j) \cdot \frac{\sigma_2}{\sigma_1}(DE(n-n_0) F_0 \cdots F_{j-1})  \\
 & \quad \leq \bar{C} e^{\bar\mu} \sum_{j=1}^{\ell(k-n-1)} \frac{\sigma_2}{\sigma_1} (DE(n-n_0) F_0 \cdots F_{j-1}) =: RHS_1
\end{align*}
where we have used the triangle inequality $\ell(k-n-1)$ times, applied Lemma \ref{lem:BPSA4A5} to each of the resulting terms, and then used Corollary \ref{cor:D+} with the bound on the size of single generators; then, using the quadratic gaps condition (which bounds from below the first singular value gap for images of words ending in peripheral excursions) 
\begin{align}
& d(U_1(DE(n-n_0)), U_1(DE(n+1-n_0))) \leq RHS_1 \notag \\ & \quad \leq \sum_{j=1}^{\ell(k-n-1)} \frac{\sigma_1}{\sigma_d}(F_j) \cdot \frac{\sigma_2}{\sigma_1}(DE(n-n_0) F_0 \cdots F_{j-1}) \notag \\
 & \quad \leq \bar{C} e^{\bar\mu} \sum_{j=1}^{\ell(k-n-1)} \frac{\sigma_2}{\sigma_1} (\rho(\gamma(k)^{-1} \gamma(k-n_0) \cdot 
 \gamma(k-n_0)^{-1} \gamma(k-n) p_{k-n-1,1} \cdots p_{k-n-1,j}))
 \notag \\
 & \quad \leq \bar{C} e^{\bar\mu}  \cdot \frac1{C'} \sum_{j=0}^{\ell(k-n-1)} |\gamma(k-n_0)^{-1} \gamma(k-n) p_{k-n-1,1} \cdots p_{k-n-1,j})|^{-2} =: RHS_2 \notag 
\end{align}
and finally using the metric quasigeodesic lower bound and Proposition \ref{prop:wordlengths_bilip_log}, we obtain
\begin{align}
& d(U_1(DE(n-n_0)), U_1(DE(n+1-n_0))) \leq RHS_2 \notag \\
 & \quad \leq \bar{C} e^{\bar\mu} \cdot \frac1{C'} \sum_{j=0}^{\ell(k-n-1)} \left( 2^{\ubar\upsilon^{-1}(n-n_0) - \ubar\upsilon} + j \right)^{-2} \notag \\
 & \quad \leq \frac{2^{1+\ubar\upsilon} \bar{C} e^{\bar\mu}}{C'} \exp \left( -\frac{\log 2}{\ubar\upsilon} (n-n_0) \right) 
 \leq C_3 \exp \left( \frac{\log 2}{\ubar\upsilon} \cdot \delta(A_{k-n}) \right) \label{ineq:hidepth}
\end{align}
where $C_3 := \frac{2^{1+\ubar\upsilon} \bar{C} e^{\bar\mu}} {C'}$; at the end we have used the general inequality
\[ \sum_{j=0}^b (M+j)^{-2} = \sum_{j=M}^{M+b} j^{-2} \leq \int_{M-1}^{M+b} x^{-2} \,dx 
= \frac{1}{M-1} - \frac{1}{M+b} \leq \frac2M .\]
This second inequality will serve us when the depth $\delta(A_{k-n})$ is relatively large compared to $n$.

For $n > 0$ where the depth $\delta(A_{k-n-1}) \leq \frac{\mu_0}{2\mu_2} n$ (including all $n$ where $\delta(A_{k-n}) = 0$, i.e. $A_n$ is nonperipheral), it follows from (\ref{ineq:lodepth}) that
\begin{align*}
d\left( U_1(A_{k-1} \cdots A_{k-n}), U_1(A_{k-1} \cdots A_{k-(n+1)}) \right) & \leq C_2 e^{\mu_2 \left( \delta(A_{k-n-1}) \right)} \cdot C_0^{-1} e^{-\mu_0 n} \\
  & \leq C_2 e^{\mu_2 \cdot \frac{\mu_0}{2\mu_2}n} e^{-\mu_0 n} = C_2 e^{-\frac{\mu_0}2n}
\end{align*}
For $n > 0$ where the depth $\delta(A_{k-n}) > \frac{\mu_0}{2\mu_2} n$, we have,
from (\ref{ineq:hidepth}),
\begin{align*} 
d\left( U_1(A_{k-1} \cdots A_{k-n}), U_1(A_{k-1} \cdots A_{k-n-1}) \right) & \leq C_3 \exp\left( -\frac{\mu_0 \log 2}{2\ubar\upsilon \mu_2} n \right)
\end{align*}
and so {\bf we have the desired inequalities for our Lemma, with $C = \max\left\{ C_2, C_3 \right\}$
and $\mu = \frac{\mu_0}2 \cdot \min\{1, \frac{\log 2}{\ubar\upsilon \mu_2} \}$.} 

For $(A_{k+n})_{n \geq 0}$ and the limit giving $E^{s}(k)$, we may argue similarly, or alternatively we may consider the reversed dual sequence $^\iota x^* = (B_k)_{k\in\ints}$ given by 
\begin{equation}
B_k := \rho^*(\gamma(-k-1)^{-1} \gamma(-k-2)) = (A_{-k-1}^{-1})^T 
\label{eqn:rev_dual_seq}
\end{equation} 
where $\rho^*$ is the dual representation, which is also 1-relatively dominated (Proposition \ref{prop:dualrep_reldom}.)

By Proposition \ref{prop:dualrep_reldom}, we have
\begin{align*}
U_1(B_{-k-1} \cdots B_{-k-n}) & = U_1(\rho^*(\gamma_r(-k)^{-1} \gamma_r(n-k-1))) \\
 & = \left( U_{d-1}(\rho(\gamma_r(k)^{-1} \gamma_r(k+n-2))) \right)^\perp \\
 & = \left( S_{d-1}(\rho(\gamma_r(k+n-2)^{-1} \gamma_r(k) )) \right)^\perp  \\ & = \left( S_{d-1} (A_{k+n-1} \cdots A_k) \right)^\perp
\end{align*}
Then we have
\begin{align*}
d(S_{d-1}(A_{k+n-1} \cdots A_k), S_{d-1}(A_{k+n} \cdots A_0)) & = d(U_1(B_{-k} \cdots B_{-k-n}), U_1(B_{-k} \cdots B_{-k-n-1})) \\ & \leq C e^{-\mu n} .
\end{align*}
where in the last step we have used the argument above for the $E^u(-k)$ limit for $^\iota x^*$,
\end{proof}

\begin{proof}[Proof of Proposition \ref{prop:limits_exist_trans}]
By Corollary \ref{cor:EC}, the limits $E^{u}(x)$ and $E^{s}(x)$ exist, and the sequence $x = x_\gamma$ satisfies axiom (EC) in the statement of Theorem \ref{thm:QTZ}, with constants depending only on the domination and quasigeodesic constants.

From the upper and lower domination inequalities (D\textsuperscript{-}) and the metric quasigeodesic properties in Definition \ref{defn:metric_proj_qgeod}), $x = x_\gamma$ satisfies axiom (SVG-BG) in the statement of Theorem \ref{thm:QTZ}, with constants $C = \ubar{C} e^{-\ubar\mu \ubar\upsilon}$ and $\mu = \ubar\mu \ubar\upsilon$.

{\bf Step 1: bounded-depth sequences.} 

\begin{defn}
We say a sequence $x = (A_k)_{k\in\ints}$ has {\bf bounded depth $\Delta$} in the backward direction (in the forward direction, respectively) if  $\delta(A_k) \leq \Delta$ for all $k \leq 0$ (for all $k \geq 0$, resp.)
\end{defn}

Equivalently, for $x_{\gamma_r}$, our (sub)path $\gamma|_{\ints_{\leq0}}$ (or $\gamma|_{\ints_{\geq0}}$, respectively) has peripheral excursions of bounded cusped length.

\begin{prop} \label{prop:lim_trans_bdd_dep}
Given $\Delta \in \ints_{\geq 0}$, there exists $s_{\min}(\Delta)$ (which also depends on the quasigeodesic and domination constants) such that for any $x = x_{\gamma_r}$ with bounded depth $\Delta$ in the backward direction or in the forward direction, $s(E^{u}(x), E^{s}(x)) \geq s_{\min}(\Delta)$

\begin{proof}
If $x = x_{\gamma_r}$ has bounded depth $\Delta$ in the backward direction, then $x$ satisfies the axiom (FI)\textsubscript{back} from the inequalities
\begin{align*} 
\frac{\sigma_1(A_{k+n-1} \cdots A_{k-m})}{\sigma_1(A_{k+n-1} \cdots A_k) \cdot \sigma_1(A_{k-1} \cdots A_{k-m})} & \geq \frac{\sigma_1(A_{k+n-1} \cdots A_k) \cdot \sigma_d(A_{k-1} \cdots A_{k-m})}{\sigma_1(A_{k+n-1} \cdots A_k) \cdot \sigma_1(A_{k-1} \cdots A_{k-m})} \\
 & \geq \frac1{C_2} e^{-\mu_2(\Delta+m)}
= \frac{e^{-\mu_2 \Delta}}{C_2} e^{-\mu_2 m} ;
\end{align*}
these inequalities follow from the general inequalities $\sigma_1(A) \cdot \sigma_1(B) \geq \sigma_1(AB) \geq \sigma_1(A) \cdot \sigma_d(B)$ and Corollary \ref{cor:D+} and Definition \ref{defn:metric_proj_qgeod}, with $C_2,\mu_2$ as in the proof of Lemma \ref{lem:EC}. 

Thus if $x = x_{\gamma_r}$ has bounded depth $\Delta$ in the backward direction, it satisfies (FI)\textsubscript{back} with $D = C_2 e^{\mu_2 \Delta}$ and $\mu'=\mu_2$.  
In particular, Theorem \ref{thm:QTZ} gives us $s(E^u(x), E^s(x)) \geq s_{\min}(\Delta)$ for some $s_{\min}(\Delta)$ depending also on the quasigeodesic and domination constants, and we obtain the Proposition for such sequences.

If $x = x_{\gamma_r} = (A_k)_{k\in\ints}$ has bounded depth $\Delta$ in the forward direction but not the backward direction, consider again the reversed dual sequence $^\iota x^* = (B_k)_{k\in\ints}$ defined above in (\ref{eqn:rev_dual_seq}).

The sequence $^\iota x^*$ has bounded depth in the backward direction, hence Proposition \ref{prop:lim_trans_bdd_dep} we have $s(E^u(^\iota x^*), E^s(^\iota x^*)) \geq s_{\min}(\Delta)$. 

But now, by Proposition \ref{prop:dualrep_reldom}, $E^u(^\iota x^*) = E^s(x)^\perp$ since
\begin{align*}
E^u(^\iota x^*) & = \lim_{n\to\infty} U_1(B_{-1} \cdots B_{-n}) \\
 & = \lim_{n\to\infty} U_1(\rho^*(\gamma_r(0)^{-1} \gamma_r(n-2))) \\
 & = \lim_{n\to\infty} (U_{d-1}(\rho(\gamma_r(0)^{-1} \gamma_r(n-2))) )^\perp \\
 & = \left( \lim_{n\to\infty} S_{d-1}(\rho(\gamma_r(n-2)^{-1} \gamma_r(0) )) \right)^\perp = E^s(x)^\perp
\end{align*}
and similarly $E^s(^\iota x^*) = E^u(x)^\perp$. Hence we have $s(E^u(x), E^s(x)) \geq s_{\min}(\Delta)$ as desired.
\end{proof} \end{prop}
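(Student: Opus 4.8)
The plan is to deduce bounded-depth transversality directly from Theorem \ref{thm:QTZ}, and to reduce the forward-bounded-depth case to the backward one by the reversed-dual trick.

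First, suppose $x = x_{\gamma_r}$ has bounded depth $\Delta$ in the backward direction. By Corollary \ref{cor:EC} (equivalently Lemma \ref{lem:EC}) the sequence $x$ already satisfies axiom (EC), and from the lower domination inequality (D\textsuperscript{-}) together with property (i) of Definition \ref{defn:metric_proj_qgeod} it satisfies (SVG-BG); so the one remaining hypothesis of Theorem \ref{thm:QTZ} to verify is (FI)\textsubscript{back}. For this I would start from the elementary singular-value inequalities $\sigma_1(A)\sigma_1(B) \geq \sigma_1(AB) \geq \sigma_1(A)\sigma_d(B)$, which bound the ratio in (FI)\textsubscript{back} from below by $\tfrac{\sigma_d}{\sigma_1}(A_{k-1}\cdots A_{k-m})$; then Corollary \ref{cor:D+} (the upper domination inequality (D\textsuperscript{+})) combined with the depth-versus-length estimate of Definition \ref{defn:metric_proj_qgeod} bounds $\tfrac{\sigma_1}{\sigma_d}$ of a single matrix exponentially in its depth, and since every $A_j$ with $j \leq 0$ has depth at most $\Delta$, multiplying the $m$ factors gives $\tfrac{\sigma_1}{\sigma_d}(A_{k-1}\cdots A_{k-m}) \leq C_2 e^{\mu_2(\Delta+m)}$ with $C_2,\mu_2$ as in the proof of Lemma \ref{lem:EC}. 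Thus (FI)\textsubscript{back} holds with $C = C_2 e^{\mu_2\Delta}$ and $\mu' = \mu_2$, and Theorem \ref{thm:QTZ}(ii) then yields $s(E^u(x),E^s(x)) \geq s_{\min}$, with $s_{\min}$ the explicit expression in that theorem, hence depending only on $\Delta$, the quasigeodesic constants $\ubar\upsilon,\bar\upsilon$, and the domination constants. (As remarked after Theorem \ref{thm:QTZ}, the side condition $\tfrac1\mu\log 3C > 1$ can be arranged by shrinking $\mu$ or enlarging $C$ without affecting the other axioms.)

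For $x = x_{\gamma_r} = (A_k)$ of bounded depth $\Delta$ in the forward direction, I would pass to the reversed dual sequence ${}^\iota x^* = (B_k)$ of (\ref{eqn:rev_dual_seq}): reversing the index converts forward-bounded depth into backward-bounded depth, so the previous paragraph applies to ${}^\iota x^*$ and gives $s(E^u({}^\iota x^*),E^s({}^\iota x^*)) \geq s_{\min}(\Delta)$. Using Proposition \ref{prop:dualrep_reldom} exactly as in the computation of $E^u({}^\iota x^*)$ carried out above, one identifies $E^u({}^\iota x^*) = E^s(x)^\perp$ and $E^s({}^\iota x^*) = E^u(x)^\perp$; since the sine of the angle between a line and a hyperplane is unchanged on passing to orthogonal complements, $s(E^u(x),E^s(x)) = s(E^s(x)^\perp, E^u(x)^\perp) = s(E^u({}^\iota x^*),E^s({}^\iota x^*)) \geq s_{\min}(\Delta)$, as desired.

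The only genuinely delicate point is the verification of (FI)\textsubscript{back}: one must use the depth bound $\Delta$ uniformly along the entire backward ray (not merely near the basepoint), and check that the extracted bound really has the form $C^{-1}e^{-m\mu'}$ with the $e^{\mu_2\Delta}$-factor absorbed into $C$, so that $\mu'$ is independent of $\Delta$. Everything else — (EC), (SVG-BG), and the dual-sequence bookkeeping — is already in hand from Corollary \ref{cor:EC}, Definition \ref{defn:metric_proj_qgeod}, and Proposition \ref{prop:dualrep_reldom}.
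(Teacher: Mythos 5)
Your overall strategy is the same as the paper's: verify (FI)\textsubscript{back} for backward-bounded-depth sequences, feed the three axioms into Theorem \ref{thm:QTZ} to get the gap bound, and handle the forward-bounded-depth case via the reversed dual sequence of (\ref{eqn:rev_dual_seq}) together with the identifications $E^u({}^\iota x^*) = E^s(x)^\perp$, $E^s({}^\iota x^*) = E^u(x)^\perp$ from Proposition \ref{prop:dualrep_reldom}. The dual-sequence half and the reductions to (EC) and (SVG-BG) are fine.

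The gap is in your verification of (FI)\textsubscript{back}. You propose to bound $\tfrac{\sigma_1}{\sigma_d}$ of each single factor $A_j$ (for $j\leq 0$) by a quantity exponential in its depth $\leq \Delta$, and then multiply the $m$ factors. But submultiplicativity then gives
\[ \frac{\sigma_1}{\sigma_d}(A_{k-1}\cdots A_{k-m}) \;\leq\; \bigl(C'e^{\mu_2(\Delta+O(1))}\bigr)^m, \]
whose exponent grows like $m\Delta$, not like $\Delta+m$; this is \emph{not} the claimed bound $C_2 e^{\mu_2(\Delta+m)}$, and in particular it does not yield an exponential rate $\mu'$ independent of $\Delta$, which is exactly the point you flag as delicate at the end. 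The way the paper gets the $\Delta+m$ form is to apply Corollary \ref{cor:D+} to the \emph{whole} product at once: $A_{k-1}\cdots A_{k-m} = \rho(\gamma_r(k)^{-1}\gamma_r(k-m))$ is the image of a single group element, and property (ii) of Definition \ref{defn:metric_proj_qgeod} bounds its cusped length by $\bar\upsilon\bigl(m+\min\{\delta(k),\delta(k-m)\}\bigr)+\bar\upsilon \leq \bar\upsilon(m+\Delta)+\bar\upsilon$, so (D\textsuperscript{+}) gives $\tfrac{\sigma_1}{\sigma_d}(A_{k-1}\cdots A_{k-m}) \leq C_2 e^{\mu_2(\Delta+m)}$ directly, and (FI)\textsubscript{back} holds with the $e^{\mu_2\Delta}$ factor absorbed into $C$ and $\mu'=\mu_2$ independent of $\Delta$. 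Your argument can be repaired either by making this global application of (D\textsuperscript{+}), or by keeping the per-factor estimate and accepting a $\Delta$-dependent $\mu'$ (Theorem \ref{thm:QTZ} still applies and $s_{\min}$ is allowed to depend on $\Delta$, so the Proposition as stated survives, with a worse constant); but as written the claimed inequality does not follow from the multiplication you describe.
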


{\bf Step 2: unbounded-depth sequences.} If our sequence $x = x_{\gamma_r}$ does not have bounded depth in either the backward or forward directions, then the subpaths in both directions (i.e. both $\gamma|_{\ints_{\leq0}}$ and $\gamma|_{\ints_{\geq0}}$) contain arbitrarily long peripheral excursions. 

Define $P^\pm \in \mathcal{P}$ and infinite peripheral excursions $p^\pm_\infty$ as follows: \begin{itemize} 
\item if $\gamma$ is eventually peripheral in the forward (backward, respectively) direction, let $p^+_\infty$ ($p^-_\infty$, resp.) be the maximal infinite peripheral excursion of the form $\gamma|_{\geq N}$ for some $N \in \ints_{\geq 0}$ ($\gamma|_{\geq N}$ for some $N \in \ints_{\leq 0}$, resp.), and let $P^+$ ($P^-$, resp.) be the peripheral subgroup in which $p^+_\infty$ ($p^-_\infty$, resp.) lies.
\item If $\gamma$ is not eventually peripheral in the forward (backward, resp.) direction: by the finiteness of $|\mathcal{P}|$ and since the peripheral subgroups are finitely-generated, in this direction we can find $P^+ \in \mathcal{P}$ ($P^-$, resp.) and a sequence of increasingly longer peripheral excursions $p_n^\pm$ in $P^\pm$. By a diagonal argument these converge to an infinite peripheral excursion $p^\pm_\infty$ into $P^\pm$ (respectively.)
\end{itemize}

Let $L$ be the constant from the local-to-global condition in Definition \ref{defn:peri_preconds}
and $T_2$ be the threshold such that \[ \frac{C}{1-e^{-\mu}} e^{-\mu T_2} \leq \frac{\delta_0}8 \] where $C, \mu>0$ are the constants from Lemma \ref{lem:EC}, $\delta_0$ is the constant from the uniform transversality condition, and define 
$ T := \max\left \{ T_2, L \right\} $.

Consider, in each direction, the first peripheral excursions into $P^\pm$ of depth at least $T$ which (i.e. whose reparametrized projections) agree with $p^\pm_\infty$ up to length $T$. Take a sequence $x'$ where we replace these peripheral excursions with $p^\pm_\infty$ (resp.) By construction and by the local-to-global condition, these are uniform metric projected quasigeodesics in both directions (starting from 0.) 
From the uniform transversality condition, 
we have $s(E^{u}(x'), E^{s}(x')) \geq \delta_0$.

Next we wish to use (EC) (more precisely, Corollary \ref{cor:EC}) and the choice of $T$ to say that 
\[ d(E^{u}(x), E^{u}(x')) \leq \frac{\delta_0}4 \quad\quad\quad\quad
d(E^{s}(x), E^{s}(x')) \leq \frac{\delta_0}4 .\] 
To verify (EC) for $x'$, remark that our construction---in particular the choice of $T$---together with the local-to-global condition give us that we have geodesic rays in both directions, and hence 
(EC) still follows from Lemma \ref{lem:EC}.

Hence
$ s(E^{u}(x), E^{s}(x)) \geq \frac{\delta_0}2 >0 $
and we have a splitting.

{\bf To obtain the minimum gap:} from Proposition \ref{prop:lim_trans_bdd_dep} (i.e. step 1 above), we have a minimum gap $s(N)$ for any sequence of bounded depth $N$ in either direction; from step 2, we have a minimum gap $\delta_0/2$ for sequences of unbounded depth. Suppose $s(N) \to 0$ as $N \to \infty$. Then we may choose an infinite sequence of matrix sequences $x^{(m)}$, each associated to a (reparametrized) $(\ubar\upsilon,\bar\upsilon)$-metric projected quasigeodesic of bounded depth $d_m$, with $d_m \to \infty$, such that the gap between $E^{u}(x^{(m)})$ and $E^{s}(x^{(m)})$ is bounded above by $\frac 1m$. 

Up to subsequence, these converge to some infinite sequence $x$ which is associated to a reparametrized $(\ubar\upsilon,\bar\upsilon)$-metric projected quasigeodesic with zero gap between $E^{u}(x)$ and $E^{s}(x)$; but this is a contradiction whether $x$ has unbounded or bounded depth.

Hence, by our compactness argument, we may choose our minimum gap to be 
\[ \min \{ \delta_0/2, \inf_{N \in \nats} s(N) \} > 0 .\]
\end{proof}
\section{Relative domination implies relative hyperbolicity} \label{sec:reldom_relhyp}
Recall that $\Gamma$ is a torsion-free finitely-generated group. 
We will presently prove the following
\spacer \begin{thm} \label{thm:reldom_relhyp}
If $\rho: \Gamma \to \GL(d,\real)$ is 
$1$-dominated relative to $\mathcal{P} \neq \varnothing$ with domination constants $(\ubar{C},\ubar\mu,\bar{C},\bar\mu)$, 
and $\Gamma \neq \bigcup \mathcal{P}^\Gamma$
(i.e. $\Gamma$ contains non-peripheral elements), 
then $\Gamma$ must be hyperbolic relative to $\mathcal{P}$.
\end{thm}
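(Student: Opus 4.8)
The plan is to verify the hypotheses of the Bowditch--Yaman criterion, or rather its streamlined form Theorem~\ref{thm:Gerasimov}, for the action of $\Gamma$ on a suitable compact metrizable space $M$. The natural candidate for $M$ is the limit set of $\rho$ in $\proj(\real^d)$: first one should show that limits of the form $\lim_{n\to\infty} U_1(\rho(\gamma_n))$ along sequences $\gamma_n$ with $|\gamma_n|_c \to \infty$ exist and depend only on the ``direction'' in which $\gamma_n$ escapes, assembling into a well-defined compact set $M = \Lambda_\rho$. For non-peripheral escape this should follow from the transversality machinery of Proposition~\ref{prop:limits_exist_trans} (the unstable spaces $E^u(x)$ of projected-geodesic matrix sequences), while for peripheral escape it follows from the unique limits hypothesis, which gives a single $\xi_\rho(P)$ per peripheral $P$. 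One then gets a natural $\Gamma$-action on $M$ by projective transformations, and $M$ is nonempty (since $\Gamma$ has non-peripheral elements, so there are infinite-order proximal elements by Proposition~\ref{lem:nonperi_proximal}) and perfect (orbit arguments).

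Next I would establish the two dynamical conditions in Theorem~\ref{thm:Gerasimov}: proper discontinuity of the induced action on $M^{(3)}$, and cocompactness on $M^{(2)}$. For proper discontinuity on triples, the key input is that the orbit map is a quasi-isometric embedding of the relative Cayley graph into the symmetric space (Proposition~\ref{prop:qiembed_rel}), combined with the uniform transversality/minimal-gap statement (Proposition~\ref{prop:limits_exist_trans}(ii) together with the uniform transversality hypothesis of Definition~\ref{defn:peri_conds}): if $\rho(g_i)$ moved three points of $M$ while staying in a compact subset of the space of triples, the uniform gap between attracting and repelling data would force a bound on $|g_i|_c$, contradicting properness of $|\cdot|_c$. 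For cocompactness on pairs $M^{(2)}$, the idea is to associate to a pair $(\xi^+, \xi^-)$ a bi-infinite projected metric quasigeodesic through the origin realizing that pair as its endpoints (using the relative hyperbolicity-flavored local-to-global condition in Definition~\ref{defn:peri_preconds} to build such quasigeodesics, and the transversality results to see that $(E^u, E^s)$ recovers $(\xi^+,\xi^-)$), and then to translate so that a uniformly bounded initial segment of the quasigeodesic sits near $\id$; the excursions into combinatorial horoballs and the reparametrization of Proposition~\ref{prop:reparam_proj_geod} let one control ``how deep into a cusp'' a pair can force the geodesic, yielding a compact fundamental domain for pairs with endpoints in the same peripheral point modulo its stabilizer.

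Once Theorem~\ref{thm:Gerasimov} applies, it yields that $\Gamma$ is hyperbolic relative to the maximal parabolic subgroups of the action $\Gamma \actson M$, and $M$ is equivariantly identified with the Bowditch boundary. The final bookkeeping step is to check that these maximal parabolics are exactly the conjugates of the subgroups in $\mathcal{P}$: a peripheral $P$ fixes $\xi_\rho(P) \in M$ (by unique limits) and acts on the complement with compact quotient, so $P$ is contained in a maximal parabolic; conversely the uniform transversality condition ($\xi(P) \neq \xi(\gamma P'\gamma^{-1})$ for distinct peripherals, and in particular no non-peripheral element fixes a $\xi(P)$) together with malnormality forces equality, so the parabolic subgroups are precisely $\mathcal{P}$ up to conjugacy. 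The delicate point worth noting is that the malnormality in (RH) is needed to know that distinct peripheral cosets contribute distinct parabolic fixed points and that the stabilizers are the peripheral subgroups themselves rather than something larger.

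I expect the main obstacle to be the cocompactness on $M^{(2)}$ (bounded parabolic points), rather than the proper discontinuity on triples: one must carefully build the bi-infinite projected metric quasigeodesic associated to a generic pair of distinct points of $M$ and prove it \emph{uniformly} fellow-travels in a way compatible with the cusped metric, handling both the case of a ``generic'' pair (non-peripheral on both ends) and the case where one or both endpoints are peripheral points $\xi(P)$; the reparametrization machinery of \S\ref{sub:reparam_proj} and the local-to-global hypothesis in (RH) are precisely what make this possible, but marshalling them to produce a genuinely compact quotient — uniformly over all peripheral subgroups and all pairs — is where the real work lies. A secondary technical point is verifying that $M$ is the \emph{entire} relevant boundary and not a proper invariant subset, i.e.\ that every escaping sequence in $\Gamma$ (relative to $|\cdot|_c$) has a subsequence whose unstable spaces converge into $M$; this again reduces to the existence-of-limits results (Corollary~\ref{cor:EC} and the unique limits hypothesis) applied along projected geodesics from $\id$ to $\gamma_n$.
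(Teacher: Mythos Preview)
Your proposal is correct and follows essentially the same route as the paper: construct the limit set $\Lambda_{rel} \subset \proj(\real^d)$, verify the hypotheses of Theorem~\ref{thm:Gerasimov} (perfectness, proper discontinuity on triples, cocompactness on pairs), and then identify the maximal parabolics with $\mathcal{P}$. The paper's concrete mechanisms are a north-south dynamics lemma (Lemma~\ref{lem:ns_dyn_all}, built on the quantitative transversality of Lemma~\ref{lem:biinf_trans_quant}) for the action on triples, and, for cocompactness on pairs, an expansivity argument that shifts the associated matrix sequence until it approximates a metric \emph{geodesic} sequence where the uniform gap $s_{\min}$ of Proposition~\ref{prop:limits_exist_trans} kicks in; your secondary worry about $M$ being the ``entire'' boundary is sidestepped by defining $\Lambda_{rel}$ as a nested intersection of closures rather than as a set of geodesic-ray limits.
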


We remark that the statement is still true if $\mathcal{P} = \varnothing$---that is precisely the result from \cite{BPS}.

The proof of Theorem \ref{thm:reldom_relhyp} will use the criterion for relative hyperbolicity given in Theorem \ref{thm:Gerasimov}.
To do so we will find a compact, perfect metric space on which $\Gamma$ acts as a geometrically finite convergence group, and verify that the maximal parabolic subgroups are precisely the peripheral subgroups. Below, we construct such a space $\Lambda_{rel}$, verify it has the required properties, check that the action of $\Gamma$ on the space of distinct triples $\Lambda_{rel}^{(3)}$ is properly discontinuous and the action of $\Gamma$ on the space of distinct pairs $\Lambda_{rel}^{(2)}$ is cocompact, and finally characterize the maximal parabolic subgroups.

We remark that the outline of the argument is adapted from that of \cite{BPS}, \S3. In particular, a statement describing north-south dynamics (Lemma 3.13 in \cite{BPS}, Lemma \ref{lem:ns_dyn_all} here), resulting from a quantitative transversality result (Corollary \ref{cor:lim_transverse}), is a key intermediate proposition. Here the geodesics we consider are located not in the group but in the associated cusped space, and this necessitates the new tools introduced in the previous section
for the proof of the transversality result. There are also differences in the proofs due to the convergence action of the group being geometrically-finite rather than uniform; among other things, this, through our assumption that $\Gamma$ contains both peripheral and non-peripheral elements, simplifies the proof of perfectness (Proposition \ref{prop:Mrel_perfect}.)

We fix some notation for the below. Fix $\ell_0 \in \nats$ such that $\ubar{C} e^{-\ubar\mu \ell_0} < 1$.
We will write, for brevity, $\Xi_\rho(\gamma) := U_1(\rho(\gamma))$ and $\Xi^*_\rho(\gamma) := S_{d-1}(\rho(\gamma)^{-1}) = U_{d-1}(\rho(\gamma))$, for $\gamma \in \Gamma$. We recall that these were defined in \S\ref{sec:SVD}. Given $\xi, \zeta \in \proj(\real^d)$ or $\Gr_{d-1}(\real^d)$, $d(\xi, \zeta)$ will denote distance between $\xi$ and $\zeta$ in the relevant Grassmannian.

\subsection{The limit set}
We will construct a candidate space $\Lambda_{rel}$ for the compact metric space $M$ required in Theorem \ref{thm:Yaman}, as follows:
\[ \Lambda_{rel} := \bigcap_{n \geq \ell_0} \overline{ \left\{ \Xi_\rho(\gamma) : |\gamma|_c \geq n \right\} } .\]
We remark that any $\xi \in \Lambda_{rel}$ can be written as a limit $\displaystyle \lim_{n \to \infty} \Xi_\rho(\gamma_n)$ where $|\gamma_n|_c \to \infty$.

\spacer \begin{rmk}
$\Lambda_{rel}$ is closely related to
Benoist's limit set from \cite{Benoist1997}: 
at least in the case where $\rho(\Gamma)$ is Zariski-dense, $\Lambda_{rel}$ is the natural projection of Benoist's limit set to the projective space.
\end{rmk}

It is fairly immediate that
\spacer \begin{prop} \label{prop:BPS311}
$\Lambda_{rel}$ is compact, non-empty, and $\rho(\Gamma)$-invariant.

\begin{proof}
$\Lambda_{rel}$ is compact and non-empty since it is a decreasing intersection of non-empty closed subsets of a Grassmannian, which is a compact space. 

To show $\Lambda_{rel}$ is $\rho(\Gamma)$-invariant, we fix $\eta \in \Gamma$ and $\xi \in \Lambda_{rel}$, and choose a sequence $(\gamma_n) \subset \Gamma$ such that $|\gamma_n|_c \to \infty$ and $\Xi(\gamma_n) \to \xi$. $\Xi(\eta\gamma_n)$ is well-defined whenever $|\gamma_n| \geq \ell_0 - |\eta|$, and by (D\textsuperscript{-}) and Lemma \ref{lem:BPSA4A5}(\ref{eqn:A5}) we have
\[ d(\rho(\eta) \, \Xi_\rho(\gamma_n), \Xi_\rho(\eta\gamma_n)) \leq \frac{\sigma_1}{\sigma_d}(\rho(\eta)) \cdot \ubar{C} e^{-\ubar\mu |\gamma_n|_c} \to 0 \]
as $n \to \infty$, and so $\Xi_\rho(\eta\gamma_n) \to \rho(\eta) \xi$ as $n \to \infty$, and in particular $\rho(\eta) \xi \in \Lambda_{rel}$.
\end{proof}
\end{prop}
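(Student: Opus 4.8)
The plan is to split the three properties into two groups: compactness together with non-emptiness, which is formal and follows from $\Lambda_{rel}$ being a nested intersection of closed subsets of a compact Grassmannian; and $\rho(\Gamma)$-invariance, which uses (D\textsuperscript{-}) together with the standard linear-algebra estimate comparing the top singular direction of a product with that of its last factor.

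For compactness and non-emptiness I would write $\Lambda_{rel}=\bigcap_{n\ge\ell_0}K_n$ with $K_n:=\overline{\{\Xi_\rho(\gamma):|\gamma|_c\ge n\}}\subseteq\proj(\real^d)$; these are closed and nested decreasing. Each $K_n$ is non-empty because $\{\gamma:|\gamma|_c\le N\}$ is finite for every $N$ (as recorded in the proof of Proposition~\ref{prop:discrete_faithful}) while $\Gamma$ is infinite, so there exist $\gamma$ with $|\gamma|_c\ge n$, and for such $\gamma$ inequality (D\textsuperscript{-}) forces $\sigma_1(\rho(\gamma))>\sigma_2(\rho(\gamma))$ (this is precisely why $\ell_0$ was fixed), so that $\Xi_\rho(\gamma)=U_1(\rho(\gamma))$ is defined. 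A nested decreasing family of non-empty closed subsets of the compact space $\proj(\real^d)$ has non-empty intersection, and that intersection is closed, hence compact. The same reasoning — membership of $\xi$ in every $K_n$ plus a diagonal extraction — yields the companion fact (used throughout the section) that every $\xi\in\Lambda_{rel}$ equals $\lim_n\Xi_\rho(\gamma_n)$ for some sequence with $|\gamma_n|_c\to\infty$.

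For $\rho(\Gamma)$-invariance, fix $\eta\in\Gamma$ and $\xi\in\Lambda_{rel}$, and pick $\gamma_n$ with $|\gamma_n|_c\to\infty$ and $\Xi_\rho(\gamma_n)\to\xi$. The natural approximating sequence for $\rho(\eta)\xi$ is $\Xi_\rho(\eta\gamma_n)$: since $|\eta\gamma_n|_c\ge|\gamma_n|_c-|\eta|_c\to\infty$ (triangle inequality for $d_c$), the previous paragraph guarantees both that $\Xi_\rho(\eta\gamma_n)$ is eventually well-defined and that its limit, if it exists, lies in $\Lambda_{rel}$. To identify that limit, apply Lemma~\ref{lem:BPSA4A5}, which gives
\[ d\bigl(\rho(\eta)\,\Xi_\rho(\gamma_n),\ \Xi_\rho(\eta\gamma_n)\bigr)\ \le\ \tfrac{\sigma_1}{\sigma_d}(\rho(\eta))\cdot\tfrac{\sigma_2}{\sigma_1}(\rho(\gamma_n)); \]
here $\tfrac{\sigma_1}{\sigma_d}(\rho(\eta))$ is a fixed constant while $\tfrac{\sigma_2}{\sigma_1}(\rho(\gamma_n))\le \ubar{C}^{-1}e^{-\ubar\mu|\gamma_n|_c}\to 0$ by (D\textsuperscript{-}), so the left-hand side tends to $0$. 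Continuity of the $\rho(\eta)$-action on $\proj(\real^d)$ gives $\rho(\eta)\,\Xi_\rho(\gamma_n)\to\rho(\eta)\xi$, and the triangle inequality then forces $\Xi_\rho(\eta\gamma_n)\to\rho(\eta)\xi$, whence $\rho(\eta)\xi\in\Lambda_{rel}$.

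There is no substantive obstacle — the statement is, as announced, ``fairly immediate.'' The only point requiring a moment's care is bookkeeping of where $\Xi_\rho(-)$ is defined: $U_1$ demands a top singular-value gap, and one must invoke (D\textsuperscript{-}) (through the choice of $\ell_0$, and through $|\eta\gamma_n|_c\to\infty$) to secure it both for the elements entering the $K_n$ and for $\eta\gamma_n$ with $n$ large. This is exactly what the definitions of $\Lambda_{rel}$ and of $\ell_0$ were arranged to supply.
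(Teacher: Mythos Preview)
Your proposal is correct and follows essentially the same route as the paper: compactness and non-emptiness via a nested intersection of closed sets in a compact Grassmannian, and invariance via Lemma~\ref{lem:BPSA4A5}(\ref{eqn:A5}) combined with (D\textsuperscript{-}) applied to a sequence $(\gamma_n)$ realizing $\xi$. You fill in a bit more detail (why each $K_n$ is non-empty, why $\Xi_\rho(\eta\gamma_n)$ is eventually defined via $|\eta\gamma_n|_c\to\infty$), and your constant $\ubar{C}^{-1}$ in the bound is in fact the correct one from (D\textsuperscript{-}).
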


\subsection{Dynamics on the limit set} \label{sub:dyn_bdy}

Recall that $\rho: \Gamma \to \GL(d,\real)$ is a $1$-relatively dominated representation with domination constants $(\ubar{C},\ubar\mu,\bar{C},\bar\mu)$.

We start this section with the following comparability lemma, which follows from Corollary \ref{cor:D+} and related estimates:
\spacer \begin{lem}
\label{lem:wordsumcomp}
There exist constants $\nu \in (0,1)$, $c_0 > 1$ and $c_1 > 1$, depending only on the domination constants $\ubar{C}, \ubar\mu>0$, such that 
for any $\gamma, \eta \in \Gamma$ satisfying
$|\gamma|_c, |\eta|_c \geq \ell_0$ (with $\ell_0$ as above), then
\[ d_c(\gamma,\eta) \geq \nu(|\gamma|_c + |\eta|_c) - c_0 - c_1|\log d(\Xi_\rho(\gamma), \Xi_\rho(\eta))| .\]
\begin{proof}
Consider $\gamma, \eta \in \Gamma$ with cusped word length at least $\ell_0$. Assume without loss of generality that $|\gamma|_c \leq |\eta|_c$. Applying Lemma \ref{lem:BPSA4A5}(\ref{eqn:A4}) to $A = \rho(\eta)$ and $B = \rho(\eta^{-1}\gamma)$, and using the relatively dominated condition and Corollary \ref{cor:D+}, we obtain
\begin{align*}
d\left( \Xi_\rho(\eta), \Xi_\rho(\gamma) \right) & \leq \frac{\sigma_1}{\sigma_d} \left( \rho(\eta^{-1}\gamma) \right) \cdot \frac{\sigma_2}{\sigma_1}(\rho(\eta)) \\
 & \leq \bar{C} e^{\bar\mu |\eta^{-1}\gamma|_c} \cdot \ubar{C} e^{-\ubar\mu |\eta|_c} 
\end{align*} 
where $\bar{C}, \bar\mu$ are the constants from Corollary \ref{cor:D+}. Equivalently, after taking logarithms and isolating the $d_c(\gamma,\eta)$ term,
\begin{align*}
d_c(\gamma, \eta) = |\eta^{-1}\gamma|_c & \geq {\bar\mu}^{-1} \left( \ubar\mu|\eta|_c - \log \bar{C} - \log \ubar{C} + \log d(\Xi_\rho(\eta), \Xi_\rho(\gamma)) \right) \\
& \geq \ubar\mu \bar\mu^{-1} |\eta|_c - \bar\mu^{-1} \left(\log \bar{C} + \log \ubar{C} \right) - \bar\mu^{-1} \left|\log d(\Xi_\rho(\eta), \Xi_\rho(\gamma)) \right|
\end{align*} 
and since $|\eta|_c \geq (|\gamma|_c + |\eta|_c)/2$, we obtain the lemma. \end{proof}
\end{lem}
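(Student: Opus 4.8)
The plan is to bound the distance $d(\Xi_\rho(\gamma),\Xi_\rho(\eta))$ between the two attracting directions from above by a product of singular-value ratios, then take logarithms and solve for $d_c(\gamma,\eta)$. Concretely, assume without loss of generality that $|\gamma|_c \le |\eta|_c$ (both at least $\ell_0$), and write $\rho(\gamma) = \rho(\eta)\,\rho(\eta^{-1}\gamma)$, so that with $A := \rho(\eta)$ and $B := \rho(\eta^{-1}\gamma)$ we have $\Xi_\rho(\eta) = U_1(A)$ and $\Xi_\rho(\gamma) = U_1(AB)$. Note that $U_1(\rho(\eta))$ is genuinely well-defined here, since $|\eta|_c \ge \ell_0$ forces a first singular-value gap for $\rho(\eta)$ by (D\textsuperscript{-}).

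First I would apply the linear-algebra estimate of Lemma \ref{lem:BPSA4A5}(\ref{eqn:A4}) (the analogue of the Bochi--Potrie--Sambarino Lemmas A.4--A.5), which gives $d\big(U_1(AB),U_1(A)\big) \le \frac{\sigma_1}{\sigma_d}(B)\cdot\frac{\sigma_2}{\sigma_1}(A)$. Then I would feed in the two domination inequalities: the upper domination inequality (D\textsuperscript{+}, Corollary \ref{cor:D+}) applied to $B = \rho(\eta^{-1}\gamma)$ yields $\frac{\sigma_1}{\sigma_d}(B) \le \bar{C}\,e^{\bar\mu|\eta^{-1}\gamma|_c}$, while the lower domination inequality (D\textsuperscript{-}) applied to $A = \rho(\eta)$ yields $\frac{\sigma_2}{\sigma_1}(A) \le \ubar{C}^{-1} e^{-\ubar\mu|\eta|_c}$. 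Multiplying, $d(\Xi_\rho(\gamma),\Xi_\rho(\eta)) \le \bar{C}\,\ubar{C}^{-1}\, e^{\bar\mu|\eta^{-1}\gamma|_c - \ubar\mu|\eta|_c}$.

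Next I would take logarithms, use $d_c(\gamma,\eta) = |\eta^{-1}\gamma|_c$, and rearrange to isolate that term, obtaining $d_c(\gamma,\eta) \ge \bar\mu^{-1}\big(\ubar\mu|\eta|_c - \log(\bar{C}\ubar{C}^{-1}) + \log d(\Xi_\rho(\gamma),\Xi_\rho(\eta))\big)$. Since $\log x \ge -|\log x|$ and, by our normalization, $|\eta|_c \ge \tfrac12(|\gamma|_c + |\eta|_c)$, this becomes $d_c(\gamma,\eta) \ge \tfrac{\ubar\mu}{2\bar\mu}(|\gamma|_c + |\eta|_c) - \bar\mu^{-1}|\log(\bar{C}\ubar{C}^{-1})| - \bar\mu^{-1}|\log d(\Xi_\rho(\gamma),\Xi_\rho(\eta))|$. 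Setting $\nu := \tfrac{\ubar\mu}{2\bar\mu}$ and taking $c_0, c_1 > 1$ larger than the remaining two coefficients gives the stated inequality.

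There is no genuine obstacle here beyond bookkeeping; the two points needing a moment's care are (i) confirming that Lemma \ref{lem:BPSA4A5} is invoked in the correct orientation, i.e.\ with $A = \rho(\eta)$ on the \emph{left} so that the relevant gaps are $\frac{\sigma_2}{\sigma_1}$ of the ``long'' factor and $\frac{\sigma_1}{\sigma_d}$ of the ``short'' factor, and (ii) checking $\nu \in (0,1)$, which is exactly where one uses the relation $\bar\mu \ge \ubar\mu$ between the upper and lower domination rates furnished by Proposition \ref{prop:1D+} (giving in fact $\nu \le \tfrac12$).
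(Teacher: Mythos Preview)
Your proposal is correct and follows essentially the same route as the paper: the same without-loss-of-generality assumption, the same application of Lemma \ref{lem:BPSA4A5}(\ref{eqn:A4}) with $A=\rho(\eta)$ and $B=\rho(\eta^{-1}\gamma)$, the same use of (D\textsuperscript{+}) and (D\textsuperscript{-}) on the two factors, and the same final step $|\eta|_c \ge \tfrac12(|\gamma|_c+|\eta|_c)$. If anything, your write-up is slightly more careful---you correctly record the constant as $\ubar{C}^{-1}$ rather than $\ubar{C}$, and you explicitly verify $\nu = \ubar\mu/(2\bar\mu) \in (0,1)$ via $\bar\mu \ge \ubar\mu$ from Proposition \ref{prop:1D+}.
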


In particular, applying this to projected geodesic rays, we obtain
\spacer \begin{lem} \label{lem:geodrays_qg}
If $(\gamma_n)_{n=0}^\infty, (\eta_n)_{n=0}^\infty$ are two projected geodesic sequences in $\Gamma$ with $\gamma_0 = \eta_0 = \id$ such that 
$\displaystyle \lim_{n\to\infty} \Xi_\rho(\gamma_n) \neq \lim_{n\to\infty} \Xi_\rho(\eta_n)$, then $(\dots, \eta_2, \eta_1, \id, \gamma_1, \gamma_2, \dots)$ is a metric quasigeodesic, with quasigeodesic constants depending only on 
\[  d \left( \lim_{n\to\infty} \Xi_\rho(\gamma_n), \lim_{n\to\infty} \Xi_\rho(\eta_n)\right) .\]
\begin{proof}  
Given the hypotheses, it follows from Corollary \ref{cor:EC} that the limits $\displaystyle \xi_\rho(\gamma) := \lim_{n\to\infty} \Xi_\rho(\gamma_n)$, $\displaystyle \xi_\rho(\eta) := \lim_{n\to\infty} \Xi_\rho(\eta_n)$ and $\displaystyle \xi^*_\rho(\eta) := \lim_{n\to\infty} \Xi^*_\rho(\eta_n)$ exist.

The previous Lemma applied to the pairs of elements $(\gamma_n, \eta_n)$, together with Proposition \ref{prop:reparam_proj_geod}, yields that the sequence $(\dots, \eta_n, \dots \eta_0, \id, \gamma_0, \dots, \gamma_n, \dots )$ is a metric quasigeodesic path, with constants depending on 
$\eps := d(\xi_\rho(\gamma_n), \xi_\rho(\eta_n))$, $\nu \in (0,1), c_0, c_1$ from Lemma \ref{lem:wordsumcomp}, and $\ell_0$ from above.

More precisely, Proposition  \ref{prop:reparam_proj_geod} verifies the metric quasigeodesic inequalities for any subpath restricted to one side of $\id$, i.e. containing only elements $\gamma_i$ or $\eta_j$. 

For subpaths containing both some $\eta_l$ and some $\gamma_k$, we have
\begin{align*}
d_c(\gamma_k, \eta_l) \leq d_c(\gamma_k,\id) + d_c(\id,\eta_l) \leq 8(k+l)+40
\end{align*}
from the triangle inequality and Proposition \ref{prop:reparam_proj_geod}. For the lower bound here: write $$ c := \max\{2\ell_0, c_0+c_1 \log(3/\eps)\} ,$$
and note that we have
\begin{align*}
d_c(\gamma_k, \eta_l) = |\eta_l^{-1}\gamma_k|_c & \geq \nu(|\eta_l|_c+|\gamma_k|_c) - c
 \geq \frac\nu6 (l + k) - c
\end{align*}  
from Lemma \ref{lem:wordsumcomp} and Proposition \ref{prop:reparam_proj_geod} when both $|\gamma_k|_c,|\eta_l|_c > \ell_0$. In the case $|\eta_l|_c \leq \ell_0$ we have
\begin{align*}
d_c(\gamma_k, \eta_l) & \geq d_c(\gamma_k,\id) - d_c(\eta_l,\id) 
 \geq |\gamma_k|_c - \ell_0 \\
 & \geq (|\gamma_k|_c + |\eta_l|_c) - 2\ell_0 
\end{align*}
and an analogous argument produces the same lower bound when $|\gamma_k|_c \leq \ell_0$. 
\end{proof}
\end{lem}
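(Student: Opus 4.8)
The plan is to check the three defining inequalities of a metric quasigeodesic path (Definition \ref{defn:metric_proj_qgeod}) for the bi-infinite concatenation, distinguishing subpaths that stay on one side of $\id$ from subpaths that straddle it.

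First I would note that, since $(\gamma_n)$ and $(\eta_n)$ are reparametrized projected geodesic rays, Corollary \ref{cor:EC} guarantees that $\xi_\rho(\gamma) := \lim_n \Xi_\rho(\gamma_n)$, $\xi_\rho(\eta) := \lim_n \Xi_\rho(\eta_n)$ and $\xi^*_\rho(\eta) := \lim_n \Xi^*_\rho(\eta_n)$ exist, with exponential convergence rates controlled only by the domination constants. Set $\eps := d(\xi_\rho(\gamma),\xi_\rho(\eta)) > 0$. The convergence rate then yields a threshold $N_0 = N_0(\eps)$ with $d(\Xi_\rho(\gamma_k),\Xi_\rho(\eta_l)) \ge \eps/3$ whenever $k,l \ge N_0$. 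For subpaths contained in one ray, inequalities (i), (ii) and (iii) are exactly the content of Proposition \ref{prop:reparam_proj_geod}; in particular one has $|\gamma_k|_c \ge k/6$ and $|\eta_l|_c \ge l/6$, and condition (iii), being a statement about the internal structure of peripheral excursions, is inherited verbatim by the concatenation. The junction near $\id$ causes no trouble, since $\gamma_1$ and $\eta_1$ are first steps of quasigeodesic paths out of $\id$.

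For a straddling subpath with endpoints $\gamma_k$ and $\eta_l$, the upper bound (ii) follows from the triangle inequality through $\id$ and the one-sided estimates, giving $d_c(\gamma_k,\eta_l) \le d_c(\gamma_k,\id) + d_c(\id,\eta_l) \le 8(k+l)+40$. The heart of the argument is the lower bound (i). When $|\gamma_k|_c, |\eta_l|_c \ge \ell_0$ and $k, l \ge N_0$, Lemma \ref{lem:wordsumcomp} combined with $d(\Xi_\rho(\gamma_k),\Xi_\rho(\eta_l)) \ge \eps/3$ gives $d_c(\gamma_k,\eta_l) \ge \nu(|\gamma_k|_c+|\eta_l|_c) - (c_0 + c_1\log(3/\eps))$, and then $|\gamma_k|_c \ge k/6$, $|\eta_l|_c \ge l/6$ turn this into $d_c(\gamma_k,\eta_l) \ge \frac{\nu}{6}(k+l) - c$ with $c := \max\{2\ell_0,\, c_0 + c_1\log(3/\eps)\}$. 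When instead $|\eta_l|_c \le \ell_0$ (or symmetrically $|\gamma_k|_c \le \ell_0$), Lemma \ref{lem:wordsumcomp} is unavailable and one argues crudely: $d_c(\gamma_k,\eta_l) \ge d_c(\gamma_k,\id) - d_c(\eta_l,\id) \ge |\gamma_k|_c - \ell_0 \ge (|\gamma_k|_c + |\eta_l|_c) - 2\ell_0$, which is of the same shape; the finitely many index pairs with $k < N_0$ or $l < N_0$ are absorbed into the additive constant. Since every constant that appears is a function only of $\eps$ together with $\nu, c_0, c_1, \ell_0$ — all of which depend only on the fixed domination constants and on $\eps$ — the resulting quasigeodesic constants depend only on $d(\xi_\rho(\gamma),\xi_\rho(\eta))$, as claimed. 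The main obstacle is precisely the bookkeeping around short initial segments of the two rays, where the lower-domination growth has not yet become effective, so that Lemma \ref{lem:wordsumcomp} cannot be invoked and one must fall back on the direct triangle-inequality bounds without letting the constants blow up.
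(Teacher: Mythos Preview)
Your proposal is correct and follows essentially the same route as the paper: one-sided subpaths handled by Proposition~\ref{prop:reparam_proj_geod}, the straddling upper bound by the triangle inequality through $\id$, and the straddling lower bound via Lemma~\ref{lem:wordsumcomp} once both cusped lengths exceed $\ell_0$, falling back on the reverse triangle inequality otherwise. One small slip: the set of index pairs with $k<N_0$ or $l<N_0$ is not finite (one index may be arbitrary), but your crude argument for the case $|\gamma_k|_c\le\ell_0$ applies verbatim here, since $k<N_0$ forces $|\gamma_k|_c$ to be bounded by a constant depending only on $N_0$ (hence on $\eps$), so the conclusion is unaffected.
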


We may combine this with Proposition \ref{prop:limits_exist_trans} to obtain
\spacer \begin{cor} \label{cor:lim_transverse}
If $(\gamma_n)_{n=0}^\infty, (\eta_n)_{n=0}^\infty$ are two projected geodesic sequences in $\Gamma$ with $\gamma_0 = \eta_0 = \id$ such that 
$\displaystyle \lim_{n\to\infty} \Xi_\rho(\gamma_n) \neq \lim_{n\to\infty} \Xi_\rho(\eta_n)$, then $\displaystyle \lim_{n\to\infty} \Xi_\rho(\gamma_n)$ is transverse to $\displaystyle \lim_{n\to\infty} \Xi^*_\rho(\eta_n)$.

\begin{proof}
Given the hypotheses, it follows from Corollary \ref{cor:EC} that the limits $\displaystyle \xi_\rho(\gamma) := \lim_{n\to\infty} \Xi_\rho(\gamma_n)$, $\displaystyle \xi_\rho(\eta) := \lim_{n\to\infty} \Xi_\rho(\eta_n)$ and $\displaystyle \xi^*_\rho(\eta) := \lim_{n\to\infty} \Xi^*_\rho(\eta_n)$ exist.
Since $\gamma_n$ and $\eta_n$ piece together
to form a metric quasigeodesic path (Lemma \ref{lem:geodrays_qg}), Proposition \ref{prop:limits_exist_trans} then yields the desired conclusion.
\end{proof}
\end{cor}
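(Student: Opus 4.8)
The plan is to deduce this straight from Proposition \ref{prop:limits_exist_trans} by splicing the two rays into a single bi-infinite path through the identity. The hypothesis $\lim_n \Xi_\rho(\gamma_n) \neq \lim_n \Xi_\rho(\eta_n)$ is exactly what Lemma \ref{lem:geodrays_qg} requires, so --- applying that lemma with the roles of the two (a priori symmetric) sequences exchanged, and using the reparametrization of Proposition \ref{prop:reparam_proj_geod} which is built into it --- the concatenation $\alpha := (\dots, \gamma_2, \gamma_1, \id, \eta_1, \eta_2, \dots)$ is a bi-infinite $(\ubar\upsilon,\bar\upsilon)$-metric quasigeodesic path based at $\id$, with constants controlled by $d\bigl(\lim_n \Xi_\rho(\gamma_n), \lim_n \Xi_\rho(\eta_n)\bigr)$. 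First I would fix this $\alpha$ and pass to its associated matrix sequence $x = x_\alpha = (A_k)_{k\in\ints}$ in the sense of Definition \ref{defn:x_gamma}.

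I would then apply Proposition \ref{prop:limits_exist_trans} to $x$: it guarantees that the limits $E^u(x) = \lim_n U_1(A_{-1}\cdots A_{-n})$ and $E^s(x) = \lim_n S_{d-1}(A_{n-1}\cdots A_0)$ exist (incidentally re-deriving existence of $\xi^*_\rho(\eta)$ via Corollary \ref{cor:EC}), and that together they span $\real^d$; since $E^u(x)$ is a line and $E^s(x)$ a hyperplane, that direct-sum splitting \emph{is} transversality. It then remains only to identify $E^u(x)$ with $\lim_n \Xi_\rho(\gamma_n)$ and $E^s(x)$ with $\lim_n \Xi^*_\rho(\eta_n)$. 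Telescoping gives $A_{-1}\cdots A_{-n} = \rho(\alpha(0)^{-1}\alpha(-n)) = \rho(\alpha(-n))$ since $\alpha(0)=\id$, and $\alpha(-n)$ is a term of a subsequence of the $\gamma_n$ (the reparametrization only regroups letters), for which the limit of $U_1(\rho(\cdot))$ is unchanged, so $E^u(x) = \lim_n \Xi_\rho(\gamma_n)$; likewise $A_{n-1}\cdots A_0 = \rho(\alpha(n)^{-1}\alpha(0)) = \rho(\alpha(n))^{-1}$, so by the identity $S_{d-1}(g) = U_{d-1}(g^{-1})$ we get $E^s(x) = \lim_n U_{d-1}(\rho(\alpha(n))) = \lim_n \Xi^*_\rho(\eta_n)$. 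Combining these identifications with the splitting from Proposition \ref{prop:limits_exist_trans} gives the corollary.

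Since Proposition \ref{prop:limits_exist_trans} already absorbs all the analytic content --- the (EC) estimates, the verification of the Quas--Thieullen--Zarrabi axioms (SVG-BG) and (FI)\textsubscript{back}, and the compactness argument handling unbounded-depth peripheral excursions --- the only genuine work in this corollary is the bookkeeping above. The subtle point, and the main (mild) obstacle, is getting the orientation of the splice right: the stable hyperplane $E^s(x)$ must come from the side carrying the $\eta_n$'s and the unstable line $E^u(x)$ from the side carrying the $\gamma_n$'s, and one has to keep straight the passage between $S_{d-1}(\cdot)$ and $U_{d-1}(\cdot)$ under inversion; checking, via Lemma \ref{lem:geodrays_qg}, that the spliced and reparametrized path genuinely is a metric quasigeodesic path based at $\id$ is the other place where a little care is needed.
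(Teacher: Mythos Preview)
Your proposal is correct and follows essentially the same approach as the paper's proof: splice the two rays into a bi-infinite metric quasigeodesic path through $\id$ via Lemma \ref{lem:geodrays_qg}, then invoke Proposition \ref{prop:limits_exist_trans}. The paper's proof is terser and leaves the identification of $E^u(x)$, $E^s(x)$ with $\lim_n \Xi_\rho(\gamma_n)$, $\lim_n \Xi^*_\rho(\eta_n)$ implicit; your explicit telescoping computation and the remark about orientation are exactly the bookkeeping the paper suppresses.
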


Using this together with a compactness argument, we may then prove the following quantitative / finite transversality result.

\spacer \begin{lem}
\label{lem:biinf_trans_quant}
For every $\eps > 0$, there exist $\ell_1 \geq \ell_0$ and $\delta > 0$ such that for all $\gamma, \eta \in \Gamma$ with \begin{enumerate}[(i)]
\item $|\gamma|_c, |\eta|_c > \ell_1$, and
\item $d(\Xi_\rho(\gamma), \Xi_\rho(\eta)) > \eps$,
\end{enumerate} 
we have $\angle (\Xi_\rho(\gamma), \Xi^*_\rho(\eta)) > \delta$. 
\begin{proof}
The proof will proceed by contradiction. Assume there exist $\eps > 0$ and sequences $\ell_j \to \infty$, $\delta_j \to 0$ such that for each $j$ there exist $\gamma_j, \eta_j \in \Gamma$ with $|\gamma_j|_c, |\eta_j|_c > \ell_j$ and $d\left( \Xi_\rho(\gamma_j), \Xi_\rho(\eta_j) \right) > \eps$, but $\angle (\Xi_\rho(\gamma_j), \Xi^*_\rho(\eta_j)) \leq \delta_j$. 

Consider the $\gamma_j$ and $\eta_j$ as projected geodesics.
By a diagonal argument, these converge, up to subsequence, to some (infinite words) $\gamma := g_1 \cdots g_n \cdots$ and $\eta := h_1 \cdots h_m \cdots$. 
Reparametrizing as needed, we may assume that these are $(6,20)$-metric quasigeodesic paths (these constants being the ones obtained in Proposition \ref{prop:reparam_proj_geod}.)

By Corollary \ref{cor:lim_transverse}, the limits $\xi_\rho(x_\gamma)$ and $\xi^*_\rho(x_\eta)$ exist, and $ \angle (\xi_\rho(x_\gamma), \xi^*_\rho(x_\eta)) > 0 $.

This gives us a contradiction, since, by construction, $\angle (\xi_\rho(x_\gamma), \xi^*_\rho(x_\eta)) = 0$.
\end{proof} \end{lem}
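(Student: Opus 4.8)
The plan is to argue by contradiction and extract a limiting pair of projected geodesic rays, thereby reducing the quantitative assertion to the qualitative transversality of Corollary~\ref{cor:lim_transverse}. Suppose the claim fails for some $\eps>0$: there are $\ell_j\to\infty$, $\delta_j\to0$, and $\gamma_j,\eta_j\in\Gamma$ with $|\gamma_j|_c,|\eta_j|_c>\ell_j$, $d(\Xi_\rho(\gamma_j),\Xi_\rho(\eta_j))>\eps$, but $\angle(\Xi_\rho(\gamma_j),\Xi^*_\rho(\eta_j))\le\delta_j$. For each $j$ fix a geodesic in $X$ from $\id$ to $\gamma_j$, project it and reparametrize as in \S\ref{sub:reparam_proj} to get a $(6,20)$-metric quasigeodesic path $\gamma^{(j)}\colon[0,N_j]\to\Cay(\Gamma)$ based at $\id$ with $\gamma^{(j)}(N_j)=\gamma_j$, and similarly $\eta^{(j)}\colon[0,M_j]\to\Cay(\Gamma)$ with endpoint $\eta_j$; since $|\gamma_j|_c\to\infty$, the upper metric-quasigeodesic bound of Proposition~\ref{prop:reparam_proj_geod} forces $N_j,M_j\to\infty$ and also shows that $\gamma^{(j)}(k)$ has cusped length $O(k)$, so by Corollary~\ref{cor:wordlengths_comp} it ranges over a finite set of group elements. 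A diagonal argument then produces a subsequence along which $\gamma^{(j)}(k)$ and $\eta^{(j)}(k)$ stabilize for every $k$; passing to the limit in conditions (i)--(iii) of Definition~\ref{defn:metric_proj_qgeod}, the resulting infinite words $\gamma,\eta\colon\ints_{\ge0}\to\Cay(\Gamma)$ are $(6,20)$-metric quasigeodesic projected geodesic rays based at $\id$. Refining once more using compactness of $\proj(\real^d)$ and $\Gr_{d-1}(\real^d)$, we may also assume $\Xi_\rho(\gamma_j)\to\xi_1$, $\Xi_\rho(\eta_j)\to\xi_2$ and $\Xi^*_\rho(\eta_j)\to\xi_2^*$.

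The heart of the proof, and the step I expect to be the main obstacle, is to identify these three limits with the ray limits $\lim_n\Xi_\rho(\gamma(n))$, $\lim_n\Xi_\rho(\eta(n))$ and $\lim_n\Xi^*_\rho(\eta(n))$. The naive route --- bounding $d(\Xi_\rho(\gamma_j),\Xi_\rho(\gamma(n)))$ via Lemma~\ref{lem:BPSA4A5} applied to $\rho(\gamma_j)=\rho(\gamma(n))\cdot\rho(\gamma(n)^{-1}\gamma_j)$ --- fails, because the factor $\tfrac{\sigma_1}{\sigma_d}(\rho(\gamma(n)^{-1}\gamma_j))$ is uncontrolled as $j\to\infty$. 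Instead I would telescope the equicontinuity estimates of Lemma~\ref{lem:EC} along $\gamma^{(j)}$ itself: those estimates use only the domination inequalities (D\textsuperscript{-}),~(D\textsuperscript{+}), the quadratic-gaps condition, and the depth bounds built into Definition~\ref{defn:metric_proj_qgeod}, so they hold with uniform constants $C,\mu>0$ on every finite reparametrized projected geodesic, giving $d(\Xi_\rho(\gamma^{(j)}(n)),\Xi_\rho(\gamma^{(j)}(n+1)))\le Ce^{-\mu n}$ for all $n<N_j$. Summing from a fixed $N$ to $N_j-1$ and combining with Corollary~\ref{cor:EC} for the ray $\gamma$ gives $d(\Xi_\rho(\gamma_j),\lim_n\Xi_\rho(\gamma(n)))\le\tfrac{2C}{1-e^{-\mu}}e^{-\mu N}$ for $j$ large (using $\gamma^{(j)}(N)=\gamma(N)$), whence letting $j\to\infty$ and then $N\to\infty$ yields $\xi_1=\lim_n\Xi_\rho(\gamma(n))$. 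The same telescoping on the $\eta$ side --- using the $U_1$ half of Lemma~\ref{lem:EC} for $\xi_2$, and its $S_{d-1}$ half (equivalently, the reversed dual sequence and Proposition~\ref{prop:dualrep_reldom}) for $\xi_2^*$ --- gives $\xi_2=\lim_n\Xi_\rho(\eta(n))$ and $\xi_2^*=\lim_n\Xi^*_\rho(\eta(n))$.

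With these identifications, the contradiction is immediate. From $d(\Xi_\rho(\gamma_j),\Xi_\rho(\eta_j))>\eps$ we obtain $d(\xi_1,\xi_2)\ge\eps>0$, so $\lim_n\Xi_\rho(\gamma(n))\ne\lim_n\Xi_\rho(\eta(n))$; hence Lemma~\ref{lem:geodrays_qg} makes the bi-infinite path $(\dots,\gamma(2),\gamma(1),\id,\eta(1),\eta(2),\dots)$ a metric quasigeodesic, and Corollary~\ref{cor:lim_transverse} then gives that $\xi_1=\lim_n\Xi_\rho(\gamma(n))$ is transverse to $\xi_2^*=\lim_n\Xi^*_\rho(\eta(n))$, i.e.\ $\angle(\xi_1,\xi_2^*)>0$. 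On the other hand $\angle(\cdot,\cdot)$ is continuous on $\proj(\real^d)\times\Gr_{d-1}(\real^d)$, so $\angle(\xi_1,\xi_2^*)=\lim_j\angle(\Xi_\rho(\gamma_j),\Xi^*_\rho(\eta_j))\le\lim_j\delta_j=0$ --- contradiction. Aside from the identification in the middle paragraph, everything is routine bookkeeping with the constructions recalled in \S\ref{sec:relhyp} and the transversality results of \S\ref{sec:limtrans}.
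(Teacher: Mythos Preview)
Your proof is correct and follows the same contradiction-plus-diagonal-limit strategy as the paper, invoking Corollary~\ref{cor:lim_transverse} on the limiting pair of rays to contradict $\angle(\xi_1,\xi_2^*)=0$. Your middle paragraph --- telescoping the (EC) estimates of Lemma~\ref{lem:EC} uniformly along the finite reparametrized paths $\gamma^{(j)},\eta^{(j)}$ to identify $\xi_1,\xi_2,\xi_2^*$ with the ray limits --- is exactly the content the paper compresses into the phrase ``by construction,'' so you have made explicit what the paper leaves implicit.
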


Using this last version of transversality, we then have the following statement describing a sort of North-South dynamics: 
\spacer
\begin{lem}
\label{lem:ns_dyn_all}
Given $\eps, \eps' > 0$, there exists $\ell > \ell_0$ such that for any 
$\eta \in \Gamma$ with $|\eta|_c > \ell$ and any $\xi \in \Lambda_{rel}$ with $d(\xi, \Xi_\rho(\eta^{-1}) ) > \eps$, we have
\[ d(\rho(\eta) \xi, \Xi_\rho(\eta) ) \leq \eps' .\]

\begin{proof}
Let $\ell_1 \geq \ell_0$ and $\delta > 0$ be given by Lemma \ref{lem:biinf_trans_quant}, with our given $\eps > 0$. Choose $\ell > \ell_1$ such that $\ubar{C} e^{-\ubar\mu \ell} < \eps' \sin \delta$. 

Fix $\eta \in \Gamma$ and $\xi \in \Lambda_{rel}$ such that $|\eta|_c > \ell$ and $d(\xi, \Xi_\rho(\eta^{-1}) ) > \eps$. Choose a sequence $(\gamma_n) \subset \Gamma$ such that $|\gamma_n|_c \to \infty$ and $\Xi_\rho(\gamma_n) \to \xi$. Without loss of generality assume for each $n$ we have $|\gamma_n|_c > \ell_1$ and 
\[ d(\Xi_\rho(\gamma_n), \Xi_\rho(\eta^{-1}) ) > \eps .\]
It then follows from Lemma \ref{lem:biinf_trans_quant} that
\[ \angle (\Xi_\rho(\gamma_n), \Xi^*_\rho(\eta^{-1})) > \delta \]
and then, by Lemma \ref{lem:BPSA6} with $A = \rho(\eta)$ and $P = \Xi(\gamma_n)$, we obtain
\begin{align*} 
d(\rho(\eta) \, \Xi_\rho(\gamma_n), \Xi_\rho(\eta) ) & \leq \frac{\sigma_2}{\sigma_1}(\rho(\eta)) \frac{1}{\sin \angle \left( \Xi_\rho(\gamma_n), \Xi^*_\rho(\eta^{-1}) \right) } 
\leq \frac{\sigma_2}{\sigma_1}(\rho(\eta)) \frac{1}{\sin \delta} \\
 & \leq \frac{\ubar{C} e^{-\ubar\mu\ell}}{\sin \delta}  < \eps'
\end{align*}
and letting $n \to \infty$ we have $d(\rho(\eta) \xi, \Xi_\rho(\eta) ) \leq \eps'$ as desired.
\end{proof} \end{lem}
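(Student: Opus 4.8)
The plan is to obtain this North--South dynamics statement by combining the quantitative transversality result of Lemma~\ref{lem:biinf_trans_quant} with the elementary projective contraction estimate recorded in Appendix~\ref{app:linalg} (Lemma~\ref{lem:BPSA6}). The one subtlety worth isolating in advance is this: the hypothesis controls only the distance from $\xi$ to the \emph{line} $\Xi_\rho(\eta^{-1}) = U_1(\rho(\eta^{-1}))$, while the contraction estimate for the projective action of $\rho(\eta)$ demands a lower bound on the \emph{angle} between $\xi$ and the repelling hyperplane $\Xi^*_\rho(\eta^{-1}) = U_{d-1}(\rho(\eta^{-1}))$. Passing from the former to the latter is exactly what transversality supplies, and it is the only nontrivial ingredient; a point can be far from a given line inside a hyperplane while still lying very close to that hyperplane, so a bare linear-algebra argument cannot work.

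In detail, I would proceed as follows. Given $\eps$, apply Lemma~\ref{lem:biinf_trans_quant} to obtain $\ell_1 \geq \ell_0$ and $\delta > 0$. Then, using the lower domination inequality (D\textsuperscript{-}) in the form $\frac{\sigma_2}{\sigma_1}(\rho(\eta)) \leq \ubar{C}\,e^{-\ubar\mu |\eta|_c}$, choose $\ell > \ell_1$ with $\ubar{C}\,e^{-\ubar\mu \ell} < \eps'\sin\delta$. Now fix $\eta$ with $|\eta|_c > \ell$ and $\xi \in \Lambda_{rel}$ with $d(\xi, \Xi_\rho(\eta^{-1})) > \eps$, and write $\xi = \lim_n \Xi_\rho(\gamma_n)$ with $|\gamma_n|_c \to \infty$ (possible directly from the defining intersection for $\Lambda_{rel}$); discarding finitely many terms, one may assume $|\gamma_n|_c > \ell_1$ and $d(\Xi_\rho(\gamma_n), \Xi_\rho(\eta^{-1})) > \eps$ for all $n$. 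Lemma~\ref{lem:biinf_trans_quant} applied to the pair $(\gamma_n, \eta^{-1})$ then yields $\angle(\Xi_\rho(\gamma_n), \Xi^*_\rho(\eta^{-1})) > \delta$. Feeding $A = \rho(\eta)$ and $P = \Xi_\rho(\gamma_n)$ into Lemma~\ref{lem:BPSA6} gives $d(\rho(\eta)\,\Xi_\rho(\gamma_n), \Xi_\rho(\eta)) \leq \frac{\sigma_2}{\sigma_1}(\rho(\eta))/\sin\delta \leq \ubar{C}\,e^{-\ubar\mu\ell}/\sin\delta < \eps'$, uniformly in $n$; letting $n\to\infty$ and using continuity of the $\rho(\eta)$-action on $\proj(\real^d)$ produces $d(\rho(\eta)\xi, \Xi_\rho(\eta)) \leq \eps'$.

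I do not expect any genuine obstacle here: the heavy lifting --- transversality of the limiting flags attached to metric quasigeodesics in the cusped space, hence Lemma~\ref{lem:biinf_trans_quant} via Corollary~\ref{cor:lim_transverse}, Proposition~\ref{prop:limits_exist_trans}, and the QTZ-type Theorem~\ref{thm:QTZ} --- has already been done upstream, so this lemma is essentially an assembly step. The two small points needing attention are (i) that an element of $\Lambda_{rel}$ is genuinely approximated by $\Xi_\rho(\gamma_n)$ with $|\gamma_n|_c \to \infty$, not merely with $|\gamma_n| \to \infty$ (immediate from the definition of $\Lambda_{rel}$, and relevant only because Lemma~\ref{lem:biinf_trans_quant} is phrased in terms of cusped length), and (ii) the routine calibration of $\ell$ via (D\textsuperscript{-}) so that a single application of the contraction estimate already beats $\eps'$.
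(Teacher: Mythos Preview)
Your proposal is correct and follows essentially the same approach as the paper's own proof: invoke Lemma~\ref{lem:biinf_trans_quant} to convert the distance hypothesis on $\Xi_\rho(\eta^{-1})$ into an angle bound away from the hyperplane $\Xi^*_\rho(\eta^{-1})$, then apply Lemma~\ref{lem:BPSA6} together with (D\textsuperscript{-}) to a sequence $\gamma_n$ approximating $\xi$ and pass to the limit. The structure, the choice of constants, and the two ancillary observations you flag are exactly those of the paper.
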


\subsection{Perfectness}

\spacer \begin{prop} \label{prop:Mrel_perfect}
$\Lambda_{rel}$ is perfect, 
that is every point in $\Lambda_{rel}$ is an accumulation point of other points in $\Lambda_{rel}$.

\begin{proof}
We first claim that $|\Lambda_{rel}| \geq 3$. By assumption we have non-peripheral and hence (by Lemma \ref{lem:nonperi_proximal}) biproximal elements, and also peripheral elements. The proximal elements give us at least two distinct points $\xi^\pm$ in $\Lambda_{rel}$; the peripheral elements give us at least one point $\xi_P$ in $\Lambda_{rel}$. 

We claim that the peripheral point $\xi_P$ is not fixed by any non-peripheral element of $\Gamma$, and in particular is distinct from the proximal limit points $\xi^\pm$. To see this, suppose $\gamma \in \Gamma$ is non-peripheral and fixes $\xi_P$. Then $\xi_{\gamma P \gamma^{-1}} = \xi_P$, which violates the transversality hypothesis in Definition \ref{defn:peri_conds}. 

Hence $|\Lambda_{rel}| \geq 3$.

Now let $b_1$ be a point in $\Lambda_{rel}$, and let $\eps' > 0$. We will show that the $2\eps'$-neighborhood of $b_1$ contains another element of $\Lambda_{rel}$.

Choose $b_2, b_3$ to be two distinct points of $\Lambda_{rel} \setminus \{b_1\}$. Let $\eps := \frac 12 \min_{i\neq j} d(b_i,b_j)$. Let $\ell > \ell_0$ be given by  Lemma \ref{lem:ns_dyn_all}, depending on $\eps$ and $\eps'$. Choose $\eta \in \Gamma$ such that $|\eta|_c > \ell$ and $d(\Xi_\rho(\eta), b_1) < \eps'$. Consider $\Xi_\rho(\eta^{-1})$ as a linear subspace of $\real^d$; it can be $\eps$-close to at most one of the spaces $b_1, b_2, b_3$. In other words, there are different indices $i, j\in \{1,2,3\}$ 
such that 
\[ d(b_i, \Xi_\rho(\eta^{-1})) > \eps \]
and similarly for $b_j$. In particular, by Lemma \ref{lem:ns_dyn_all}, 
\[ d(\rho(\eta)b_i, b_1) \leq d(\rho(\eta) b_i, \Xi_\rho(\eta)) + \eps' < 2 \eps' .\]
By $\Gamma$-invariance, the spaces $\rho(\eta) b_i$ and $\rho(\eta) b_j$ are in $\Lambda_{rel}$; but at most one of them can be equal to $b_1$.
\end{proof}
\end{prop}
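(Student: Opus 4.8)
The plan is to combine the North--South dynamics statement (Lemma \ref{lem:ns_dyn_all}) with the $\Gamma$-invariance of $\Lambda_{rel}$ (Proposition \ref{prop:BPS311}) and a short pigeonhole argument applied to a triple of points of $\Lambda_{rel}$; the only genuinely nontrivial preliminary step is checking that $\Lambda_{rel}$ has at least three points.

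First I would verify $|\Lambda_{rel}| \geq 3$. Since $\Gamma$ contains a non-peripheral element $\gamma$, Proposition \ref{lem:nonperi_proximal} applied to $\gamma$ and $\gamma^{-1}$ shows $\rho(\gamma)$ is biproximal; its attracting and repelling lines $\xi^\pm$ arise as $\lim_n \Xi_\rho(\gamma^{\pm n})$ with $|\gamma^{\pm n}|_c \to \infty$ (by non-distortion), so $\xi^+ \neq \xi^-$ both lie in $\Lambda_{rel}$. Since $\mathcal{P} \neq \varnothing$, a sequence going to infinity inside some $P \in \mathcal{P}$ gives a further point $\xi_P = \xi_\rho(P) \in \Lambda_{rel}$ via the unique limits condition. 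To see $\xi_P$ is new, I would note that the limit data is $\Gamma$-equivariant, so $\xi_\rho(\gamma P \gamma^{-1}) = \rho(\gamma)\,\xi_\rho(P)$; hence if some non-peripheral $\gamma$ fixed $\xi_P$ we would get $\xi_\rho(\gamma P \gamma^{-1}) = \xi_\rho(P)$, contradicting the uniform transversality hypothesis in Definition \ref{defn:peri_conds}. In particular $\xi_P \notin \{\xi^+, \xi^-\}$ (each of the latter is fixed by the non-peripheral element $\gamma$), so $|\Lambda_{rel}| \geq 3$.

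Now fix $b_1 \in \Lambda_{rel}$ and $\eps' > 0$; the goal is a point of $\Lambda_{rel} \setminus \{b_1\}$ within $2\eps'$ of $b_1$. Pick two distinct further points $b_2, b_3 \in \Lambda_{rel} \setminus \{b_1\}$, set $\eps := \tfrac12 \min_{i \neq j} d(b_i, b_j) > 0$, and let $\ell > \ell_0$ be the threshold furnished by Lemma \ref{lem:ns_dyn_all} for this $\eps$ and $\eps'$. Writing $b_1 = \lim_n \Xi_\rho(\gamma_n)$ with $|\gamma_n|_c \to \infty$, choose $\eta := \gamma_n$ for $n$ large, so that $|\eta|_c > \ell$ and $d(\Xi_\rho(\eta), b_1) < \eps'$. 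The single line $\Xi_\rho(\eta^{-1})$ can be $\eps$-close to at most one of $b_1, b_2, b_3$, so there are two distinct indices $i, j \in \{1,2,3\}$ with $d(b_i, \Xi_\rho(\eta^{-1})) > \eps$ and $d(b_j, \Xi_\rho(\eta^{-1})) > \eps$. Lemma \ref{lem:ns_dyn_all} then gives $d(\rho(\eta) b_i, \Xi_\rho(\eta)) \leq \eps'$, hence $d(\rho(\eta) b_i, b_1) < 2\eps'$, and likewise for $b_j$. By $\Gamma$-invariance of $\Lambda_{rel}$ both $\rho(\eta) b_i$ and $\rho(\eta) b_j$ lie in $\Lambda_{rel}$, and since $\rho(\eta)$ is injective at most one of them equals $b_1$; the other is the sought-after nearby point. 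The main obstacle is the step $|\Lambda_{rel}| \geq 3$ — specifically arguing that the peripheral limit point is distinct from the proximal fixed points, which is exactly where the uniform transversality hypothesis (via equivariance of the limit data) is needed; everything after that is pigeonhole plus one invocation of North--South dynamics, with no estimates required.
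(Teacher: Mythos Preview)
Your proposal is correct and follows essentially the same argument as the paper: first establish $|\Lambda_{rel}| \geq 3$ using a biproximal element together with a peripheral limit point (ruled distinct via the uniform transversality hypothesis), then run the pigeonhole-plus-North--South argument with Lemma \ref{lem:ns_dyn_all} exactly as the paper does. Your write-up is in fact slightly more explicit than the paper's in two places---you spell out the equivariance $\xi_\rho(\gamma P\gamma^{-1}) = \rho(\gamma)\xi_\rho(P)$ underlying the ``$\xi_P$ is new'' step, and you say how $\eta$ is obtained---but the substance is identical.
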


\subsection{Geometrically-finite convergence group action}

We first prove that $\Gamma$ acts on $\Lambda_{rel}$ as a convergence group, that is to say
\spacer 
\begin{prop}
\label{prop:new318}
The natural induced action of $\Gamma$ on the space $\Lambda_{rel}^{(3)}$ of distinct triples is properly discontinuous.

\begin{proof}
We will pick out a distinguished family of compact sets of $\Lambda_{rel}^{(3)}$, and use these to prove proper discontinuity of the action.
Given $T = (P_1, P_2, P_3) \in \Lambda_{rel}^{(3)}$ a triple of distinct points, define $|T| = |(P_1,P_2,P_3)| := \min_{i \neq j} d(P_i,P_j)$, where $d$ is a(ny) Riemannian metric on the Grassmannian. 
For every $\delta > 0$, $\left\{T \in \Lambda_{rel}^{(3)} : |T| \geq \delta \right\}$ is a compact subset of $\Lambda_{rel}^{(3)}$, and conversely every compact subset of $\Lambda_{rel}^{(3)}$ is contained in a subset of that form. 

We will now establish that, given $\delta > 0$, there exists $\ell \in \nats$ such that if $T \in \Lambda_{rel}^{(3)}$ satisfies $|T| > \delta$ and $\eta \in \Gamma$ 
satisfies $|\eta|_c > \ell$, then $|\rho(\eta)T| < \delta$. 
This will suffice to establish the proposition, since it implies that given any compact subset $\Lambda_{rel}^{(3)}$, all but finitely many words (those of length at most $\ell$) must move the compact subset off itself.

Given $\delta > 0$, let $\ell$ be given by Lemma \ref{lem:ns_dyn_all} with $\eps = \eps' = \frac\delta2$. 

Now consider $(\xi_1,\xi_2,\xi_3) \in \Lambda_{rel}^{(3)}$ such that $|T| > \delta$, and $\eta \in \Gamma$ such that $|\eta|_c > \ell$. Note that $d(\Xi(\eta^{-1}) , \xi_i) > \frac\delta2$ for at least two of the lines $\xi_1,\xi_2,\xi_3$---say, without loss of generality, $\xi_1$ and $\xi_2$.

Lemma \ref{lem:ns_dyn_all} yields $d(\rho(\eta) \xi_i, \Xi_\rho(\eta) ) < \frac\delta2$ for $i=1,2$, and so $$|\rho(\eta) T| \leq d(\rho(\eta) \xi_1, \rho(\eta) \xi_2) < \delta,$$ as desired.
\end{proof}
\end{prop}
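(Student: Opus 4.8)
The plan is the standard convergence-group argument: exhibit a cofinal family of compact subsets of $\Lambda_{rel}^{(3)}$ and show that each of them is moved entirely off itself by all but finitely many elements of $\Gamma$. Writing $|T| := \min_{i \neq j} d(P_i, P_j)$ for a triple $T = (P_1, P_2, P_3)$, with $d$ a fixed Riemannian metric on $\proj(\real^d)$, set $K_\delta := \{T \in \Lambda_{rel}^3 : |T| \geq \delta\}$ for $\delta > 0$. Since $\Lambda_{rel}$ is compact (Proposition \ref{prop:BPS311}) and $T \mapsto |T|$ is continuous on $\Lambda_{rel}^3$ and strictly positive on $\Lambda_{rel}^{(3)}$, each $K_\delta$ is compact and contained in $\Lambda_{rel}^{(3)}$, and every compact $C \subseteq \Lambda_{rel}^{(3)}$ satisfies $C \subseteq K_{\delta_C}$ with $\delta_C := \min_{T \in C} |T| > 0$. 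By $\rho(\Gamma)$-invariance of $\Lambda_{rel}$ (Proposition \ref{prop:BPS311}) the group $\Gamma$ does act on $\Lambda_{rel}^{(3)}$, and it suffices to show that $\{\gamma \in \Gamma : \rho(\gamma) K_\delta \cap K_\delta \neq \varnothing\}$ is finite for every $\delta > 0$.

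The main step, where the work of the earlier lemmas is consumed, is a contraction estimate: for each $\delta > 0$ there is $\ell \in \nats$ such that $|\eta|_c > \ell$ implies $|\rho(\eta) T| < \delta$ for every $T \in \Lambda_{rel}^{(3)}$ with $|T| \geq \delta$. I would prove this via the North--South dynamics Lemma \ref{lem:ns_dyn_all}, applied with $\eps = \eps' = \delta/4$ to produce a threshold $\ell > \ell_0$. Given $\eta$ with $|\eta|_c > \ell$ and $T = (\xi_1, \xi_2, \xi_3)$ with $|T| \geq \delta$: the single line $\Xi_\rho(\eta^{-1}) \in \proj(\real^d)$ can lie within distance $\delta/4$ of at most one of the $\xi_i$ (otherwise two of them would be within $\delta/2 < \delta \leq |T|$ of each other), so after relabelling $d(\xi_1, \Xi_\rho(\eta^{-1})) > \delta/4$ and $d(\xi_2, \Xi_\rho(\eta^{-1})) > \delta/4$; since $\xi_1, \xi_2 \in \Lambda_{rel}$, Lemma \ref{lem:ns_dyn_all} gives $d(\rho(\eta)\xi_i, \Xi_\rho(\eta)) \leq \delta/4$ for $i = 1, 2$, whence $|\rho(\eta) T| \leq d(\rho(\eta)\xi_1, \rho(\eta)\xi_2) \leq \delta/2 < \delta$.

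To conclude, $|\rho(\eta) T| < \delta$ means $\rho(\eta) T \notin K_\delta$, so any $\gamma$ with $\rho(\gamma) K_\delta \cap K_\delta \neq \varnothing$ must satisfy $|\gamma|_c \leq \ell$; since only finitely many $\gamma \in \Gamma$ satisfy $|\gamma|_c \leq \ell$, the action on $\Lambda_{rel}^{(3)}$ is properly discontinuous. This supplies the convergence-group hypothesis of the Bowditch--Yaman--Gerasimov criterion (Theorem \ref{thm:Gerasimov}); the remaining hypothesis, cocompactness of $\Gamma \actson \Lambda_{rel}^{(2)}$, is the next item. I do not expect a genuine obstacle here: the analytic content has already been front-loaded into the quantitative transversality results (Corollary \ref{cor:lim_transverse}, Lemma \ref{lem:biinf_trans_quant}) and the North--South dynamics Lemma \ref{lem:ns_dyn_all}, so what remains is bookkeeping — the only points needing mild care being the cofinality of $\{K_\delta\}$ among the compacta of $\Lambda_{rel}^{(3)}$ and choosing the $\eps$'s so that the final inequality comes out strict rather than merely $\leq \delta$.
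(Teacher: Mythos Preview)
Your proposal is correct and follows essentially the same approach as the paper's own proof: define the compacta $K_\delta$, use a pigeonhole argument to find two points of the triple far from $\Xi_\rho(\eta^{-1})$, and apply Lemma~\ref{lem:ns_dyn_all} to contract them toward $\Xi_\rho(\eta)$. The only difference is that you take $\eps = \eps' = \delta/4$ where the paper takes $\delta/2$; your choice is slightly cleaner, since it yields $|\rho(\eta)T| \leq \delta/2 < \delta$ unambiguously, whereas the paper's constants give $\leq \delta$ from the non-strict conclusion of Lemma~\ref{lem:ns_dyn_all} (a harmless slip there).
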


We then prove that $\Gamma$ in fact acts on $\Lambda_{rel}$ as a geometrically finite convergence group. By Theorem \ref{thm:Gerasimov}, to demonstrate geometric finiteness it suffices to show cocompactness on the space of distinct pairs. For this we will use an expansivity argument:
\spacer \begin{prop} \label{prop:geomfin}
The natural induced action of $\Gamma$ on the space $\Lambda_{rel}^{(2)}$ of distinct pairs is cocompact.

\begin{proof}
As with the case of distinct triples above, for every $\delta >0$, $\left\{T \in \Lambda_{rel}^{(2)} : |T| \geq \delta \right\}$ is compact subset of $\Lambda_{rel}^{(2)}$, and conversely every compact subset of $\Lambda_{rel}^{(2)}$ is contained in a subset of that form. Here, analogously to above, $|T| := d(\xi_1, \xi_2)$.

We will now prove the following statement: 
there exists $\eps > 0$ such that for every $T = (\xi_1, \xi_2) \in \Lambda_{rel}^{(2)}$, there exists $\gamma \in \Gamma$ such that $|\rho(\gamma)T| \geq \eps$. This suffices to establish the Proposition.

Choose $\eps = \frac12 s_{\min}$, where $s_{\min}$ is the minimum gap from Proposition \ref{prop:limits_exist_trans} for metric {\it geodesic} sequences given our domination constants. 
If $|T| \geq \eps$ then we may take $\gamma = \id$, so we may suppose that $|T| < \eps$. 

Choose $(6,20)$-metric quasigeodesic paths (the constants are from Proposition \ref{prop:reparam_proj_geod}) $(\gamma_i = g_1 \cdots g_{|\gamma_i|}), (\eta_i = h_1 \cdots h_{|\eta_i|}) \subset \Gamma$ such that 
$\Xi_\rho(\gamma_i) \to \xi_1$, $\Xi_\rho(\eta_i) \to \xi_2$, and consider the sequence of matrices $(\dots, A_{-1}, A_0, A_1, \dots)$ given by $A_i = \rho(g_{i+1}^{-1})$ for $i \geq 0$ and $A_i = \rho(h_{|i|})$ for $i<0$.

By Lemma \ref{lem:geodrays_qg}$, (\dots, \eta_2, \eta_1, \id, \gamma_1, \gamma_2, \dots) =: x$ is a metric quasigeodesic.

If the sequence for $\xi_1$ is not eventually peripheral, then we may find an increasing sequence of 
$i_m > 0$
such that the shifted sequences
\[ \sigma^{i_m} x := \left( \sigma^{i_m} A_n := A_{n+i_m} \right)_{n\in\ints} \]
converge (as $m\to\infty$) to a metric geodesic sequence $\sigma^\infty x = (B_n)_{n\in\ints}$, i.e. $\displaystyle B_n = \lim_{m\to\infty} \sigma^{i_m} A_n$ for each $n \in \ints$. By construction, for any given $N$ we can find $m_0$ so that $\sigma^{i_m} A_n = B_n$ whenever $|n| \leq N$ and $m \geq m_0$.

By Proposition \ref{prop:limits_exist_trans}, $\sin \angle \left( E^u(\sigma^\infty x), E^s(\sigma^\infty x) \right) > 2\eps$.  Moreover, by Corollary \ref{cor:EC}, 
for all large enough $m$ (given $\nu'$ and $c'$), $\sin \angle \left( E^*(\sigma^{i_m} x), E^*(\sigma^\infty x)\right) < \frac\eps2$ for $* \in \{u,s\}$, so that 
\[ \sin \angle \left( E^u(\sigma^{i_m} x), E^s(\sigma^{i_m} x) \right) > \eps .\]

Since the endpoints of $\sigma^{i^m} x$ are given by acting on the endpoints of $x$ by $A_{i_m-1} \cdots A_0 = \rho(g_1 \cdots g_{i_m})^{-1} = \rho(\gamma_{i_m}^{-1})$, this establishes that $|\rho(\gamma_{i_m}^{-1})(\xi_1,\xi_2)| \ge \eps$, as desired.

We argue similarly if the sequence for $\xi_2$ is not eventually peripheral.

If the sequences for both $\xi_1$ and $\xi_2$ are eventually peripheral, there is a positive lower bound on the (infimum of the) distance between these (over all shifts, as above):
if not, we can find $P,P' \in \mathcal{P}$ and a sequence of words $w_n \to \infty$ not starting with a letter from $P$ such that $d(\xi(P), w_n \xi(P')) < 2^{-n}$. Up to subsequence, the $w_n$ converge to some infinite geodesic such that $\displaystyle \lim_{n\to\infty} \Xi_\rho(w_n) = \xi(P)$; but now observe that this infinite geodesic cannot be eventually peripheral in both directions---these limit points are all distinct by hypothesis---, and by the arguments above neither can it be not eventually peripheral. We conclude, by contradiction, that said lower bound must in fact exist. 
\end{proof} \end{prop}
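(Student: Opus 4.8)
The plan is to verify cocompactness through the standard ``expansivity'' reformulation. Since the sets $K_\delta := \{(\xi_1,\xi_2) \in \Lambda_{rel}^{(2)} : d(\xi_1,\xi_2) \geq \delta\}$ exhaust the compact subsets of $\Lambda_{rel}^{(2)}$ as $\delta \to 0$, it suffices to produce a single $\eps > 0$ so that every pair in $\Lambda_{rel}^{(2)}$ has a $\Gamma$-translate lying in $K_\eps$. I would take $\eps := \tfrac12 s_{\min}$, where $s_{\min}$ is the uniform lower bound on the separation $\sin\angle(E^u(x),E^s(x))$ supplied by Proposition \ref{prop:limits_exist_trans} for bi-infinite metric \emph{geodesic} sequences with the given domination constants; if a pair already satisfies $d(\xi_1,\xi_2) \geq \eps$ then $\gamma = \id$ works, so assume otherwise.

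Given $(\xi_1,\xi_2)$, write each $\xi_i = \lim_n \Xi_\rho(\gamma^{(i)}_n)$ along a projected geodesic ray based at $\id$; reparametrising via Proposition \ref{prop:reparam_proj_geod} and splicing the two rays at $\id$, Lemma \ref{lem:geodrays_qg} shows the resulting bi-infinite path is a metric quasigeodesic $x$, with associated matrix sequence whose unstable and stable limits $E^u(x), E^s(x)$ encode $\xi_1$ and $\xi_2$ (so that a lower bound on $\sin\angle(E^u(x),E^s(x))$ translates into one on $d(\xi_1,\xi_2)$). The key move is to pick the translating element by shifting $x$: if, say, the ray toward $\xi_1$ is not eventually peripheral, then along it there are shift parameters $i_m \to \infty$ for which the shifted sequences $\sigma^{i_m}x$ converge coordinatewise to a bi-infinite metric \emph{geodesic} sequence $\sigma^\infty x$. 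Proposition \ref{prop:limits_exist_trans} gives $\sin\angle(E^u(\sigma^\infty x), E^s(\sigma^\infty x)) \geq 2\eps$, and the uniform convergence in Corollary \ref{cor:EC} upgrades this to $\sin\angle(E^u(\sigma^{i_m}x), E^s(\sigma^{i_m}x)) > \eps$ for all large $m$. Since the endpoints of $\sigma^{i_m}x$ are those of $x$ acted on by $\rho(\gamma^{(1)}_{i_m})^{-1}$, this exhibits the desired translate of $(\xi_1,\xi_2)$ in $K_\eps$; the symmetric argument handles the case where the ray toward $\xi_2$ is not eventually peripheral.

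The remaining case --- both rays eventually peripheral, so that $\xi_1$ and $\xi_2$ are $\Gamma$-translates of peripheral limit points $\xi_\rho(P), \xi_\rho(P')$ --- is where the shift-and-limit trick fails (one cannot extract a geodesic limit), and I expect it to be the main obstacle. I would resolve it by a compactness-and-contradiction argument resting on the uniform transversality hypothesis in Definition \ref{defn:peri_conds}: were the translated pairs in this family not uniformly separated, one would obtain $P, P' \in \mathcal{P}$ and words $w_n \to \infty$ not beginning with a letter of $P$ such that $d(\xi_\rho(P), \rho(w_n)\,\xi_\rho(P')) \to 0$; passing to a subsequence, the $w_n$ converge to a bi-infinite geodesic whose forward limit is $\xi_\rho(P)$, and this geodesic can be neither eventually peripheral in both directions --- for then its two ideal limits would be distinct peripheral points, contradicting $d \to 0$ --- nor non-eventually-peripheral in some direction, since that reduces to the previous case and forces a definite separation. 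This contradiction delivers the uniform $\eps$ here as well, and combining the two cases proves the Proposition; together with Proposition \ref{prop:new318} and Theorem \ref{thm:Gerasimov}, this is the input providing geometric finiteness in the relative hyperbolicity theorem.
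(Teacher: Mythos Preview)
Your proposal is correct and follows essentially the same approach as the paper: the same expansivity reformulation, the same choice $\eps = \tfrac12 s_{\min}$, the same splicing via Lemma \ref{lem:geodrays_qg}, the same shift-and-limit argument invoking Proposition \ref{prop:limits_exist_trans} and Corollary \ref{cor:EC} in the non-eventually-peripheral case, and the same contradiction argument for the both-peripheral case. One small remark: in that final case the limit of the $w_n$ is a geodesic \emph{ray}, not a bi-infinite geodesic, so the dichotomy is simply whether or not the ray is eventually peripheral; your reasoning for each alternative is correct, just adjust the wording.
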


\subsection{Peripherals are maximal parabolics}

\begin{lem} \label{lem:nonperi_limit}
For any non-peripheral $\gamma \in \Gamma$, $\displaystyle \lim_{n \to \infty} \Xi_\rho(\gamma^n)$ is the top eigenline of $\rho(\gamma)$ 

\begin{proof}
Recall that $\rho(\gamma)$ is necessarily proximal (Proposition \ref{lem:nonperi_proximal}), so that the top eigenline is well-defined.

To show $\lim_{n \to \infty} \Xi_\rho(\gamma^n)$ is the top eigenline of $\rho(\gamma)$, we may apply Lemma \ref{lem:BPSA6} with $A = \rho(\gamma^n)$ and $L$ the top eigenline; then $d(L, \Xi_\rho(\gamma^n)) \leq C_\gamma e^{-\mu_\gamma n}$ for positive constants $C_\gamma, \mu_\gamma$ depending only on $\rho(\gamma)$; in particular, as $n \to \infty$, this bound goes to zero, so that $\displaystyle \lim_{n\to\infty} \Xi_\rho(\gamma^n) = L$ as desired. 
\end{proof} 
\end{lem}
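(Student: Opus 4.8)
The plan is to apply Lemma \ref{lem:BPSA6} (one of the linear-algebra lemmas from Appendix \ref{app:linalg}) to the proximal matrix $\rho(\gamma)$ iterated, using the already-established fact that $\rho(\gamma)$ is proximal (Proposition \ref{lem:nonperi_proximal}). Since $\rho(\gamma)$ is proximal, it has a well-defined attracting eigenline $L = L^+$ with $|\lambda_1(\rho(\gamma))| > |\lambda_2(\rho(\gamma))|$, and a complementary repelling hyperplane $H^-$; we want to show $U_1(\rho(\gamma^n)) = \Xi_\rho(\gamma^n) \to L$.

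First I would recall the spectral picture: write $\rho(\gamma)$ in a decomposition adapted to $L \oplus H^-$. Iterating, $\rho(\gamma^n) = \rho(\gamma)^n$ contracts $\proj(\real^d) \setminus \proj(H^-)$ towards $L$ at a rate governed by $\left|\frac{\lambda_2}{\lambda_1}\right|(\rho(\gamma))$, and in particular one gets control on the singular-value gap: $\frac{\sigma_1}{\sigma_2}(\rho(\gamma^n)) \to \infty$ geometrically (this also follows from Proposition \ref{lem:nonperi_proximal} via the relation $\log|\lambda_i| = \lim \frac1n \log\sigma_i$), so $U_1(\rho(\gamma^n))$ is well-defined for all large $n$. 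Then I would invoke Lemma \ref{lem:BPSA6} with $A = \rho(\gamma^n)$ and the fixed eigenline $L$: this lemma bounds $d(L, U_1(A))$ in terms of $\frac{\sigma_2}{\sigma_1}(A)$ and the angle between $L$ and the stable hyperplane $S_{d-1}(A^{-1}) = U_{d-1}(A)$; since $L$ is $\rho(\gamma)$-invariant and transverse to the repelling hyperplane, that angle stays bounded below uniformly in $n$, giving $d(L, \Xi_\rho(\gamma^n)) \leq C_\gamma e^{-\mu_\gamma n}$ for constants $C_\gamma, \mu_\gamma > 0$ depending only on the proximal dynamics of $\rho(\gamma)$.

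Letting $n \to \infty$, this bound forces $\Xi_\rho(\gamma^n) \to L$, which is the claim. The main obstacle — really the only thing requiring care — is justifying the uniform lower bound on $\sin\angle(L, U_{d-1}(\rho(\gamma^n)))$ needed to apply Lemma \ref{lem:BPSA6}; this amounts to checking that the stable hyperplanes $U_{d-1}(\rho(\gamma^n))$ converge to the repelling hyperplane $H^-$ of $\rho(\gamma)$ (or at least stay away from $L$), which is again a consequence of proximality of $\rho(\gamma)$ applied to the dual/transpose picture, i.e. that $\rho(\gamma)^{-1}$ (or $\rho^*(\gamma)$) is proximal with attracting line $L^\perp$-type data. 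All of this is standard once proximality is in hand, so the proof is short.
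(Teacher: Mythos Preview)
Your approach is exactly the paper's: invoke proximality via Proposition~\ref{lem:nonperi_proximal}, then apply Lemma~\ref{lem:BPSA6} with $A=\rho(\gamma^n)$ and $P=L$ the top eigenline (using $\rho(\gamma^n)L=L$ so that $d(A(L),U_1(A))=d(L,\Xi_\rho(\gamma^n))$), and let $n\to\infty$. One bookkeeping slip worth fixing: the hyperplane in Lemma~\ref{lem:BPSA6} is $S_{d-1}(A)=U_{d-1}(A^{-1})$, not $S_{d-1}(A^{-1})=U_{d-1}(A)$; with $A=\rho(\gamma^n)$ the relevant space is therefore $U_{d-1}(\rho(\gamma^{-n}))$, which is what converges to the repelling hyperplane $H^-$ and stays uniformly transverse to $L$, whereas $U_{d-1}(\rho(\gamma^n))$ contains $U_1(\rho(\gamma^n))\to L$ and would give an angle tending to zero. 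With that correction your argument is the paper's.
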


\spacer \begin{prop} \label{prop:para_peri}
The maximal parabolic subgroups of $\Gamma$ are precisely (conjugates of) peripheral subgroups.

\begin{proof}
Suppose $H$ is a maximal parabolic subgroup.

Observe that $H$ cannot contain non-peripheral elements. Indeed, suppose $\gamma \in \Gamma$ is non-peripheral. From Lemma \ref{lem:nonperi_proximal} and \ref{lem:nonperi_limit}, $\rho(\gamma)$ is proximal, and $\displaystyle \lim_{n \to \infty} \Xi_\rho(\gamma^n)$ is the top eigenline of $\rho(\gamma)$. Similarly, $\rho(\gamma^{-1})$ is proximal, and $\displaystyle \lim_{n \to \infty} \Xi_\rho(\gamma^{-n})$ is the bottom eigenline of $\rho(\gamma)$. These are distinct (by proximality), and are both fixed by $\gamma$, so $\gamma \notin H$.

Hence every $\gamma \in H$ is peripheral. 

Now, from the unique limits hypothesis in Definition \ref{defn:peri_conds}, for any peripheral subgroup $P$, $\displaystyle \lim_{n \to \infty} \Xi_\rho(\eta_n) = \xi_\rho(P)$ for any sequence $(\eta_n) \subset P$, and so $P$ fixes $\xi_\rho(P)$. By Lemma \ref{lem:ns_dyn_all}, $P$ fixes no other point $\beta \in \Lambda_{rel}$: any such $\beta$ is at some definite distance $\eps(\beta) >0$ from $\xi(P)$, and hence by Lemma \ref{lem:ns_dyn_all}, sufficiently long words in $P$ must move $\beta$ off of itself.
Hence every peripheral subgroup $P$ is parabolic, and extends to some maximal parabolic subgroup $\hat{P}$. 

Suppose $\hat{P} \smallsetminus P \neq \varnothing$, so that $\hat{P}$ also contain some non-identity element $q$ of some other peripheral subgroup $Q \neq P$. By the torsionfree assumption, $\hat{P} \cap Q$ contains arbitrarily large powers of $q$. 
By the same argument as in the previous paragraph, this implies that $Q \subset \hat{P}$.  
But this contradicts the first part of the uniform transversality hypothesis which stipulates that $\xi_\rho(P) \neq \xi_\rho(Q)$.

Hence we must have $\hat{P} = P$, i.e. 
the maximal parabolic subgroups are exactly the peripheral subgroups, as desired.
\end{proof} \end{prop}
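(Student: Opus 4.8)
The plan is to leverage the machinery already assembled. By Propositions~\ref{prop:Mrel_perfect}, \ref{prop:new318} and \ref{prop:geomfin}, $\Gamma$ acts on $\Lambda_{rel}$ as a geometrically finite convergence group, and Theorem~\ref{thm:Gerasimov} then exhibits $\Gamma$ as hyperbolic relative to the collection of maximal parabolic subgroups of this action; so it suffices to identify those maximal parabolics with the conjugates of members of $\mathcal{P}$. I would do this in three moves: (a) no maximal parabolic contains a non-peripheral element; (b) each $P \in \mathcal{P}$ is itself a maximal parabolic; (c) every maximal parabolic arises this way.

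For (a), let $H$ be a maximal parabolic and suppose $\gamma \in H$ is non-peripheral. By Proposition~\ref{lem:nonperi_proximal} both $\rho(\gamma)$ and $\rho(\gamma^{-1})$ are proximal, and by Lemma~\ref{lem:nonperi_limit} the limits $\lim_{n} \Xi_\rho(\gamma^{n})$ and $\lim_n \Xi_\rho(\gamma^{-n})$ are the distinct attracting and repelling eigenlines of $\rho(\gamma)$; since $|\gamma^{n}|_c \to \infty$ by non-distortion (Corollary~\ref{cor:wordlengths_comp}), both lie in $\Lambda_{rel}$, and both are fixed by $\gamma$. Thus $\gamma$ is an infinite-order element with a fixed locus of size $2$, contradicting the definition of a parabolic subgroup. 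Hence every element of $H$ is peripheral.

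For (b), fix $P \in \mathcal{P}$; it is infinite. The unique-limits clause of Definition~\ref{defn:peri_conds} furnishes a point $\xi_\rho(P) \in \Lambda_{rel}$ with $\Xi_\rho(\eta_n) \to \xi_\rho(P)$ for every sequence $(\eta_n) \subset P$ tending to infinity, and combining this with the gap estimate of Lemma~\ref{lem:BPSA4A5} and (D\textsuperscript{-}), exactly as in the proof of Proposition~\ref{prop:BPS311}, shows $\rho(p)\,\xi_\rho(P) = \xi_\rho(P)$ for all $p \in P$, so $P$ fixes $\xi_\rho(P)$. Moreover $P$ fixes nothing else: if $p \in P \smallsetminus\{1\}$ fixed some $\beta \neq \xi_\rho(P)$, then $\Xi_\rho(p^{\pm n}) \to \xi_\rho(P)$, and feeding $\eta = p^n$ (for large $n$) into the north-south dynamics of Lemma~\ref{lem:ns_dyn_all} with $\eps = \tfrac12 d(\beta,\xi_\rho(P))$ and $\eps'$ small forces $\rho(p^n)\beta = \beta$ to approach $\xi_\rho(P)$, a contradiction. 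Since $\Gamma$ is torsion-free, every nontrivial element of $P$ has infinite order, so no such element has a fixed locus of size $2$, and $P$ is parabolic; let $\hat P \supseteq P$ be a maximal parabolic containing it. If $\hat P \neq P$, pick $q \in \hat P \smallsetminus P$; by (a) applied to $\hat P$, $q$ is peripheral, hence lies in a unique peripheral subgroup $Q \neq P$ (malnormality). Since $\hat P$ fixes $\xi_\rho(P)$ and contains the arbitrarily large powers $q^n$, the same Lemma~\ref{lem:ns_dyn_all} argument with $\eta = q^n$, $\Xi_\rho(q^{\pm n}) \to \xi_\rho(Q)$ and $\beta = \xi_\rho(P)$ forces $\xi_\rho(P) = \xi_\rho(Q)$, contradicting the distinctness clause of uniform transversality. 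Hence $\hat P = P$.

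Finally, for (c): an arbitrary maximal parabolic $H$ is infinite, so by (a) it contains a nontrivial peripheral element $q \in Q$, where $Q$ is a conjugate of a member of $\mathcal{P}$; applying (b) to $Q$ (every step being conjugation-equivariant) makes $Q$ a maximal parabolic, and $\langle q\rangle \subseteq H \cap Q$ is infinite, so $H$ and $Q$ are maximal parabolics sharing the parabolic fixed point of $q$; by uniqueness of the maximal parabolic at a parabolic point, $H = Q$. This identifies the maximal parabolics of the action with the conjugates of members of $\mathcal{P}$. I expect the crux to be step~(b): showing that a peripheral subgroup fixes \emph{nothing} beyond its designated point $\xi_\rho(P)$ and that it coincides with its own maximal-parabolic closure is precisely where the unique-limits and uniform-transversality hypotheses, together with the north-south dynamics Lemma~\ref{lem:ns_dyn_all} (hence ultimately the transversality Proposition~\ref{prop:limits_exist_trans}), are indispensable, and where one must be careful that the relevant powers genuinely escape to infinity in the cusped metric and that malnormality truly confines $q$ to a single peripheral subgroup.
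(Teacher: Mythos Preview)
Your proof is correct and follows essentially the same approach as the paper's: both use proximality of non-peripheral elements to exclude them from parabolics, then the unique-limits hypothesis together with the north-south dynamics (Lemma~\ref{lem:ns_dyn_all}) to show each $P \in \mathcal{P}$ is parabolic with unique fixed point $\xi_\rho(P)$, and finally the distinctness clause of uniform transversality to rule out $\hat P \supsetneq P$. Your organization is somewhat more explicit than the paper's---in particular your step~(c), showing that an arbitrary maximal parabolic $H$ must coincide with some peripheral subgroup via the shared parabolic fixed point of a common element, spells out a direction the paper leaves implicit in its concluding sentence---but the ingredients and logic are the same.
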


It follows from the above that the parabolic points in $\Lambda_{rel}$ are precisely the peripheral fixed points.

\subsection{Summary of argument}
\begin{proof}[Proof of Theorem \ref{thm:reldom_relhyp}]

Consider a representation $\rho: \Gamma \to \GL(d,\real)$ which is 1-dominated relative to a prescribed collection of peripheral subgroups $\mathcal{P}$, such that $\Gamma$ contains at least one non-peripheral element.

$\rho$ induces an action of $\Gamma$ on the space of lines $\proj(\real^d)$. Consider $\Lambda_{rel} \subset \proj(\real^d)$. It is non-empty, compact and $\Gamma$-invariant (Proposition \ref{prop:BPS311}), and perfect (Proposition \ref{prop:Mrel_perfect}.) 

The diagonal action of $\Gamma$ on $\Lambda_{rel}^{(3)}$ is properly discontinuous (Proposition \ref{prop:new318}) and the diagonal action on $\Lambda_{rel}^{(2)}$ is cocompact (Proposition \ref{prop:geomfin}.) 

Moreover the maximal parabolic groups are precisely the peripheral subgroups; by Theorem \ref{thm:Gerasimov} and since conical limit points cannot be parabolic 
these are all bounded, and in particular the stabiliser of each bounded parabolic point is finitely-generated (Proposition \ref{prop:para_peri}.)

We summarize all of this in a statement that will be used again in the next section:
\spacer \begin{prop} \label{prop:geomfin_conv}
Given a representation $\rho: \Gamma \to \GL(d,\real)$ which is 1-dominated relative to $\mathcal{P}$, $\rho(\Gamma)$ acts on $\Lambda_{rel}$ as a geometrically-finite convergence group, with $\mathcal{P}^\Gamma$ as the set of maximal parabolic subgroups.
\end{prop}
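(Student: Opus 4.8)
The plan is to assemble Proposition~\ref{prop:geomfin_conv} as a direct corollary of the constituent results already proven in \S\ref{sec:reldom_relhyp}, since all the substantive work has been done; what remains is to verify that these results fit together to exactly match the hypotheses of the Bowditch--Yaman--Gerasimov machinery. First I would recall that, by Proposition~\ref{prop:BPS311}, $\Lambda_{rel} \subset \proj(\real^d)$ is non-empty, compact, and $\rho(\Gamma)$-invariant, so it is a legitimate candidate for the space $M$ in Theorem~\ref{thm:Yaman}. Second, by Proposition~\ref{prop:Mrel_perfect} it is perfect (this is where the hypothesis that $\Gamma$ contains both peripheral and non-peripheral elements enters, guaranteeing $|\Lambda_{rel}| \geq 3$). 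Third, Proposition~\ref{prop:new318} gives that the diagonal action on $\Lambda_{rel}^{(3)}$ is properly discontinuous, i.e.\ $\rho(\Gamma) \actson \Lambda_{rel}$ is a convergence action; and Proposition~\ref{prop:geomfin} gives that the action on $\Lambda_{rel}^{(2)}$ is cocompact.

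Next I would invoke Theorem~\ref{thm:Gerasimov}: a finitely-generated $\Gamma$ acting on a non-empty perfect compact metrizable $M$ with properly discontinuous action on $M^{(3)}$ and cocompact action on $M^{(2)}$ is hyperbolic relative to the maximal parabolic subgroups of the action. Since all four input conditions have just been verified for $M = \Lambda_{rel}$, we conclude $\Gamma$ is hyperbolic relative to the maximal parabolics, which in particular means the action $\rho(\Gamma) \actson \Lambda_{rel}$ is a \emph{geometrically-finite} convergence group action (every point is a conical limit point or a bounded parabolic point, with finitely generated parabolic stabilizers). Finally, Proposition~\ref{prop:para_peri} identifies the maximal parabolic subgroups of this action as exactly the conjugates of the peripheral subgroups, i.e.\ $\mathcal{P}^\Gamma$; this completes the identification claimed in the statement.

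I do not anticipate a genuine obstacle here, since this proposition is explicitly a repackaging of the chain of results proven in \S\ref{sec:reldom_relhyp} --- the paper signals as much by saying ``We summarize all of this in a statement.'' The one point that deserves a sentence of care is logical bookkeeping: Proposition~\ref{prop:para_peri} (that peripherals are exactly the maximal parabolics) uses the uniform transversality and unique limits hypotheses from Definition~\ref{defn:peri_conds} plus the north-south dynamics lemma (Lemma~\ref{lem:ns_dyn_all}), and one must make sure none of these arguments secretly presupposes the relative hyperbolicity we are trying to establish --- but a scan of the proofs shows they rely only on the domination inequalities, the (RH) preconditions on $\mathcal{P}$, and the limit-existence results of \S\ref{sec:limtrans}, all of which are available without assuming $\Gamma$ relatively hyperbolic a priori. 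The only other subtlety is the passage from ``finitely-generated parabolic stabilizers'' to the hypotheses of Theorem~\ref{thm:Yaman}, but this is subsumed by using Gerasimov's formulation (Theorem~\ref{thm:Gerasimov}), which only needs conditions on $M^{(2)}$ and $M^{(3)}$ together with finite generation of $\Gamma$ itself, all of which hold. So the proof is simply:

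\begin{proof}
By Proposition~\ref{prop:BPS311}, $\Lambda_{rel}$ is non-empty, compact, and $\rho(\Gamma)$-invariant; it is metrizable as a subspace of the Grassmannian $\proj(\real^d)$. By Proposition~\ref{prop:Mrel_perfect} it is perfect. By Proposition~\ref{prop:new318} the induced action of $\Gamma$ on $\Lambda_{rel}^{(3)}$ is properly discontinuous, and by Proposition~\ref{prop:geomfin} the induced action on $\Lambda_{rel}^{(2)}$ is cocompact. Since $\Gamma$ is finitely generated, Theorem~\ref{thm:Gerasimov} applies and shows that $\Gamma$ is hyperbolic relative to the collection of maximal parabolic subgroups of the action $\rho(\Gamma) \actson \Lambda_{rel}$; equivalently (by Theorem~\ref{thm:Yaman} and the fact that conical limit points are never parabolic), this action is a geometrically-finite convergence group action, with $\Lambda_{rel}$ equivariantly homeomorphic to the Bowditch boundary and with all parabolic stabilizers finitely generated. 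Finally, by Proposition~\ref{prop:para_peri}, the maximal parabolic subgroups of this action are exactly the conjugates of the subgroups in $\mathcal{P}$, i.e.\ the collection $\mathcal{P}^\Gamma$. This is precisely the asserted statement.
\end{proof}
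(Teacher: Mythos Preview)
Your proposal is correct and follows essentially the same approach as the paper: the proposition is explicitly presented there as a summary of Propositions~\ref{prop:BPS311}, \ref{prop:Mrel_perfect}, \ref{prop:new318}, \ref{prop:geomfin}, and \ref{prop:para_peri}, assembled via Theorem~\ref{thm:Gerasimov}, exactly as you have done. Your additional remark checking that none of the cited results presupposes relative hyperbolicity is a welcome piece of logical hygiene that the paper leaves implicit.
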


Hence, by Theorem \ref{thm:Gerasimov}, $\Gamma$ is hyperbolic relative to $\mathcal{P}$.
\end{proof}

\section{Limit maps} \label{sec:limitmaps}
In this section, we prove that a relatively dominated representation $\rho: (\Gamma,\mathcal{P}) \to \GL(d,\real)$ gives us a pair of limit maps from the Bowditch boundary $\del(\Gamma,\mathcal{P})$ into projective space and its dual.

In the case where $\mathcal{P} = \varnothing$, this recovers the limit maps from the Gromov boundary of the group into projective space and its dual that we obtain for an Anosov representation.

\spacer \begin{defn}
Suppose $\Gamma$ is hyperbolic relative to $\mathcal{P}$, and we have a pair of continuous maps $\xi: \del(\Gamma, \mathcal{P}) \to \proj(\real^d)$ and $\xi^*: \del(\Gamma, \mathcal{P}) \to \proj(\real^{d*})$.

$\xi$ and $\xi^*$ are said to be {\bf compatible} if $\xi(\eta) \subset \theta(\eta)$ as linear subspaces for all $\eta \in \del(\Gamma,\mathcal{P})$.

$\xi$ and $\xi^*$ are said to be {\bf transverse} if $\xi(\eta) \oplus \theta(\eta') = \real^d$ for all $\eta \neq \eta'$.

Given $\rho: \Gamma \to \GL(d,\real)$ such that $\rho(P)$ is a parabolic subgroup of $\GL(d,\real)$ for each $P \in \mathcal{P}$, $\xi$ and $\xi^*$ are said to be {\bf dynamics-preserving} if 
\begin{enumerate}[(i)]
\item $\xi(\gamma^+) = (\rho(\gamma))^+$ and $\xi^*(\gamma^+)^\perp = (\rho^*(\gamma))^+$.
for all nonperipheral $\gamma \in \Gamma$, where $\gamma^+ := \lim_{n\to\infty} \gamma^n \in \del(\Gamma,\mathcal{P})$ and $\rho(\gamma)^+$ is the attracting eigenline for $\rho(\gamma)$,
and
\item If $\del P \in \del(\Gamma,P)$ is the unique point associated to $P \in \mathcal{P}$, then $\xi(\del P)$ is the parabolic fixed point associated to $\rho(P)$.
\end{enumerate}
\end{defn}

\spacer \begin{thm} \label{thm:limitmaps}
Given $\rho: \Gamma \to \GL(d,\real)$ 1-dominated relative to $\mathcal{P}$, we have well-defined, $\rho(\Gamma)$-equivariant, continuous maps $\xi_\rho: \del(\Gamma, \mathcal{P}) \to \proj(\real^d)$ and $\xi^*_\rho: \del(\Gamma, \mathcal{P}) \to \proj(\real^{d*})$ which are dynamics-preserving, compatible, and transverse.

\begin{proof}
Recall that if $\rho: \Gamma \to \GL(d,\real)$ is 1-dominated relative to $\mathcal{P}$, then $\Gamma$ is hyperbolic relative to $\mathcal{P}$ by Theorem \ref{thm:reldom_relhyp}. Moreover, as noted in Proposition \ref{prop:geomfin_conv}, $\rho(\Gamma) \actson \Lambda_{rel}$ as a geometrically-finite convergence group, with $\mathcal{P}^\Gamma$ as the set of maximal parabolic subgroups.

Yaman's criterion (Theorem \ref{thm:Yaman}) then gives us an equivariant homeomorphism 
\[ \xi_\rho: \del(\Gamma,\mathcal{P}) \to \Lambda_{rel} \subset \proj(\real^d) .\]
By looking at the action of $\rho(\Gamma)$ on the dual vector space (recall \S\ref{sub:dual_rep} and in particular Proposition \ref{prop:dualrep_reldom}), we similarly obtain an equivariant homeomorphism 
\[ \xi^*_\rho: \del(\Gamma,\mathcal{P}) \to \Lambda_{rel}^* \subset \Gr_{d-1}(\real^d) .\]

Equivariance then combines with the other properties of our limit set $\Lambda_{rel}$ to imply that $\xi_\rho$ and $\xi^*_\rho$ are dynamics-preserving. Here we state the arguments for $\xi_\rho$; via the dual representation $\rho^*$ they also imply the claim for $\xi^*_\rho$.

For non-peripheral elements $\gamma$, the attracting eigenline $\rho(\gamma)^+$ is contained in $\Lambda_{rel}$ (Lemma \ref{lem:nonperi_limit}). Every point in $\proj(\real^d)$---outside a hyperplane given by the orthogonal complement of $\rho(\gamma)^+$---is attracted to $\rho(\gamma)^+$ under the action of $\rho(\gamma)$. By the transversality properties of $\Lambda_{rel}$, there exist points of $\Lambda_{rel}$ outside of this hyperplane, since said hyperplane is equal to the attracting hyperplane of $\rho^*(\gamma^{-1})$, and by Corollary \ref{cor:lim_transverse} any point of $\Lambda_{rel}$ other than $\rho(\gamma^{-1})^+$ is transverse to this.

Hence, by equivariance, we have that $\xi_\rho(\gamma^n \zeta) = \rho(\gamma^n) \xi_\rho(\zeta) \to \rho(\gamma)^+$ as $n \to \infty$, for an open set of $\zeta \in \Lambda_{rel}$, and so $\displaystyle \xi_\rho(\gamma^+) = \xi_\rho \left( \lim_{n\to\infty} \gamma^n \right) = \rho(\gamma)^+$.

For peripheral elements $\eta \in P$, the associated limit line $\xi_\rho(P)$ is 
contained in $\Lambda_{rel}$ by the unique limits assumption. Since $\xi$ is a homeomorphism, there is some $\zeta \in \del(\Gamma,\mathcal{P})$ such that $\xi_\rho(\zeta) = \rho(\eta)^+$. 
By equivariance, $\xi_\rho(\eta^n \zeta) = \rho(\eta^n) \xi_\rho(\zeta) \to \rho(\eta)^+$ as $n \to \infty$. Hence $\displaystyle \xi_\rho(\eta^+) = \xi_\rho \left( \lim_{n\to\infty} \eta^n \right) = \rho(\eta)^+$.

To verify that $\xi_\rho$ and $\xi^*_\rho$ are compatible and transverse, we will show that $\xi_\rho, \xi^*_\rho$ satisfy
\begin{align*}
\xi_\rho(x) & = \lim_{n \to \infty} \Xi_\rho(\gamma_n) &
\xi^*_\rho(x) & = \lim_{n \to \infty} \Xi^*_\rho(\gamma_n) 
\end{align*}
for $(\gamma_n) \in \Gamma$ any projected geodesic in $\Gamma$ such that $\gamma_n \to x$, and $\Xi_\rho$ and $\Xi^*_\rho$ as in \S\ref{sec:reldom_relhyp}.

To see this, we note that if $x = \gamma^+ \in \del(\Gamma,\mathcal{P})$ is a proximal limit point, then $\xi_\rho(x)$ is the top eigenline of $\rho(\gamma)$ since $\xi$ is dynamics-preserving, and by Lemma \ref{lem:nonperi_limit} this is equal to $\lim_{n\to\infty} \Xi_\rho(\gamma^n)$. If $x = \del P \in \del(\Gamma,\mathcal{P})$ is a parabolic limit point, then by the dynamics-preserving property $\xi_\rho(x) = \xi_\rho (\eta^+)$ for any $\eta \in P$, and by the unique limits hypothesis $\xi_\rho(x) = \xi_\rho (\eta^+) = \lim_{n\to\infty} \Xi_\rho(\eta_n)$ for any sequence $\eta_n \to \infty$ in $P$.

More generally, given $x \in \del(\Gamma,\mathcal{P})$ that is not a peripheral fixed point , suppose $(\gamma_n)$ is a sequence (along a metric quasigeodesic path) such that no $\gamma_n$ ends in a peripheral letter and $\gamma_n \to x$. Pick any peripheral element $\eta \in \bigcup \mathcal{P}$.

Then, writing $x_n := \displaystyle \lim_{m\to\infty} \gamma_n \eta^m$, we have \[ \displaystyle \lim_{n\to\infty} x_n = \lim_{n\to\infty} \lim_{m\to\infty} \gamma_n \eta^m = \lim_{n\to\infty} \gamma_n = x \] 
(once $n$ and $m$ are large enough, by Lemma \ref{lem:wordsumcomp} the sequences involved may be taken to be uniform quasigeodesics.)

By continuity, $\displaystyle \xi_\rho(x) = \lim_{n\to\infty} \xi_\rho(x_n)$; we then have 
\[ \xi_\rho(x) = \lim_{n\to\infty} \xi_\rho(x_n) = \lim_{n\to\infty} \lim_{m\to\infty} \Xi_\rho(\gamma_n \eta^m) = \lim_{n\to\infty} \Xi_\rho(\gamma_n) \]
where the last equality follows from Corollary \ref{cor:EC} (because the $\gamma_n\eta^m$ may be taken to be uniform quasigeodesics) and the triangle inequality: 
\begin{align*}
d(\Xi(\gamma_n), \xi(x)) & \leq 
d\left( \Xi(\gamma_n), \Xi(\gamma_n \eta^m) \right) + 
d\left( \Xi(\gamma_n \eta^m), \xi(x_n) \right) + 
d\left( \xi(x), \xi(x_n) \right)
\\ 
 & \leq \hat{C} e^{-\hat\mu n} + \hat{C} e^{-\hat\mu m}  + d\left( \xi(x), \xi(x_n) \right)
\end{align*}
and all of the terms that appear in the last line can be made arbitrarily small by taking ($m$ and then) $n$ sufficiently large.

We have written the argument above for $\xi_\rho$; the argument for $\xi^*_\rho$ is entirely analogous.

The compatibility of $\xi_\rho$ and $\xi^*_\rho$ then follows since $\Xi_\rho(\gamma_n) \subset \Xi^*_\rho(\gamma_n)$ for all $n$ by definition;  the transversality of $\xi_\rho$ and $\xi^*_\rho$ follows from Corollary \ref{cor:lim_transverse}. 
\end{proof}
\end{thm}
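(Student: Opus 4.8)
The plan is to bootstrap from the relative hyperbolicity theorem and the geometrically-finite convergence action that have already been established. Since $\rho$ is $1$-dominated relative to $\mathcal{P}$, Theorem \ref{thm:reldom_relhyp} gives that $\Gamma$ is hyperbolic relative to $\mathcal{P}$, and Proposition \ref{prop:geomfin_conv} gives that $\rho(\Gamma)$ acts on $\Lambda_{rel} \subset \proj(\real^d)$ as a geometrically-finite convergence group with $\mathcal{P}^\Gamma$ the set of maximal parabolic subgroups. Yaman's criterion (Theorem \ref{thm:Yaman}) then yields a $\Gamma$-equivariant homeomorphism $\xi_\rho \colon \del(\Gamma,\mathcal{P}) \to \Lambda_{rel}$, well-defined independently of all the auxiliary choices. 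Applying the same machinery to the dual representation $\rho^*$, which is again $1$-dominated relative to $\mathcal{P}$ by Proposition \ref{prop:dualrep_reldom}, produces a second equivariant homeomorphism $\xi^*_\rho \colon \del(\Gamma,\mathcal{P}) \to \Lambda_{rel}^* \subset \Gr_{d-1}(\real^d) \cong \proj(\real^{d*})$. Continuity and $\rho(\Gamma)$-equivariance of both maps are then immediate from this construction.

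Next I would verify the dynamics-preserving property. For a non-peripheral $\gamma \in \Gamma$, Proposition \ref{lem:nonperi_proximal} gives that $\rho(\gamma)$ is proximal and Lemma \ref{lem:nonperi_limit} identifies its attracting eigenline with $\lim_{n\to\infty}\Xi_\rho(\gamma^n) \in \Lambda_{rel}$; the repelling hyperplane of $\rho(\gamma)$ is the attracting hyperplane of $\rho^*(\gamma^{-1})$, and by Corollary \ref{cor:lim_transverse} it meets $\Lambda_{rel}$ only at $\rho(\gamma^{-1})^+$, so every other point of $\Lambda_{rel}$ is attracted to $\rho(\gamma)^+$ under $\rho(\gamma)$. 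Combining this with equivariance of $\xi_\rho$ forces $\xi_\rho(\gamma^+) = \rho(\gamma)^+$, and the dual statement follows by running the same argument for $\rho^*$. For a peripheral subgroup $P$, the unique limits hypothesis in Definition \ref{defn:peri_conds} says that $\xi_\rho(P)$ is the common limit of $U_1(\rho(\eta_n))$ over all $\eta_n \to \infty$ in $P$, hence is fixed by $\rho(P)$; equivariance of $\xi_\rho$ together with the fact that $\del P$ is the unique $P$-fixed point in $\del(\Gamma,\mathcal{P})$ then forces $\xi_\rho(\del P) = \xi_\rho(P)$, the parabolic fixed point of $\rho(P)$.

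The technical heart of the proof is compatibility and transversality, and the hard part will be establishing the characterization $\xi_\rho(x) = \lim_{n\to\infty}\Xi_\rho(\gamma_n)$ and $\xi^*_\rho(x) = \lim_{n\to\infty}\Xi^*_\rho(\gamma_n)$ for every projected geodesic $(\gamma_n)$ in $\Gamma$ with $\gamma_n \to x$. This holds on the dense set of proximal limit points, by the dynamics-preserving property and Lemma \ref{lem:nonperi_limit}, and on the parabolic points, by the unique limits hypothesis. To extend it to a general non-parabolic $x$, I would approximate $x$ by the proximal limit points $x_n := \lim_{m\to\infty}\gamma_n\eta^m$ for a fixed peripheral $\eta$: Lemma \ref{lem:wordsumcomp} and Proposition \ref{prop:reparam_proj_geod} show the relevant spliced paths $\gamma_n\eta^m$ can be taken to be uniform metric quasigeodesics once $n,m$ are large, so $x_n \to x$, and then a triangle-inequality estimate using the uniform exponential convergence of Corollary \ref{cor:EC} together with continuity of $\xi_\rho$ gives $\lim_n \Xi_\rho(\gamma_n) = \lim_n \lim_m \Xi_\rho(\gamma_n\eta^m) = \lim_n \xi_\rho(x_n) = \xi_\rho(x)$. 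The delicate point here is making this genuinely uniform, i.e.\ controlling the quasigeodesic constants for the spliced paths and legitimately interchanging the two limits. With the characterization in hand, compatibility is immediate because $\Xi_\rho(\gamma_n) \subset \Xi^*_\rho(\gamma_n)$ for every $n$, and transversality of $\xi_\rho$ and $\xi^*_\rho$ at distinct boundary points $x \neq x'$ is exactly Corollary \ref{cor:lim_transverse} applied to two projected geodesic rays from $\id$ with distinct endpoints realizing $x$ and $x'$.
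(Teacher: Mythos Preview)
Your proposal is correct and follows essentially the same approach as the paper's proof: both obtain $\xi_\rho$ and $\xi^*_\rho$ from Yaman's criterion via Proposition \ref{prop:geomfin_conv} (applied to $\rho$ and to $\rho^*$), verify the dynamics-preserving property using equivariance together with Lemma \ref{lem:nonperi_limit} and the unique limits hypothesis, and then establish the characterization $\xi_\rho(x)=\lim_n\Xi_\rho(\gamma_n)$ by approximating general points by spliced sequences $\gamma_n\eta^m$ and invoking Corollary \ref{cor:EC}, after which compatibility and transversality follow from $\Xi_\rho\subset\Xi^*_\rho$ and Corollary \ref{cor:lim_transverse}. The only minor difference is that your argument for $\xi_\rho(\del P)=\xi_\rho(P)$ in the peripheral case uses uniqueness of the $P$-fixed point on $\del(\Gamma,\mathcal{P})$ directly, whereas the paper argues via a preimage $\zeta$ of $\rho(\eta)^+$; both are valid.
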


\begin{rmk}
We may alternatively prove this by defining the limit maps using 
\begin{align*}
\xi_\rho(x) & = \lim_{n \to \infty} \Xi_\rho(\gamma_n) &
\xi^*_\rho(x) & = \lim_{n \to \infty} \Xi^*_\rho(\gamma_n) 
\end{align*}
for $(\gamma_n) \in \Gamma$ any projected geodesic in $\Gamma$ such that $\gamma_n \to x$, as in \cite{GGKW}, and directly showing, using arguments similar to those above and earlier in the paper, that these maps satisfy the desired properties.
From the analysis above these will turn out to be equivalent to the limit maps supplied by Yaman's criterion.
\end{rmk}

\section{Examples} \label{sec:eg}

For a start, we observe that dominated representations are relatively dominated relative to $\mathcal{P} = \varnothing$, since in that case we have $|\cdot|_c = |\cdot|$.
We will now show that geometrically finite subgroups of $\SO(1,d)$ and geometrically finite convex projective holonomies, in the sense of \cite{CM12}, give examples of relatively dominated representations. 

\subsection{In rank one}

In rank one, the relatively dominated condition coincides with the more classical notion of geometric finiteness. Here we will illustrate the particular example of geometrically finite real hyperbolic manifold holonomies; the arguments for the more general case are similar.

\spacer \begin{eg} \label{eg:hyp_hol}
Let $M$ be a geometrically finite hyperbolic $d$-manifold, $\Gamma = \pi_1 M$, and $\rho: \Gamma \to \PSO(d,1) \subset \PSL(d+1,\real)$ be its holonomy representation.

In this case we know that $\Gamma$ is hyperbolic relative to the cusp stabilizers $\mathcal{P}$, and that the relative Cayley graph, and in fact the cusped space, quasi-isometrically embeds into $\HH^d$.

\begin{proof}[Proof of quasi-isometry]
This may be verified directly using hyperbolic geometry: 
 the quasi-isometric embedding of the Cayley graph is still given by the orbit map. This sends the ends of each coset $\gamma P$ of a cusp subgroup $P$ to a single point $\xi \in \del\HH^n$, and we may extend the orbit map to a quasi-isometric embedding of the combinatorial horoball over $\gamma P$ (the 0-simplices of which we address as elements of $P \times \ints_{\geq 0}$) to a quasi-horoball based at $\xi$ as follows:
\begin{itemize}
\item for each $p \in \gamma P$, let $\eta_p: [0,\infty) \to \HH^n$ be the geodesic ray from the image of $p$ to $\xi$;
\item send $(p, n)$ to $\eta_p(\lambda n)$ with $\lambda = e^2/2$ (the normalization constant needed so that the exponential decay factor between levels of the combinatorial horoballs matches the exponential decay factor between their images in $\HH^n$.) 
\end{itemize}

Call this map $\phi$. To check that this is indeed a quasi-isometric embedding, or more precisely a quasi-isometry to $C \subset \HH^n$ where $C$ is the convex hull of the limit set of $\Gamma$, we invoke the following argument of Cannon and Cooper:

\spacer \begin{lem}[\cite{cannon_cooper}, Lemma 4.2] \label{lem:cannon_cooper}
Given two spaces $X$, $Y$ with path metrics $d_X$, $d_Y$, $\phi: X \to Y$ is a quasi-isometry if it satisfies the following three conditions: \begin{enumerate}[(i)]
\item (quasi-onto) for some $\eps> 0$,  $Y \subset N(\phi(X), \eps)$ (the $\eps$-neighborhood of $\phi(X)$);
\item (Lipschitz) for some $L > 0$ and all $x_1, x_2 \in X$, $d_Y(\phi(x_1), \phi_2(x)) \leq L d_X(x_1, x_2)$; and
\item (uniformly non-collapsing) for each $R > 0$ there exists an $r > 0$ such that if $d_X(x_1, x_2) > r$ then $d_Y(\phi(x_1), \phi_2(x_2)) > R$. 
\end{enumerate}
\end{lem}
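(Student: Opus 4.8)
The plan is to extract the multiplicative and additive constants of a quasi-isometry directly from the three hypotheses. Two of the three ingredients come for free: the quasi-onto clause of the definition of a quasi-isometry is exactly hypothesis (i), and the upper bound $d_Y(\phi(x_1),\phi(x_2)) \le L\, d_X(x_1,x_2)$ is exactly hypothesis (ii). So all of the content is in producing a lower bound of the form $d_Y(\phi(x_1),\phi(x_2)) \ge \tfrac1A\, d_X(x_1,x_2) - B$, and this is where the path-metric structure of $Y$ and the uniform non-collapsing property (iii) enter.

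To set up the lower bound, let $\epsilon$ be the constant from (i), put $R := 2\epsilon + 1$, and let $r = r(R)$ be the constant furnished by (iii), whose contrapositive reads: if $d_Y(\phi(z),\phi(z')) \le R$ then $d_X(z,z') \le r$. Fix $x_1, x_2 \in X$ and write $D := d_Y(\phi(x_1),\phi(x_2))$. Using that $(Y,d_Y)$ is a path metric space, choose a path from $\phi(x_1)$ to $\phi(x_2)$ of length at most $D+1$ and subdivide it into $N \le D+2$ consecutive arcs each of length at most $1$, producing sample points $y_0 = \phi(x_1), y_1, \dots, y_N = \phi(x_2)$ with $d_Y(y_{i-1},y_i) \le 1$. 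By (i), pull each sample point back: pick $z_i \in X$ with $d_Y(y_i,\phi(z_i)) \le \epsilon$, taking $z_0 := x_1$ and $z_N := x_2$.

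The key estimate is then that consecutive pulled-back points have $\phi$-images at controlled $Y$-distance:
\[ d_Y(\phi(z_{i-1}),\phi(z_i)) \le d_Y(\phi(z_{i-1}),y_{i-1}) + d_Y(y_{i-1},y_i) + d_Y(y_i,\phi(z_i)) \le \epsilon + 1 + \epsilon = R, \]
so the contrapositive of (iii) forces $d_X(z_{i-1},z_i) \le r$ for every $i$. Summing over $i$ via the triangle inequality in $X$ gives $d_X(x_1,x_2) \le N r \le (D+2)r$, hence $D \ge \tfrac1r\, d_X(x_1,x_2) - 2$. Taking $A := \max\{L, r\}$ and $B := \max\{2,\epsilon\}$ then packages (i), (ii), and this estimate into the definition of a quasi-isometry.

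The one point that genuinely needs care — and the step I expect to be the main obstacle, such as it is — is the choice of the threshold $R$: it must be taken at least $2\epsilon+1$ so that the three-term triangle-inequality bound on $d_Y(\phi(z_{i-1}),\phi(z_i))$ lands below $R$, allowing (iii) to be applied \emph{uniformly} to all consecutive sample pairs at once. Everything else is routine bookkeeping: the arc-length subdivision, repeated use of the triangle inequality, and checking the degenerate case of a length-zero path (where one still takes $N \ge 1$ so that $z_0 = x_1$ and $z_N = x_2$ are honestly available). It is worth remarking that only the path-metric structure of $Y$ is used in this argument; the hypothesis that $X$ also carries a path metric is not needed for this implication.
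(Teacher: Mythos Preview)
Your proof is correct and is the standard argument. Note, however, that the paper does not actually supply its own proof of this lemma: it is simply cited as \cite{cannon_cooper}, Lemma 4.2, and then the three hypotheses are verified for the specific map under consideration. So there is nothing in the paper to compare against; you have filled in what the paper leaves as a black-box citation, and your argument is presumably close to what Cannon--Cooper themselves do.
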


Let $p_1, \dots, p_k$ be parabolic fixed points belonging to different conjugacy classes, and $N_1, \dots, N_k$ be a system of disjoint horoballs based at $p_1, \dots, p_k$ (resp.) in $\HH^n$ such that $\bigcup \{\gamma N_i : \gamma\in \Gamma; i=1, \dots, n\} =: \mathcal{N}$ fills out a family of disjoint open horoballs in $\HH^n$, and 
\[ \phi(\Gamma) \subset \HH^n \setminus \mathcal{N} =: \mathcal{Q} .\]
($\mathcal{Q}$ is the ``thick part'', or ``truncated hyperbolic space''.)


To verify condition (i) here: let $y$ be a point of $C \subset \HH^n$. Then either there exists some $i\in\{1,\dots,n\}$ and $\gamma \in \Gamma$ such that $y \in \gamma N_i$, or $y \in \mathcal{Q}$.
In the latter case, 
\[ d_{\HH^n}(y, \phi(X^{(0)})) \leq \diam (\mathcal{Q} / \Gamma) < \infty .\]
In the former case, consider the horoball $\gamma N_i$, which has center $\gamma p_i =: p$. As noted in Example \ref{eg:horoball_qi}, $y$ is within distance $\delta$ of 
a vertex of the combinatorial horoball for $\gamma P_i$, where $\gamma P_i \gamma^{-1}$ is the maximal parabolic subgroup of $\Gamma$ fixing $p$, 
where $\delta$ may be chosen independent of $i$ and $H$.

Hence, condition (i) of the Lemma is satisfied with $\eps \geq \max\{\diam (\mathcal{Q}/\Gamma), \delta + 1 \} < \infty$.

For condition (ii): by Milnor-\v{S}varc, $\phi$ is a quasi-isometry between the Cayley graph and the truncated hyperbolic space $\HH^n \setminus N$. As noted in Example \ref{eg:horoball_qi}, $\phi$ is a quasi-isometry between the system of combinatorial horoballs and the system of horoballs $N$. In both cases, in fact, it is not difficult to show that the quasi-isometry in question is Lipschitz, in the latter case with uniform constants across the entire system of horoballs. This, together with the triangle inequality, gives us that $\phi$ is Lipschitz as a map from all of $X$ to $\HH^n$.


For condition (iii): suppose, on the contrary, that there exists $R > 0$ such that for every positive integer $m$, there exist points $x_m, w_m \in X$ such that $d(x_m, w_m) \geq m$, but $d(\phi(x_m), \phi(w_m)) \leq R$.

Since $\phi$ is a quasi-isometry between the system of combinatorial horoballs and the system of horoballs removed from hyperbolic space, there exists $r_0 > 0$ such that if $x$ and $w$ are points in the same combinatorial horoball,and $d(x,w) > r_0$, then $d(\phi(x), \phi(w)) > R$.

Suppose $m \geq (L+1) r_0$, where $L$ is the Lipschitz constant from (ii); without loss of generality suppose $L \geq 1$. Choose a geodesic path from $x_m$ to $w_m$ in $X$. If this geodesic path has a connected subpath of length at least $L r_0$ in a combinatorial horoball, then by the previous paragraph $d(x_m, w_m) \geq R$. Otherwise the geodesic path has a connected subpath of length at least $r_0$ with both endpoints in the Cayley graph. Then, by the same computation as in Example \ref{eg:horoball_qi}, 
\begin{align*}
d(\phi(x_m), \phi(w_m)) & \geq 2 \log d_{\mathcal{Q}}(\phi(x_m), \phi(w_m)) \\ 
 & \geq L \log |x_m^{-1} w_m| \geq \frac L2 (|x_m^{-1} w_m|_c -1) \geq \frac L2 (r_0 - 1) .
\end{align*} 
In particular, if we suppose (without loss of generality---choose $r_0$ to be larger if not) $\frac L2 (r_0 - 1) \geq R$, then we have a contradiction.

This verifies the hypotheses of Lemma \ref{lem:cannon_cooper}, and hence $\phi$ is a quasi-isometry as desired.
\end{proof}


The quasi-isometric embedding of the relative Cayley graph immediately gives us both lower and upper domination inequalities (D$^\pm$), since $\frac{\sigma_1}{\sigma_2}(\rho(\gamma)) = \frac12 \frac{\sigma_1}{\sigma_{d+1}}(\rho(\gamma))$ for any $\gamma \in \Gamma$, 
and there exists a basepoint $o \in \HH^d$ so that $d(o,\rho(\gamma) \cdot o)=\log \frac{\sigma_1}{\sigma_{d+1}}(\rho(\gamma))$ for all $\gamma \in \Gamma$.

The unique limits condition is satisfied since each cusp stabilizer is parabolic; the quadratic gaps condition is satisfied in the peripherals since, by a direct computation,
\[ \left|\log \frac{\sigma_1}{\sigma_2} \left( \rho(\eta) \right) - 2 \log n \right| = \left|d(o, \rho(\eta)\cdot o) - 2 \log n \right| \leq C_\eta  \]
for any parabolic element $\eta$, where $C_\eta$ is a constant depending on $\eta$. 
Conjugation changes this by a fixed additive constant, and we may take a uniform choice of such constant. The quadratic gaps condition is then satisfied in full, due to the following argument:
\spacer \begin{defn}
We say $\rho: (\Gamma,\mathcal{P}) \to \PGL(d,\real)$ {\bf admits good limit maps} if
\begin{itemize}
\item $\xi_\rho: \del(\Gamma,\mathcal{P}) \to \proj(\real^d)$ given by $\displaystyle\lim_{n\to\infty} \gamma_n \mapsto \lim_{n \to \infty} \Xi_\rho(\gamma_n)$ and
\item $\xi^*_\rho: \del(\Gamma,\mathcal{P}) \to \proj(\real^d)^*$ given by $\displaystyle \lim_{n\to\infty} \gamma_n \mapsto \lim_{n \to \infty} \Xi^*_\rho(\gamma_n)$ 
\end{itemize}
are well-defined, continuous, $\rho(\Gamma)$-equivariant, compatible, dynamics-preserving and transverse.
\end{defn}

We note that in our case $\rho$ admits good limit maps, with the image of $\xi_\rho$ being, up to conjugation in $\PSL(d+1,\real)$, the limit set in the boundary of the Beltrami--Klein projective ball model of hyperbolic $d$-space in $\proj(\real^{d+1})$, and the image of $\xi_\rho^*$ consisting of hyperplanes tangent to the boundary.

\spacer \begin{prop} \label{prop:jump_quadgap}
Suppose $\rho: (\Gamma,\mathcal{P}) \to \PGL(d,\real)$ admits good limit maps, and the quadratic gaps condition is satisfied for peripheral elements $\eta \in \bigcup \mathcal{P}$. 

Then the peripherals satisfy the quadratic gaps condition in full. 
\end{prop}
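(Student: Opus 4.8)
We sketch the argument; fix $\ubar\upsilon,\bar\upsilon>0$ for the conclusion. The hypothesis that peripherals satisfy quadratic gaps supplies $c_0>0$ with $\frac{\sigma_1}{\sigma_2}(\rho(\eta))\ge c_0|\eta|^2$ for all $\eta\in\bigcup\mathcal P$, so in particular $U_1(\rho(\eta))$ is defined once $|\eta|$ is large. The plan is to reduce the full condition to a uniform lower bound on the angle between $U_1(\rho(\eta))$ and the hyperplane $\Xi^*_\rho(\gamma^{-1})$, and then to extract that bound from the good limit maps by a compactness argument. The reduction rests on two elementary singular value estimates for a product $AB$ (variants of the lemmas in Appendix \ref{app:linalg}): writing $w_1$ for a top right singular vector of $A$, so that $w_1^{\perp}=S_{d-1}(A)=U_{d-1}(A^{-1})$ whenever $A$ has a singular gap, one has $\sigma_1(AB)\ge\sigma_1(A)\sigma_1(B)\sin\angle(U_1(B),w_1^{\perp})$ (decompose $U_1(B)$ along $w_1$ and $w_1^{\perp}$) together with $\sigma_2(AB)\le\sigma_1(A)\sigma_2(B)$; dividing,
\[
\frac{\sigma_1}{\sigma_2}(AB)\ \ge\ \frac{\sigma_1}{\sigma_2}(B)\cdot\sin\angle\bigl(U_1(B),\,S_{d-1}(A)\bigr),
\qquad
\frac{\sigma_1}{\sigma_2}(AB)\ \ge\ \frac{\sigma_d}{\sigma_1}(A)\cdot\frac{\sigma_1}{\sigma_2}(B),
\]
the second being the crude consequence $\sigma_1(AB)\ge\sigma_d(A)\sigma_1(B)$. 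Taking $A=\rho(\gamma)$ and $B=\rho(\eta)$ and noting $S_{d-1}(\rho(\gamma))=U_{d-1}(\rho(\gamma)^{-1})=\Xi^*_\rho(\gamma^{-1})$, it suffices to bound $\sin\angle(U_1(\rho(\eta)),\Xi^*_\rho(\gamma^{-1}))$ away from $0$ when $\rho(\gamma)$ is ``large'', treating $\gamma$ ranging over a finite set separately with the crude bound; the $\eta\gamma$ half of the statement follows by applying the $\gamma\eta$ half to $(\gamma^{-1},\eta^{-1})$, since $\frac{\sigma_1}{\sigma_2}(\rho(\eta\gamma))=\frac{\sigma_1}{\sigma_2}(\rho(\gamma^{-1}\eta^{-1}))$ and the reverse of a metric quasigeodesic path is again one.

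The argument proper proceeds by contradiction. If no uniform $C'>0$ works, choose $\gamma_n\in\Gamma$ and $\eta_n\in P_n\in\mathcal P$ with $\gamma_n\eta_n$ an $(\ubar\upsilon,\bar\upsilon)$-metric quasigeodesic path (based at $\id$, with $\eta_n$ the terminal peripheral excursion) and $\frac{\sigma_1}{\sigma_2}(\rho(\gamma_n\eta_n))<\tfrac1n|\eta_n|^2$. Since $\frac{\sigma_1}{\sigma_2}\ge1$ always, the $|\eta_n|$ must be unbounded, so after passing to a subsequence $|\eta_n|\to\infty$ and $P_n\equiv P$. If $(\gamma_n)$ has a bounded subsequence, refine to a constant value $\gamma_n\equiv\gamma$: then $\tfrac{\sigma_d}{\sigma_1}(\rho(\gamma))>0$ is a fixed number and the crude bound gives $\frac{\sigma_1}{\sigma_2}(\rho(\gamma\eta_n))\ge\tfrac{\sigma_d}{\sigma_1}(\rho(\gamma))\,c_0|\eta_n|^2$, which exceeds $\tfrac1n|\eta_n|^2$ for $n$ large --- a contradiction. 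Hence we may assume $|\gamma_n|\to\infty$, so $|\gamma_n|_c\to\infty$ (Corollary \ref{cor:wordlengths_comp}).

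The core step is then the unbounded case. Translating the path for $\gamma_n\eta_n$ by $\gamma_n^{-1}$ produces metric quasigeodesics $\beta_n$ through $\id$ whose backward arc traces $\gamma_n^{-1}$ (of cusped length $\to\infty$) and whose forward ray is the excursion $\eta_n$ running into $P$; a diagonal extraction yields a bi-infinite $(\ubar\upsilon,\bar\upsilon)$-metric quasigeodesic $\beta$ with forward endpoint the peripheral point $\del P$ and backward endpoint $\beta^-$, necessarily distinct from $\del P$ (the two ideal endpoints of a bi-infinite quasigeodesic in the $\delta$-hyperbolic cusped space are distinct), along which $\eta_n\to\del P$ and $\gamma_n^{-1}\to\beta^-$ in $\del(\Gamma,\mathcal P)$; in particular $\rho(\gamma_n)$ has a singular gap for $n$ large (its inverse sits far out on the backward ray of $\beta_n$, where $\xi^*_\rho$ is defined), so $\Xi^*_\rho(\gamma_n^{-1})$ makes sense. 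Because $\rho$ admits good limit maps, the defining limit formulas --- with uniformity coming from Corollary \ref{cor:EC} after reparametrizing via Proposition \ref{prop:reparam_proj_geod} --- give $\Xi_\rho(\eta_n)\to\xi_\rho(\del P)$ and $\Xi^*_\rho(\gamma_n^{-1})\to\xi^*_\rho(\beta^-)$. Transversality of $\xi_\rho$ and $\xi^*_\rho$ at the distinct points $\del P$ and $\beta^-$ then gives $\xi_\rho(\del P)\oplus\xi^*_\rho(\beta^-)=\real^d$, hence $\sin\angle(\Xi_\rho(\eta_n),\Xi^*_\rho(\gamma_n^{-1}))\ge\delta_0>0$ for $n$ large, and the sharp product estimate yields $\frac{\sigma_1}{\sigma_2}(\rho(\gamma_n\eta_n))\ge\delta_0\,c_0|\eta_n|^2>\tfrac1n|\eta_n|^2$ for $n$ large --- contradicting the choice of the sequence.

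The main obstacle, and the part demanding genuine care, is this last compactness step: extracting the bi-infinite limit quasigeodesic $\beta$ in the cusped space, confirming that the endpoints $\eta_n$ and $\gamma_n^{-1}$ really do converge in the Bowditch boundary to the two \emph{distinct} ideal endpoints of $\beta$, and verifying that the good-limit-map formulas --- phrased for vertices along a fixed projected geodesic --- transfer to these endpoints of varying quasigeodesics (this is precisely where Corollary \ref{cor:EC} and Proposition \ref{prop:reparam_proj_geod} do the bookkeeping, controlling $\Xi_\rho$ and $\Xi^*_\rho$ in terms of the length over which the relevant projected geodesics fellow-travel). By contrast, the two product inequalities and the bounded-versus-unbounded dichotomy for $\rho(\gamma)$ are entirely routine.
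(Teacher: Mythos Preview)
Your argument is correct and follows essentially the same route as the paper: reduce to a uniform lower bound on $\sin\angle(\Xi_\rho(\eta),\Xi^*_\rho(\gamma^{-1}))$ via a singular-value product inequality (you use a direct estimate giving a linear factor of $\sin\alpha$, the paper quotes Lemma~\ref{lem:BPSA7} for $\sin^2\alpha$), then obtain that bound by a compactness argument against the transversality of the good limit maps. Your version is in fact somewhat more careful than the paper's terse sketch, in particular in separating off the bounded-$\gamma$ case and spelling out the extraction of the bi-infinite limiting quasigeodesic.
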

\begin{proof}
Given a geodesic $\gamma\eta$ where $\eta$ is peripheral, 
Lemma \ref{lem:BPSA7} gives us
\[ \frac{\sigma_1}{\sigma_2}(\rho(\gamma\eta)) \geq \delta^2 \cdot \frac{\sigma_1}{\sigma_2}(\rho(\gamma)) \cdot \frac{\sigma_1}{\sigma_2}(\rho(\eta)) ,\]
where $\delta := \sin \angle (\Xi(\eta), \Xi^*(\gamma^{-1}))$; we then obtain the quadratic gaps condition for $\gamma\eta$ by using the transversality of the limit maps to obtain a uniform positive lower bound
on $\delta$ and observing that $\frac{\sigma_1}{\sigma_2}(\rho(\gamma)) \geq 1$. 
More precisely: suppose no such $\delta$ exists; then we have a sequence of metric quasigeodesics $\gamma_n \eta_n$, with $\eta_n$ peripheral such that $\sin \angle (\Xi(\eta_n), \Xi*(\gamma_n^{-1})) \geq 2^{-n}$. Up to subsequence, these converge to some bi-infinite metric quasigeodesic $\gamma_\infty \eta_\infty$ with $\sin \angle(\xi(\eta_\infty), \xi^*(\gamma_\infty^{-1})) = 0$; but this is in contradiction with the transversality of the limit maps.
\end{proof}

The uniform transversality condition is also satisfied due to the good limit maps, by the following
\spacer \begin{prop} \label{prop:trans_unitrans}
Suppose $\rho: (\Gamma, \mathcal{P}) \to \PGL(d,\real)$ admits good limit maps. Then the uniform transversality hypothesis from Definition \ref{defn:peri_conds} is satisfied.
\begin{proof}
By the transversality of the limit maps, $\gamma(g^{-1} v_1(P), h W_{d-1}(P')) > 0$. To obtain the {\it uniform} version of this hypothesis, suppose we have sequences $(\gamma_n), (\eta_n) \subset \Gamma$ and peripheral subgroups $P, P'$ such that $\angle (\gamma_n^{-1} v_1(P'), \eta_n W_{d-1}(P)) < 2^{-n}$. Up the subsequence, the $\gamma_n^{-1}$ converge to some infinite (projected quasi-)geodesic $\gamma^{-1}: \nats \to \Gamma$, and the $\eta_n$ to some infinite (projected quasi-)geodesic $\eta: \nats \to \Gamma$ and $\angle( \xi_\rho(\gamma^{-1}), \xi^*_\rho(\eta)) = 0$; but this contradicts transversality. 
\end{proof} \end{prop}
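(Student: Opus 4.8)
The plan is to verify the two clauses of the uniform transversality condition in Definition \ref{defn:peri_conds} separately, extracting both from the transversality (and, for the first clause, compatibility) of the good limit maps $\xi_\rho, \xi^*_\rho$, with a compactness argument supplying the quantitative second clause.

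For the pointwise clause, given peripheral subgroups with $P \neq \gamma P'\gamma^{-1}$, I would identify $\xi_\rho(P) = \xi_\rho(\del P)$ and $\xi_\rho(\gamma P'\gamma^{-1}) = \xi_\rho(\gamma\cdot\del P')$ via the dynamics-preserving property and equivariance, where $\del P, \del P' \in \del(\Gamma,\mathcal{P})$ are the associated parabolic points; then $\del P \neq \gamma\cdot\del P'$. Transversality gives $\xi_\rho(\del P)\oplus\xi^*_\rho(\gamma\cdot\del P') = \real^d$; since compatibility gives $\xi_\rho(\gamma\cdot\del P')\subseteq\xi^*_\rho(\gamma\cdot\del P')$, an equality $\xi_\rho(\del P) = \xi_\rho(\gamma\cdot\del P')$ would force $\xi_\rho(\del P)\subseteq\xi^*_\rho(\gamma\cdot\del P')$, contradicting the direct-sum decomposition. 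Hence $\xi_\rho(P)\neq\xi_\rho(\gamma P'\gamma^{-1})$.

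For the quantitative clause I would argue by contradiction. Fix $\ubar\upsilon,\bar\upsilon>0$; if no $\delta_0$ works, then (using finiteness of $\mathcal{P}$ to pass to a subsequence with fixed $P, P'$) there are $g_n, h_n\in\Gamma$ and bi-infinite $(\ubar\upsilon,\bar\upsilon)$-metric quasigeodesic paths $\eta_n g_n h_n \eta_n'$ with an infinite excursion into $P$ at the negative end and one into $P'$ at the positive end, satisfying $\sin\angle(g_n^{-1}\xi_\rho(P), h_n\xi^*_\rho(P')) < 2^{-n}$. Writing $\alpha_n$ for the path reparametrized so that the $g_n$--$h_n$ junction is based at $\id$, the value $\alpha_n(k)$ at each fixed index $k$ lies in a fixed finite set (there are finitely many group elements of bounded cusped length, by the metric quasigeodesic inequalities and Proposition \ref{prop:wordlengths_bilip_log}), so a diagonal extraction yields a limiting $(\ubar\upsilon,\bar\upsilon)$-metric quasigeodesic $\alpha_\infty$ through $\id$. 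Being a uniform quasigeodesic in the hyperbolic cusped space (Propositions \ref{prop:unhat_distance} and \ref{prop:reparam_proj_geod}), $\alpha_\infty$ has two \emph{distinct} ideal endpoints $\zeta^-\neq\zeta^+$ in $\del(\Gamma,\mathcal{P})$, and the endpoints $\zeta_n^\pm$ of $\alpha_n$ converge to them. By the dynamics-preserving property and equivariance, $g_n^{-1}\xi_\rho(P) = \xi_\rho(\zeta_n^-)$ and $h_n\xi^*_\rho(P') = \xi^*_\rho(\zeta_n^+)$; by continuity these converge to $\xi_\rho(\zeta^-)$ and $\xi^*_\rho(\zeta^+)$, so $\sin\angle(\xi_\rho(\zeta^-), \xi^*_\rho(\zeta^+)) = 0$ with $\zeta^-\neq\zeta^+$ --- contradicting transversality. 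This supplies the required $\delta_0$.

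The hard part will be the compactness step: justifying that the diagonal limit $\alpha_\infty$ is honestly a bi-infinite quasigeodesic and that the ideal endpoints $\zeta_n^\pm$ converge to $\zeta^\pm$ in the (compact) Bowditch boundary, so that continuity of $\xi_\rho, \xi^*_\rho$ can be applied. This rests on the stability of quasigeodesics in the hyperbolic cusped space together with the control on metric (quasi)geodesic paths built up in \S\ref{sec:relhyp}; once that is in place, everything else is formal.
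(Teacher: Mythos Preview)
Your argument is correct and follows essentially the same approach as the paper: both obtain the pointwise clause directly from transversality of the limit maps, and both run a contradiction/diagonal-extraction argument to upgrade this to the uniform bound. Your version is in fact more careful than the paper's sketch---you explicitly justify distinctness of the limiting endpoints and the continuity step, which the paper leaves implicit.
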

\end{eg}

\subsection{A higher rank example}

In higher rank, we have holonomies of geometrically-finite convex projective $n$-manifolds, in the sense of \cite{CM12}:
\spacer \begin{defn}[\cite{CM12}, D\'efinition 1.5 and Th\'eor\`eme 1.3]
Let $\Omega \subset \proj(\real^{d+1})$ be a strictly convex domain with $C^1$ boundary. A finitely-generated discrete subgroup $\Gamma \leq \Aut(\Omega)$ is {\bf geometrically finite} if the 1-neighborhood of the convex core $\overline{C(\Lambda_\Gamma) / \Gamma} \subset \Omega / \Gamma$ is of finite volume.
\end{defn}

\spacer \begin{prop} \label{prop:eg_convproj}
Let $M$ be a $d$-manifold and write $\Gamma = \pi_1 M$. Suppose $\rho: \Gamma \to \PGL(d+1,\real)$ is a geometrically-finite convex projective holonomy representation. Then $\rho$ is 1-dominated relative to its cusp stabilizers.
\begin{proof}
Let $\Omega := \tilde{M}$; this is a strictly convex domain in $\proj(\real^{d+1})$ with $C^1$ boundary, and hence $\delta$-hyperbolic given the Hilbert metric. $\Gamma$ is hyperbolic relative to its cusp stabilizers $\mathcal{P}$, and acts on its limit set $\Lambda_\Gamma \subset \del\Omega$ of accumulation points as a geometrically-finite convergence group (\cite{CM12}, Th\'eor\`eme 1.9.)

In fact $\Lambda_\Gamma$, as well as the dual limit set $\Lambda_\Gamma^* \subset \proj(\real^{d+1})^*$, may be equivariantly identified with $\del(\Gamma,\mathcal{P})$, giving us continuous, compatible, dynamics-preserving limit maps; in particular $\xi^*_\rho(x)$ is tangent to $\del\Omega$ at $\xi_\rho(x)$. This gives us the unique limits condition.
Since $\del\Omega$ is strictly convex and $C^1$, 
these limit maps are transverse. This gives us, via Proposition \ref{prop:trans_unitrans}, the uniform transversality condition.

By \cite{CLT}, Theorem 0.5, all of the peripheral elements $\eta \in \bigcup \mathcal{P}$ have image $\rho(\eta)$ projectively equivalent to an element in the holonomy of a hyperbolic cusp; in particular (cf. Example \ref{eg:hyp_hol}), we have quadratic gaps in the peripheral subgroups, and hence, by Proposition \ref{prop:jump_quadgap}, the quadratic gaps condition in full.

We now claim that the orbit map is a relative quasi-isometric embedding from $(\Gamma,d_c)$ into $(\Omega,d_\Omega)$, where $d_\Omega$ denotes the Hilbert metric on $\Omega$, and $d_\Omega(o, \gamma \cdot o) = \log\frac{\sigma_1}{\sigma_{d+1}}(\rho(\gamma))$ for all $\gamma \in \Gamma$. 

To establish this, we observe that 
\begin{itemize}
    \item since the cusps are projectively equivalent, and hence isometric, to hyperbolic cusps, we have a system of disjoint horoballs $\mathcal{N}$ of $\Omega$, with boundaries the images of cusp stabilizers, which is quasi-isometric to our system of combinatorial horoballs; 
    \item the cocompact action of $\rho(\Gamma)$ on the compact core of $\tilde{M}$ as a geometrically-finite convex projective manifold gives, by the Milnor-\v{S}varc lemma, a quasi-isometry from $\Cay(\Gamma)$ with the word metric to the truncated domain $\Omega \setminus \mathcal{N}$.
\end{itemize}
Then we may apply the same argument as in Example \ref{eg:hyp_hol}, using Lemma \ref{lem:cannon_cooper}, to obtain our relative quasi-isometric embedding.

Finally, by \cite{CM14}, Proposition 7.2, Corollaire 7.3 and Lemme 7.6, there exists $\eps = \eps(\rho) > 0$ such that $\log \frac{\lambda_1}{\lambda_2}(\rho(\gamma)) \geq \eps \log \frac{\lambda_1}{\lambda_{d+1}}(\rho(\gamma))$ for all non-peripheral $\gamma \in \Gamma$: more precisely, Lemme 7.6 bounds the ratio ${\log \frac{\lambda_1}{\lambda_2}(\rho(\gamma))} \cdot \left( \log \frac{\lambda_1}{\lambda_{d+1}}(\rho(\gamma)) \right)^{-1}$ from below by an auxiliary quantity $\frac12 \chi(\gamma)$ (half the top Lyapunov exponent for the Hilbert geodesic flow corresponding to $\rho(\gamma)$); Proposition 7.2 and Corollaire 7.3 together give us $\eps > 0$ (coming from the H\"older regularity of the boundary $\del\Omega$) such that $\frac12 \chi(\gamma) > \left(1+\frac1\eps\right)^{-1}$

We may then show that there exists $\eps' = \eps'(\rho)>0$ such that $\log \frac{\sigma_1}{\sigma_2}(\rho(\gamma)) > \eps' \log \frac{\sigma_1}{\sigma_{d+1}}(\rho(\gamma)) + \hat{C}_\rho$ where $\hat{C}_\rho$ is some constant depending only on the representation; this last inequality. which suffices to establish the lower domination inequality (D\textsuperscript{-}), will follow from the inequality with the eigenvalue gaps, together with results of \cite{AMS} and \cite{Benoist1997} (as tied together in \cite{GGKW}, Theorem 4.12): 

Specifically, by \cite{CM12}, Th\'eor\`eme 7.28, we may assume that $\rho$ is strongly irreducible and Zariski-dense. Then \cite{GGKW}, Theorem 4.12
states that there is a finite subset $F \subset \Gamma$ 
such that for any $\gamma \in \Gamma$ there exists $f \in F$ 
such that
\[ \log \frac{\sigma_1}{\sigma_2}(\rho(\gamma)) \geq \log \frac{\lambda_1}{\lambda_2}(\rho(\gamma f)) - C_\rho  \]
where $C_\rho$ is some constant depending only on $\rho$, 
and similarly
\[ \log \frac{\lambda_1}{\lambda_{d+1}}(\rho(\gamma f)) \geq \log \frac{\sigma_1}{\sigma_{d+1}}(\rho(\gamma )) - C_\rho ,\]
and putting 
all of 
these inequalities together we obtain
\begin{align*} 
\log\frac{\sigma_1}{\sigma_2}(\rho(\gamma)) & \geq  \log \frac{\lambda_1}{\lambda_2}(\rho(\gamma f)) - C_\rho 
\geq \eps \log \frac{\lambda_1}{\lambda_{d+1}}(\rho(\gamma f)) - C_\rho \\
  & \geq \eps \log \frac{\sigma_1}{\sigma_{d+1}}(\rho(\gamma)) - (\eps+1) C_\rho
\end{align*}
as desired.
\end{proof}
\end{prop}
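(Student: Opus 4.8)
The plan is to check, in turn, the three ingredients in the definition of a relatively dominated representation (Definition \ref{defn:reldomrep}): that $\mathcal{P}$ (the collection of cusp stabilizers) satisfies (RH), that the peripheral images are well-behaved in the sense of Definition \ref{defn:peri_conds}, and that the lower domination inequality (D\textsuperscript{-}) holds. Throughout I would work with $\Omega := \tilde{M} \subset \proj(\real^{d+1})$, a strictly convex domain with $C^1$ boundary, which is $\delta$-hyperbolic for the Hilbert metric $d_\Omega$. By \cite{CM12}, Th\'eor\`eme 1.9, $\Gamma$ is hyperbolic relative to its cusp stabilizers $\mathcal{P}$ and acts on its limit set $\Lambda_\Gamma \subset \del\Omega$ as a geometrically-finite convergence group, so (RH) is automatic; moreover $\Lambda_\Gamma$ and the dual limit set $\Lambda_\Gamma^* \subset \proj(\real^{d+1})^*$ are equivariantly identified with the Bowditch boundary $\del(\Gamma,\mathcal{P})$, producing continuous, compatible, dynamics-preserving limit maps $\xi_\rho, \xi^*_\rho$ with $\xi^*_\rho(x)$ tangent to $\del\Omega$ at $\xi_\rho(x)$.

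From this package the well-behavedness conditions should fall out one by one. The unique limits condition is immediate from the existence of the limit maps together with the fact that cusp stabilizers act parabolically; strict convexity and $C^1$-regularity of $\del\Omega$ give transversality of the limit maps, and then Proposition \ref{prop:trans_unitrans} upgrades this to the uniform transversality condition. For quadratic gaps I would use \cite{CLT}, Theorem 0.5, to identify each $\rho(\eta)$, for $\eta$ peripheral, with a projective conjugate of the holonomy of a hyperbolic cusp; the explicit computation in Example \ref{eg:hyp_hol} then yields quadratic gaps for peripheral elements, and Proposition \ref{prop:jump_quadgap} promotes this to the full quadratic gaps condition. For the upper domination bound (and more) I would show the orbit map $\gamma \mapsto \rho(\gamma)\cdot o$ is a quasi-isometric embedding of $(\Gamma, d_c)$ into $(\Omega, d_\Omega)$, where $d_\Omega(o, \rho(\gamma)\cdot o) = \log\tfrac{\sigma_1}{\sigma_{d+1}}(\rho(\gamma))$: since the cusps are projectively, hence metrically, equivalent to hyperbolic cusps, $\Omega$ carries a $\Gamma$-invariant system $\mathcal{N}$ of disjoint horoballs quasi-isometric to the combinatorial horoballs, while Milnor--\v{S}varc applied to the cocompact action on the compact core gives a quasi-isometry from $\Cay(\Gamma)$ to the truncated domain $\Omega\smallsetminus\mathcal{N}$; gluing these two quasi-isometries via Lemma \ref{lem:cannon_cooper}, exactly as in Example \ref{eg:hyp_hol}, gives the relative quasi-isometric embedding, which in particular bounds $\sigma_1(\rho(\eta))$ appropriately for peripheral $\eta$.

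The main obstacle is the lower domination inequality (D\textsuperscript{-}), since the quasi-isometric embedding only controls the \emph{full} singular value gap $\log\tfrac{\sigma_1}{\sigma_{d+1}}$ (via $d_\Omega$), whereas (D\textsuperscript{-}) asks for linear growth of the \emph{first} gap $\log\tfrac{\sigma_1}{\sigma_2}$ in $|\gamma|_c$. I would bridge this in two steps. First, from the H\"older regularity of $\del\Omega$: by \cite{CM14}, Proposition 7.2, Corollaire 7.3 and Lemme 7.6, there is $\eps = \eps(\rho) > 0$ with $\log\tfrac{\lambda_1}{\lambda_2}(\rho(\gamma)) \geq \eps \log\tfrac{\lambda_1}{\lambda_{d+1}}(\rho(\gamma))$ for every non-peripheral $\gamma$ (Lemme 7.6 compares the two gaps to half the top Lyapunov exponent of the Hilbert geodesic flow, and Proposition 7.2 together with Corollaire 7.3 bound that exponent from below using H\"older regularity). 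Second, to convert eigenvalue gaps into singular value gaps I would reduce to the strongly irreducible, Zariski-dense case (\cite{CM12}, Th\'eor\`eme 7.28) and invoke \cite{AMS} and \cite{Benoist1997} as assembled in \cite{GGKW}, Theorem 4.12: there is a finite $F \subset \Gamma$ such that each $\gamma$ admits $f \in F$ with $\log\tfrac{\sigma_1}{\sigma_2}(\rho(\gamma)) \geq \log\tfrac{\lambda_1}{\lambda_2}(\rho(\gamma f)) - C_\rho$ and $\log\tfrac{\lambda_1}{\lambda_{d+1}}(\rho(\gamma f)) \geq \log\tfrac{\sigma_1}{\sigma_{d+1}}(\rho(\gamma)) - C_\rho$. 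Chaining these with the first step gives $\log\tfrac{\sigma_1}{\sigma_2}(\rho(\gamma)) \geq \eps \log\tfrac{\sigma_1}{\sigma_{d+1}}(\rho(\gamma)) - (\eps+1)C_\rho$ for non-peripheral $\gamma$, and combining with the quasi-isometric embedding (so $\log\tfrac{\sigma_1}{\sigma_{d+1}}(\rho(\gamma))$ grows linearly in $|\gamma|_c$) yields (D\textsuperscript{-}) on non-peripheral elements; one finally patches in peripheral $\gamma$ using the peripheral gap bounds already established, so that (D\textsuperscript{-}) holds on all of $\Gamma$. A subtlety worth double-checking is that the non-peripheral restriction in \cite{CM14} really does propagate to a \emph{uniform} constant on all of $\Gamma$ rather than a merely $\gamma$-dependent one; here the comparison to the (uniform) Lyapunov spectrum of the geodesic flow is what makes it work.
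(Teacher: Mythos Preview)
Your proposal is correct and follows essentially the same route as the paper's own proof: the same setup via \cite{CM12}, the same limit maps giving unique limits and (via Proposition \ref{prop:trans_unitrans}) uniform transversality, the same use of \cite{CLT} and Proposition \ref{prop:jump_quadgap} for quadratic gaps, the same Cannon--Cooper/Milnor--\v{S}varc argument for the relative quasi-isometric embedding, and the same chain \cite{CM14} $\to$ \cite{GGKW} Theorem 4.12 (after reducing to the Zariski-dense case via \cite{CM12}, Th\'eor\`eme 7.28) to pass from eigenvalue gaps to singular value gaps for (D\textsuperscript{-}). Your added remarks about patching (D\textsuperscript{-}) on peripheral elements and about uniformity of the \cite{CM14} constant are sensible bits of care that the paper leaves implicit.
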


\section{Relation to Kapovich--Leeb} \label{sec:KL}
In \cite{KL}, Kapovich and Leeb develop a number of possible relative analogues of Anosov representations. Here we describe how some of these are related to the notion of relatively dominated subgroups described here.

The definitions in \cite{KL} are formulated in terms of discrete subgroups $\Gamma \leq G$ of semisimple Lie groups $G$; we reformulate them in terms of discrete and faithful representations, and in the specific case of $G = \SL(d,\real)$.

We also remark that the choice of a model Weyl chamber $\tau_{mod}$ in \cite{KL} is equivalent to the choice of a Cartan projection / set of roots, and in particular all of the definitions below are formulated in the specific case of the first and last simple roots $\left\{ \log \frac{\sigma_1}{\sigma_2}, \log \frac{\sigma_{d-1}}{\sigma_d} \right\}$.

Below, given a representation $\rho: \Gamma \to G$, we let $\Lambda_\Gamma$ denote the limit set of $\rho(\Gamma) \subset G$ in the flag variety $G/P_{1,d-1}$ corresponding to our chosen set of simple roots: a point in $G/P_{1,d-1}$ corresponds to a pair $(\xi, \xi^*) \in \proj(\real^d) \times \proj(\real^d)^*$ such that the line corresponding to $\xi$ is contained in the hyperplane represented by $\xi^*$. More specifically, $\Lambda_\Gamma$ is the closure of the set of accumulation points $\displaystyle (\xi, \xi^*) = \lim_{n\to\infty} \left( \Xi_\rho(\gamma_n)), \Xi^*_\rho(\gamma_n)) \right)$ for sequences $\gamma_n \to \infty$. 

\subsection{Relatively dominated implies relatively RCA}

\spacer \begin{defn}[\cite{KL}, Definition 7.6]
$\rho: \Gamma \to G$ is {\bf relatively RCA}  if \begin{itemize}
\item (regularity) $\log \frac{\sigma_1}{\sigma_2}(\rho(\gamma_n)) \to \infty$ for all sequences $(\gamma_n)_{n\in\nats}$ going to infinity in $\Gamma$.

\item (convergence) every point in $\Lambda_\Gamma$ is either a conical limit point or a bounded parabolic point, and the stabilizers of the bounded parabolic points are finitely generated.

\item (antipodality) $\Lambda_\Gamma$ is antipodal, i.e. every pair of points in the limit set (has a pair of lifts which) can be joined by a bi-infinite geodesic in $G/K$.
\end{itemize}
\end{defn}

We remark that, roughly speaking, the relatively dominated condition (Definition \ref{defn:reldomrep}) may be seen as strengthening the regularity hypothesis while weakening the convergence and antipodality hypotheses. There is also a more subtle distinction involving the role of the intrinsic geometry of $\Gamma$, which we elaborate on more in the next subsection.

We also remark that projecting $\Lambda_\Gamma \subset \proj(\real^d) \times \proj(\real^d)^*$ to the first coordinate yields the limit set $\Lambda_{rel}$ from \S\ref{sec:reldom_relhyp} above.

\spacer \begin{defn}[\cite{KL}, Definition 7.1]
A subgroup $\Gamma \leq G$ is {\bf relatively asymptotically embedded} if it satisfies the regularity and antipodality conditions (as in the previous Definition), and admits a relatively hyperbolic structure $(\Gamma, \mathcal{P})$ such that there exists a $\Gamma$-equivariant homeomorphism $\del_\infty(\Gamma, \mathcal{P}) \to \Lambda_\Gamma$.
\end{defn}

\spacer\begin{thm}[\cite{KL}, Theorem 7.8] \label{thm:KL78}
$\rho$ is relatively RCA if and only if $\rho(\Gamma)$ is relatively asymptotically embedded.
\end{thm}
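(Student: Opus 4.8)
The statement packages together two ingredients that appear verbatim in both definitions (regularity and antipodality) with an equivalence between the ``convergence'' hypothesis and the existence of a compatible relatively hyperbolic structure carrying an equivariant boundary homeomorphism. The plan is therefore to reduce everything, in both directions, to the Bowditch--Yaman dictionary between geometrically finite convergence actions and relatively hyperbolic groups (Theorem \ref{thm:Yaman}), using the following preliminary observation common to both implications: regularity forces the action $\Gamma \actson \Lambda_\Gamma$ of $\rho(\Gamma)$ on its flag-variety limit set to be a convergence action. Indeed, given a sequence $(\gamma_n)$ leaving every finite subset of $\Gamma$, after passing to a subsequence we have $\Xi_\rho(\gamma_n) \to \xi^+$ and $\Xi^*_\rho(\gamma_n^{-1}) \to \xi^-$, and the uniform growth of $\log\frac{\sigma_1}{\sigma_2}(\rho(\gamma_n))$ together with a contraction estimate of the type of Lemma \ref{lem:BPSA6} shows that $\rho(\gamma_n)$ restricted to $\Lambda_\Gamma$ converges locally uniformly to the constant map $\xi^+$ off the hyperplane determined by $\xi^-$; this is exactly the convergence-group property.

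For the forward direction, suppose $\rho$ is relatively RCA. By the preliminary observation $\Gamma \actson \Lambda_\Gamma$ is a convergence action, and (excluding the elementary case, a standing hypothesis for this sort of statement) $\Lambda_\Gamma$ is a non-empty, compact, perfect, metrizable space. The convergence hypothesis in the definition of relatively RCA says precisely that this convergence action is geometrically finite and that the stabilizers of bounded parabolic points are finitely generated, so Yaman's criterion (Theorem \ref{thm:Yaman}) applies: $\Gamma$ is hyperbolic relative to the collection $\mathcal{P}$ of its maximal parabolic subgroups for the action on $\Lambda_\Gamma$, and there is a $\Gamma$-equivariant homeomorphism $\del_\infty(\Gamma,\mathcal{P}) \to \Lambda_\Gamma$. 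Combining this with the regularity and antipodality already assumed, we obtain exactly the definition of relatively asymptotically embedded.

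For the reverse direction, suppose $\rho(\Gamma)$ is relatively asymptotically embedded, witnessed by a relatively hyperbolic structure $(\Gamma,\mathcal{P})$ and a $\Gamma$-equivariant homeomorphism $h: \del_\infty(\Gamma,\mathcal{P}) \to \Lambda_\Gamma$. Since $\Gamma$ is hyperbolic relative to $\mathcal{P}$, the action $\Gamma \actson \del_\infty(\Gamma,\mathcal{P})$ is a geometrically finite convergence action whose bounded parabolic stabilizers are exactly the conjugates of the groups in $\mathcal{P}$, which are finitely generated by hypothesis. Conicality and bounded-parabolicity of a point, as well as finite generation of a point stabilizer, depend only on the topological dynamics of a convergence action, so $h$ transports all of these properties to $\Lambda_\Gamma$; this is precisely the convergence hypothesis, while regularity and antipodality are part of the assumption. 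Hence $\rho$ is relatively RCA.

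The main obstacle is not any single estimate but making the two ``convergence-action'' pictures genuinely match up. In the forward direction one must check that the abstract maximal parabolic subgroups produced by Yaman's criterion are finitely generated---this is exactly where the finite-generation clause of the convergence hypothesis is consumed---and that $\Lambda_\Gamma$ is perfect, which is why the elementary case is set aside. In the reverse direction the delicate point is that the parabolic fixed points of the flag-variety action $\Gamma \actson \Lambda_\Gamma$ must coincide with the images under $h$ of the peripheral fixed points in $\del_\infty(\Gamma,\mathcal{P})$; this is automatic once one knows $\Gamma \actson \Lambda_\Gamma$ is a convergence action, but establishing that (via regularity) is the real work, since ``parabolic point'' is only a convergence-group notion. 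Everything else is bookkeeping over the Bowditch--Yaman correspondence.
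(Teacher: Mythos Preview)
The paper does not prove this theorem at all: it is stated as a result of Kapovich--Leeb, imported via the citation \cite{KL}, Theorem 7.8, and used as a black box in \S\ref{sec:KL}. There is therefore no ``paper's own proof'' to compare against.

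That said, your outline is the natural one and is essentially how the result is obtained in \cite{KL}: both directions pivot on the Bowditch--Yaman dictionary (Theorem \ref{thm:Yaman}) between geometrically finite convergence actions and relatively hyperbolic structures, with regularity supplying the convergence-group property on $\Lambda_\Gamma$ and the equivariant homeomorphism transporting conical/bounded-parabolic structure back and forth. Your identification of the genuine subtleties---perfectness of $\Lambda_\Gamma$ in the non-elementary case, the finite-generation clause for parabolic stabilizers, and the need to first establish that $\Gamma \actson \Lambda_\Gamma$ is a convergence action before speaking of its parabolic points---is accurate. As a sketch this is fine; a full proof would need to spell out the regularity $\Rightarrow$ convergence-action step more carefully (in \cite{KL} this goes through their notion of flag convergence), but nothing in your approach is wrong.
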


In particular, if $\rho: \Gamma \to G$ is relatively RCA, then $\Gamma$ is relatively hyperbolic. Below, we will use the notions of relative RCA and relative asymptotic embeddedness interchangeably.

\spacer \begin{thm} \label{thm:reldom_rRCA}
If $\rho: \Gamma \to G$ is relatively dominated, then $\rho(\Gamma)$ is relatively asymptotically embedded. 
\begin{proof}
Regularity is immediate from the lower domination inequality (D\textsuperscript{-}) and the quasi-equivalence of $|\gamma|_c$ and $\|a(\rho(\gamma)\|$ (Proposition \ref{prop:qiembed_rel}.)

Antipodality follows from transversality: given two points $\xi_\pm$ in the limit set, consider the associated hyperplanes $\theta_\pm$; then, by transversality we have a decomposition $\real^d = \xi_+ \oplus (\theta_+ \cap \theta_-) \oplus \xi_-$, which gives a bi-infinite geodesic joining the simplices associated to $(\xi_\pm,\theta_\pm)$ in the associated flag variety $G/P_{1,d-1}$---concretely, pick a diagonal matrix $A \in \SL(d,\real)$ respecting that decomposition, and consider the bi-infinite geodesic $\exp(tA)$.

Asymptotic embeddedness follows from Theorem \ref{thm:limitmaps} on the limit maps: more precisely, we can combine both limit maps from that Theorem into a single limit map $(\xi,\xi^*)$ into the flag manifold corresponding to our choice of $\tau_{mod}$, and this single limit map gives us our asymptotic embedding.
\end{proof} \end{thm}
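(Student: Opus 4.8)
The plan is to verify, one at a time, the three clauses in the definition of relative asymptotic embeddedness, each of which is by now a short consequence of results already proved. \textbf{Regularity} is essentially immediate: if $(\gamma_n)\subset\Gamma$ leaves every finite subset then $|\gamma_n|_c\to\infty$ by Corollary \ref{cor:wordlengths_comp}, so the lower domination inequality (D\textsuperscript{-}) gives $\log\tfrac{\sigma_1}{\sigma_2}(\rho(\gamma_n))\geq\log\ubar C+\ubar\mu|\gamma_n|_c\to\infty$; alternatively one quotes the relative quasi-isometric embedding of Proposition \ref{prop:qiembed_rel} together with the identification of $\|a(\rho(\gamma))\|$ with symmetric-space displacement.

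\textbf{Antipodality} I would obtain from transversality of the limit maps. Two distinct points of $\Lambda_\Gamma$ are realized as flags $(\xi_\pm,\theta_\pm)$ with the line $\xi_\pm$ contained in the hyperplane $\theta_\pm$; Corollary \ref{cor:lim_transverse} (equivalently the transversality clause of Theorem \ref{thm:limitmaps}) gives $\xi_+\oplus\theta_-=\real^d=\xi_-\oplus\theta_+$, whence a direct sum decomposition $\real^d=\xi_+\oplus(\theta_+\cap\theta_-)\oplus\xi_-$. Picking a diagonal $A\in\SL(d,\real)$ adapted to this decomposition, with the top eigenvalue on $\xi_+$ and the bottom on $\xi_-$, the geodesic $t\mapsto\exp(tA)\cdot o$ in $G/K$ has the two flags $(\xi_\pm,\theta_\pm)$ as its ideal endpoints, which is exactly what antipodality asks for.

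\textbf{Asymptotic embeddedness} is where the substantive input sits. First, by Theorem \ref{thm:reldom_relhyp} (together with its $\mathcal P=\varnothing$ counterpart from \cite{BPS}, under which $\Gamma$ is word-hyperbolic) the group $\Gamma$ is hyperbolic relative to $\mathcal P$, so the Bowditch boundary $\del(\Gamma,\mathcal P)=\del_\infty X$ is defined. Theorem \ref{thm:limitmaps} then supplies $\Gamma$-equivariant continuous maps $\xi_\rho:\del(\Gamma,\mathcal P)\to\proj(\real^d)$ and $\xi^*_\rho:\del(\Gamma,\mathcal P)\to\proj(\real^d)^*$ that are compatible and transverse, with $\xi_\rho$ an equivariant homeomorphism onto $\Lambda_{rel}$ (this is how it is produced, via Yaman's criterion applied to Proposition \ref{prop:geomfin_conv}). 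I would package these into a single continuous, $\Gamma$-equivariant map $\eta\mapsto(\xi_\rho(\eta),\xi^*_\rho(\eta))$ into the flag variety $G/P_{1,d-1}$; it is injective because $\xi_\rho$ already is (and compatibility makes $\xi^*_\rho$ a well-defined partner). It remains to identify the image with $\Lambda_\Gamma$: the image is a closed, $\rho(\Gamma)$-invariant subset, and by the computation at the end of the proof of Theorem \ref{thm:limitmaps} — that $\xi_\rho(x)=\lim\Xi_\rho(\gamma_n)$, $\xi^*_\rho(x)=\lim\Xi^*_\rho(\gamma_n)$ along any projected geodesic $\gamma_n\to x$, upgraded via Corollary \ref{cor:EC} and the unique-limits hypothesis of Definition \ref{defn:peri_conds} to arbitrary escaping sequences — it contains every accumulation flag $\lim(\Xi_\rho(\gamma_n),\Xi^*_\rho(\gamma_n))$, hence all of $\Lambda_\Gamma$; conversely each point of the image is manifestly such an accumulation flag. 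A continuous equivariant bijection from the compact Hausdorff space $\del(\Gamma,\mathcal P)$ to $\Lambda_\Gamma$ is a homeomorphism, giving the asymptotic embedding.

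\textbf{Main obstacle.} The only genuinely delicate step is the last identification: showing that \emph{every} accumulation point of $\bigl(\Xi_\rho(\gamma_n),\Xi^*_\rho(\gamma_n)\bigr)$, over arbitrary sequences leaving every finite set (not merely those tracking a projected geodesic), lies in the image of the combined limit map. Passing from geodesic to arbitrary sequences is handled by the uniform convergence estimates of Corollary \ref{cor:EC} when the sequence follows a (reparametrized) quasigeodesic, and by the unique-limits hypothesis of Definition \ref{defn:peri_conds} for the residual case of sequences diving into a single peripheral coset. Regularity and antipodality, by contrast, are one-line deductions from the already-established domination inequalities and transversality.
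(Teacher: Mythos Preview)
Your proof is correct and follows essentially the same three-part structure as the paper's: regularity from (D\textsuperscript{-}), antipodality from the transversality-induced splitting $\real^d=\xi_+\oplus(\theta_+\cap\theta_-)\oplus\xi_-$, and asymptotic embeddedness by packaging the two limit maps of Theorem~\ref{thm:limitmaps} into a single map to $G/P_{1,d-1}$. You are in fact more careful than the paper about identifying the image of the combined map with $\Lambda_\Gamma$; note that your ``main obstacle'' can be dispatched more directly than by upgrading to arbitrary sequences: any $(\xi,\xi^*)\in\Lambda_\Gamma$ has $\xi=\xi_\rho(x)$ and $\xi^*=\xi^*_\rho(y)$ for some $x,y\in\del(\Gamma,\mathcal P)$ (since $\xi_\rho$, $\xi^*_\rho$ are already homeomorphisms onto the respective coordinate projections), and the flag condition $\xi\subset\xi^*$ together with transversality forces $x=y$.
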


\subsection{Uniform regularity and distortion, and equivalence of notions}

\spacer \begin{defn}[\cite{KL}, \S4.4.1]
$\Gamma$ is {\bf uniformly regular} if there exist constants $\mu,c>0$ such that $\log \frac{\sigma_1}{\sigma_2}(\rho(\gamma_n)) \geq \mu \|a(\rho(\gamma_n))\| - c$ for all $(\gamma_n) \subset \Gamma$ going to infinity.
\end{defn}

\spacer \begin{defn}
Suppose $\Gamma$ is hyperbolic relative to $\mathcal{P}$ and we have a representation $\rho: \Gamma \to G$. We say $\Gamma$ (or any subgroup $H \leq \Gamma$) is {\bf relatively undistorted by $\rho$} if $\rho$ induces (via any orbit map) a quasi-isometric embedding of the relative Cayley (sub)graph (cf. Proposition \ref{prop:qiembed_rel}) into the symmetric space, i.e. the cusped word-length $|\gamma|_c$ and the norm $\|a(\rho(\gamma))\|$ are quasi-equivalent for all $\gamma \in \Gamma$ (resp., for all $\gamma \in H$). 
\end{defn}

\spacer \begin{rmk}
Uniform regularity does not necessarily entail undistortedness: e.g. consider a hyperbolic mapping torus $\Gamma \subset \SO^+(1,3) \subset \SL(4,\real)$ which is abstractly isomorphic to $\pi_1 \Sigma_g \rtimes \ints$; the fiber groups (abstractly isomorphic to $\pi_1 \Sigma_g$) are exponentially distorted. $\Gamma$, being a geometrically finite subgroup of $\SO^+(1,3)$, is uniformly regular (and undistorted by the inclusion map); the fiber groups, being exponentially distorted subgroups, are not quasi-isometrically embedded and hence not undistorted by the inclusion map. However, they remain uniformly regular, since this is a condition purely on the Cartan projections and independent of word-length.
\end{rmk}

\spacer \begin{defn}
We say $\rho: \Gamma \to G$ is {\bf relatively uniform RCA and undistorted} if it satisfies the convergence and antipodality conditions, and moreover $\rho(\Gamma)$ is uniformly regular and $\Gamma$ is relatively undistorted by $\rho$.
\end{defn}

\spacer\begin{thm}[\cite{KL}, Theorem 8.25]
$\rho$ is relatively uniform RCA and undistorted if and only if it is relatively asymptotically embedded with uniformly regular peripheral subgroups and $\Gamma$ is relatively undistorted by $\rho$.
\end{thm}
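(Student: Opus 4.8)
The plan is to peel the claim off the already-available equivalence between being relatively RCA and being relatively asymptotically embedded (Theorem~\ref{thm:KL78}), handling the extra adjectives---uniform regularity and relative undistortedness---one at a time. Note at the outset that the hypothesis ``$\Gamma$ is relatively undistorted by $\rho$'' carries with it a fixed relatively hyperbolic structure $(\Gamma,\mathcal{P})$ and the quasi-equivalence $|\gamma|_c \asymp \|a(\rho(\gamma))\|$; this hypothesis appears verbatim on both sides of the claimed equivalence, as does antipodality, so neither needs any further argument. The real work is entirely in converting between uniform regularity of $\rho(\Gamma)$ and uniform regularity of the $\rho(P)$.

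\emph{Forward direction.} Assume $\rho$ is relatively uniform RCA and undistorted. Uniform regularity of $\rho(\Gamma)$ restricts immediately to uniform regularity of each $\rho(P)$, since it is a statement about all sequences escaping to infinity in $\Gamma$. Combining uniform regularity with relative undistortedness gives the unadorned regularity condition: for $\gamma_n \to \infty$ one has $\log\tfrac{\sigma_1}{\sigma_2}(\rho(\gamma_n)) \geq \mu\,\|a(\rho(\gamma_n))\| - c \gtrsim |\gamma_n|_c - C$, which tends to infinity by Corollary~\ref{cor:wordlengths_comp}. Hence regularity, convergence and antipodality all hold, i.e.\ $\rho$ is relatively RCA, and Theorem~\ref{thm:KL78} supplies a $\Gamma$-equivariant homeomorphism $\del_\infty(\Gamma,\mathcal{P}) \to \Lambda_\Gamma$. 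One checks that the peripheral structure this produces (the maximal parabolic subgroups of the convergence action) agrees with the prescribed $\mathcal{P}$, using that relative undistortedness is stated relative to $\mathcal{P}$ and that maximal parabolics are peripheral, as in the analysis of \S\ref{sec:reldom_relhyp}. This is precisely relative asymptotic embeddedness, now recorded together with the uniform regularity of the peripherals and the relative undistortedness of $\Gamma$.

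\emph{Converse direction.} Assume $\rho(\Gamma)$ is relatively asymptotically embedded with uniformly regular peripheral subgroups and $\Gamma$ is relatively undistorted by $\rho$. Then $\rho$ is relatively RCA by Theorem~\ref{thm:KL78}, so the convergence and antipodality conditions hold, and relative undistortedness is given; what remains is to promote uniform regularity from the peripherals to all of $\Gamma$. By relative undistortedness ($\|a(\rho(\gamma))\| \asymp |\gamma|_c$) this is equivalent to a lower-domination-type bound $\log\tfrac{\sigma_1}{\sigma_2}(\rho(\gamma)) \geq \mu |\gamma|_c - C$. Take a projected-geodesic representative of $\gamma$ and decompose it into peripheral excursions $\eta_1,\dots,\eta_k$ and intervening non-peripheral stretches. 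Across each concatenation point the equivariant homeomorphism $\del_\infty(\Gamma,\mathcal{P}) \to \Lambda_\Gamma$ provides transverse limit flags, and a compactness argument over the finitely many excursion-type configurations (as in the proof of Proposition~\ref{prop:jump_quadgap}) yields a \emph{uniform} angle lower bound $\delta_0 > 0$; feeding this into the gap-superadditivity estimate (Lemma~\ref{lem:BPSA7}) gives
\[
\log\tfrac{\sigma_1}{\sigma_2}(\rho(\gamma)) \geq \sum_{i=1}^{k} \log\tfrac{\sigma_1}{\sigma_2}(\rho(\eta_i)) + (\text{non-peripheral contribution}) - O(k).
\]
Each peripheral term is $\gtrsim \|a(\rho(\eta_i))\| - c \asymp |\eta_i|_c - c$ by uniform regularity of the relevant $\rho(P)$ together with the undistortedness of $P$; each long non-peripheral stretch contributes linearly in its cusped length (being a Morse quasigeodesic with regular Cartan projection, again via transversality of the limit maps); and $k \lesssim |\gamma|_c$ since each excursion and each stretch consumes a definite amount of cusped length. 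Summing and comparing with the matching upper estimate $\|a(\rho(\gamma))\| \lesssim \sum_i \|a(\rho(\eta_i))\| + (\text{non-per.}) + O(k)$ (cf.\ Proposition~\ref{prop:qiembed_rel}) yields $\log\tfrac{\sigma_1}{\sigma_2}(\rho(\gamma)) \geq \mu |\gamma|_c - C$, so $\rho(\Gamma)$ is uniformly regular.

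The main obstacle is this globalization step in the converse: extracting a uniform transversality constant $\delta_0$ at the excursion boundaries (the compactness argument, which leans on continuity of the limit map out of the Bowditch boundary) and controlling the cumulative additive error $O(k)$ proportional to the number of excursions. Everything else reduces to bookkeeping around Theorem~\ref{thm:KL78} and the quasi-isometry identification $|\cdot|_c \asymp \|a(\rho(\cdot))\|$ packaged into the relative undistortedness hypothesis.
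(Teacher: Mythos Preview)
This theorem is quoted in the paper as a result of Kapovich--Leeb (\cite{KL}, Theorem 8.25) and is \emph{not} proved here; the paper simply cites it without argument, using it as background for relating relatively dominated representations to the notions in \cite{KL}. There is therefore no paper proof to compare your proposal against.

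That said, the remark immediately following the statement indicates that the proof in \cite{KL} (in the non-relative case, and by implication here) proceeds through the notion of Morse subgroups and a higher-rank Morse lemma---a route quite different from yours. Your converse direction attempts instead to globalize uniform regularity by decomposing a projected geodesic into peripheral excursions and non-peripheral stretches, then using Lemma~\ref{lem:BPSA7} with a uniform transversality constant extracted from the asymptotic embedding. This is in the spirit of the present paper rather than of \cite{KL}, and it is plausible in outline, but it is a genuine new argument, not a reconstruction of anything appearing here. In particular, the step where you claim each ``long non-peripheral stretch contributes linearly in its cusped length (being a Morse quasigeodesic with regular Cartan projection, again via transversality of the limit maps)'' is doing real work that is not supplied by anything in this paper and would itself seem to require something like the Morse machinery you are trying to avoid; the bound on the cumulative $O(k)$ error also needs care, since excursions of cusped length $1$ or $2$ are possible and could in principle dominate the count.
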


\spacer \begin{rmk} \label{rmk:reldom_rURU}
We can in fact strengthen Theorem \ref{thm:reldom_rRCA} to say that if $\rho: \Gamma \to G$ is relatively dominated, then $\rho(\Gamma)$ is relatively uniform RCA and undistorted, since, via Proposition \ref{prop:qiembed_rel}, (D\textsuperscript{-}) is precisely the uniform regularity and undistortedness condition. 
\end{rmk}

\spacer \begin{rmk}
In the non-relative case, uniform regularity and undistortedness (URU) is equivalent to RCA \cite{KLP}. The proof goes through the notion of Morse subgroups and in particular requires some version of a higher-rank Morse lemma.
 \end{rmk}

\spacer \begin{thm} \label{thm:rRCA_reldom}
If $\rho: \Gamma \to G$ is such that $\rho(\Gamma)$ is relative uniform RCA and undistorted with peripherals also satisfying the quadratic gaps condition, then $\rho$ is relatively dominated.
\begin{proof}
Relative uniform RCA implies relative hyperbolicity of the source group (via Theorem \ref{thm:KL78}); 
this immediately gives us (RH).

As noted in Remark \ref{rmk:reldom_rURU}, (D\textsuperscript{-}) is exactly the uniform regularity and undistortedness condition.

It remains to check that the hypotheses in Definition \ref{defn:peri_conds} are satisfied. The quadratic gaps condition has been assumed. Upper domination follows from \cite{KL}, Corollary 5.13. Unique limits follow from the relative asymptotic embedding; by Proposition \ref{prop:trans_unitrans}, so does uniform transversality.
\end{proof}
\end{thm}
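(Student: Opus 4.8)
The plan is to verify, one at a time, each of the conditions appearing in Definition~\ref{defn:peri_preconds} and Definition~\ref{defn:peri_conds}, drawing on the hypotheses (relative uniform RCA, undistortedness, quadratic gaps in the peripherals) together with the cited results of Kapovich--Leeb. First I would observe that relative uniform RCA, via Theorem~\ref{thm:KL78}, already supplies a relatively hyperbolic structure $(\Gamma,\mathcal{P})$; as remarked after Definition~\ref{defn:peri_preconds}, all three parts of (RH) (malnormality, non-distortion, local-to-global) are automatic for a torsion-free group that is hyperbolic relative to $\mathcal{P}$, appealing to \cite{osinRH} and to the hyperbolicity of the cusped space. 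So (RH) is free.

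Next, for the lower domination inequality (D\textsuperscript{-}): here the point is that uniform regularity gives $\log\frac{\sigma_1}{\sigma_2}(\rho(\gamma)) \geq \mu\|a(\rho(\gamma))\| - c$, and relative undistortedness gives $\|a(\rho(\gamma))\| \geq \lambda^{-1}|\gamma|_c - \lambda$ for suitable $\lambda$; composing these two inequalities yields exactly (D\textsuperscript{-}) with some constants $\ubar C,\ubar\mu>0$. This is the content of Remark~\ref{rmk:reldom_rURU}, so I would simply cite it.

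It then remains to check the four bullets of Definition~\ref{defn:peri_conds}. Quadratic gaps is assumed outright. For upper domination I would cite \cite{KL}, Corollary~5.13, which controls $\sigma_1(\rho(\eta))$ in terms of the distance $\|a(\rho(\eta))\|$, and combine it with undistortedness (or with Proposition~\ref{prop:wordlengths_bilip_log} for elements inside a peripheral coset) to obtain a bound of the form $\sigma_1(\rho(\eta)) \leq C_1 e^{\mu_1 |\eta|_c}$. For unique limits: relative asymptotic embeddedness (equivalently relative RCA, Theorem~\ref{thm:KL78}) provides a $\Gamma$-equivariant homeomorphism $\partial_\infty(\Gamma,\mathcal{P}) \to \Lambda_\Gamma$; in particular each peripheral subgroup $P$ has a single limit point $\xi_\rho(P) \in \proj(\real^d)$ (the image of the parabolic point $\partial P$), and any sequence $\eta_n \to \infty$ in $P$ must have $\Xi_\rho(\eta_n) = U_1(\rho(\eta_n)) \to \xi_\rho(P)$, with the analogous statement in the Grassmannian $\Gr_{d-1}$ coming from the dual flag. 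Finally, for uniform transversality: antipodality of $\Lambda_\Gamma$ gives $\xi(P)$ transverse to $\xi^*(\gamma P'\gamma^{-1})$ for all $P,P',\gamma$ (in particular $\xi(P)\neq\xi(\gamma P'\gamma^{-1})$), and the quantitative ``uniform'' version follows via Proposition~\ref{prop:trans_unitrans} once one knows $\rho$ admits good limit maps --- which the relative asymptotic embedding provides. Concretely I would observe that the compatible, dynamics-preserving, transverse limit maps are exactly what Proposition~\ref{prop:trans_unitrans} needs as input, and invoke it.

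The main obstacle, and the step requiring the most care, is upper domination together with checking that the ``good limit maps'' hypothesis of Proposition~\ref{prop:trans_unitrans} really holds: one must make sure that the equivariant homeomorphism $\partial_\infty(\Gamma,\mathcal{P}) \to \Lambda_\Gamma$ furnished by relative asymptotic embeddedness is genuinely realized by the singular-value limits $\lim \Xi_\rho(\gamma_n)$ and $\lim \Xi^*_\rho(\gamma_n)$ along projected geodesics, i.e. that it is dynamics-preserving and compatible in the sense of the definition preceding Theorem~\ref{thm:limitmaps}, so that Proposition~\ref{prop:trans_unitrans} and Proposition~\ref{prop:jump_quadgap}-style compactness arguments apply; the remaining pieces are essentially bookkeeping against the cited Kapovich--Leeb results. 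I would also double-check that the constants in (RH)'s local-to-global clause, which are needed to make sense of ``$(\ubar\upsilon,\bar\upsilon)$-metric quasigeodesic'' in the quadratic gaps and uniform transversality hypotheses, are the ones coming from relative hyperbolicity, so that the assumed quadratic gaps condition is being assumed for the correct class of paths.
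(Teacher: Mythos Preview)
Your proposal is correct and follows essentially the same route as the paper's own proof: relative hyperbolicity (hence (RH)) from Theorem~\ref{thm:KL78}, (D\textsuperscript{-}) from Remark~\ref{rmk:reldom_rURU}, upper domination from \cite{KL} Corollary~5.13, unique limits from the asymptotic embedding, and uniform transversality via Proposition~\ref{prop:trans_unitrans}. Your extra care about verifying that the asymptotic embedding really supplies ``good limit maps'' in the precise sense needed for Proposition~\ref{prop:trans_unitrans} is a reasonable point to flag, though the paper's proof simply asserts this.
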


\section{Extending the definition}

As above let $\Gamma$ be a finitely-generated group which is hyperbolic relative to some finite collection $\mathcal{P}$ of finitely-generated subgroups satisfying (RH) (Definition \ref{defn:peri_preconds}.)

We say that a representation $\rho: \Gamma \to \PGL(d,\real)$ is $1$-relatively dominated (with domination constants $(\ubar{C},\ubar\mu,\bar{C},\bar\mu)$) if it is the composition of a $1$-relatively dominated representation $\hat\rho: \Gamma \to \GL(d,\real)$ (with the same domination constants) with the natural projection map $\pi: \GL(d,\real) \to \PGL(d,\real)$, or more generally if we can find a group $\hat\Gamma$, a 2-to-1 homomorphism $f: \hat\Gamma \to \Gamma$, and a $1$-dominated representation $\hat\rho: \hat\Gamma \to \GL(d, \real)$ such that $\pi \circ \hat\rho = \rho \circ f$  (cf. \cite{BPS}, Remark 3.4.)

Alternatively, we can continue to use Definitions \ref{defn:peri_conds} and \ref{defn:reldomrep}, since {\it ratios} of singular values remain unchanged under the reductions considered here, and we can continue to work with the same symmetric space and flag spaces. 

By considering the associated representations to $\GL(d,\real)$, we have that $\Gamma$ is hyperbolic relative to $\mathcal{P}$ in these cases as well (Theorem \ref{thm:reldom_relhyp}) and we have associated continuous, equivariant, dynamics-preserving, transverse limit maps (Theorem \ref{thm:limitmaps}.) By considering the associated representations, or by working directly with the hypotheses in Definitions \ref{defn:peri_conds} and \ref{defn:reldomrep}, the results from \S\ref{sub:df_prox} and \ref{sub:rel_qi_embed} continue to hold.

More generally, we may use the following standard fact from the representation theory of semisimple Lie groups:
\spacer \begin{thm}[cf. \cite{GW}, Proposition 4.3 and Remark 4.12]
Given $G$ a semisimple Lie group with finite center and $P$ a parabolic subgroup of $G$, there exists a finite dimensional irreducible representation $\phi = \phi_{G,P}: G \to \SL(V)$ such that $\phi(P)$ is the stabilizer (in $\phi(G)$) of a line in $V$.

$\phi$ induces maps $\beta: G/P \to \proj(V)$ and $\beta^*: G/Q \to \proj(V^*)$, where $Q$ is the opposite parabolic to $P$.

Moreover, if $P$ is non-degenerate, then $\ker \phi = Z(G)$ and $\phi$ is an immersion.
\end{thm}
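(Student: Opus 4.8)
The plan is to reduce the statement to the classical highest-weight theory of representations of semisimple Lie groups, in the form of Tits' theorem on proximal (``regular'') representations. First I would fix a minimal parabolic $P_0 \subseteq P$ with Langlands decomposition $P_0 = MAN$, let $\Sigma$ be the restricted root system of $(\mathfrak{g},\mathfrak{a})$ and $\Delta \subset \Sigma$ the simple restricted roots determined by $N$; then $P = P_\Theta$ for a unique $\Theta \subseteq \Delta$, where $P_\Theta$ is generated by $P_0$ together with the restricted-root subgroups $U_{-\alpha}$, $\alpha \in \Theta$, and the opposite parabolic $Q = P_\Theta^-$ is generated by the opposite minimal parabolic together with $U_\alpha$, $\alpha \in \Theta$. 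Next I would invoke Tits' theorem to choose a dominant weight $\chi$ for $\Sigma$ that is \emph{$\Theta$-regular}, i.e. $\langle\chi,\alpha^\vee\rangle = 0$ for $\alpha \in \Theta$ and $\langle\chi,\alpha^\vee\rangle > 0$ for $\alpha \in \Delta\setminus\Theta$, lying in the sublattice of weights realized as highest restricted weights of finite-dimensional irreducible representations of $G$ (replacing $\chi$ by a positive integer multiple if necessary), and take $\phi\colon G \to \GL(V)$ an irreducible representation with highest restricted weight $\chi$; since $G = [G,G]$, $\phi$ takes values in $\SL(V)$.

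The core computation is then that $\Stab_G(\ell_\chi) = P$, where $v_\chi$ is a highest weight vector and $\ell_\chi = [v_\chi] \in \proj(V)$: the subgroup generated by $P_0$ and the $U_{-\alpha}$, $\alpha \in \Theta$, fixes $v_\chi$ because $\mathfrak{n}$ raises weights and $\langle\chi,\alpha^\vee\rangle = 0$ forces the $\alpha$-string through $\chi$ to be trivial (so $\mathfrak{g}_{-\alpha}v_\chi = 0$) for $\alpha \in \Theta$; conversely, for $\alpha \in \Delta\setminus\Theta$ the weight-$(\chi-\alpha)$ vectors are nonzero and satisfy $e_\alpha f_\alpha v_\chi = \langle\chi,\alpha^\vee\rangle v_\chi \neq 0$, so $U_{-\alpha}$ moves $\ell_\chi$; since $\Stab_G(\ell_\chi)$ is a parabolic containing $P_0$, it must equal $P_\Theta = P$. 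The orbit map $gP \mapsto \phi(g)\ell_\chi$ defines $\beta\colon G/P \to \proj(V)$, which is $G$-equivariant, injective, and, as $G/P$ is compact, a smooth closed embedding. For the dual map, the common Levi factor $L$ of $P$ and $Q$ preserves the one-dimensional top weight space $V_\chi = \ell_\chi$, hence preserves $H := \bigoplus_{\mu \neq \chi} V_\mu$, and $\mathfrak{n}^-$ preserves $H$ as well (it lowers weights), while by the same string computation $U_\alpha$ for positive $\alpha \notin \Theta$ does not; so $Q = \Stab_G(H)$, and passing to the annihilator gives a line $[\lambda] \in \proj(V^*)$ with $\Stab_G([\lambda]) = Q$, whence $\beta^*\colon G/Q \to \proj(V^*)$, $gQ \mapsto \phi^*(g)[\lambda]$ with $\phi^*(g) = \phi(g^{-1})^T$.

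For the last assertion, suppose $P$ is non-degenerate, i.e. contains no simple factor of $G$; then $\Theta$ omits a simple root in the diagram of each simple factor of $\mathfrak{g}$, so $\chi$ is nontrivial on each factor and $d\phi$ is nonzero on each, hence injective on $[\mathfrak{g},\mathfrak{g}] = \mathfrak{g}$. Thus $\phi$ is an immersion and $\ker\phi$ is discrete and central, so $\ker\phi \subseteq Z(G)$; choosing $\chi$ so that additionally $\phi$ factors through $G/Z(G)$ (possible after passing to a multiple, since $Z(G)$ is finite) forces $Z(G) \subseteq \ker\phi$, hence $\ker\phi = Z(G)$. The step I expect to be the main obstacle is this simultaneous choice of the weight $\chi$: one must work with Tits' description of exactly which dominant weights of the \emph{restricted} root system arise as highest restricted weights of genuine finite-dimensional representations of the given real form $G$, while keeping $\chi$ exactly $\Theta$-regular and trivial on $Z(G)$; everywhere else the argument is a routine unwinding of highest-weight combinatorics.
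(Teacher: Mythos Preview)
The paper does not actually prove this theorem: it states it as a standard fact from the representation theory of semisimple Lie groups, citing \cite{GW}, \S4 (and also \cite{BCLS}) for the construction, and noting that $\phi_{G,P}$ is called a Pl\"ucker or Tits representation in the literature. Your proposal is therefore not being compared against a proof in the paper but against the standard construction in those references.

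That said, your sketch is precisely the standard argument underlying those references: identify $P = P_\Theta$ via a choice of minimal parabolic and restricted simple roots $\Delta$, use Tits' description of highest restricted weights realized by finite-dimensional irreducible representations of the real form to pick $\chi$ which is $\Theta$-regular, and then verify $\Stab_G(\ell_\chi) = P_\Theta$ by the $\alpha$-string computation. The dual hyperplane argument for $Q$ is also the usual one. The one place where your argument is slightly loose is the final clause: you deduce $\ker\phi \subseteq Z(G)$ correctly (discrete normal subgroup of a connected group is central), but for the reverse inclusion you appeal to ``passing to a multiple so that $\phi$ factors through $G/Z(G)$''. This is correct in spirit---by Schur's lemma $Z(G)$ acts on the irreducible $V$ by a character, and since $Z(G)$ is finite one can replace $\chi$ by $|Z(G)|\cdot\chi$ (still $\Theta$-regular) to kill this character---but it would be worth spelling this out, since it is exactly the sort of detail that the cited references handle carefully.
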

For a construction, we refer the reader to \cite{GW}, \S4 (see also \cite{BCLS}, Theorem 2.12 and Corollary 2.13.) The irreducible representation $\phi_{G,P}$ is called a Pl\"ucker representation in \cite{BCLS}, or a Tits representation in \cite{BPS}. 

We now make the following
\spacer \begin{defn}
Given $\Gamma$ a finitely-generated group and $\mathcal{P}$ a finite collection of finitely-generated proper infinite subgroups satisfying (RH), $G$ a semisimple Lie group with finite center and $P$ a non-degenerate parabolic subgroup of $G$, we say that a representation $\rho: \Gamma \to G$ is $P$-dominated relative to $\mathcal{P}$ (with domination constants $(\ubar{C},\ubar\mu,\bar{C},\bar\mu)$) if $\phi_{G,P} \circ \rho: \Gamma \to \SL(V)$ is 1-dominated relative to $\mathcal{P}$ (with the same constants).
\end{defn}

Given a $P$-relatively dominated representation $\rho: \Gamma \to G$,  by applying Theorem \ref{thm:reldom_relhyp} to $\phi_{G,P} \circ \rho: \Gamma \to \SL(V)$, we have that $\Gamma$ is hyperbolic relative to $\mathcal{P}$ in these cases as well. By Theorem \ref{thm:limitmaps}, $\phi_{G,P} \circ \rho$ has associated continuous, equivariant, dynamics-preserving, transverse limit maps of $\del(\Gamma,\mathcal{P})$ into $\proj(V)$ and $\proj(V^*)$; we may compose these with $\beta^{-1}$ and $(\beta^*)^{-1}$ to obtain limit maps of $\del(\Gamma,\mathcal{P})$ into the flag varieties $G/P$ and $G/Q$. We may argue similarly to see that the results from \S\ref{sub:df_prox} and \ref{sub:rel_qi_embed} continue to hold. 

As a particular case of this, suppose $G = \SL(d,\real)$ and $P = P_k$ is the stabilizer of a $k$-plane in $G$. Then we may explicitly take $V = \bigwedge^k \real^d$ and $\phi_{G,P}: \SL(d,\real) \to \SL(V)$ to be the map given by the action of $\SL(d,\real)$ on the exterior product $V$ coming from the natural action $\SL(d,\real) \actson \real^d$. 

We note, very briefly, that 
\begin{align*}
\sigma_1(\bigwedge^k \rho(\gamma)) & = \sigma_1 \cdots \sigma_k (\rho(\gamma)), \\ \sigma_2(\bigwedge^k \rho(\gamma)) & = \sigma_1 \cdots \sigma_{k-1} \sigma_{k+1} (\rho(\gamma)),
\end{align*} 
and moreover $U_1(\bigwedge^k \rho(\gamma)) = U_k(\rho(\gamma))$ (in the sense that they represent the same $k$-dimensional subspace of $\real^d$) and
\begin{align*}
S_{D-1}(\bigwedge{}^k \rho(\gamma)) & = U_{D-1} (\wedge^k \rho(\gamma^{-1})) = \langle \theta \in \Gr_k(\real^d) : \theta \not\pitchfork S_{d-k}(\rho(\gamma)) \rangle
\end{align*}
(where $D := \binom{d}k = \dim \bigwedge^k \real^d$) 
and hence we may also equivalently and more directly define $P_k$-relatively dominated representations as in \S\ref{sec:reldom}, replacing $\frac{\sigma_1}{\sigma_2}$ with $\frac{\sigma_k}{\sigma_{k+1}}$ as appropriate, and similarly replacing projective space and its dual with the appropriate Grassmannians. 

\appendix
\section{Linear algebraic lemmas} \label{app:linalg}

We collect in this appendix various lemmas of quantitative linear algebra which are used in the proofs above and below, especially in sections \ref{sec:reldom_relhyp} and \ref{sec:limitmaps}. They appear in the order in which they are used above. These are elementary; many of them appear, with proof, in Appendix A of \cite{BPS}. 

Recall that, given $\xi, \eta \in \proj(\real^d)$ or $\Gr_{d-1}(\real^d)$, or more generally $\Gr_p(\real^d)$ for some $p$ between 1 and $d$, $d(\xi, \eta)$ will denote distance between $\xi$ and $\eta$ in the relevant Grassmannian.

Below, we say that $A \in \GL(d,\real)$ is {\bf $P_p$-proximal} if $\sigma_{p+1}(A) > \sigma_p(A)$. Recall that $U_p(A)$ is well-defined once $A$ is $P_p$-proximal.

\spacer \begin{lem} [\cite{GGKW}, Lemma 5.8; \cite{BPS}, Lemmas A.4, A.5] \label{lem:BPSA4A5}
Given $A, B \in \GL(d,\real)$ with $A$, $AB$ and $BA$ $P_p$-proximal, we have
\begin{align}
d(U_p(A), U_p(AB)) & \leq \frac{\sigma_1}{\sigma_d}(B) \frac{\sigma_{p+1}}{\sigma_p}(A) \label{eqn:A4} \\
d(BU_p(A), U_p(BA)) & \leq  \frac{\sigma_1}{\sigma_d}(B) \frac{\sigma_{p+1}}{\sigma_p}(A) \label{eqn:A5}
\end{align}
\end{lem}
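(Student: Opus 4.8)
\textbf{Proof plan for Lemma \ref{lem:BPSA4A5}.}

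The plan is to establish both inequalities by reducing to the action of $A$ on the unit sphere, tracking how the image ellipsoid changes when we pre- or post-compose with $B$. First I would set up the standard picture: write the singular value decompositions $A = K_A \Sigma_A L_A$ and note that $U_p(A)$ is spanned by the first $p$ columns of $K_A$, i.e.\ the top $p$ semi-axes of the ellipsoid $A(S^{d-1})$. The key quantitative input is that if $A$ is $P_p$-proximal, then a unit vector $v$ lying \emph{nearly} in the orthogonal complement of $U_p(A)$ is contracted relative to a generic vector by roughly the factor $\sigma_{p+1}(A)/\sigma_p(A)$; conversely the gap $\sigma_p(A) > \sigma_{p+1}(A)$ makes $U_p(\cdot)$ Lipschitz-stable under perturbations whose size is measured by that ratio.

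For \eqref{eqn:A4}: here $U_p(AB)$ is the span of the top $p$ axes of $AB(S^{d-1}) = A(B(S^{d-1}))$. The idea is that $B(S^{d-1})$ is an ellipsoid whose eccentricity is controlled by $\frac{\sigma_1}{\sigma_d}(B)$, so it is sandwiched between $\sigma_d(B) \cdot S^{d-1}$ and $\sigma_1(B)\cdot S^{d-1}$. Applying $A$, the image $AB(S^{d-1})$ is ``close'' to $A(S^{d-1})$ rescaled, with the discrepancy in the direction of $U_p$ versus its complement being at most $\frac{\sigma_1}{\sigma_d}(B)$; combined with the singular-value gap $\frac{\sigma_{p+1}}{\sigma_p}(A)$ of $A$ itself, a direct estimate of the angle between the top-$p$ subspaces of the two ellipsoids yields the bound. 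Concretely I would decompose an arbitrary unit vector in $U_p(AB)$ relative to the frame $K_A$, use that the component in $U_p(A)^\perp$ must be small because $A$ contracts it while $AB$ does not contract the whole of $U_p(AB)$, and quantify using the two ratios. For \eqref{eqn:A5}: observe $U_p(BA) = B \cdot (\text{something close to } U_p(A))$ is not literally true, but $BA(S^{d-1}) = B(A(S^{d-1}))$, and $A(S^{d-1})$ already has its top $p$ axes spanning $U_p(A)$; applying $B$ distorts this ellipsoid by at most the eccentricity factor $\frac{\sigma_1}{\sigma_d}(B)$, so $U_p(BA)$ differs from $B \cdot U_p(A)$ by an angle controlled by $\frac{\sigma_1}{\sigma_d}(B)\frac{\sigma_{p+1}}{\sigma_p}(A)$, arguing as before with the roles of pre/post-composition swapped. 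Alternatively \eqref{eqn:A5} follows from \eqref{eqn:A4} applied to transposes, since $U_p(M)^\perp = S_{d-p}(M^{-1})$-type dualities convert left multiplication to right multiplication; I would mention this as the quicker route.

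The main obstacle — though it is a mild one, since these are elementary and appear in \cite{BPS}, Appendix A — is getting the quantitative angle estimate clean: one must carefully handle the case where the relevant denominators ($\sigma_p(A)$, $\sigma_d(B)$) are small, and confirm that the bound is vacuous (exceeds the diameter of the Grassmannian) precisely when the perturbation is too large to say anything, so no separate case analysis is needed. I would simply cite \cite{GGKW}, Lemma 5.8 and \cite{BPS}, Lemmas A.4 and A.5 for the precise constants and the verification, and only sketch the ellipsoid-distortion heuristic above for the reader's intuition.
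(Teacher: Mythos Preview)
Your proposal is correct and matches the paper's own treatment: the paper does not prove this lemma at all but simply states it with citations to \cite{GGKW}, Lemma 5.8 and \cite{BPS}, Lemmas A.4 and A.5, exactly as you suggest doing at the end of your plan. Your ellipsoid-distortion heuristic is a reasonable intuitive sketch but goes beyond what the paper provides; since this is an appendix lemma quoted from the literature, the bare citation is all that is expected.
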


\spacer \begin{lem} [\cite{BPS}, Lemma A.6] \label{lem:BPSA6}
Given any $P_p$-proximal $A \in \GL(d,\real)$ and any $p$-dimensional subspace $P \subset \real^d$, we have 
\[ d(A(P), U_p(A)) \leq \frac{\sigma_{p+1}}{\sigma_p}(A) \frac{1}{\sin \angle(P,S_{d-p}(A))} .\]
\end{lem}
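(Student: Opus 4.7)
My plan is to prove the inequality by reducing to the case where $A$ is diagonal and then viewing both $P$ and its image as graphs of linear maps.

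First, I would use the singular value decomposition to write $A = KDL$ where $K, L \in \OO(d)$ and $D = \mathrm{diag}(\sigma_1, \ldots, \sigma_d)$. Because left-multiplying by an orthogonal matrix preserves the Grassmannian metric and because the quantities $U_p(A)$ and $S_{d-p}(A)$ transform equivariantly under such changes, it is enough to establish the inequality when $K = L = \id$, i.e.\ when $A$ is the diagonal matrix $D$. In this normalized setting $U_p(A) = \mathrm{span}(e_1, \ldots, e_p)$ and $S_{d-p}(A) = \mathrm{span}(e_{p+1}, \ldots, e_d)$, so the ambient $\real^d$ splits orthogonally as $U_p(A) \oplus S_{d-p}(A)$.

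Next, the hypothesis $\sin \angle(P, S_{d-p}(A)) =: \delta > 0$ implies $P$ is transverse to $S_{d-p}(A)$, so $P$ is the graph of a unique linear map $T: U_p(A) \to S_{d-p}(A)$. A direct computation with unit vectors $v = (x, Tx)/\sqrt{\|x\|^2 + \|Tx\|^2}$ shows
\[ \sin \angle(P, S_{d-p}(A)) = \min_{x \neq 0} \frac{\|x\|}{\sqrt{\|x\|^2 + \|Tx\|^2}} = \frac{1}{\sqrt{1 + \|T\|^2}}, \]
whence $\|T\| \leq 1/\delta$. Since $A$ preserves the splitting, its image $A(P)$ is the graph of the composite map $S := A_2 T A_1^{-1}: U_p(A) \to S_{d-p}(A)$, where $A_1, A_2$ denote the restrictions of $A$ to $U_p(A)$ and $S_{d-p}(A)$ respectively. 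The operator norm bound $\|S\| \leq \|A_2\| \cdot \|T\| \cdot \|A_1^{-1}\| = \frac{\sigma_{p+1}}{\sigma_p} \|T\|$ is then immediate from the diagonal form.

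Finally, I would translate the bound on $\|S\|$ back to a Grassmannian distance. By the symmetric computation, for a graph subspace $A(P)$ the largest principal angle to $U_p(A)$ satisfies $d(A(P), U_p(A)) = \|S\|/\sqrt{1+\|S\|^2} \leq \|S\|$. Combining everything yields
\[ d(A(P), U_p(A)) \leq \|S\| \leq \frac{\sigma_{p+1}}{\sigma_p}(A) \cdot \frac{1}{\sin \angle(P, S_{d-p}(A))}, \]
as desired. The proof is really a routine linear-algebra calculation; the only point requiring care is to match the chosen conventions for the Grassmannian distance and the angle between subspaces, so that the two maximization/minimization statements align correctly when extracting $\|T\|$ and estimating $d(A(P), U_p(A))$.
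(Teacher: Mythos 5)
Your argument is correct. The paper itself supplies no proof of this lemma; it simply cites Appendix A of \cite{BPS} (the preamble of Appendix~\ref{app:linalg} explicitly defers these elementary linear-algebra facts to that reference), so there is no internal proof to compare against. The graph-of-a-linear-map argument you give is the standard way to prove this kind of estimate and is, to my knowledge, essentially the route taken in the cited source.

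A couple of small points worth tightening. In the reduction step, the equivariance is slightly asymmetric: with $A = KDL$ one has $U_p(A) = K\,U_p(D)$ but $S_{d-p}(A) = L^{-1} S_{d-p}(D)$, so the left side of the inequality reduces via $d(KDL(P), K U_p(D)) = d(D(LP), U_p(D))$ while the right side reduces via $\sin\angle(P, L^{-1}S_{d-p}(D)) = \sin\angle(LP, S_{d-p}(D))$; both reductions then agree on replacing $P$ by $LP$ and $A$ by $D$, so the conclusion you draw is valid, but the phrase ``transform equivariantly'' glosses over the fact that $K$ and $L$ play different roles. Also, when you evaluate the minimal gap, you actually get the exact value $\sin\angle(P, S_{d-p}(A)) = (1+\|T\|^2)^{-1/2}$, hence $\|T\| = \sqrt{\delta^{-2}-1} < \delta^{-1}$, which is slightly stronger than the stated bound but of course suffices. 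Finally, your appeal to the identity $d(A(P), U_p(A)) = \|S\|/\sqrt{1+\|S\|^2}$ requires the convention that $d$ on the Grassmannian is the sine of the largest principal angle; that is consistent with how $d$ is used elsewhere in the appendix, so the last inequality $d(A(P),U_p(A)) \leq \|S\|$ is sound.
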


\spacer \begin{lem} [\cite{BPS}, Lemma A.7] \label{lem:BPSA7}
Let $A, B \in \GL(d,\real)$. Suppose that $A$ and $AB$ are $P_p$-proximal, and let $\alpha := \angle (U_p(B), S_{d-p}(A))$. Then
\begin{align*}
\sigma_p(AB) & \geq (\sin \alpha) \, \sigma_p(A) \sigma_p(B) \\
\sigma_{p+1}(AB) & \leq (\sin \alpha)^{-1} \sigma_{p+1}(A) \sigma_{p+1}(B)
\end{align*}
\end{lem}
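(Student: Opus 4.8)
The plan is to prove the first (lower) inequality directly from the Courant--Fischer characterization $\sigma_p(M) = \max_{\dim V = p} \min_{0 \neq v \in V} \|Mv\|/\|v\|$, and then to deduce the second (upper) inequality by applying the first to the inverses $B^{-1}, A^{-1}$ at index $d-p$.

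For the lower bound on $\sigma_p(AB)$: let $W \subset \real^d$ be the span of the top $p$ right singular vectors of $B$, so that $\|Bv\| \geq \sigma_p(B)\|v\|$ for every $v \in W$ and $B(W) = U_p(B)$ (here one uses that $B$ is $P_p$-proximal, which is implicit in the definition of $\alpha$). Given $w \in U_p(B)$, decompose it orthogonally as $w = w' + w''$ with $w'' \in S_{d-p}(A)$ and $w' \in S_{d-p}(A)^\perp$; reading off the singular value decomposition of $A$, the latter subspace is exactly the span of the top $p$ right singular vectors of $A$. Since the first principal angle between $U_p(B)$ and $S_{d-p}(A)$ is $\alpha$, we have $\|w'\| \geq (\sin\alpha)\|w\|$. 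The crucial observation is that $A$ carries $S_{d-p}(A)^\perp$ into $U_p(A)$ and $S_{d-p}(A)$ into $U_p(A)^\perp$, so $Aw' \perp Aw''$; hence
\[ \|Aw\|^2 = \|Aw'\|^2 + \|Aw''\|^2 \geq \|Aw'\|^2 \geq \sigma_p(A)^2 \|w'\|^2 \geq \sigma_p(A)^2 (\sin\alpha)^2 \|w\|^2 . \]
Applying this with $w = Bv$ for $v \in W \setminus \{0\}$ gives $\|ABv\| \geq (\sin\alpha)\,\sigma_p(A)\,\sigma_p(B)\,\|v\|$, and taking $V = W$ in the min--max formula yields $\sigma_p(AB) \geq (\sin\alpha)\,\sigma_p(A)\,\sigma_p(B)$. (Note this half does not actually use $P_p$-proximality of $AB$.)

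For the upper bound on $\sigma_{p+1}(AB)$, I would apply the inequality just proved to the pair $(B^{-1}, A^{-1})$ with $d-p$ in place of $p$. The proximality hypotheses transfer: $B^{-1}$ (resp. $(AB)^{-1} = B^{-1}A^{-1}$) is $P_{d-p}$-proximal precisely because $B$ (resp. $AB$) is $P_p$-proximal, and likewise $U_{d-p}(A^{-1})$ is defined since $A$ is $P_p$-proximal. Using $S_i(g) = U_i(g^{-1})$ one checks that the relevant angle is $\angle(U_{d-p}(A^{-1}), S_p(B^{-1})) = \angle(S_{d-p}(A), U_p(B))$, which equals $\alpha$ by symmetry of the first principal angle between complementary-dimensional subspaces. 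Since $\sigma_{d-p}(g^{-1}) = \sigma_{p+1}(g)^{-1}$ for any $g \in \GL(d,\real)$, the lower bound becomes $\sigma_{p+1}(AB)^{-1} \geq (\sin\alpha)\,\sigma_{p+1}(A)^{-1}\,\sigma_{p+1}(B)^{-1}$, which rearranges to the claim.

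There is no real obstacle here; the statement is elementary quantitative linear algebra. The two points that need care are: (i) the orthogonality $A(S_{d-p}(A)^\perp) \perp A(S_{d-p}(A))$, which is what licenses the Pythagorean step and which rests on correctly identifying these images with $U_p(A)$ and its orthogonal complement via the singular value decomposition of $A$; and (ii) tracking the conventions for $U_i$, $S_i$ and the (symmetric) angle between subspaces of complementary dimension, so that the dualization in the second inequality produces exactly the same angle $\alpha$.
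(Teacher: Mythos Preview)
The paper does not supply its own proof of this lemma; it merely states it in Appendix~\ref{app:linalg} with a citation to \cite{BPS}, noting that the results there ``are elementary; many of them appear, with proof, in Appendix A of \cite{BPS}.'' So there is nothing in the paper to compare against.

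Your argument is correct. The lower bound is exactly the standard Courant--Fischer computation, and the two points you flag as needing care are indeed the only ones: the orthogonality $A(S_{d-p}(A)^\perp) \perp A(S_{d-p}(A))$ is immediate from the singular value decomposition, and your interpretation of $\alpha$ as the minimal angle between a vector of $U_p(B)$ and the subspace $S_{d-p}(A)$ is the one that makes the statement true and matches the usage elsewhere in the paper (e.g.\ Lemma~\ref{lem:BPSA6}). The dualization for the upper bound is clean: the identifications $U_{d-p}(A^{-1}) = S_{d-p}(A)$ and $S_p(B^{-1}) = U_p(B)$ reproduce exactly the same angle~$\alpha$, and $\sigma_{d-p}(g^{-1}) = \sigma_{p+1}(g)^{-1}$ converts the lower bound for $B^{-1}A^{-1}$ into the desired upper bound for $AB$. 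Your remark that $B$ must be $P_p$-proximal (implicitly, since $U_p(B)$ appears in the hypothesis) is well taken and is needed in both halves.
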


\begin{lem}[cf. \cite{QTZ}, Lemma A.24] \label{lem:QTZA24}
If $U_0$ and $V_0$ are complementary vector subspaces, and $U$ is the graph of $\Theta: U_0 \to V_0$, then we have
\[ s(U,V_0) \geq \frac{s(U_0,V_0)}{\|\id \oplus \Theta\|} .\]
\begin{proof}
Choose a vector $u \in U$ achieving the minimum gap $s(U,V_0)$. Scale it so that if we decompose $u$ into its $U_0$ and $V_0$ components, its $U_0$ component $u_0$ is a unit vector. From the law of sines,
\[ \frac{1}{s(U,V_0)} = \frac{\|u\|}{\sin \angle (u_0, V_0)} \leq \frac{\|u\|}{s(U_0,V_0)} \leq \frac{\|\id \oplus \Theta\|}{s(U_0,V_0)} \]

\begin{figure}[ht]
    \centering
    \includegraphics[width=0.363\textwidth]{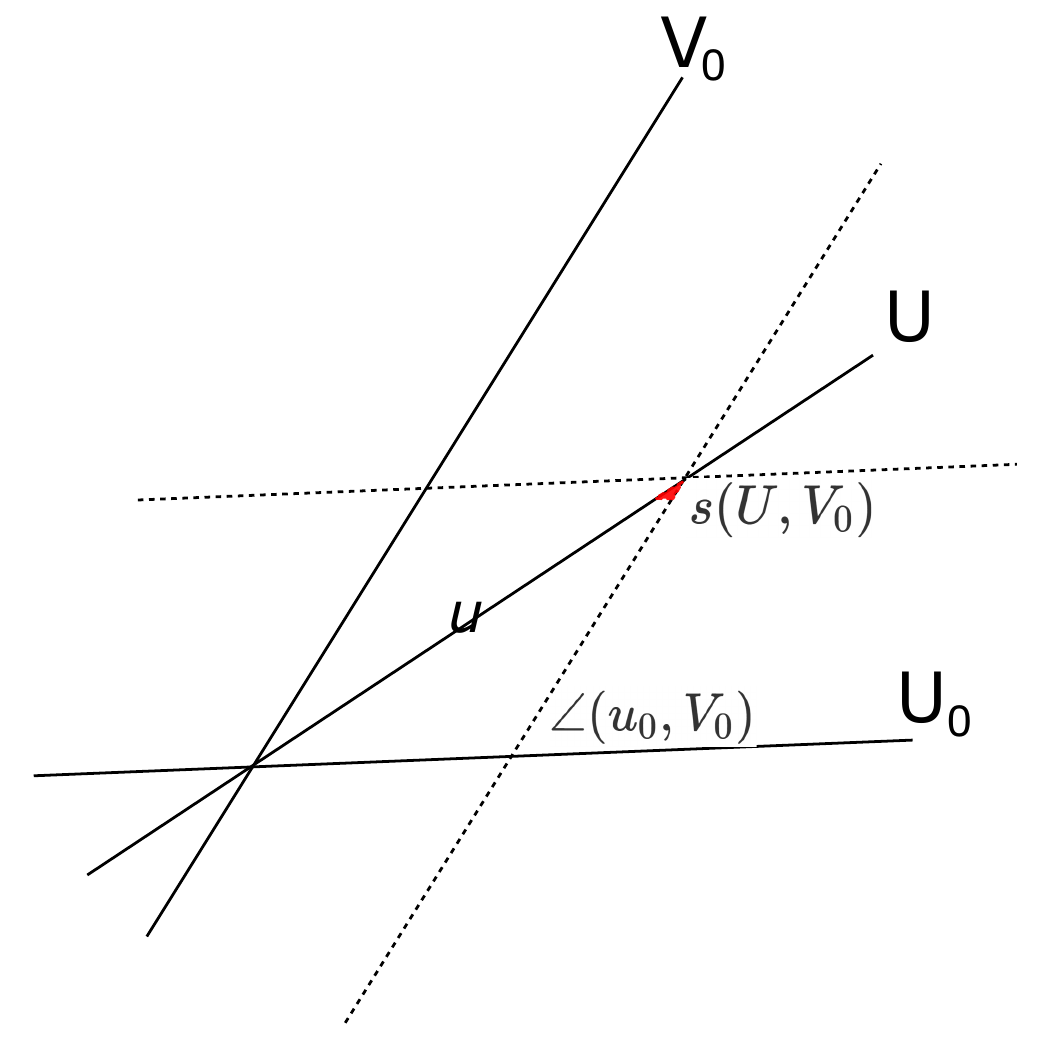}
    \label{fig:QTZ_lemA24}
\end{figure}
whence we have the desired inequality (see also illustration above.)
\end{proof}
\end{lem}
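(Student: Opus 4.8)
The plan is to argue directly with elementary trigonometry in the plane spanned by the relevant vectors, as the accompanying figure suggests. First I would unwind the definition $s(A,B) = \sin\angle(A,B)$ of the minimal gap: since $U$ is a linear subspace of $\real^d$ and its unit sphere is compact, there is a nonzero $u \in U$ realizing the gap, so that $s(U,V_0) = \sin\angle(u,V_0) = \operatorname{dist}\!\left(u/\|u\|,\, V_0\right)$. Using the direct sum decomposition $\real^d = U_0 \oplus V_0$, rescale $u$ so that its $U_0$-component is a unit vector $u_0$; because $u$ lies on the graph of $\Theta$, that component is precisely $u_0$ and $u = u_0 + \Theta u_0 = (\id \oplus \Theta)(u_0)$, whence $\|u\| \le \|\id \oplus \Theta\|$.

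The heart of the matter is the observation that $u - u_0 = \Theta u_0 \in V_0$, so that translating by a vector of $V_0$ does not change distance to $V_0$: $\operatorname{dist}(u, V_0) = \operatorname{dist}(u_0, V_0)$. Since $u/\|u\|$ and $u_0$ are related by the scalar $1/\|u\|$, and $\|u_0\| = 1$, this gives
\[ s(U,V_0) \;=\; \frac{\operatorname{dist}(u, V_0)}{\|u\|} \;=\; \frac{\operatorname{dist}(u_0, V_0)}{\|u\|} \;=\; \frac{\sin\angle(u_0, V_0)}{\|u\|} . \]
Equivalently, this is the law of sines in the triangle with vertices $0$, $u_0$, $u$, whose edge from $u_0$ to $u$ lies in $V_0$. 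Now $u_0 \in U_0$ is a unit vector, so $\sin\angle(u_0, V_0) \ge s(U_0, V_0)$ by definition of the minimal gap, and combining this with $\|u\| \le \|\id \oplus \Theta\|$ yields $s(U,V_0) \ge s(U_0,V_0)/\|\id \oplus \Theta\|$, which is the claim.

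I do not expect a genuine obstacle: the argument is a one-line geometric estimate. The only point requiring a moment's care is that $U_0$ and $V_0$ are merely complementary rather than orthogonal — this is exactly what forces the correction factor $\|\id \oplus \Theta\|$ to appear, via the operator-norm bound $\|u\| \le \|\id \oplus \Theta\|\,\|u_0\|$ on the graphing map $U_0 \to U$ — together with the bookkeeping needed to identify the right angle in the triangle $0$, $u_0$, $u$, for which the figure is a reliable guide.
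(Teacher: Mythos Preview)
Your proof is correct and follows essentially the same approach as the paper's: pick a minimizer $u\in U$, normalize so its $U_0$-component is a unit vector, use that $u-u_0\in V_0$ to relate $\sin\angle(u,V_0)$ and $\sin\angle(u_0,V_0)$, and then bound $\|u\|$ by $\|\id\oplus\Theta\|$. Your distance-translation argument $\operatorname{dist}(u,V_0)=\operatorname{dist}(u_0,V_0)$ is in fact a cleaner justification of the ``law of sines'' identity than the paper's sketch, since it avoids needing to identify the relevant angles with interior angles of a planar triangle.
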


There is also the following slightly easier version
\spacer \begin{lem}[cf. \cite{QTZ}, Lemma A.24] \label{lem:QTZA24b}
If $U_0$ and $V_0$ are complementary vector subspaces, and $U$ is the graph of $\Theta: U_0 \to V_0$, then we have
\[ s(U,V_0) \leq \frac{1}{\|\id \oplus \Theta\|} .\]
\begin{proof}
Pick a vector $u \in U$ so that if we decompose $u$ into its $U_0$ and $V_0$ components, its $U_0$ component $u_0$ is a unit vector, and $\|u\| = \|\id \oplus \Theta\|$. By the law of sines, 
\[ \frac{\|\id\oplus\Theta\|}{\sin \angle (u_0, V_0)} = \frac{\|u\|}{\sin \angle (u_0, V_0)} = \frac{1}{\sin \angle (u,V_0)}  \]
but now $$\|\id \oplus \Theta\| \leq \frac{\|\id\oplus\Theta\|}{\sin \angle (u_0, V_0)} = \frac{1}{\sin \angle (u, V_0)} \leq \frac{1}{s(U, V_0)} ,$$ whence the desired inequality.
\end{proof}
\end{lem}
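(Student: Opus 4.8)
The plan is to prove the statement by exhibiting a \emph{single} nonzero vector $u \in U$ that already witnesses the bound, since $s(U,V_0) = \inf_{0 \neq u \in U} \sin \angle(u, V_0)$ is an infimum. This is the "easy direction'' companion to Lemma \ref{lem:QTZA24}, so unlike there we will not need any lower bound on $s(U_0,V_0)$; a maximizer for the operator norm of $\id \oplus \Theta$ will do all the work.

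Concretely, first I would recall that $\id \oplus \Theta$ denotes the linear map $U_0 \to \real^d$, $u_0 \mapsto u_0 + \Theta(u_0)$, whose image is exactly the graph $U$, and that (by compactness of the unit sphere in $U_0$) there is a unit vector $u_0 \in U_0$ with $\|u_0 + \Theta(u_0)\| = \|\id \oplus \Theta\| =: N$. Set $u := u_0 + \Theta(u_0) \in U$, so that $\|u\| = N$ and the component of $u$ in $U_0$ along the decomposition $\real^d = U_0 \oplus V_0$ is the unit vector $u_0$. Next I would invoke the elementary identity $\sin \angle(w, W) = \operatorname{dist}(w, W)/\|w\|$ (Euclidean distance) valid for any nonzero vector $w$ and subspace $W$. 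Since $u - u_0 = \Theta(u_0) \in V_0$, distance to $V_0$ is unchanged by this translation, so $\operatorname{dist}(u, V_0) = \operatorname{dist}(u_0, V_0) \leq \|u_0\| = 1$. Combining, $s(U,V_0) \leq \sin \angle(u,V_0) = \operatorname{dist}(u,V_0)/\|u\| \leq 1/N$, which is the claim. Equivalently, one can phrase the final computation via the law of sines in the triangle with vertices $0$, $u_0$, $u$, exactly as in the proof of Lemma \ref{lem:QTZA24}.

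There is essentially no obstacle here; the only points requiring a little care are bookkeeping ones: that $\sin \angle$ between a line and a subspace is normalized as the sine of the smallest principal angle, that $\|\id \oplus \Theta\|$ is the operator norm for the fixed inner product on $\real^d$, and that $U$ is genuinely complementary to $V_0$ (so that $s(U,V_0)$ and the various angles are what one expects). This last point is automatic, since the projection $\real^d \to U_0$ along $V_0$ restricts to a linear isomorphism $U \to U_0$. Everything else is a one-line distance computation.
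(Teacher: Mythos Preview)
Your proof is correct and takes essentially the same approach as the paper: both pick the vector $u = u_0 + \Theta(u_0)$ with $u_0$ a unit maximizer for $\|\id \oplus \Theta\|$ and then bound $\sin\angle(u,V_0)$ from above by $1/\|u\|$. The only cosmetic difference is that the paper carries out the final estimate via the law of sines in the triangle with vertices $0$, $u_0$, $u$, whereas you use the equivalent distance identity $\sin\angle(w,W) = \operatorname{dist}(w,W)/\|w\|$ together with $\operatorname{dist}(u,V_0) = \operatorname{dist}(u_0,V_0) \leq 1$; you even note this equivalence yourself.
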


\section{A local version of Quas--Thieullen--Zarrabi} \label{app:QTZ}

The purpose of this appendix is to prove Theorem \ref{thm:QTZ}:
\spacer \begin{thm}[Theorem \ref{thm:QTZ}]
Let $(A_k)_{k\in\ints} \subset \GL(d,\real)$ be a sequence of matrices such that
there exists constants $C \geq 1$ and $\mu, \mu' \geq 0$, with $\frac1\mu \log 3C > 1$, such that the following axioms are satisfied: 
\begin{itemize}
\item (SVG-BG) 
for all $k \in \ints$ and all $n \geq 0$, 
\begin{align*}
\frac{\sigma_2}{\sigma_1} (A_{k+n-1} \cdots A_k) & \leq C e^{-n\mu} 
\end{align*}

\item (EC) 
for all $k \in \ints$ and all $n \geq 0$, 
\begin{align*}
d(S_{d-1}(A_{k+n-1} \cdots A_k), S_{d-1}(A_{k+n} \cdots A_k)) & \leq C e^{-n\mu} ,\\
d(U_1(A_{k-1} \cdots A_{k-n}), U_1(A_{k-1} \cdots A_{k-(n+1)})) & \leq C e^{-n\mu} .
\end{align*}

\item (FI)\textsubscript{back}: 
for all $k \leq 0$ and $n, m \geq 0$
\begin{align*}
\frac{\sigma_1(A_{k+n-1}  \cdots A_{k-m})}{\sigma_1(A_{k+n-1} \cdots A_k) \cdot \sigma_1(A_{k-1} \cdots A_{k-m})} & \geq C^{-1} e^{-m \mu'}
\end{align*}
\end{itemize}

Then \begin{enumerate}[(i)]
\item for each $k \in \ints$ in the sequence we have a splitting $E^{u} \oplus E^{s}$ of $\real^d$ given by
\begin{align*}
E^{u}(k) & := \lim_{n\to\infty} U_1(A_{k-1} \cdots A_{k-n}) 
\\
E^{s}(k) & := \lim_{n\to\infty} S_{d-1}(A_{k+n-1} \cdots A_k) 
\end{align*}
which is equivariant in the sense that $A_k E^*(k) = E^*(k+1)$ for all $k \in \ints$ and $* \in \{u,s\}$;
\item moreover, for all $k \leq 0$, we have a uniform lower bound $s_{min} = s_{min}(C,\mu,\mu')$ on the gap $s(E^{u}(k), E^{s}(k)) := \sin \angle (E^{u}(k),E^{s}(k))$ given by
\[ s(E^{u}(k), E^{s}(k)) \geq s_{min} := \frac23 (3e)^{-2r} \exp\left( -\frac{3/2}{1-e^{-\mu}} \right) C^{-(1+2r)} ,\]
where $r := \frac{\mu'}\mu$.
\end{enumerate}
\end{thm}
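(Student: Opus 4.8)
The plan is to dispatch part (i) quickly and then concentrate on the quantitative gap bound of part (ii), where all the real work lies.

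\emph{Part (i).} The two estimates in (EC) say exactly that, for each fixed $k$, the sequences $n \mapsto U_1(A_{k-1}\cdots A_{k-n})$ and $n \mapsto S_{d-1}(A_{k+n-1}\cdots A_k)$ have consecutive terms within $Ce^{-n\mu}$ of one another in their respective Grassmannians; summing the geometric series shows both are Cauchy, so $E^{u}(k)$ and $E^{s}(k)$ exist, with the explicit rates $d(U_1(A_{k-1}\cdots A_{k-n}), E^{u}(k)) \le \frac{C}{1-e^{-\mu}} e^{-n\mu}$ and $d(S_{d-1}(A_{k+n-1}\cdots A_k), E^{s}(k)) \le \frac{C}{1-e^{-\mu}} e^{-n\mu}$. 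For equivariance, write $E^{u}(k+1) = \lim_n U_1\big(A_k \cdot (A_{k-1}\cdots A_{k-n})\big)$ and compare with $A_k\, U_1(A_{k-1}\cdots A_{k-n})$ using Lemma \ref{lem:BPSA4A5}(\ref{eqn:A5}): the two differ by at most $\frac{\sigma_1}{\sigma_d}(A_k)\cdot\frac{\sigma_2}{\sigma_1}(A_{k-1}\cdots A_{k-n}) \le \frac{\sigma_1}{\sigma_d}(A_k)\cdot Ce^{-n\mu} \to 0$ by (SVG-BG), so $A_k E^{u}(k) = E^{u}(k+1)$; the statement for $E^{s}$ follows symmetrically, e.g.\ by passing to the reversed dual sequence exactly as in the proof of Lemma \ref{lem:EC}. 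That $E^{u}(k)\oplus E^{s}(k) = \real^d$ (a line plus a hyperplane) is automatic once transversality, i.e.\ part (ii), is known.

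\emph{Part (ii).} Fix $k \le 0$ and abbreviate $R_m := A_{k-1}\cdots A_{k-m}$ and $L_n := A_{k+n-1}\cdots A_k$; by (SVG-BG) these are strongly $P_1$-proximal, with $\frac{\sigma_2}{\sigma_1}(R_m)\le Ce^{-m\mu}$ and $\frac{\sigma_2}{\sigma_1}(L_n)\le Ce^{-n\mu}$. The crux is a uniform lower bound on $s\big(U_1(R_m), S_{d-1}(L_n)\big)$, which equals $|\langle u_m, w_n\rangle|$ for $u_m$ a unit vector spanning $U_1(R_m)$ and $w_n$ a unit normal to $S_{d-1}(L_n)$ (equivalently $w_n$ spans $U_1(L_n^{T}) = S_{d-1}(L_n)^\perp$, the top input direction of $L_n$); the graph--gap Lemmas \ref{lem:QTZA24}--\ref{lem:QTZA24b} are the formal device for reading off this identity and for propagating the bound to the limits. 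On the one hand (FI)\textsubscript{back} gives $\sigma_1(L_n R_m)\ge C^{-1}e^{-m\mu'}\,\sigma_1(L_n)\sigma_1(R_m)$. On the other, expanding $\|L_n R_m v\|$ for $v$ the top input direction of $L_n R_m$ and using strong proximality of both factors to replace $R_m v$ by a multiple of $u_m$ up to error $\le \sigma_2(R_m)$, and then $L_n u_m$ by its $w_n$-component up to error $\le \sigma_2(L_n)$, bounds $\sigma_1(L_n R_m)$ above by $\sigma_1(L_n)\sigma_1(R_m)\big(|\langle u_m,w_n\rangle| + Ce^{-m\mu} + Ce^{-n\mu}\big)$. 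Dividing, one gets $|\langle u_m,w_n\rangle| \ge C^{-1}e^{-m\mu'} - Ce^{-m\mu} - Ce^{-n\mu}$ for all $m,n\ge0$.

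This is a one-parameter family of lower bounds. Letting $n\to\infty$, so that $w_n \to E^{s}(k)^\perp$ and the $e^{-n\mu}$ term disappears, and folding in the (EC)-rate $d(U_1(R_m), E^{u}(k)) \le \frac{C}{1-e^{-\mu}} e^{-m\mu}$, gives for every $m$ that $s\big(E^{u}(k), E^{s}(k)\big) \ge C^{-1}e^{-m\mu'} - \big(C + \tfrac{C}{1-e^{-\mu}}\big) e^{-m\mu}$; optimizing the right-hand side over $m$ (the optimal $m$ is of order $r := \mu'/\mu$, which is the source of the factors $(3e)^{-2r}$ and $C^{-(1+2r)}$, while the geometric-series constant $\tfrac1{1-e^{-\mu}}$ reappears in the exponent $\exp(-\tfrac{3/2}{1-e^{-\mu}})$) yields $s_{\min}(C,\mu,\mu')$ of the stated shape; a more careful step-by-step tracking of the gap (each step costing a factor $1-O(e^{-n\mu})$, so the product over all steps is $\ge \exp(-O(1)/(1-e^{-\mu}))$, combined at one point with the (FI)\textsubscript{back} cost $C^{-1}e^{-m\mu'}$) reproduces the precise constant. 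The restriction $k\le0$ is exactly the range in which (FI)\textsubscript{back} is available, and equivariance from part (i) then propagates a (possibly worse) gap to all indices. The step I expect to be the main obstacle is this quantitative estimate: getting the two strong-proximality approximations and the graph-transform step clean enough that the errors assemble into the single expression $Ce^{-m\mu}+Ce^{-n\mu}$ with the right constants, and then running the optimization so that it produces the closed-form $s_{\min}$ rather than merely some positive number; part (i) and the convergence rates, by contrast, are routine applications of the triangle inequality, geometric series, and the lemmas of Appendix \ref{app:linalg}.
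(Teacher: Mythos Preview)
Your treatment of part (i) is fine and matches the paper's: existence of the limits from (EC) plus a geometric series, and equivariance from Lemma~\ref{lem:BPSA4A5}.

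For part (ii) your direct inequality
\[
|\langle u_m, w_n\rangle| \;\geq\; C^{-1}e^{-m\mu'} - Ce^{-m\mu} - Ce^{-n\mu}
\]
is correct (it follows from bounding $\sigma_1(L_nR_m)$ above by $\sigma_1(L_n)\sigma_1(R_m)\big(|\langle u_m,w_n\rangle| + \tfrac{\sigma_2}{\sigma_1}(L_n) + \tfrac{\sigma_2}{\sigma_1}(R_m)\big)$ via the singular value decompositions, then invoking (FI)\textsubscript{back} and (SVG-BG)). But the subsequent ``optimize over $m$'' step does not work in the stated generality. After sending $n\to\infty$ and absorbing the (EC) error you are left with
\[
s(E^u(k),E^s(k)) \;\geq\; C^{-1}e^{-m\mu'} - C\Big(1+\tfrac{1}{1-e^{-\mu}}\Big)e^{-m\mu}.
\]
When $\mu'\geq\mu$ (i.e.\ $r\geq 1$) we have $e^{-m\mu'}\leq e^{-m\mu}$ for every $m\geq 0$, and since the positive term carries $C^{-1}$ while the subtracted term carries $C(1+\cdots) > 1$, the right-hand side is \emph{never} positive. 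So no single choice of $m$ can give the claimed bound; your parenthetical remark that ``a more careful step-by-step tracking'' fixes this is exactly right, but that step-by-step argument is the entire content of the proof, not a bookkeeping refinement.

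The paper's proof is genuinely different and is designed precisely to avoid this obstruction. One fixes a block size $N$ with $\tfrac{e^{-N\mu}}{1-e^{-\mu}}\leq\tfrac{1}{3C}$, and then realises $\tilde U(k,nN)$ as the graph of a map $\Xi_n:\tilde U(k,N)\to E^s(k)$; writing each $A(k-jN,N)$ as an upper-triangular $2\times 2$ block matrix relative to the splittings $U(k-jN,jN)\oplus E^s(k-jN)$ yields the recursion $\Xi_{n+1}=\Xi_n + d_{-n}^n\,\Gamma_{-n}\,(a_{-n}^n)^{-1}$. The point is that (FI)\textsubscript{back} is used only to bound $\|\Gamma_{-n}\|\leq \tfrac{9}{4}Ce^{N\mu'}$ (a \emph{fixed} cost, independent of $n$), while (SVG-BG) makes $\|d_{-n}^n\|\cdot\|(a_{-n}^n)^{-1}\|\leq \tfrac{2}{3}e^{-(n-1)N\mu}$ decay in $n$; so the infinite product $\prod_j\big(1+\tfrac{3}{2}Ce^{N(\mu'-j\mu)}\big)$ converges regardless of whether $\mu'$ exceeds $\mu$, and together with $s(\tilde U(k,N),E^s(k))\geq\tfrac{2}{3}C^{-1}e^{-N\mu'}$ (which is your inequality at $m=N$, proved in the paper as Lemma~\ref{lem:QTZ34}) this gives the uniform gap. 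The specific constant then comes from choosing $N\in\big(\tfrac{1}{\mu}\log 3C,\,\tfrac{2}{\mu}\log 3C\big]$, which is where the hypothesis $\tfrac{1}{\mu}\log 3C>1$ enters.
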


This statement is a mild generalization of a specific case of the main results of \cite{QTZ}; it is the particular statement which is needed above.

Here we are working with finite-dimensional real vector spaces, and hence many of the technical difficulties in \cite{QTZ}, which works in the more general case of Banach spaces, are significantly lightened.

We also deal only with the specific case where the singular value gap/s are at $p=1$ and $p=d-1$, and $A_k \in \SL(d,\real)$; these assumptions are natural in the application we have here.

\spacer \begin{rmk}
We can also follow the arguments of \cite{QTZ} to obtain a domination statement:
\begin{enumerate}[(i)]
\setcounter{enumi}{2}
\item there exists $n_{\min}$ depending only on $C, \mu, \mu'$ such that for all $k \leq 0$ and $n \geq n_{\min}$ with $k+n \leq 0$, 
\[ \frac{\left\| A_{n+k-1} \cdots A_k |_{E^{s}(k)} \right\|}{\mathbf{m}(A_{n+k-1} \cdots A_k |_{E^{u}(k)})} \leq \frac{16C}{9s_{min}^2} e^{-n\mu} .\]
\end{enumerate}
where $\mathbf{m}(A)$ denotes the bottom singular value of $A \in \GL(d,\real)$. We will not include the proof here, since we do not use this conclusion above.
\end{rmk}

We introduce some notation which will be useful below: write
\begin{itemize}
    \item $A(k,n)$ for the product $A_{k+n-1} \cdots A_k$, 
    \item $\sigma_i(k,n)$ as shorthand for $\sigma_i(A(k,n))$,
    \item $\tilde{U}(k,n) := U_1(A(k-n,n)) = A(k-n,n) U(k-n,n)$ and $V(k,n) = S_{d-1}(k,n)$.
\end{itemize}
We remark that, with these notations, we have
\begin{itemize}
    \item $U(k,n) \perp V(k,n)$;
    \item $\displaystyle E^u(k) = \lim_{n\to\infty} \tilde{U}(k,n)$ and 
    $\displaystyle E^s(k) = \lim_{n\to\infty} V(k,n)$.
\end{itemize}

\subsection{Existence and equivariance of limits}

It is immediate from (EC) that the limits $E^{u}(k)$ and $E^{s}(k)$ exist. In fact, we have the following uniform convergence estimates:

\spacer \begin{lem} \label{lem:QTZ_ECest}
For every $k, N \in \ints$, 
\begin{align*}
d( V(k,N), E^{s}(k)) & \leq \frac{Ce^{-N\mu}}{1 - e^{-\mu}} \\ d(\tilde{U}(k,N), E^{u}(k)) & \leq \frac{Ce^{-N\mu}}{1 - e^{-\mu}} .
\end{align*}
\begin{proof}
Immediate from by the triangle inequality and (EC).
\end{proof}
\end{lem}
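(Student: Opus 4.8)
The plan is to derive both estimates from a single telescoping computation; I will spell it out for $V(k,N)$ and $E^{s}(k)$, the argument for $\tilde U(k,N)$ and $E^{u}(k)$ being word-for-word the same after exchanging the two lines of (EC).

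First I would rewrite the first line of (EC) in the notation introduced above: it says exactly that $d(V(k,n), V(k,n+1)) \le C e^{-n\mu}$ for all $k\in\ints$ and $n\ge 0$. Then, for any $M > N \ge 0$, the triangle inequality in the Grassmannian $\Gr_{d-1}(\real^d)$ together with this bound gives
\[ d(V(k,N),\, V(k,M)) \;\le\; \sum_{n=N}^{M-1} d(V(k,n), V(k,n+1)) \;\le\; \sum_{n=N}^{M-1} C e^{-n\mu} \;\le\; \frac{C e^{-N\mu}}{1-e^{-\mu}} , \]
the last step being the standard geometric series estimate (valid because $\mu>0$). In particular $(V(k,n))_{n}$ is Cauchy, so the limit $E^{s}(k)=\lim_{n\to\infty}V(k,n)$ exists (reconfirming the remark preceding the Lemma), and letting $M\to\infty$ and using continuity of the metric $d$ yields $d(V(k,N), E^{s}(k)) \le \frac{C e^{-N\mu}}{1-e^{-\mu}}$. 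The identical argument applied to the second line of (EC), i.e. to $d(\tilde U(k,n), \tilde U(k,n+1)) \le C e^{-n\mu}$, gives the bound for $\tilde U(k,N)$.

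I do not expect any genuine obstacle here: the statement is a purely formal consequence of (EC) and the triangle inequality, and the only computation involved is summing a geometric series. The reason it is worth isolating as a lemma is that the explicit exponential rate $\frac{C e^{-N\mu}}{1-e^{-\mu}}$ is invoked repeatedly later (for instance in Step 2 of the proof of Theorem \ref{thm:QTZ}, in Corollary \ref{cor:EC}, and when comparing $E^{u}, E^{s}$ for sequences that agree on a long initial segment).
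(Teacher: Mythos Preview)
Your proposal is correct and is exactly the argument the paper has in mind: the paper's proof is the single line ``Immediate from the triangle inequality and (EC),'' and you have simply unpacked that into the telescoping sum and geometric series estimate. There is nothing to add.
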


Equivariance follows from using Lemma \ref{lem:BPSA4A5}, whence
\begin{align*}
E^{u}(k) & = A_{k-1} \cdots A_0 \cdot \lim_{n\to\infty} U_1\left( A_{-1} \cdots A_{-n} \right) \\
 & = A_{k-1} \cdots A_0 \cdot E^{u}(0) \qquad \mbox{for }k > 0 \\
E^{u}(0) & = A_{-1} \cdots A_k \cdot E^{u}(k) \\
\mbox{i.e. }E^{u}(k) & = A_k^{-1} \cdots A_{-1}^{-1} \cdot E^{u}(0) \qquad \mbox{for } k < 0
\end{align*}
and similarly 
\begin{align*}
E^{s}(0) & = A_{0}^{-1} \cdots A_{k-1}^{-1} \cdot \lim_{n\to\infty} U_{d-1} \left( A_k^{-1} \cdots A_{k+n-1}^{-1} \right) \\
\mbox{i.e. }E^{s}(k) & = A_{k-1} \cdots A_0 \cdot E^{s}(0) \qquad \mbox{for } k > 0 \\
E^{s}(0) & = A_{-1} \cdots A_k \cdot E^{s}(k) \\
\mbox{i.e. }E^{s}(k) & = A_k^{-1} \cdots A_{-1}^{-1} \cdot E^{s}(0) \qquad \mbox{for } k < 0
\end{align*}

\subsection{Proof of splitting}

The proof will involve, essentially, carefully refined versions of arguments that can be used to give the Raghunathan estimates \cite{Raghu1979}. Here we formulate these arguments in a series of lemmas, then assemble them into a proof of statement (ii), from which (i) follows. 

We follow the argument in \cite{QTZ} \S3, writing things out more concretely for our specific finite-dimensional, invertible case. We have supplied specific references to the corresponding / closely analogous lemmas in \cite{QTZ}, in the hope that the reader interested in also reading the result there may find these helpful.

For the next five lemmas (through Lemma \ref{lem:QTZ38}), fix $N$ sufficiently large so that 
\begin{equation}
\sum_{n\geq N} e^{-n\mu} = \frac{e^{-N\mu}}{1-e^{-\mu}} \leq \frac1{3C}
\label{ineq:suff_large_N*}
\end{equation} 

The following lemma tells us that whenever $m$ and $n$ are sufficiently large, $A(k,n)$ expands vectors in $U(k,m)$ at least $\frac23 \sigma_1(k,n)$. 
More precisely, we have
\spacer \begin{lem}[\cite{QTZ}, Lemma 3.3] \label{lem:QTZ33}
For every $n, m \geq N$ and $k \in \ints$, 
we have
\[ \forall u \in U(k,m) \quad \|A(k,n) u\| \geq \frac23 \sigma_1(k,n) \cdot \|u\| \]
\begin{proof}
From (EC) and our choice of $N$, we have, arguing as in the proof of Lemma \ref{lem:QTZ_ECest},
\[ d(U(k,n), U(k,m)) = d(V(k,n), V(k,m)) \leq \frac13 .\]

Given any unit vector $u \in U(k,m)$, write $u = v+w$ where $v \in U(k,n)$ and $w \in V(k,n) \perp v$. By the properties of the singular-value decomposition, $A(k,n)u = A(k,n)v + A(k,n)w$ is still an orthogonal decomposition, and we have
\begin{align*}
\|A(k,n) u\| & \geq \|A(k,n) v\| = \sigma_1(k,n) \cdot \cos \angle \left(U(k,m), U(k,n)\right) \\
 & = \sigma_1(k,n) \sqrt{1 - d (U(k,n), U(k,m))^2} \geq \sigma_1(k,n) \cdot \frac23
\end{align*} 
as desired.
\end{proof} \end{lem}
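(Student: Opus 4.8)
The plan is to exploit the rapid convergence of the singular subspaces furnished by (EC): once $n,m \geq N$ the top right-singular directions $U(k,n)$ and $U(k,m)$ are nearly parallel, so a vector lying on $U(k,m)$ has almost all of its norm in $U(k,n)$, which $A(k,n)$ expands by exactly the factor $\sigma_1(k,n)$.

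First I would quantify the closeness. Since $U(k,j)$ and $V(k,j)$ are orthogonal complements in the source copy of $\real^d$, one has $d(U(k,n), U(k,m)) = d(V(k,n), V(k,m))$. The estimate in (EC) controlling the $S_{d-1}$ subspaces bounds $d(V(k,j), V(k,j+1))$ by $Ce^{-j\mu}$, so the triangle inequality, together with the choice of $N$ recorded above, gives $d(U(k,n), U(k,m)) \leq \sum_{j\geq N} Ce^{-j\mu} = \frac{Ce^{-N\mu}}{1-e^{-\mu}} \leq \frac13$ --- the same computation as in the proof of Lemma \ref{lem:QTZ_ECest}.

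Next, for a unit vector $u \in U(k,m)$ I would take the orthogonal decomposition $u = v + w$ with $v \in U(k,n)$ and $w \in V(k,n)$. Then $\|w\|$ is the sine of the angle between the lines $U(k,m)$ and $U(k,n)$, so $\|w\| = d(U(k,m), U(k,n)) \leq \frac13$ and hence $\|v\| = \sqrt{1 - \|w\|^2} \geq \frac23$. Because $v$ and $w$ lie in complementary right-singular subspaces of $A(k,n)$, the images $A(k,n)v$ and $A(k,n)w$ are again orthogonal, so $\|A(k,n)u\| \geq \|A(k,n)v\| = \sigma_1(k,n)\,\|v\| \geq \frac23\,\sigma_1(k,n)$; homogeneity then gives the inequality for an arbitrary $u \in U(k,m)$.

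I do not expect a genuine obstacle here --- the lemma is elementary. The only points needing a little care are keeping track of which copy of $\real^d$ (source versus image) each of $U(k,\cdot)$ and $V(k,\cdot)$ lives in, so that the decomposition of $A(k,n)u$ is legitimately orthogonal, and the elementary identity $d(\ell_1,\ell_2) = \sin\angle(\ell_1,\ell_2)$ relating the Grassmannian metric on lines to the angle between them; both are routine.
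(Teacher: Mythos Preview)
Your proposal is correct and follows essentially the same argument as the paper: bound $d(U(k,n),U(k,m))\leq\frac13$ via (EC) and the choice of $N$, orthogonally decompose $u$ along $U(k,n)\oplus V(k,n)$, and use that $A(k,n)$ preserves this orthogonality and expands the $U(k,n)$-component by exactly $\sigma_1(k,n)$. The paper phrases the final step via $\cos\angle(U(k,m),U(k,n))=\sqrt{1-d(U(k,n),U(k,m))^2}$ rather than via $\|v\|=\sqrt{1-\|w\|^2}$, but these are the same computation.
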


Recall $s(V,W)$ denotes the minimal gap $\inf \{\sin \langle (v,W) : v \in V, \|v\|=1\}$ between the subspaces. We now use the (FI) hypothesis to prove a lemma which states that whenever $m$ and $n$ are sufficiently large, we have a lower bound on the gap between the approximate fast space and the slow space. More precisely, we have
\spacer 
\begin{lem}[\cite{QTZ}, Lemma 3.4] \label{lem:QTZ34}
For all $k \leq 0$ and $m \geq N$,
\[ s(A(k-N,N)U(k-N,m), E^{s}(k)) \geq \frac23 C^{-1} e^{-N\mu'} \]

\begin{proof}
Write $W_{k,m} := A(k-N,N) U(k-N,m)$. 

Let $w \in W_{k,m}$ be a unit vector, and (given any $n \geq N$) write $w = w_1 + w_2$ where $w_1 \in U(k,n)$ and $w_2 \in V(k,n)$. Since $w = A(k-N,N)u$ for some $u \in U(k-N,m)$, we have, from Lemma \ref{lem:QTZ33} and the properties of the singular-value decomposition,
\begin{align*}
\|A(k,n)w\| & = \|A(k-N,N+n) u\| \geq \frac23 \cdot \sigma_1(k-N,N+n) \|u\| \quad\mbox{ and}\\
\|w\| & \leq \sigma_1(k-N,N)\|u\|
\end{align*}
so
\begin{align*}
\|A(k,n)w\| & \geq \frac23 \cdot \frac{\sigma_1(k-N,N+n)}{\sigma_1(k-N,N)} \|w\|
\end{align*}
On the other hand we also have
\begin{align*}
\|A(k,n) w_1\| = \sigma_1(k,n) \|w_1\| & & \mbox{and } & &
\|A(k,n) w_2\| \leq \sigma_2(k,n) \|w_2\| 
\end{align*}
or, together,
\begin{align*}
\|A(k,n) w\| & \leq \sigma_1(k,n) \left( \|w_1\| + \frac{\sigma_2}{\sigma_1}(k,n) \|w_2\| \right) 
\end{align*}

Combining the two estimates of $\|A(k,n)w\|$ we obtain
\begin{align*}
\|w_1 + w_2\| = \|w\| & \leq \frac32 \frac{\sigma_1(k-N,N) }{\sigma_1(k-N,N+n)} \|A(k,n)w\| \\
 & \leq \frac32 \frac{\sigma_1(k-N,N) \cdot \sigma_1(k,n)}{\sigma_1(k-N,N+n)} \left( \|w_1\| + \frac{\sigma_2}{\sigma_1}(k,n) \|w_2\| \right) .
\end{align*}
By property (FI)\textsubscript{back} we have 
\[ \frac{\sigma_1(k-N,N+n)}{\sigma_1(k-N,N) \cdot \sigma_1(k,n)} \geq C^{-1} e^{-N\mu'} \]
and using this and (SVG-BG) on the last inequality we further obtain
\begin{align*}
\|w_1 + w_2\| = \|w\| & \leq \frac32 C  e^{N\mu'} \left( \|w_1\| + C e^{-n\mu} \|w_2\| \right) \\ & = \frac32 Ce^{N\mu'} \|w_1\| \left( 1 + Ce^{-n\mu} \frac{\|w_2\|}{\|w_1\|} \right) .
\end{align*}
Now we claim that $\frac{\|w_2\|}{\|w_1\|} = \frac{\sqrt{1-\|w_1\|^2}}{\|w_1\|} = \sqrt{\|w_1\|^{-2} - 1}$ is uniformly bounded above by some upper bound $B$ that depends only on the constants $C$ and $\mu'$. If not, $\|w_1\|$ gets arbitrarily close to zero; in particular, it can be made smaller than $(3Ce^{N\mu'})^{-1}$. Then $1 = \|w\| \leq \frac12+ \frac32 C^2 e^{N\mu'-n\mu}  < 1$ for all large enough $n$, which is a contradiction.

With this upper bound in hand, we then have
\[ s(W_{k,m}, V(k,n)) \geq \frac{\|w_1\|}{\|w\|}
\geq \frac23 C^{-1} e^{-N\mu'} \left(1 + C e^{-n\mu} B \right)^{-1} .\]
and we conclude by letting $n \to \infty$, since $\displaystyle \lim_{n\to\infty} V(k,n) = E^s(k)$.
\end{proof}
\end{lem}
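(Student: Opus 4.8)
The plan is to control $s\bigl(W_{k,m}, E^{s}(k)\bigr)$, where $W_{k,m} := A(k-N,N)\,U(k-N,m)$ (with $N$ as above, so $\sum_{n\geq N}e^{-n\mu}\leq\tfrac1{3C}$), by first estimating $s\bigl(W_{k,m}, V(k,n)\bigr)$ for every large $n$ and then letting $n\to\infty$, using that $V(k,n)\to E^{s}(k)$ (Lemma~\ref{lem:QTZ_ECest}) together with the continuity of $s(\cdot,\cdot)$ in its subspace argument. Fix a unit vector $w\in W_{k,m}$, write $w = A(k-N,N)u$ with $u\in U(k-N,m)$, and for each $n\geq N$ split $w = w_1 + w_2$ orthogonally with $w_1\in U(k,n)$ and $w_2\in V(k,n)$. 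Since $U(k,n)\perp V(k,n)$ and $A(k,n)$ carries this orthogonal splitting to an orthogonal splitting (exactly as in the proof of Lemma~\ref{lem:QTZ33}), we have $\|w_1\| = \sin\angle\bigl(w,V(k,n)\bigr)$, so it suffices to bound $\|w_1\|$ from below by a quantity independent of $w$.

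I would get this from two competing estimates for $\|A(k,n)w\|$. On one hand, $A(k,n)w = A(k-N,N+n)u$, so Lemma~\ref{lem:QTZ33} applied to the block of length $N+n$ acting on $U(k-N,m)$ (legitimate since $N+n\geq N$ and $m\geq N$), together with $\|w\| = \|A(k-N,N)u\|\leq \sigma_1(k-N,N)\|u\|$, yields
\[ \|A(k,n)w\| \;\geq\; \tfrac23\,\frac{\sigma_1(k-N,N+n)}{\sigma_1(k-N,N)}\,\|w\| . \]
On the other hand, the singular value decomposition of $A(k,n)$ gives $\|A(k,n)w_1\| = \sigma_1(k,n)\|w_1\|$ and $\|A(k,n)w_2\|\leq \sigma_2(k,n)\|w_2\|$ with orthogonal images, whence $\|A(k,n)w\|\leq \sigma_1(k,n)\bigl(\|w_1\| + \tfrac{\sigma_2}{\sigma_1}(k,n)\|w_2\|\bigr)$. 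Comparing the two, then feeding in (FI)\textsubscript{back} with its ``$m$'' set to $N$ (legitimate because $k\leq 0$) and (SVG-BG), I obtain
\[ 1 = \|w\| \;\leq\; \tfrac32\,C\,e^{N\mu'}\bigl(\|w_1\| + C\,e^{-n\mu}\|w_2\|\bigr). \]

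The last, and main, delicate step is to turn this into a lower bound on $\|w_1\|$ that survives $n\to\infty$, the subtlety being that the splitting $w = w_1+w_2$ itself depends on $n$. I would first observe that $\|w_1\|$ cannot be too small: if $\|w_1\| < (3C e^{N\mu'})^{-1}$ then, using $\|w_2\|\leq 1$, the displayed inequality forces $1\leq \tfrac12 + \tfrac32 C^2 e^{N\mu' - n\mu}$, which fails for $n$ large. Hence $\|w_1\|\geq (3Ce^{N\mu'})^{-1}$ for all large $n$, so the ratio $\|w_2\|/\|w_1\| = \sqrt{\|w_1\|^{-2}-1}$ is bounded above by some $B = B(C,\mu')$; inserting $\|w_2\|\leq B\|w_1\|$ back into the displayed inequality gives $\|w_1\|\geq \tfrac23 C^{-1} e^{-N\mu'}(1 + BC e^{-n\mu})^{-1}$. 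Taking the infimum over unit $w\in W_{k,m}$ and then $n\to\infty$ (with $V(k,n)\to E^{s}(k)$) yields $s\bigl(W_{k,m}, E^{s}(k)\bigr)\geq \tfrac23 C^{-1} e^{-N\mu'}$. Throughout, the point to watch is that every constant depends only on $C,\mu,\mu',N$ and none on $w$, $k\leq 0$, or $m\geq N$; this uniformity is exactly what is needed when these lemmas are later assembled into the proof of Theorem~\ref{thm:QTZ}.
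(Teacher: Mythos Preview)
Your proposal is correct and follows essentially the same approach as the paper's proof: the same orthogonal splitting $w=w_1+w_2$ along $U(k,n)\oplus V(k,n)$, the same two competing estimates on $\|A(k,n)w\|$ via Lemma~\ref{lem:QTZ33} and the SVD, the same application of (FI)\textsubscript{back} (with its ``$m$'' equal to $N$) and (SVG-BG), the same contradiction argument to bound $\|w_2\|/\|w_1\|$, and the same passage to the limit $n\to\infty$ using $V(k,n)\to E^s(k)$. Your presentation is in fact slightly more explicit about the order of quantifiers (infimum over $w$, then $n\to\infty$) and about why $k\leq 0$ is needed for (FI)\textsubscript{back}.
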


This does not quite suffice, since as $N \to \infty$ these lower bounds go to zero, and so {\it a priori} we could still have the minimal gap between the fast and slow spaces collapsing to zero. Onwards we push ...
The idea is to do some kind a multiplicative block analysis, using Lemma \ref{lem:QTZ34} to control each block, and using the subsequent lemma/s to control the remaining exponential terms.
This we will achieve using, on the one hand, a lemma which controls expansion on the slow spaces:
\spacer 
\begin{lem}[\cite{QTZ}, Lemma 3.5] \label{lem:QTZ35}
For all $n \geq N$ and $k \leq 0$, 
\[ \left\|A(k,n) \big|_{E^{s}(k)} \right\| \leq \frac23 \cdot \sigma_1(k,n) \cdot e^{-(n-N)\mu}  .\]
\begin{proof}
Let $w \in E^{s}(k)$, and write $w = w_1+w_2$ where $w_1 \in U(k,n)$ and $w_2 \in V(k,n)$. Note we have
\[ d( V(k,n), E^{s}(k) ) \leq \sum_{m \geq n} C e^{-m\mu} \leq \frac{Ce^{-n\mu}}{1-e^{-\mu}} \leq \frac13 e^{-(n-N)\mu} \]
by Lemma \ref{lem:QTZ_ECest} and our choice of $N$. Then
\begin{align*}
\|A(k,n) w_1\| & \leq \sigma_1(k,n) \|w_1\| \leq \sigma_1(k,n) \cdot \frac13 e^{-(n-N)\mu} \|w_2\| \\
\|A(k,n) w_2\| & \leq \sigma_2(k,n) \|w_2\| \\
\end{align*}
and putting these two together we obtain
\begin{align*}
\|A(k,n) w\| & \leq \|A(k,n) w_1 \| + \|A(k,n) w_2\| \\
& \leq \sigma_1(k,n) \left(\frac13 e^{-(n-N)\mu} + \frac{\sigma_2}{\sigma_1}(k,n) \right) \|w_2\| \\
 & \leq \sigma_1(k,n) \left(\frac13 e^{-(n-N)\mu} + Ce^{-n\mu} \right) \|w\| \\
 & \leq \frac23 e^{-(n-N)\mu} \cdot  \sigma_1(k,n) \|w\|
\end{align*}
as desired.
\end{proof}
\end{lem}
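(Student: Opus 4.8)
The plan is to exploit that the slow space $E^{s}(k)$ is a small perturbation of the finite-stage subspace $V(k,n) = S_{d-1}(k,n)$, the span of the bottom $d-1$ right singular directions of $A(k,n)$: on $V(k,n)$ the map $A(k,n)$ already contracts relative to $\sigma_1(k,n)$ by the factor $\tfrac{\sigma_2}{\sigma_1}(k,n)$, which (SVG-BG) controls, so only the tiny component of $E^{s}(k)$ along the top singular line $U(k,n) = U_1(A(k,n))$ needs separate attention.

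Concretely, first I would record, from Lemma \ref{lem:QTZ_ECest} together with the choice of $N$ in \eqref{ineq:suff_large_N*}, the estimate $d(V(k,n), E^{s}(k)) \le \frac{C e^{-n\mu}}{1-e^{-\mu}} \le \tfrac13 e^{-(n-N)\mu}$. Then, for an arbitrary unit vector $w \in E^{s}(k)$, I would split it orthogonally as $w = w_1 + w_2$ with $w_1 \in U(k,n)$ and $w_2 \in V(k,n)$; since $\|w_1\|$ is exactly the distance from $w$ to $V(k,n)$, the displacement estimate above bounds it by (essentially) $\tfrac13 e^{-(n-N)\mu}\|w_2\|$. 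Applying $A(k,n)$ to the two pieces, $\|A(k,n)w_1\| = \sigma_1(k,n)\|w_1\|$ because $U(k,n)$ is precisely the line on which $A(k,n)$ attains its operator norm, while $\|A(k,n)w_2\| \le \sigma_2(k,n)\|w_2\|$ because $w_2$ lies in $S_{d-1}(k,n)$. The triangle inequality, the bound on $\|w_1\|$, (SVG-BG), and $\|w_2\| \le \|w\| = 1$ then combine to give $\|A(k,n)w\| \le \sigma_1(k,n)\bigl(\tfrac13 e^{-(n-N)\mu} + C e^{-n\mu}\bigr)$, and the last step is to absorb the second term via $C e^{-n\mu} = (C e^{-N\mu})\,e^{-(n-N)\mu} \le \tfrac13 e^{-(n-N)\mu}$, which is exactly what the defining property \eqref{ineq:suff_large_N*} of $N$ provides, yielding the claimed bound $\tfrac23\,\sigma_1(k,n)\,e^{-(n-N)\mu}$.

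I do not anticipate a real obstacle. The one point needing care is the passage from the Grassmannian distance $d(V(k,n),E^{s}(k))$ to a norm estimate on the $U(k,n)$-component of vectors of $E^{s}(k)$ --- this is where orthogonality of the singular-value decomposition enters --- together with, as in the earlier lemmas of this subsection, checking that the single constant $N$ fixed in \eqref{ineq:suff_large_N*} is simultaneously large enough for the convergence estimate of Lemma \ref{lem:QTZ_ECest} and for the geometric absorption of the $C e^{-n\mu}$ term. Once that bookkeeping is in place, the rest is the routine Raghunathan-type estimate sketched above.
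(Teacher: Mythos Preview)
Your proposal is correct and follows essentially the same argument as the paper's proof: orthogonally decompose $w \in E^{s}(k)$ along $U(k,n) \oplus V(k,n)$, use Lemma~\ref{lem:QTZ_ECest} together with the choice of $N$ in \eqref{ineq:suff_large_N*} to bound the $U(k,n)$-component, bound the two pieces separately using the singular values, and absorb $Ce^{-n\mu}$ into $\tfrac13 e^{-(n-N)\mu}$ via \eqref{ineq:suff_large_N*}. Your remark that orthogonality of the singular-value decomposition is what translates the Grassmannian distance into a bound on $\|w_1\|$ is exactly the point the paper uses (implicitly) as well.
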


On the other hand, we have the following lemma which gives us some control on 
the slow space components of images of approximate fast spaces

\spacer \begin{lem}[\cite{QTZ}, Lemma 3.7] \label{lem:QTZ37}
Let $N$ be sufficiently large. 

\begin{enumerate}[(i)]
\item Given $w \in \real^d$ a unit vector, write $w = w_1 + w_2$ where $w_1 \in U(k-nN,nN)$ and $w_2 \in E^{s}(k-nN)$. 
Then we have $\|w_2\| \leq \frac32$ 
\item The operator $\Gamma_{-n}: U(k-nN,nN) \to E^{s}(k-nN)$ whose graph is $$W_{n+1} := A(k-(n+1)N, N) \, U(k-(n+1)N, (n+1)N)$$
satisfies $\|\Gamma_{-n}\| \leq \frac94 C e^{N\mu'}.$
\end{enumerate}

\begin{proof}
By Lemma \ref{lem:QTZ_ECest} and our choice of $N$, $d( V(k-nN, nN), E^{s}(k-nN)) < \frac13$; basic trigonometry then implies $\frac{\|w_2|_{U(k-nN,nN)}\|}{ \|w_2|_{V(k-nN,nN)}\|} \leq \frac{1/3}{\sqrt{1-(1/3)^2}} = \frac{1}{2\sqrt{2}}$.

Hence, from the orthogonal decomposition $w_2 = w_2|_{V(k-nN,nN)} + w_2|_{U(k-nN,nN)}$, we get
\begin{align*}
\|w_2\| & \leq \left( 1 + \frac{1}{2\sqrt{2}} \right) \left\|w_2 |_{V(k-nN, nN)} \right\|
\end{align*}
and since $w_2|_{V(k-nN,nN)} = w|_{V(k-nN,nN)}$ we have $\|w_2|_{V(k-nN,nN)}\| \leq 1$, so in fact
\begin{align*}
\|w_2\| & \leq 1 + \frac{1}{2\sqrt{2}} < \frac32
\end{align*}

For (ii):
applying Lemma \ref{lem:QTZA24b} to the operator $\Gamma_{-n}: U(k-nN,nN) \to E^{s}(k-nN)$ gives us
\[ \|\id \oplus \Gamma_{-n}\| \leq \frac{1}{s(W_{n+1},E^{s}(k-nN))} \leq \frac32 C e^{N\mu'} \]
where the last inequality follows from Lemma \ref{lem:QTZ34}, which gives $s(W_{n+1}, E^{s}(k-nN)) \geq \frac23 C^{-1} e^{-N\mu'}$.

Now we observe that $\Gamma_{-n} = q_{-n} \circ (\id \oplus \Gamma_{-n})$ where $q_{-n}$ is projection to $E^s(k-nN)$ parallel to $U(k-nN,nN)$. We observe that we may rewrite statement (i) as the assertion that $\|q_{-n}\| \leq \frac32$. We put all of this together to obtain
\[ \|\Gamma_{-n}\| \leq \|q_{-n}\| \|\id \oplus \Gamma_{-n}\| \leq \frac94 Ce^{N\mu'} \]
as desired.
\end{proof}
\end{lem}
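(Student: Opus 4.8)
The plan is to deduce both parts from the single fact that, once $N$ is taken large enough for \eqref{ineq:suff_large_N*}, the approximate slow space $V(k-nN,nN)$ lies within distance $\tfrac13$ of the genuine slow space $E^{s}(k-nN)$, \emph{uniformly in $n$}. Concretely, I would apply Lemma~\ref{lem:QTZ_ECest} with the substitution $k \mapsto k-nN$, $N \mapsto nN$, and use $nN \geq N$ together with the defining inequality for $N$ to obtain $d\big(V(k-nN,nN), E^{s}(k-nN)\big) \leq \tfrac13$. In particular $E^{s}(k-nN)$ is complementary to $U(k-nN,nN)$, so the oblique projection $q_{-n}$ onto $E^{s}(k-nN)$ parallel to $U(k-nN,nN)$ is well defined, and parts (i) and (ii) become a bound on $\|q_{-n}\|$ and a bound on the graph operator $\Gamma_{-n}$ respectively.

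For part (i): writing a unit vector as $w = w_1 + w_2$ with $w_1 \in U(k-nN,nN)$ and $w_2 \in E^{s}(k-nN)$, I would decompose $w_2$ itself \emph{orthogonally} along $U(k-nN,nN) \oplus V(k-nN,nN)$. The distance bound says $\sin\angle(w_2, V(k-nN,nN)) \leq \tfrac13$, so the $U$-component of $w_2$ has norm at most $\tan(\arcsin\tfrac13) = \tfrac1{2\sqrt2}$ times the norm of its $V$-component; since $w_1 \perp V(k-nN,nN)$, the $V$-component of $w_2$ equals that of $w$ and so has norm $\leq \|w\| = 1$. Adding, $\|w_2\| \leq 1 + \tfrac1{2\sqrt2} < \tfrac32$, which is precisely $\|q_{-n}\| \leq \tfrac32$.

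For part (ii): I would first identify $W_{n+1} = A(k-(n+1)N,N)\,U(k-(n+1)N,(n+1)N)$ as exactly the subspace produced by Lemma~\ref{lem:QTZ34} after the substitution $k \mapsto k-nN$, $m = (n+1)N$ (the hypotheses $k-nN \leq 0$ and $(n+1)N \geq N$ holding automatically), giving $s\big(W_{n+1}, E^{s}(k-nN)\big) \geq \tfrac23 C^{-1} e^{-N\mu'}$. Hence $W_{n+1}$ is transverse to $E^{s}(k-nN)$ and so is the graph of a well-defined linear map $\Gamma_{-n}: U(k-nN,nN) \to E^{s}(k-nN)$. Feeding this gap bound into Lemma~\ref{lem:QTZA24b}, applied to the complementary pair $U(k-nN,nN)$, $E^{s}(k-nN)$ and the graph $W_{n+1}$, yields $\|\id \oplus \Gamma_{-n}\| \leq s\big(W_{n+1}, E^{s}(k-nN)\big)^{-1} \leq \tfrac32 C e^{N\mu'}$. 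Finally, writing $\Gamma_{-n} = q_{-n} \circ (\id \oplus \Gamma_{-n})$ and using $\|q_{-n}\| \leq \tfrac32$ from part (i), I would conclude $\|\Gamma_{-n}\| \leq \tfrac94 C e^{N\mu'}$.

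\textbf{The main obstacle} I anticipate is essentially the index bookkeeping in the appeal to Lemma~\ref{lem:QTZ34}: one has to confirm that, after the shift $k \mapsto k-nN$, the hypotheses of that lemma genuinely hold, and---crucially---that the transversality constant it produces, $\tfrac23 C^{-1} e^{-N\mu'}$, depends only on $C$, $\mu'$ and the \emph{fixed} $N$, and not on $n$. This uniformity in $n$ is the entire purpose of the lemma, since it is what a later block-by-block argument will use to stop the fast/slow gap from degenerating as $n \to \infty$. Everything else reduces to elementary planar trigonometry (the angle between a vector and a subspace) together with the graph-norm comparisons already recorded in Appendix~\ref{app:linalg}.
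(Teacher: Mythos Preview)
Your proposal is correct and follows essentially the same route as the paper's proof: both parts rest on the distance estimate $d(V(k-nN,nN),E^{s}(k-nN))\leq\tfrac13$ from Lemma~\ref{lem:QTZ_ECest}, the trigonometric bound on the oblique projection for (i), and the combination of Lemma~\ref{lem:QTZ34} with Lemma~\ref{lem:QTZA24b} and the factorization $\Gamma_{-n}=q_{-n}\circ(\id\oplus\Gamma_{-n})$ for (ii). Your explicit tracking of the substitutions $k\mapsto k-nN$, $N\mapsto nN$ (resp.\ $m=(n+1)N$) and the observation that the resulting constants are independent of $n$ is, if anything, slightly more careful than the paper's presentation.
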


Now we can put everything together:
\spacer \begin{lem}
[\cite{QTZ}, Lemma 3.8] 
\label{lem:QTZ38}
For every $n \geq 1$,
\[ s\left(  \tilde{U}(k,nN), E^{s}(k) \right) \geq \frac23 C^{-1} e^{-N\mu'}
\prod_{j=1}^{n-1} \left( 1 + \frac32 D e^{N\mu'} j^{-3} \right)^{-1} .\]

\begin{proof}
From Lemma \ref{lem:QTZA24} we have
\[ s\left( \tilde{U}(k,nN), E^{s}(k) \right) \geq \frac{s( \tilde{U}(k,N), E^{s}(k))}{\|\id \oplus \Xi_n\|} \]
where $\Xi_n: \tilde{U}(k,N) \to E^{s}(k)$ is such that $\tilde{U}(k,nN)$ is the graph of $\Xi_n$. Since $\tilde{U}(k,N) = A(k-N,N) \, U(k-N,N)$, we have $s(\tilde{U}(k,N), E^{s}(k)) \geq \frac23 C^{-1} e^{-N\mu'}$ from Lemma \ref{lem:QTZ34}
and it remains to bound $\|\id \oplus \Xi_n\|$.

Write $A_{-n} := A(k-nN, N) = \left[ \begin{array}{cc} a_{-n} & 0 \\ c_{-n} & d_{-n} \end{array} \right]$ where 
\begin{align*}
a_{-n}&: U(k-nN,nN) \to U(k-(n-1)N,(n-1)N), \\
c_{-n}&: U(k-nN,nN) \to E^{s}(k-(n-1)N), \\
d_{-n}&: E^{s}(k-nN) \to E^{s}(k-(n-1)N)
\end{align*}
and the 0 in the upper-right corner comes from the equivariance of the slow spaces; here we adopt the notational convention $U(k,0) :=  \tilde{U}(k,N)$.

Then $A_{-n}^n := A_{-1} \cdots A_{-n} = A(k-nN, nN) := \left[ \begin{array}{cc} a_{-n}^n & 0 \\ c_{-n}^n & d_{-n}^n \end{array} \right]$. 
Now we have
\begin{align*}
A_{-(n+1)}^{n+1} & = \left[ \begin{array}{cc} a_{-n}^n & 0 \\ c_{-n}^n & d_{-n}^n \end{array} \right] \left[ \begin{array}{cc} a_{-(n+1)} & 0 \\ c_{-(n+1)} & d_{-(n+1)} \end{array} \right] 
\end{align*}
and examining in particular the bottom-left entry of this product, we have
\begin{align*}
c_{-(n+1)}^{n+1} & = c_{-n}^n a_{-(n+1)} + d_{-n}^n c_{-(n+1)}. \end{align*}
Since $a^{n+1}_{-(n+1)} = a^n_{-n} a_{-(n+1)}$,
\begin{align}
c_{-(n+1)}^{n+1} (a_{-(n+1)}^{n+1})^{-1} & = c_{-n}^n (a_{-n}^n)^{-1} + d_{-n}^n c_{-(n+1)} (a_{-(n+1)})^{-1} (a_{-n}^n)^{-1} . \label{eqn:block_mat_mult}
\end{align}
Now, firstly, we observe that $\Xi_n = c_{-n}^n (a_{-n}^n)^{-1}$, since from the block structure of $A_{-n}^n$ we see that $c_{-n}^n (a_{-n}^n)^{-1}$ maps from $U(k,0) = \tilde{U}(k,N)$ 
to $E^{s}(k)$ with graph $A(k-nN,nN) \, U(k-nN,nN) = \tilde{U}(k,nN)$.

Secondly, we write $c_{-(n+1)} (a_{-(n+1)})^{-1} =: \Gamma_{-n}: U(k-nN,nN) \to E^{s}(k-nN)$ (see observation 1 below), and note that (\ref{eqn:block_mat_mult}) combined with the triangle inequality, give us (writing $\id$ for the identity on the appropriate complementary subspace, so that $\id \oplus \Xi_n$ is a linear endomorphism of $\real^d$)
\begin{align} 
\|\id \oplus \Xi_{n+1}\| \leq \|\id \oplus \Xi_n\| \left( 1 + \frac{\left\| d^n_{-n} \right\| \|\Gamma_{-n}\| \left\| (a^n_{-n})^{-1} \right\|}{\|\id \oplus \Xi_n\|} \right) . \label{ineq:block_iterative_bd}
\end{align}
To bound the last quantity that appears, we observe that 
\begin{enumerate}
\item $\Gamma_{-n}$ is precisely the operator from Lemma \ref{lem:QTZ37}(ii): $c_{-(n+1)} (a_{-(n+1)})^{-1}$ maps from $U(k-nN,nN)$ 
to $E^{s}(k-nN)$ with graph $A(k-(n+1)N,N) \, U(k-(n+1)N, (n+1)N)$.

Hence, from Lemma \ref{lem:QTZ37}, $\|\Gamma_{-n}\| \leq \frac94 Ce^{N\mu'}$
\item We have
\begin{align*}
\frac{\left\| (a^n_{-n})^{-1} \right\|}{\|\id \oplus \Xi_n\|} & \leq \left( \sigma_1(k-nN, nN) \right)^{-1} 
\end{align*}
since $(a^n_{-n})^{-1} = \left( A^n_{-n} |_{U(k-nN,nN)} \right)^{-1} \circ (\id \oplus \Xi_n)$ {\footnotesize (easier to see by writing $a_n^{-n}$ as composition of $\left( A^n_{-n} |_{U(k-nN,nN)} \right)^{-1}$ with projection onto $\tilde{U}(k,N)$ parallel to $E^{s}(k)$)} and $\|\left( A^n_{-n} |_{U(k-nN,nN)} \right)^{-1}\| = \left( \sigma_1(k-nN,nN) \right)^{-1}$

\item From Lemma \ref{lem:QTZ35},
\begin{align*}
\left\| d^n_{-n} \right\| & \leq \frac23 \cdot \sigma_1(k-nN, nN) e^{-(n-1)N\mu}
\end{align*}
\end{enumerate}
Combining the bounds from these three observations, we obtain
\begin{align*}
\frac{\left\| d^n_{-n} \right\| \|\Gamma_{-n}\| \left\| (a^n_{-n})^{-1} \right\|}{\|\id \oplus \Xi_n\|} & \leq \frac23 \cdot  \frac{\sigma_1(k-nN,nN)}{\sigma_1(k-nN,nN)} \cdot e^{-(n-1)N\mu} \cdot \frac94 Ce^{N\mu'} \\
 & = \frac32 C \cdot e^{-(n-1)N\mu} e^{N \mu'}
\end{align*}
Since $\Xi_1 \equiv 0$, $\|\id \oplus \Xi_1\| = 1$. We then use this together with (\ref{ineq:block_iterative_bd}), as in \cite{QTZ}, to obtain the iterative bound 
\begin{align*}
\|\id \oplus \Xi_n\| & \leq \prod_{j=0}^{n-2} \left( 1 + \frac32 C e^{N(\mu'-j\mu)} \right)
\end{align*}
and we are done.
\end{proof}
\end{lem}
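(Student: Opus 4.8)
The plan is to realize the subspace $\tilde U(k,nN)$ as the graph of a linear map over the fixed subspace $\tilde U(k,N)$, and to bound the norm of that map iteratively, scale by scale, using the estimates already in hand. Since the approximate fast spaces $\tilde U(k,m)$ and the slow space $E^s(k)$ are complementary --- by Lemma \ref{lem:QTZ34} together with $U(k,m)\perp V(k,m)$ and the convergence estimates of Lemma \ref{lem:QTZ_ECest} --- there is a well-defined linear map $\Xi_n\colon\tilde U(k,N)\to E^s(k)$ whose graph is $\tilde U(k,nN)$, with $\Xi_1\equiv 0$. Lemma \ref{lem:QTZA24} then gives
\[ s\bigl(\tilde U(k,nN),E^s(k)\bigr)\ \geq\ \frac{s\bigl(\tilde U(k,N),E^s(k)\bigr)}{\|\id\oplus\Xi_n\|}, \]
and the numerator is at least $\frac23 C^{-1}e^{-N\mu'}$ by Lemma \ref{lem:QTZ34} (with $m=N$, recalling $\tilde U(k,N)=A(k-N,N)\,U(k-N,N)$). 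So the whole statement reduces to an upper bound on $\|\id\oplus\Xi_n\|$.

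For that I would decompose each factor $A_{-n}:=A(k-nN,N)$ in block form relative to the splittings $U(k-nN,nN)\oplus E^s(k-nN)$ on the source and $U(k-(n-1)N,(n-1)N)\oplus E^s(k-(n-1)N)$ on the target; here the upper-right block is identically zero because the slow spaces are equivariant under the cocycle. Multiplying these lower-triangular blocks, tracking the bottom-left entry of $A_{-1}\cdots A_{-n}$, and dividing by the top-left entry identifies $\Xi_n$ with $c^n_{-n}(a^n_{-n})^{-1}$ and produces an iterative inequality of the shape
\[ \|\id\oplus\Xi_{n+1}\|\ \leq\ \|\id\oplus\Xi_n\|\left(1+\frac{\|d^n_{-n}\|\,\|\Gamma_{-n}\|\,\|(a^n_{-n})^{-1}\|}{\|\id\oplus\Xi_n\|}\right), \]
where $\Gamma_{-n}$ is the operator whose graph is $A(k-(n+1)N,N)\,U(k-(n+1)N,(n+1)N)$. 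The three quantities in the error term are exactly what the preceding lemmas control: $\|\Gamma_{-n}\|\leq\frac94 Ce^{N\mu'}$ by Lemma \ref{lem:QTZ37}(ii); $\|(a^n_{-n})^{-1}\|/\|\id\oplus\Xi_n\|\leq\sigma_1(k-nN,nN)^{-1}$, obtained by factoring $(a^n_{-n})^{-1}$ through $\bigl(A^n_{-n}|_{U(k-nN,nN)}\bigr)^{-1}$; and $\|d^n_{-n}\|\leq\frac23\sigma_1(k-nN,nN)\,e^{-(n-1)N\mu}$ by Lemma \ref{lem:QTZ35}. The $\sigma_1$ factors cancel and the error term becomes at most $\frac32 Ce^{N\mu'}e^{-(n-1)N\mu}$.

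Unrolling the recursion from $\|\id\oplus\Xi_1\|=1$ gives $\|\id\oplus\Xi_n\|\leq\prod_{j=0}^{n-2}\bigl(1+\frac32 Ce^{N(\mu'-j\mu)}\bigr)$, which, substituted into the first displayed inequality (after a harmless reindexing and bounding of the exponential factors by the constants appearing in the statement), yields the claimed lower bound. I expect the main obstacle to be the bookkeeping around the block decompositions rather than any serious analytic difficulty: one must check carefully that the slow subspaces used at each scale are genuinely $A$-equivariant (so that the upper-right block vanishes exactly), that $\Gamma_{-n}$ is literally the operator appearing in Lemma \ref{lem:QTZ37}(ii) and hence subject to its bound, and that the identity $\|(A^n_{-n}|_{U(k-nN,nN)})^{-1}\|=\sigma_1(k-nN,nN)^{-1}$ is being invoked for the right subspace --- these are the places where an off-by-one in the scale index, or a confusion between $U(\cdot,\cdot)$ and $\tilde U(\cdot,\cdot)$, would derail the estimate.
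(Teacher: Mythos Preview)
Your proposal is correct and follows essentially the same approach as the paper's proof: the same graph-transform setup via Lemma~\ref{lem:QTZA24}, the same lower-triangular block decomposition of the $A_{-n}$ relative to $U(k-nN,nN)\oplus E^s(k-nN)$, the same identification $\Xi_n=c^n_{-n}(a^n_{-n})^{-1}$ and recursive inequality, and the same three bounds from Lemmas~\ref{lem:QTZ37}(ii), \ref{lem:QTZ35}, and the factoring of $(a^n_{-n})^{-1}$. Your cautionary remarks about bookkeeping are apt, and your final product $\prod_{j=0}^{n-2}\bigl(1+\tfrac32 Ce^{N(\mu'-j\mu)}\bigr)$ is exactly what the paper obtains (the $D$ and $j^{-3}$ in the displayed statement appear to be typos carried over from the cited source).
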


An elementary argument, done in \cite{QTZ}, gives us control over the infinite product that appears as we take $n \to \infty$:
\spacer \begin{lem}[\cite{QTZ}, Lemma 3.10] \label{lem:QTZ310}
Fix constants $C, N, \mu'$ and $\mu > 0$. Then
\[ \prod_{j=0}^\infty \left[ 1+\frac32 C^{-1} e^{N(\mu'-j\mu)} \right] \leq 
\exp\left(\frac32 C^{-1} e^{-N\mu'} \cdot (1-e^{-N\mu})^{-1} \right) < \infty .\]

\begin{proof}
Write $a_j := \frac32 C^{-1} e^{N\mu'} e^{-jN\mu}$. If $\mu > 0$, then $\sum_{j=1}^\infty a_j$ converges, and hence so does our infinite product $\prod_{j=1}^\infty (1+a_j)$.

In particular, we have $\prod_{j=1}^\infty (1+a_j) = \exp\left( \sum_{j=1}^\infty \log(1+a_j) \right) \leq \exp (\sum_{j=1}^\infty a_j)$ since $a_j > 0$. Now observe $\sum_{j=1}^\infty a_j = \frac32 C^{-1} e^{N\mu'} \sum_{j=0}^\infty e^{-jN\mu}$, and $\sum_{j=0}^\infty e^{-jN\mu} = (1-e^{-N\mu})^{-1}$. 
\end{proof}
\end{lem}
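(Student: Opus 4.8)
The statement is purely a fact about infinite products of positive reals, and the plan is the standard one: pass to logarithms, use the elementary inequality $\log(1+x)\le x$ valid for all $x\ge 0$, and evaluate the resulting geometric series.

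Concretely, I would first set $a_j := \frac32 C^{-1} e^{N(\mu'-j\mu)} = \frac32 C^{-1} e^{N\mu'} e^{-jN\mu}$, so that the product to be estimated is $\prod_{j=0}^\infty(1+a_j)$ with each $a_j>0$. The second step records that, since $\mu>0$, the tail factor $e^{-jN\mu}$ decays geometrically, so
\[ \sum_{j=0}^\infty a_j = \frac32 C^{-1} e^{N\mu'}\sum_{j=0}^\infty e^{-jN\mu} = \frac32 C^{-1} e^{N\mu'}\,(1-e^{-N\mu})^{-1} < \infty . \]
In particular the partial products $\prod_{j=0}^{n}(1+a_j)$ form a bounded increasing sequence, so the infinite product converges.

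The third and last step is the inequality itself: for every finite $n$,
\[ \prod_{j=0}^n (1+a_j) = \exp\Bigl(\sum_{j=0}^n \log(1+a_j)\Bigr) \le \exp\Bigl(\sum_{j=0}^n a_j\Bigr) \le \exp\Bigl(\sum_{j=0}^\infty a_j\Bigr), \]
using $\log(1+x)\le x$ for $x\ge 0$ together with the nonnegativity of the $a_j$; letting $n\to\infty$ and substituting the value of $\sum_j a_j$ computed above yields the asserted bound $\exp\bigl(\frac32 C^{-1}e^{N\mu'}(1-e^{-N\mu})^{-1}\bigr)$, which is finite. (The exponent produced this way carries $e^{N\mu'}$; in any case the right-hand side is a finite constant depending only on $C,N,\mu',\mu$, which is all that is used downstream, e.g.\ in the proof of Lemma \ref{lem:QTZ38}.)

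There is no genuine obstacle here: the only two points requiring any care are the convergence of the infinite product — which is immediate once one knows $a_j\ge 0$ and $\sum_j a_j<\infty$ — and the inequality $\log(1+x)\le x$, which follows from concavity of $\log$ (or from its Taylor expansion). Everything else is arithmetic with a geometric series.
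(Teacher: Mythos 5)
Your proposal is correct and follows essentially the same route as the paper's own proof: set $a_j := \frac32 C^{-1} e^{N\mu'}e^{-jN\mu}$, use $\log(1+x)\le x$, and sum the geometric series. Your parenthetical observation that the exponent naturally comes out as $e^{N\mu'}$ rather than the $e^{-N\mu'}$ appearing in the statement is also consistent with the paper's own computation, which likewise produces $\frac32 C^{-1}e^{N\mu'}(1-e^{-N\mu})^{-1}$.
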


Now for the final assembly:
\begin{proof}[Proof of splitting]
From Lemma \ref{lem:QTZ38} and Lemma \ref{lem:QTZ310}, 
we have
\begin{align*} 
s(\tilde{U}(k, nN), E^{s}(k)) & \geq \frac23 C^{-1} e^{-N\mu'} \prod_{j=0}^{n-2} \left[ 1 + \frac32 C e^{N(\mu'-j\mu)} \right]^{-1} \\ 
 & \geq \frac23 C^{-1} \exp \left(-\frac32 C^{-1}e^{-N\mu'} (1-e^{-N\mu})^{-1} - N\mu' \right)  .
 \end{align*}

Now recall that $N$ satisfies (\ref{ineq:suff_large_N*}), i.e. $N \geq \frac1\mu \left( \log 3C - \log(1-e^{-\mu}) \right) > \frac1\mu \log 3C$. 
Pick $N \leq \frac2\mu \log 3C$,  
so that $e^{-N\mu'} \geq (3C)^{-2r}$ where $r := \frac{\mu'}\mu$.
Such a choice of $N$ exists from our hypothesis that $\frac1\mu \log 3C > 1$.
Then
\begin{align*} 
s(\tilde{U}(k, nN), E^{s}(k))
 & \geq \frac23 C^{-1} \exp \left(-\frac32 \frac{C^{-(1+2r)} 9^{-r}}{1-e^{-N\mu}} - 2r \log 3C \right) \\
 & \geq \frac23  (3e)^{-2r} C^{-(1+2r)} \exp \left( -\frac{3^{1-2r} C^{-(1+2r)}}{2(1-e^{-N\mu})} \right) \\
 & \geq \frac23 (3e)^{-2r} \exp\left( -\frac{3/2}{1-e^{-\mu}} \right) C^{-(1+2r)}.
\end{align*}

Finally, using the fact that $\tilde{U}(k, nN) \to E^{u}(k)$ as $n \to\infty$, we are done.
\end{proof}

\printbibliography

\end{document}